\def\Law{\mathsf{Law}}
\def\SE{\mathrm{SE}}
\def\saddle{\mathsf{saddle}}
\newcommand\numberthis{\addtocounter{equation}{1}\tag{\theequation}}
\newtheoremstyle{myremark} 
    {\topsep}                    
    {\topsep}                    
    {\rm}                        
    {}                           
    {\bf}                        
    {.}                          
    {.5em}                       
    {}  
\newcommand\blfootnote[1]{%
  \begingroup
  \renewcommand\thefootnote{}\footnote{#1}%
  \addtocounter{footnote}{-1}%
  \endgroup
}
\newtheorem{theorem}{Theorem}[section]
\newtheorem{lemma}[theorem]{Lemma}
\newtheorem{proposition}[theorem]{Proposition}
\newtheorem{definition}{Definition}[section]
\theoremstyle{myremark}
\begin{document}
\begin{center}
	
	{\bf{\LARGE{State evolution beyond first-order methods I: \\ Rigorous predictions and finite-sample guarantees}}}
	
	\vspace*{.2in}
	
	{\large{
			\begin{tabular}{ccc}
				Michael Celentano$^{\circ}$, Chen Cheng$^{\ast}$, Ashwin Pananjady$^{\star,\dagger}$, Kabir Aladin Verchand$^{\star, \ddagger}$
			\end{tabular}
	}}
	\vspace*{.2in}
	
	\begin{tabular}{c}
		Department of Statistics, UC Berkeley$^{\circ}$\\
		Department of Statistics, Stanford University$^{\ast}$ \\
                Statistical Laboratory, University of Cambridge$^\ddagger$ \\
        Schools of Industrial and Systems Engineering$^\star$ and Electrical and Computer Engineering$^\dagger$, \\
		Georgia Institute of Technology
	\end{tabular}
	
	\vspace*{.2in}

	\today
	
	\vspace*{.2in}
\begin{abstract}
	\blfootnote{Authors listed in alphabetical order. MC contributed to this work while he was a Miller fellow at UC Berkeley.} We develop a toolbox for exact analysis of iterative algorithms on a class of high-dimensional nonconvex optimization problems with random data.  While prior work has shown that low-dimensional statistics of (generalized) first-order methods can be predicted by a deterministic recursion known as state evolution, our focus is on developing such a prediction for a more general class of algorithms. We provide a state evolution for any method whose iterations are given by (possibly interleaved) first-order and saddle point updates, showing two main results. First, we establish a rigorous state evolution prediction that holds even when the updates are not coordinate-wise separable. Second, we establish finite-sample guarantees bounding the deviation of the empirical updates from the established state evolution.  In the process, we develop a technical toolkit that may prove useful in related problems.  One component of this toolkit is a general Hilbert space lifting technique to prove existence and uniqueness of a convenient parameterization of the state evolution.  Another component of the toolkit combines a generic application of Bolthausen’s conditioning method with a sequential variant of Gordon’s Gaussian comparison inequality, and provides additional ingredients that enable a general finite-sample analysis. 
\end{abstract}
\end{center}
\tableofcontents

\section{Introduction} \label{sec:intro}
Iterative algorithms underpin several methods in statistics and engineering.  For instance, they are employed in decoders for wireless communication~\citep{tse2005fundamentals,richardson2008modern}, as general purpose methods in convex and nonconvex optimization~\citep{boyd2004convex,nocedal2006numerical,jain2017non}, in the training of large-scale machine learning models~\citep{bottou2018optimization,lan2020first}, and as powerful methods in the study of average-case computational complexity~\citep{gamarnik2022disordered}.  From an analysis point of view, geometric properties inherent to the problem can be used to obtain rough quantitative bounds on the error dynamics of iterative algorithms even on nonconvex landscapes~\citep[e.g.][]{chi2019nonconvex,ge2016matrix,ge2017no}. However, these bounds may in some cases be (i)~overly conservative when the data is random, as is often the case in statistical learning and signal processing problems~\citep{blumensath2008iterative,agarwal2012fast}, or (ii)~unavailable in the absence of global problem structure~\citep{nemirovski1983problem,nesterov2018lectures}.  Motivated by the need for sharp average-case analysis of optimization algorithms, we study the \emph{precise} dynamics of iterative methods with random data, with the aim of producing deterministic recursions that accurately track low-dimensional functionals of the iterates. In the literature, it is common to refer to predictions of this flavor using the umbrella terminology of~\emph{state evolution}~\citep{donoho2009message}

To set the stage, let $n, d \in \mathbb{N}$ denote a sample size and parameter dimension, respectively, and let $\bX \in \mathbb{R}^{n \times d}$ have entries $(X_{ij})_{1 \leq i \leq n, 1 \leq j \leq d} \overset{\mathsf{iid}}{\sim} \mathsf{N}(0, 1/d)$. Iterative algorithms with random data can often be viewed as methods that make use of the  matrix $\bX$ to produce sequences $\{(\bu_k, \bv_k)\}_{k \in \mathbb{N}} \in \mathbb{R}^n \times \mathbb{R}^d$. 
A canonical update that produces such a sequence is a \emph{general first-order method} (GFOM), which we refer to throughout as just a first-order update. This update is specified by (potentially nonseparable) Lipschitz functions $f_{k + 1}^u: \mathbb{R}^n \rightarrow \mathbb{R}^n$ and $f_{k+1}^v: \mathbb{R}^d \rightarrow \mathbb{R}^d$ and given by
\[
\bu_{k+1} = f_{k+1}^u\bigl(\bX \bv_k\bigr) \quad \text{ and } \quad \bv_{k+1} = f_{k+1}^v\bigl(-\bX^{\top} \bu_{k}\bigr), \tag{\text{First-order}}
\]
where these functions can have additional dependence on previous iterates. See Section~\ref{sec:setup} for the precise definition of this general class of updates and examples. First-order updates encompass not just gradient-based methods on canonical loss functions, but can also capture other methods that are not necessarily based on minimizing a loss~\citep[e.g.][]{berthier2020state,rangan2011generalized,lou2025accurate}. 

The development of state evolution predictions for first-order updates dates back to approximate message passing (AMP) (see~\citet{donoho2009message,bayati2011dynamics} and \cite{feng2022unifying} for background). The reduction of general first-order methods to AMP was first given by~\citet{celentano2020estimation}.  
 Recent years have seen a flurry of work leveraging state evolution for first-order methods in a variety of different settings including gradient flows~\citep{celentano2021high,bordelon2023dynamics}, stochastic gradient methods~\citep{gerbelot2024rigorous,mignacco2020dynamical,mannelli2019passed}, inference along the trajectory~\citep{bellec2024uncertainty,han2024gradient}, and in sampling applications~\citep{fan2025dynamical,liang2023high}, to name a few.  While the aforementioned papers are based on either AMP state evolution, Gaussian conditioning arguments, and/or dynamical mean field theory, trajectory predictions for various first-order methods have also been derived via deterministic equivalents rooted in random matrix theory~\citep[see, e.g.,][and the references therein]{collins2024hitting,ferbach2025dimension} or via somewhat more direct arguments~\citep{tan2023online,arous2021online,arous2024high,abbe2023sgd}.

While many practical methods can be written as first-order updates, higher-order methods that are commonplace in numerical optimization do not execute this type of update.  
Consider for instance iterative methods for $M$-estimation problems in which we would like to minimize the objective $\ell(\bX \bv)$ over the decision variable $\bv \in \mathbb{R}^d$, where $\ell: \mathbb{R}^n \rightarrow \mathbb{R}$ is a smooth, strongly convex function.  In poorly conditioned problems, a numerically stable and preferred algorithm is the proximal point method~\citep[see, e.g.,][]{drusvyatskiy2018proximal,rockafellar1976monotone}, which forms the iterates $\{\bv_k\}_{k \in \mathbb{N}}$ by solving a regularized optimization problem
$\bv_{k+1} = \argmin_{\bv \in \mathbb{R}^d} \; \Bigl\{\ell(\bX\bv) + \frac{1}{2\alpha_k} \| \bv - \bv_{k} \|_2^2\Bigr\}$ at each iteration.
Evidently, such an iteration cannot be written as a first-order update. However, since the convex conjugate of a smooth, strongly convex function is also smooth and strongly convex, we may write each iteration as the min-max problem
\[
(\bu_{k+1}, \bv_{k+1}) = \underset{\bu \in \mathbb{R}^n, \bv \in \mathbb{R}^d}{\mathsf{saddle}}\; \Bigl\{\bu^{\top} \bX \bv - \ell^{\star}(\bu) + \frac{1}{2\alpha_k} \| \bv - \bv_k \|_2^2\Bigr\},
\]
where $\ell^{\star}$ denotes the Fenchel--Legendre conjugate of $\ell$. 
More generally, many complex methods --- for potentially nonconvex optimization problems --- can be written as \emph{saddle point updates} of the above form. In general, such an update can be written terms of two (potentially nonseparable) strongly convex and smooth penalty functions $\phi_{k + 1}^u: \mathbb{R}^n \rightarrow \mathbb{R}$ and $\phi_{k + 1}^v: \mathbb{R}^d \rightarrow \mathbb{R}$ as
\[
			(\bu_{k+1},\bv_{k+1})
=
\underset{\bv \in \mathbb{R}^p, \bu \in \mathbb{R}^n}{\saddle}
\Big\{
\bu^\top \bX \bv
-
\phi_{k+1}^u(\bu)
+
\phi_{k+1}^v(\bv)
\Big\} \tag{\text{Saddle}},
\]
where these functions can have additional dependence on previous iterates.
In Section~\ref{sec:setup}, we provide a precise definition of this general class of updates as well as additional examples.

In contrast to their first-order counterparts, higher-order and saddle updates have received relatively sparse attention from a state evolution perspective, aside from now-classical papers studying the one-shot saddle update for $M$-estimation problems~\citep{bean2013optimal,donoho2016high,thrampoulidis2018precise}. For iterative updates, a few papers have considered the simplified sample-split setting, in which independent, fresh data is used at every iteration. Deterministic trajectory predictions have recently been analyzed to obtain concrete convergence guarantees of algorithms such as alternating minimization~\citep{chandrasekher2023sharp, chandrasekher2024alternating}, stochastic prox-linear methods~\citep{lou2024hyperparameter}, and iteratively reweighted least squares~\citep{kaushik2024precise}. In an effort to go beyond sample-split algorithms, non-rigorous replica predictions for the dynamics of a similar higher-order method were provided in~\citet{okajima2025asymptotic}. A complementary body of recent work leverages random matrix theory to upper bound the dynamics of a higher-order optimization method termed ``Hessian descent"~\citep{montanari2025solving}; see also papers by~\citep{subag2021following,jekel2025potential}. Two-step variants of saddle updates were also considered by~\citet{celentano2021cad} and~\citet{celentano2023challenges} in the context of analyzing debiased statistical estimators.
Complex methods have also been considered in two-stage optimization problems in which either (i) a spectral method is used for initialization followed by a first-order method such as AMP to locally refine the estimates~\citep{mondelli2021approximate} or (ii) a first-order method is applied to localize the iterates and probe landscape properties of complicated random optimization problems~\citep{celentano2023local,celentano2024sudakov}.

\subsection*{Contributions and organization}

The main contribution of this paper is to develop a rigorous state evolution for iterative methods whose updates can be written as either a first-order update or a saddle point update, as well as interleaved variants of such updates. 
In Section~\ref{sec:setup}, we formalize the setting and provide several more examples.  We then present our two main results in Section~\ref{sec:main-results}.  Our first main result, Theorem~\ref{lem:fix-pt-exist-unique}, provides an explicit description of the state evolution, accompanied by a proof of uniqueness.  Our second main result, Theorem~\ref{thm:exact-asymptotics} provides finite-sample guarantees for the adherence of the empirical iterations to their state evolution predictions.  The proofs of the two theorems are then provided in Sections~\ref{sec:exist-unique} and~\ref{sec:proof-exact-asymptotics}, respectively. We conclude with a discussion in Section~\ref{sec:discussion}, and some ancillary and technical proofs are postponed to the appendices. In a companion paper, we provide an infinite-dimensional, canonical representation of state evolution in our setting, as well as concrete convergence consequences for particular iterative algorithms.

Let us highlight some technical takeaways from the present paper. To our knowledge, our parameterization of the state evolution is nonstandard but transparently allows --- in Theorem~\ref{lem:fix-pt-exist-unique} --- for a proof of existence and uniqueness in nonseparable settings. We believe that similar parameterizations and proof techniques can be applied to other related problems. In addition, our techniques for proving Theorem~\ref{thm:exact-asymptotics} involve several pieces that may be of potential independent interest. To handle successive saddle updates, we build on Bolthausen's conditioning argument~\citep{bolthausen2014iterative} and introduce a sequential variant of the convex Gaussian minmax theorem (CGMT) that may may find broader applications; the one-shot variant~\citep{gordon1985some,gordon1988milman,stojnic2013framework,thrampoulidis2015regularized} has been extensively used in the literature. Finally, our technique for establishing finite-sample concentration results relies on stability properties of the established state evolution, and is different from existing techniques based on directly handling AMP iterations. Details are discussed in Section~\ref{sec:main-results}.

\subsection*{Notation}
Throughout, we will let $\Lambda = n/d$ for convenience.
For a positive integer $m$, we use $[m]$ to denote the set of natural numbers less than or equal to $m$. We let $\mathbbm{1}\{\cdot\}$ denote the
indicator function. Denote by $\mathbb{B}_2(r; \btheta)$ the $\ell_2$ ball of radius $r$ centered at $\btheta$.  Let $\mathbb{S}^{k}_{\geq 0} \subseteq \mathbb{R}^{k \times k}$ denote the set of positive, semi-definite symmetric matrices.  For two sequences of non-negative reals $\{f_n\}_{n
	\geq 1}$ and $\{g_n \}_{n \geq 1}$, we use $f_n \lesssim g_n$ to indicate that
there is a universal positive constant $C$ such that $f_n \leq C g_n$ for all
$n \geq 1$.   Throughout, we use $(c, C, \ldots)$ to denote universal
positive constants, and their values may change from line to line. 

We denote by $\mathsf{N}(\bm{\mu}, \bm{\Sigma})$ a normal distribution with mean $\bm{\mu}$ and covariance matrix $\bm{\Sigma}$. For a covariance matrix $\bK \in \mathbb{S}^k_{\geq 0}$, a collection of random vectors $\{\bz_i\}_{i=1}^{k} \in \mathbb{R}^m$ is distributed as $\bz_1, \bz_2, \ldots, \bz_k \sim \mathsf{N}(0, \bK \otimes \bI_m)$ means that $\{\bz_i\}_{i=1}^k$ form a collection of centered Gaussian random vectors with pairwise covariance matrices $\EE[\bz_\ell \bz_{\ell'}^{\top}] = K_{\ell, \ell'} \bI_m$.  Equivalently, the matrix $\bZ_k \in \reals^{m \times k}$ having $i$-th column $\bz_i$ has i.i.d. rows each drawn from the Gaussian distribution $\normal(0, \bK)$.  We say that $X \overset{(d)}{=} Y$ for two random variables $X$ and $Y$ that are equal in distribution. 

Given a measurable space $(\Omega, \mathcal{F})$ and a measure $P$, we let $L^2(\Omega, \mathcal{F}, P)$ denote the Hilbert space of square-integrable functions on $(\Omega, \mathcal{F})$ with respect to the measure $P$.  When the context is clear, we abuse notation and simply write $L^2$.  When $\Omega$ itself is an inner product space, for two elements $X, X' \in L^2(\Omega, \mathcal{F}, P)$, we write $\langle X, X' \rangle_{L^2} = \mathbb{E}\langle X, X' \rangle$.  For a topological space $\mathbb{X}$, we will let $\mathcal{B}(\mathbb{X})$ denote its Borel $\sigma$-algebra.  We will use square brackets $[ \cdot ]$ to enclose the arguments of a linear functional $\mathsf{f}: L^2 \rightarrow \mathbb{R}$ and the usual round brackets $(\cdot)$ to enclose the argument of a function $f: \mathbb{R} \rightarrow \mathbb{R}$.

For a pair of matrices $\bA, \bB \in \mathbb{R}^{m \times k}$, we define the operation $\llangle \cdot, \cdot \rrangle: \mathbb{R}^{m \times k} \times \mathbb{R}^{m \times k}$ as $\llangle \bA, \bB \rrangle := \bA^{\top} \bB \in \mathbb{R}^{k \times k}$.  We will additionally write $\llangle \bA, \bB \rrangle_{L^2} = \mathbb{E}\llangle \bA, \bB \rrangle$ with expectations taken entry-wise.  For a matrix $\bA \in \mathbb{R}^{n \times d}$, we will let $\| \bA \|_{\infty}$ denote its maximum absolute entry and $\kappa(\bA):= \sigma_{\max}(\bA)/\sigma_{\min}(\bA)$ denote its condition number. 

For $\mu \in \mathbb{R}_{> 0}$ and a convex function $f: \mathbb{R}^d \rightarrow \mathbb{R}$, we define the proximal operator $\mathrm{prox}_{\mu f}: \mathbb{R}^d \rightarrow \mathbb{R}$ as 
\[
\mathrm{prox}_{\mu f}(x) := \argmin_{y \in \mathbb{R}^d} \Bigl\{ f(y) + \frac{1}{2\mu} \| y - x \|_2^2 \Bigr\}.
\]
We say that a multivariate function $h: \mathbb{R}^{p} \rightarrow \mathbb{R}^p$ is $L$-Lipschitz if $\| h(x) - h(x') \|_2 \leq L \| x - x' \|_2$ for all $x, x' \in \mathbb{R}^p$.
 Let $(\mathbb{X}, \| \cdot \|)$ be a normed space and $k \in \mathbb{N}$.  We say that a function $\psi: \mathbb{X}^{\otimes k} \rightarrow \mathbb{R}$ is order-$2$ pseudo-Lipschitz with constant $L$ if, for all $(x_1, \ldots x_k), (x'_1, \ldots, x'_k) \in \mathbb{X}^{\otimes k}$, 
\[
\lvert \psi(x) - \psi(x') \rvert \leq L \cdot \biggl(1 + \sum_{\ell=1}^{k} \| x_\ell \| + \| x'_{\ell} \| \biggr) \cdot \sum_{\ell=1}^{k} \| x_{\ell} - x'_{\ell} \| \quad \text{ and } \quad \lvert \psi(0, \ldots, 0) \rvert \leq L.  
\] 
  We adopt the convention that for the function $h: \mathbb{R}^n \rightarrow \mathbb{R}$ defined as $h(\bx) = \| \bx \|_2$, we have $\partial h(0) = 0$.

\section{Formal setting} \label{sec:setup}

In this section, we formally introduce the type of iterative algorithms considered in this paper, and provide several examples of such algorithms.

\subsection{Family of iterative algorithms}

As alluded to in Section~\ref{sec:intro}, we consider iterations that make use of the matrix $\bX$ with (a possible mix of) first-order and saddle updates, accommodating algorithms whose iterates may involve some form of auxiliary randomness. Let $\{\beps_{u, k}\}_{k \in \mathbb{N}} \overset{\mathsf{iid}}{\sim} \mathsf{N}(0, \bI_n/n)$ and $\{\beps_{v, k}\}_{k \in \mathbb{N}} \overset{\mathsf{iid}}{\sim} \mathsf{N}(0, \bI_d/d)$ denote auxiliary randomness independent of $\bX$.  We consider iterative algorithms which form sequences of pairs $\{(\bu_k, \bv_k)\}_{k \in \mathbb{N}} \in \mathbb{R}^{n} \times \mathbb{R}^d$, which are initialized as 
\begin{equation} \label{eq:initialization}
	\bu_1 = f_1^u(\beps_{u,1}),
	\quad \text{ and } \quad
	\bv_1 = f_1^v(\beps_{v,1}).
\end{equation}
Here $f_1^{u}: \reals^{n} \rightarrow \reals^{n}$ and $f_1^{v}: \reals^{d} \rightarrow \reals^{d}$ are, respectively, $L$-Lipschitz functions of their arguments.  Note that this allows for both random initialization as well as deterministic initializations --- in the latter case we can choose $(f_1^u, f_1^v)$ to be suitable constant functions so that $\bu_1 = f_1^u(0)$ and $\bv_1 = f_1^v(0)$. 

After $k \in \mathbb{N}$ iterations, we collect the previous iterates $\bu_1, \ldots, \bu_k$ and $\bv_1, \ldots, \bv_k$ into matrices
\[
\bU_k = [\bu_1 \; \vert \; \bu_2 \; \vert \; \cdots \; \vert \; \bu_k] \in \mathbb{R}^{n \times k} \qquad \text{ and } \qquad \bV_k = [\bv_1 \; \vert \; \bv_2 \; \vert \; \cdots \; \vert \; \bv_k] \in \mathbb{R}^{d \times k},
\]
and collect the auxiliary randomness $\{\beps_{u,\ell}\}_{1 \leq \ell \leq k}$ and $\{\beps_{v,\ell}\}_{1 \leq \ell \leq k}$ into matrices $\bE_{u, k} \in \mathbb{R}^{n \times k}$ and $\bE_{v,k} \in \mathbb{R}^{d \times k}$ similarly.  

At time $k + 1$, we then take one of two possible updates: a first-order or saddle-point update, defined below. 
\begin{subequations} \label{eq:updates-general}
	\begin{definition}[First-order update]
		\label{def:FO-update}
		Let $f_{k+1}^{u}: \mathbb{R}^{n} \rightarrow \mathbb{R}^{n}$ and $f_{k+1}^{v}: \mathbb{R}^d \rightarrow \mathbb{R}^d$ be $L$-Lipschitz functions.  A (generalized) first-order update at iteration $k + 1$ is given by
		\begin{align} \label{eq:first-order}
			\bu_{k+1}
			&=
			f_{k+1}^u(\bX \bv_k; \bU_k,\bE_{u,k}),
			\qquad \text{ and } \qquad 
			\bv_{k+1}
			=
			f_{k+1}^v
			(-\bX^\top \bu_k; \bV_k,\bE_{v,k}).
		\end{align}
	\end{definition}
	
	\begin{definition}[Saddle update]
		\label{def:saddle-update}
		Let $\phi^{u}_{k+1}: \mathbb{R}^{n} \rightarrow \mathbb{R}$ and $\phi_{k+1}^{v}: \mathbb{R}^d \rightarrow \mathbb{R}$ denote a pair of $\mu$-strongly convex and $L$-smooth functions.  A saddle update is given by 
		\begin{align}
			(\bu_{k+1},\bv_{k+1})
			&=
			\underset{\bv \in \mathbb{R}^p, \bu \in \mathbb{R}^n}{\saddle}
			\Big\{
			\bu^\top \bX \bv
			-
			\phi_{k+1}^u(\bu;\bU_k,\bE_{u,k})
			+
			\phi_{k+1}^v(\bv;\bV_k,\bE_{v,k})
			\Big\}. \label{eq:saddle-update}
		\end{align}
	\end{definition}
\end{subequations}
For brevity below, we often drop the dependence on the auxiliary randomness $(\bE_{u, k}, \bE_{v, k})$ in the definitions of the functions $(f^u_{k+1}, f^v_{k+1}, \phi^u_{k+1}, \phi^v_{k+1})$. Equipped with these definitions, let us now give several examples of iterative methods that fall under the umbrella of these updates. 

\subsection{Illustrative examples} 
To illustrate, we write two first-order methods in the form~\eqref{eq:first-order} and  two saddle methods in the form~\eqref{eq:saddle-update}; note that another example is given by the proximal point update from Section~\ref{sec:intro}. We note that the general form of iterations can also cover updates that interleave saddle and first-order updates~\citep[see, e.g.][]{mondelli2021approximate,celentano2023local}.

\paragraph{Example 1: Gradient methods:} Our first example is gradient descent for $M$-estimation in generalized linear models. Given a smooth (i.e. $C^1$) function $\ell$, we are interested in optimizing a loss $\mathcal{R}(\btheta) = \frac{1}{n} \sum_{i = 1}^n \ell(\inprod{\bx_i}{\btheta}; y_i)$ via gradient descent iterates
\begin{align} \label{eq:GD}
	\btheta_{t + 1} = \btheta_t - \frac{\eta}{n} \bX^\top  \ell'(\bX \btheta_t; \by).
\end{align}
Here we write $\ell'(\bX \btheta_t; \by) \in \reals^n$ to denote a vector whose $i$-th entry is given by $\ell'(\inprod{\bx_i}{\btheta}; y_i)$ and the derivative is taken with respect to the first argument.

To write Eq.~\eqref{eq:GD} as a first-order update according to Definition~\ref{def:FO-update}, we define $(\bu_\ell, \bv_{\ell})_{\ell \geq 0}$ as follows
\begin{align*}
	&\bu_{\ell} = \ell'(\bX \bv_\ell; \by)  \qquad &\text{ and } \qquad  &\bv_{\ell} = \bv_{\ell - 1} \quad &\text{ if $\ell$ odd}, \\
	&\bu_{\ell} = \bu_{\ell-1} \qquad &\text{ and } \qquad &\bv_{\ell} = \bv_{\ell - 1} - \eta \bX^\top  \bu_{\ell - 1} \quad &\text{ if $\ell$ even}.
\end{align*}
We then take $\btheta_k = \bv_{2(k-1)}$ with initialization $\bv_0 = \btheta_1$ and $\bu_0 = 0$. \hfill $\clubsuit$

\paragraph{Example 2: Approximate message passing (AMP):}
The approximate message passing algorithm in standard form~\citep{bayati2011dynamics,javanmard2013state,berthier2020state} for iterates $\bp_k \in \reals^d, \bq_k \in \reals^n$ and nonlinearities $\tau_k: \reals^{n} \to \reals^n$, $e_k: \reals^d \to \reals^d$ is given by the iteration\footnote{Note that we have an additional factor $\Lambda$ appearing in the Onsager terms since the typical AMP iteration is stated using a random matrix having entries of variance $1/n$, whereas our random matrix $\bX$ has entries of variance $1/d$.}
\begin{equation}
	\label{eq:amp}
	\begin{gathered}
		\bp_{k+1} = \bX^\top \tau_k(\bq_k) -\sa_{k} e_{k}(\bp_{k}),
		\qquad
		\bq_{k} = \bX e_k(\bp_k) -  \sb_{k - 1} \tau_{k-1}(\bq_{k-1}),
	\end{gathered}
\end{equation}
where
the scalars $\sa_k = \frac{1}{d} \mathsf{div} [\tau_k] (\bq_k)$ and $\sb_k = \frac{1}{d} \mathsf{div}[e_k](\bp_k)$ give rise to Onsager correction terms. Here $\mathsf{div}[f]$ is the divergence of $f$, given by the sum of its coordinate-wise derivatives. 

To write Eq.~\eqref{eq:amp} as a first-order update according to Definition~\ref{def:FO-update}, we introduce $(\bu_\ell, \bv_{\ell})_{\ell \geq -1}$ via
\begin{align*}
	&\bv_{4k} = e_k(\bv_{4k-1})  \qquad &\text{ and } \qquad \qquad   &\bu_{4k} = \bu_{4k - 1} , \\
	&\bv_{4k+1} = \bv_{4k} \qquad &\text{ and } \qquad \qquad &\bu_{4k+1} = \bX \bv_{4k} - \sb_{k-1}\tau_{k-1}(\bu_{4k}) ,\\
	&\bv_{4k+2} = \bv_{4k+1} \qquad &\text{ and } \qquad \qquad &\bu_{4k+2} =  \tau_{k}(\bu_{4k+1}) , \\
	&\bv_{4k+3} = \bX^{\top} \bu_{4k+2} - \sa_{k}e_k(\bv_{4k+2}) \qquad &\text{ and } \qquad \qquad &\bu_{4k+3} = \bu_{4k+2}.
\end{align*}
We then take $\bp_{k+1} = \bv_{4k+3}$ and $\bq_k = \bu_{4k+1}$ with initialization $\bv_{-1} = \bp_1$ and $\bu_{-1} = 0$. \hfill $\clubsuit$

\paragraph{Example 3: Tikhonov regularized nonlinear least squares for single-index models:} Let $\mathcal{Q}$ denote a distribution over latent variables and suppose we have a known link function $g: \mathbb{R} \times \mathcal{Q} \to \real{R}$. Consider a set of $n$ i.i.d. covariate-response pairs $\{(\bx_i, y_i)\}_{i=1}^{n} \in \mathbb{R}^d \times \mathbb{R}$, generated according to the (random) single-index model $y_i = g( \bx_i^{\top} \btheta_*; q_i) + \sigma \epsilon_i$, with $(\bx_i)_{i=1}^{n} \overset{\mathsf{i.i.d.}}{\sim} \mathsf{N}(0, \bI_d/d)$, $(q_i)_{i=1}^{n} \overset{\mathsf{i.i.d.}}{\sim} \mathcal{Q}$, $\epsilon_i \overset{\mathsf{i.i.d.}}{\sim} \normal(0, 1)$ and with a prior that $\btheta_* \sim \mathsf{N}(0, \bSigma)$ for some positive definite matrix $\bSigma$.  

A canonical family of algorithms to optimize the log-posterior in these problems~\citep{chandrasekher2023sharp} is a weighted least squares update, in which we choose a Tikhonov regularization proportional to $\| \btheta \|^2_{\bSigma^{-1}} = \btheta^{\top} \bSigma^{-1} \btheta$ and some weight function $w:\mathbb{R} \times \mathbb{R} \to \mathbb{R}$ and then execute the iterations
\begin{align} \label{eq:HO-iteration-example}
	\btheta_{k+1} = \argmin_{\btheta \in \mathbb{R}^d}\; \frac{1}{2} \sum_{i=1}^{n} \Bigl(w(\bx_i^\top \btheta_k, y_i) y_i - \bx_i^{\top} \btheta\Bigr)^2 + \lambda \| \btheta \|_{\bSigma^{-1}}^2 = \argmin_{\btheta \in \mathbb{R}^d}\; \frac{1}{2} \bigl \| w(\bX \btheta_k, \by) \odot \by -  \bX \btheta \bigr \|_2^2 + \lambda \| \btheta \|_{\bSigma^{-1}}^2
\end{align}
for some $\lambda > 0$,
where we stack the weights $w(\bx_i^\top \theta, y)$ to form the vector $w(\bX \btheta_k, \by)$ and $\odot$ denotes the Hadamard product.
Examples of such updates are alternating minimization for phase retrieval~\citep{gerchberg1972practical,fienup1982phase} --- in which we take $w(\bx^\top \btheta, y) = \mathsf{sgn}(\bx^\top \btheta)$ --- and expectation maximization (EM) for mixtures of linear regressions~\citep{dempster1977maximum,balakrishnan2017statistical} --- in which we take 
\[
w(\bx^\top \btheta, y) =\frac{ \phi((y - \bx^{\top} \btheta)/\sigma)}{ \phi((y - \bx^{\top} \btheta)/\sigma) +  \phi((y + \bx^{\top} \btheta)/\sigma)},
\]
where $\phi$ denotes the standard Gaussian density.  

The iterations~\eqref{eq:HO-iteration-example} can equivalently be written as
\[
(\bu_{k+1}, \btheta_{k+1}) = \underset{\btheta \in \mathbb{R}^d, \bu \in \mathbb{R}^n}{\saddle}\; \biggl\{ \bu^{\top}\bX \btheta - \bu^{\top} \by \odot w(\bX \btheta_k, \by) - \frac{1}{2}\| \bu \|_2^2 + \lambda \| \btheta \|_{\bSigma^{-1}}^2 \biggr\},
\]
which is an instance of the saddle-point updates~\eqref{eq:saddle-update} with $\bv_k = \btheta_k$ and functions $\phi_{k+1}(\btheta) = \lambda \| \bv \|_2^2$ and $\phi^u_{k+1}(\bu) = \| \bu \|_2^2/2 +  \bu^{\top} \by \odot w(\bX \bv_k, \by)$. Furthermore, notice that the KKT conditions of the saddle problems for each $2 \leq \ell \leq k$ yield that $\bX \bv_\ell = \by \odot w(\bX \btheta_{\ell - 1}, \by) + \bu_k$. Thus $\phi^u_{k+1}(\bu)$ depends on the past iterates only through the matrix $\bU_k$.  \hfill $\clubsuit$

\paragraph{Example 4: Prox-linear for nonlinear least squares problems:}
Suppose that given $n$ i.i.d. covariate response pairs $\{(\bx_i, y_i)\}_{i=1}^{n} \in \mathbb{R}^d \times \mathbb{R}$, we are interested in solving the nonlinear least squares problem
\[
\min_{\btheta \in \mathbb{R}^d}\; \sum_{i=1}^{n} \bigl\{y_i - g(\bx_i^{\top} \btheta) \bigr\}^2,  
\]
where $g: \mathbb{R} \rightarrow \mathbb{R}$ is assumed (for simplicity) to be strictly increasing and differentiable. Such loss functions are of interest in, for instance, nonlinear computed tomography~\citep[see, e.g.,][and the references therein]{fridovich2023gradient}.  A canonical iterative method to perform this minimization is the
prox-linear method~\citep{lewis2016proximal,drusvyatskiy2021nonsmooth}, whose iterates  $\{\btheta_{k}\}_{k \in \mathbb{N}}$ can be written according to the updates 
\[
\btheta_{k+1} = \argmin_{\btheta \in \mathbb{R}^d} \sum_{i=1}^{n}\bigl\{y_i - g(\bx_i^{\top} \btheta_k) - g'(\bx_i^{\top} \btheta_k) \bx_i^{\top} (\btheta - \btheta_k) \bigr\}^2 + \frac{1}{2\alpha_k} \| \btheta - \btheta_k \|_2^2.
\]
Note that taking the stepsize $\alpha_k \rightarrow \infty$, this reduces to the familiar  Gauss--Newton method~\citep[see, e.g.,][Chapter 10]{nocedal2006numerical}.  
Write the residuals as $r_i := y_i - g(\bx_i^{\top} \btheta_k)$ and collect these into the vector $\br_k = [r_1, \ldots, r_n]^{\top}$. Also define the diagonal matrix $\bD_k = \diag(g'(\bx_1^{\top} \btheta_k), \ldots, g'(\bx_n^{\top} \btheta_k))$ and make the change of variables $\bv = \btheta - \btheta_k$. Then we can more concisely write each iteration as
\[
\bv_{k+1} = \argmin_{\bv \in \mathbb{R}^d}\; \bigl \| \br_k - \bD_k \bX \bv \bigr\|_2^2 + \frac{1}{2 \alpha_k} \| \bv \|_2^2 = \argmin_{\bv \in \mathbb{R}^d} \sup_{\bz \in \mathbb{R}^n}\; \bz^{\top} \bD_k \bX \bv - \bz^{\top} \br_k - \frac{1}{2} \| \bz \|_2^2 + \frac{1}{2\alpha_k} \| \bv_k \|_2^2,  
\]
where in the second relation we used the fact that the Fenchel--Legendre conjugate of $\bx \mapsto \| \bx \|_2^2/2$ is itself.  Making the substitution $\bu = \bD_k \bz$ we thus have
\[
(\bu_{k+1}, \bv_{k+1}) = \underset{\bv \in \mathbb{R}^d, \bu \in \mathbb{R}^n}{\saddle}\; \biggl\{ \bu^{\top} \bX \bv - \bu^{\top} \bD_k^{-1} \br_k - \frac{1}{2} \bigl \| \bD_k^{-1} \bu \bigr \|_2^2 + \frac{1}{2\alpha_k} \| \bv_k \|_2^2 \biggr\},
\]
where we note that since $g$ is strictly increasing, the inverse $\bD_k^{-1}$ always exists.  \hfill $\clubsuit$

\section{Main results} \label{sec:main-results}

Having given several examples, we now state our main results for the general iterations that we set up in Section~\ref{sec:setup}. First, in Theorem~\ref{lem:fix-pt-exist-unique}, we provide a state evolution recursion that covers these methods, and show rigorously the existence and uniqueness of this state evolution. Second, in Theorem~\ref{thm:exact-asymptotics}, we establish finite-sample guarantees showing that low-dimensional functions of the empirical iterates~\eqref{eq:initialization}-\eqref{eq:updates-general} are predicted by deterministic functionals of the state evolution.

\subsection{State evolution}
Recall that $\Lambda = n/d$.  The state evolution specifies two joint distributions on different probability spaces, indexed by the iteration $k \in \mathbb{N}$.  The first is a joint distribution over random matrices $\Use_k \in \mathbb{R}^{n \times k}, \Uhatse_k \in \mathbb{R}^{n \times k}, \Hse_k \in \mathbb{R}^{n \times k}$ while the second is a joint distribution over random matrices $\Vse_k \in \mathbb{R}^{d \times k}, \Vhatse_k \in \mathbb{R}^{d \times k}, \Gse_k \in \mathbb{R}^{d \times k}$.  When $k=1$, these satisfy 
\begin{align}
	\label{eq:SE-base-case}
	\begin{gathered}
		\vse_1 =  \bv_{1}, \qquad \use_1 =  \bu_{1}, \qquad \uhatse_1 = 0, \qquad \vhatse_1 = 0,  \\
		\gse_1 \sim \normal(\bzero, \| \bu_1 \|_2^2 \cdot \id_d /d) \qquad \hse_1 \sim \normal(\bzero, \| \bv_1 \|_2^2 \cdot \id_n /n). 
	\end{gathered}
\end{align}
For further indices $k \in \mathbb{N}$ with $k > 1$, they satisfy
\begin{equation}
	\label{eq:se-guass-and-span}
	\begin{gathered}
		\gse_1, \ldots, \gse_k \sim \normal(\bzero, \bK^g_k \otimes \id_d / d),
		\quad \text{ and } \quad 
		\hse_1, \ldots, \hse_k \sim \normal(\bzero,\bK^h_k \otimes \id_n / n),
		\\
		\uhatse_k
		\in \spn\bigl( \{\use_{\ell} \}_{1 \leq \ell \leq k} \bigr), \quad \text{ and } \quad 
		\vhatse_k
		\in \spn\bigl( \{\vse_{\ell} \}_{1 \leq \ell \leq k} \bigr).
	\end{gathered}
\end{equation}
For first order updates at iteration $k$, they additionally satisfy 
\begin{equation}
	\label{eq:se-first-order}
	\use_k
	=
	f_{k}^u\bigl(\sqrt{\Lambda}\hse_{k-1} - \uhatse_{k-1};\Use_{k-1}\bigr),
	\quad \text{ and } \quad 
	\vse_k
	=
	f_{k}^v\bigl(\gse_{k-1}- \vhatse_{k-1}; \Vse_{k-1}\bigr),
\end{equation}
where we recall that $f_{k}^u, f_k^v$ were the Lipschitz update functions specified in Definition~\ref{def:FO-update}.  On the other hand, for saddle updates at iteration $k$, they satisfy
\begin{equation}
	\label{eq:se-saddle}
	\sqrt{\Lambda} \hse_k - \uhatse_k
	=
	\nabla \phi_k^u(\use_k; \Use_{k-1}), \quad \text{ and } \quad 
	\gse_k -\vhatse_{k}
	=
	\nabla \phi_k^v(\vse_k; \Vse_{k-1}).
\end{equation}
These random variables additionally satisfy the expectation relations
\begin{equation}
	\label{eq:fix-pt}
	\begin{aligned}
		\bK^g_k = \llangle \Gse_k, \Gse_k \rrangle_{L^2} &= \llangle \Use_k, \Use_k \rrangle_{L^2},
		\qquad&
		\bK^h_T = \llangle \Hse_k, \Hse_k \rrangle_{L^2} &= \llangle \Vse_k, \Vse_k \rrangle_{L^2},
		\\
		\llangle \Gse_k, \Vse_k \rrangle_{L^2} &= \llangle \Use_k , \Uhatse_k \rrangle_{L^2},
		\qquad&
		\llangle \sqrt{\Lambda} \Hse_k, \Use_k \rrangle_{L^2} &= \llangle \Vse_k, \Vhatse_k \rrangle_{L^2}.
	\end{aligned}
\end{equation}
Note that above, we abuse notation and write $L^2$ to mean one of two quantities: either 
\[
L^2\Bigl(\underbrace{\mathbb{R}^d \times \cdots \mathbb{R}^d}_{k \text{ times}}, \mathcal{B}\bigl( \underbrace{\mathbb{R}^d \times \cdots \mathbb{R}^d}_{k \text{ times}}\bigr), \mathsf{N}(0, \bI_d)^{\otimes k}\Bigr) \quad \text{ or } \quad L^2\Bigl(\underbrace{\mathbb{R}^n \times \cdots \mathbb{R}^n}_{k \text{ times}}, \mathcal{B}\bigl( \underbrace{\mathbb{R}^n \times \cdots \mathbb{R}^n}_{k \text{ times}}\bigr), \mathsf{N}(0, \bI_n)^{\otimes k}\Bigr).
\]
Note that since the functions $\{f_k^u, f_k^v\}_{k \in \mathbb{N}}$ and $\{ \nabla \phi^u_k, \nabla \phi^v_k\}_{k \in \mathbb{N}}$ are Lipschitz, all of the random vectors in the previous displays are elements of the appropriate $L^2$ spaces described above.  

Our first main result proves that the equations defining the state evolution have a unique solution.   
\begin{theorem}
	\label{lem:fix-pt-exist-unique}
	For each $k \in \mathbb{N}$, there is a unique pair of joint distributions $\Law(\Gse_k, \Vse_k, \Vhatse_k)$ and $\Law(\Hse_k, \Use_k, \Uhatse_k)$ that satisfy Eqs.~\eqref{eq:SE-base-case}, \eqref{eq:se-guass-and-span}, \eqref{eq:se-first-order}, \eqref{eq:se-saddle}, and \eqref{eq:fix-pt}.
\end{theorem}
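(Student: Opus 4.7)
The plan is to proceed by strong induction on $k$. The base case $k=1$ is immediate from~\eqref{eq:SE-base-case}, which uniquely prescribes every relevant distribution. For the inductive step, I would assume the joint laws are uniquely determined through iteration $k-1$ and construct the $k$-th increment $(\use_k, \vse_k, \gse_k, \hse_k, \uhatse_k, \vhatse_k)$ together with the new row/column of the covariance matrices $\bK^g_k, \bK^h_k$, splitting into two cases according to the nature of the step.

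In the \textbf{first-order case}, equation~\eqref{eq:se-first-order} directly defines $\use_k$ and $\vse_k$ as Lipschitz functions of inductively-determined quantities. The new Gaussians $\hse_k, \gse_k$ are then uniquely pinned down: \eqref{eq:fix-pt} prescribes their covariances with the previous Gaussians (via the now-known $\mathbb{E}[\vse_j^{\top}\vse_k]$ and $\mathbb{E}[\use_j^{\top}\use_k]$), and the conditional law of $\hse_k$ given $\Hse_{k-1}$ is the unique centered Gaussian with this covariance structure (and likewise for $\gse_k$). Finally, $\uhatse_k \in \spn(\use_1, \ldots, \use_k)$ (resp.\ $\vhatse_k$) is determined as the unique element of the span satisfying the cross-covariance conditions in~\eqref{eq:fix-pt}; this amounts to a finite-dimensional linear system whose Gram matrix $\llangle \Use_k, \Use_k\rrangle_{L^2}$ is positive semidefinite, i.e., a well-posed $L^2$ orthogonal projection.

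In the \textbf{saddle case}, the equation $\sqrt{\Lambda}\hse_k - \uhatse_k = \nabla \phi_k^u(\use_k;\Use_{k-1})$ (and its $\phi_k^v$ counterpart) couples the new iterate, Gaussian, and correction simultaneously. I would parameterize $\uhatse_k = \sum_{j \leq k}\alpha_j \use_j$ and, using $\mu$-strong convexity of $\phi_k^u$, rewrite the equation as
\[
\use_k = \argmin_{u \in \mathbb{R}^n}\Bigl\{\phi_k^u(u;\Use_{k-1}) + \tfrac{\alpha_k}{2}\|u\|_2^2 - \bigl\langle \sqrt{\Lambda}\hse_k - \sum_{j<k}\alpha_j \use_j,\, u\bigr\rangle\Bigr\},
\]
valid for $\alpha_k > -\mu$; this renders $\use_k$ a Lipschitz function of the coefficients $(\alpha_j)_{j \leq k}$, of $\hse_k$, and of inductive data. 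Imposing~\eqref{eq:fix-pt} (and the analogous relations for $\vse_k, \vhatse_k$) would then reduce the problem to a finite-dimensional fixed-point system in the coefficients $(\alpha_j), (\beta_j)$ and the new diagonal covariance entries $\bK^h_{k,k}, \bK^g_{k,k}$, whose unique solvability I would establish via a monotonicity (or contraction) argument rooted in the strong convexity of $\phi_k^u$ and $\phi_k^v$.

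The principal obstacle is the saddle case: the dependencies form a loop --- $\bK^h_{k,k}$ depends on $\vse_k$, which depends on $\gse_k$ (with covariance $\bK^g_{k,k}$), which depends on $\use_k$, which in turn depends on $\hse_k$, closing back through $\bK^h_{k,k}$ --- and this must be resolved simultaneously. I expect to untangle it via the Hilbert-space lifting alluded to in the introduction: treating the iterates abstractly as $L^2$ random variables reduces the problem to a low-dimensional monotone fixed-point system with a unique solution, independent of the ambient dimensions $n, d$. Uniqueness then propagates forward in $k$, since at each step every new variable is obtained by either (i) a Lipschitz formula, (ii) a unique Gaussian extension with prescribed covariance, (iii) a unique $L^2$ projection, or (iv) a uniquely solvable monotone fixed-point equation.
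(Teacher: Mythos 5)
Your treatment of the base case and of the first-order step matches the paper's argument: define $(\use_k,\vse_k)$ through the Lipschitz update, extend the Gaussian family by prescribing the new covariances, and obtain $\uhatse_k,\vhatse_k$ as the unique elements of the relevant spans solving a linear (Gram) system — this is exactly what the paper does, and that part of your plan is sound.

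The gap is in the saddle case, which is where essentially all of the difficulty of the theorem lives. You correctly identify the circular dependence among $\use_k,\hse_k,\vse_k,\gse_k$ and the new covariance entries, but your proposed resolution — parameterize $\uhatse_k=\sum_j\alpha_j\use_j$, view $\use_k$ as a prox of $\phi^u_k$ (``valid for $\alpha_k>-\mu$''), and then close the loop by ``a monotonicity (or contraction) argument'' for a finite-dimensional fixed-point system in $(\alpha_j),(\beta_j),\bK^h_{k,k},\bK^g_{k,k}$ — is asserted rather than established, and it is far from clear that it can be established in this form. The map defined by this scalarized system is built from expectations of gradients of arbitrary (nonseparable) strongly convex, smooth $\phi^u_k,\phi^v_k$ composed with Gaussian extensions; there is no evident monotone or contractive structure in the coefficient variables, and you give no argument for one. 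You also implicitly need $\alpha_k>-\mu$ (indeed the true solution has $\alpha_k\ge 0$), but ruling out the other regime is itself nontrivial: in the paper this corresponds to showing $\langle\bxi_g,\vse_{k+1}\rangle_{L^2}\ge 0$ and handling the degenerate case $\proj^{\perp}_{\Use_k}\use_{k+1}=0$, which requires the sign-constrained saddle formulation~\eqref{eq:L2-inductive-saddle-new}, Claim~\ref{clm:chains}, and Lemma~\ref{lem:prox-like}. The paper avoids any monotonicity/contraction verification altogether: it lifts the problem to the Hilbert spaces $\mathcal{H}^{k+1}_n,\mathcal{H}^{k+1}_d$, constructs a strongly convex--concave constrained saddle problem whose KKT conditions are shown to coincide exactly with the state-evolution equations (Lemma~\ref{lem:Hilbert-saddle-lemma}, Propositions~\ref{prop:existence-SE-fixpt} and~\ref{prop:uniqueness-SE-fixpt}), so that uniqueness of the saddle point delivers uniqueness of the state evolution. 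Your final sentence defers precisely to this ``Hilbert-space lifting,'' but that lifting — the choice of objective $\mathfrak{L}_{k+1}$, the sign constraints, the identification of the KKT system with~\eqref{eq:se-saddle} and~\eqref{eq:fix-pt}, and the treatment of the vanishing-projection cases — is the content of the proof, and it is missing from your proposal.
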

We provide the proof of Theorem~\ref{lem:fix-pt-exist-unique} in Section~\ref{sec:exist-unique}.  

Our parameterization~\eqref{eq:SE-base-case}--\eqref{eq:fix-pt} of the state evolution may seem nonstandard, but as revealed in the proof of Theorem~\ref{lem:fix-pt-exist-unique}, it is directly useful in establishing existence and uniqueness guarantees. To illustrate the parameterization, we show that it recovers two existing state evolutions in the literature.    

Our first example consists of linear estimation with AMP~\citet[Section 2.1]{bayati2011dynamics} in a pure noise problem, in which the dynamics are given by
\[
\bp_{k+1} = \lambda_{k} \cdot \bigl(\bX^{\top} \bq_k + \Lambda \bp_k\bigr), \qquad \text{ and } \qquad \bq_{k} = \by - \bX \bp_k + \lambda_{k-1} \bq_{k-1},
\]
where $\lambda_k$ is a user-defined parameter and $\by \sim \mathsf{N}(0, \sigma^2 \bI_n)$.  For this simple iteration, the reduction to our first-order methods in Definition~\ref{def:FO-update} is simpler than that in Example 2.  Indeed, we take 
\begin{align}
	&\bv_{2k} = \bv_{2k-1} \qquad &\text{ and } \qquad \qquad &\bu_{2k} =  \by - \bX \bv_{2k-1} + \lambda_{k-1} \bu_{2k-1} , \nonumber\\
	&\bv_{2k+1} = \lambda_k \cdot \bigl(\bX^{\top} \bu_{2k} + \Lambda \bv_{2k}\bigr) \qquad &\text{ and } \qquad \qquad &\bu_{2k+1} = \bu_{2k}. \label{eq:AMP-linear-expanded}
\end{align}
In Appendix~\ref{sec:SE-AMP-example}, we show that the state evolution in Eqs.~\eqref{eq:SE-base-case}--\eqref{eq:fix-pt} recovers~\citet[Eq. (2.5)]{bayati2011dynamics} with $v$ in that equation set to $0$.  In particular, we show that $\| \use_{2k}\|_{L^2}^2 = \tau_k^2$ where $\tau_k$ satisfies the recursion
\[
\tau_{k}^2 = \sigma^2 + \Lambda \lambda_{k-1}^2 \tau_{k-1}^2.
\]

For our second example, we consider the asymptotic description of the behavior of $M$-estimators as in~\citep{thrampoulidis2018precise,bellec2023existence}, where one is interested in understanding asymptotic properties of
\[
\bv^{\mathrm{M}} = \argmin_{\bv \in \mathbb{R}^d}\; L(\bv) := \frac{1}{d} \rho\bigl(\by - \bX \bv\bigr) + \frac{\lambda}{d} f(\bv),
\]
where $\rho$ is a data-fitting term and $f$ is understood as a penalty.  To simplify the discussion, we suppose that $\by = 0$.  In Appendix~\ref{sec:appendix-m-est-gordon}, we show that the asymptotic properties of this problem are characterized by the fixed point equations
\begin{subequations} \label{eq:BK-SE}
	\begin{align}
		\alpha^2 &= \mathbb{E}\Bigl[\bigl\{\mathsf{prox}_{\lambda \nu^{-1} f}\bigl(\nu^{-1} \beta Z\bigr)\bigr\}^2\Bigr],\label{eq:example-gordon-norm-v}\\
		\Lambda^{-1} \beta^2 \kappa^2 &= \mathbb{E}\Bigl[\bigl\{\alpha W - \mathsf{prox}_{\kappa \rho}(\alpha W)\bigr\}^2\Bigr],\label{eq:example-gordon-norm-u}\\
		\delta^{-1} \nu \alpha \kappa &= \mathbb{E}\Bigl[W \cdot \bigl\{ \alpha W - \mathsf{prox}_{\kappa \rho}(\alpha W)\bigr\}\Bigr], \quad \text{and }\label{eq:example-gordon-uh}\\
		\kappa \beta &= \mathbb{E}\Bigl[Z \cdot \mathsf{prox}_{\lambda \nu^{-1} f}(\nu^{-1} \beta Z)\Bigr], \label{eq:example-gordon-vg},
	\end{align}
\end{subequations}
where $Z, W \sim \mathsf{N}(0,1)$ are independent of all other randomness. These equations coincide with the set of fixed point equations in~\citet[Eq. (1.11)]{bellec2023existence}.  Note that taking one saddle step in our state evolution and applying Theorem~\ref{lem:fix-pt-exist-unique} immediately implies the existence and uniqueness of a solution to the fixed point equations in~\eqref{eq:BK-SE}, hence recovering (under stronger assumptions) the guarantee of~\citet[Theorem 3.2, (2)]{bellec2023existence}.   

Let us remark briefly on our technique for proving Theorem~\ref{lem:fix-pt-exist-unique}.  Our proof is inspired by the variational approach of~\citet{montanari2025generalization}, which obtains existence and uniqueness of a solution to a system of nonlinear equations obtained post-scalarization of the auxiliary optimization generated by the convex Gaussian minmax theorem (CGMT). In particular, they construct an appropriate infinite-dimensional optimization problem whose KKT conditions coincide with the fixed point equations.  Our proof follows this general recipe inductively, but without explicitly scalarizing the auxiliary objective; note that this is important in our setting since the functions defining our saddle update are not necessarily separable. At each iteration, when the update is a saddle update, we construct an appropriate variational problem and show that its KKT conditions coincide with the state evolution.  On the other hand, if the update is first-order, we argue the existence and uniqueness of the state evolution from first principles.

\subsection{Finite-sample guarantees} 

We now show that the state evolution~\eqref{eq:SE-base-case}--\eqref{eq:fix-pt} is operationally useful, in that it can make predictions about the iterates $(\bu_k, \bv_k)_{ \geq 1}$.
We begin with some notation.  Since $\widehat{\bv}_k^{\mathrm{SE}} \in \spn( \{\vse_{k'} \}_{k' \leq k} )$ and $\widehat{\bu}_k^{\mathrm{SE}} \in \spn(\{\use_{k'} \}_{k' \leq k} )$, there must exist matrices $\bL_k^{v} \in \mathbb{R}^{k \times k}$ and $\bL_k^{u} \in \mathbb{R}^{k \times k}$ such that
\[
\widehat{\bv}_k^{\mathrm{SE}} = \sum_{\ell = 1}^{k} \bigl(L^{v}_k\bigr)_{k, \ell} \bv_{\ell}^{\mathrm{SE}} \quad \text{and} \quad \widehat{\bu}_k^{\mathrm{SE}} = \sum_{\ell = 1}^{k} \bigl(L^{u}_k\bigr)_{k, \ell} \bu_{\ell}^{\mathrm{SE}}.
\]
Using these coefficients, we further define, for $\ell \in [k]$, the vectors
\begin{align}
	\label{eq:PO-g-h-def}
	\bg_{\ell} :=  \sum_{j = 1}^{\ell} \bigl(L^{v}_\ell\bigr)_{j, \ell} \bv_{j} - \bX^{\top} \bu_{\ell} \quad \text{and} \quad \bh_{\ell} :=  \Lambda^{-1/2} \Big(\sum_{j = 1}^{\ell} \bigl(L^{u}_{\ell}\bigr)_{\ell, j}\bu_{j} + \bX \bv_{\ell} \Big),
\end{align}
and collect these into the matrices $\bG_k = [\bg_1 \; \vert \; \bg_2 \; \vert \; \cdots \; \vert \; \bg_k] \in \mathbb{R}^{d \times k}$ and $\bH_k= [\bh_1 \; \vert \; \bh_2 \; \vert \; \cdots \; \vert \; \bh_k] \in \mathbb{R}^{n \times k}$.
Before stating the theorem, we require the following regularity assumption.
\begin{assumption}\label{asm:regularity}
	There exist universal constants $(\mu, L, K)$ such that the following is true. For each $k \in \mathbb{N}$, the functions $\phi_{k}^v, \phi_k^{u}$ are $\mu$-strongly convex and $L$-smooth, and the functions $f^u_k, f^v_k$ are $L$-Lipschitz.  Moreover, we have $\| \nabla \phi_{k}^{u}(0) \|_2 \vee \| \nabla \phi_{k}^{v}(0) \|_2 \vee \| f^u_k(0) \|_2 \vee \| f^v_k(0)\|_2 \leq K$.
\end{assumption}

\begin{assumption}\label{asm:non-degeneracy}
	The matrices $\bK^g_k, \bK^h_k$ in Eq.~\eqref{eq:se-guass-and-span} are invertible for each $k$.
\end{assumption}
We note that Assumption~\ref{asm:non-degeneracy} is similar to the non-degeneracy assumption in~\citet[Section 5.2]{berthier2020state} and can be removed using similar techniques therein or by using approximate Cholesky decompositions as in~\citet{celentano2023challenges}.  We elect to state our theorem under this simplifying assumption as it allows to more easily expose the ideas underlying the proof.  
\begin{theorem}
	\label{thm:exact-asymptotics}
	Let $T \in \mathbb{N}$, suppose that Assumptions~\ref{asm:regularity} and~\ref{asm:non-degeneracy} hold, and that, for each, $k \in [T]$, the pair $(\bu_k, \bv_k)$ is obtained using either a first order update as in Definition~\ref{def:FO-update} or a saddle update as in Definition~\ref{def:saddle-update}.  Further, let $\bU_T\in \mathbb{R}^{n \times T}$ and $\bV_T \in \mathbb{R}^{d \times T}$ be defined as 
	\[
	\bU_T = [\bu_1 \; \vert \; \bu_2 \; \vert \; \cdots \; \vert \; \bu_T] \qquad \text{ and } \qquad \bV_T = [\bv_1 \; \vert \; \bv_2 \; \vert \; \cdots \; \vert \; \bv_T].
	\]
	Let $\psi_d: \mathbb{R}^{d \times T} \times \mathbb{R}^{d \times T}$ and $\psi_n: \mathbb{R}^{n \times T} \times \mathbb{R}^{n \times T}$ be order-$2$ pseudo-Lipschitz functions.  There exists a pair of constants $c_{\mathrm{SE}}, C_{\mathrm{SE}} > 0$, depending only on the state evolution and the tuple $(\mu, L, K)$ such that if $\delta \in (e^{-c_{\mathrm{SE}}n}, 1)$, then with probability at least $1 - \delta$, we have
	\begin{align} \label{eq:finite-sample}
		\Bigl \lvert \psi_d(\bV_T, \bG_T) - \EE\bigl[\psi_d(\Vse_T, \Gse_T)\bigr] \Bigr \rvert \; \vee \; \Bigl \lvert \psi_n(\bU_T, \bH_T) - \EE\bigl[\psi_n(\Use_T, \Hse_T)\bigr] \Bigr \rvert \leq C_{\mathrm{SE}} \bigl(T! \bigr)^2 \Bigl(\frac{\log(10T^2/\delta)}{n}\Bigr)^{2^{-T}}.
	\end{align}
\end{theorem}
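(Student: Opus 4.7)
The plan is to proceed by induction on the iteration index $T$, combining Bolthausen's conditioning method with a sequential version of Gordon's CGMT and stability properties of the state evolution constructed in Theorem~\ref{lem:fix-pt-exist-unique}. At each step I would condition on the $\sigma$-algebra generated by past iterates $(\bU_{T-1}, \bV_{T-1}, \bG_{T-1}, \bH_{T-1})$ together with the auxiliary randomness; by Bolthausen's lemma the conditional law of $\bX$ decomposes as a deterministic affine function enforcing the linear constraints $\bX \bv_\ell = \sqrt{\Lambda}(\bh_\ell + \widehat{\bu}_\ell)$ and $-\bX^\top \bu_\ell = \bg_\ell - \widehat{\bv}_\ell$ from~\eqref{eq:PO-g-h-def}, plus an independent Gaussian matrix $\widetilde{\bX}$ supported on the orthogonal complement of past iterates. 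Assumption~\ref{asm:non-degeneracy} ensures that the orthogonal components $\bu_{T-1}^\perp, \bv_{T-1}^\perp$ of the latest iterates have nontrivial norm, so each step genuinely contributes fresh randomness. The base case $T=1$ is direct: $\bu_1, \bv_1$ depend only on $\beps_{u,1}, \beps_{v,1}$, and $\bG_1, \bH_1$ are Gaussian with the SE covariance in~\eqref{eq:SE-base-case}, so Gaussian concentration of pseudo-Lipschitz functions gives the claim with exponent $2^{-1}$.

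For a first-order update at step $T$, the Bolthausen decomposition yields $\bX \bv_{T-1} = (\text{deterministic function of past}) + \widetilde{\bX} \bv_{T-1}^\perp$, so that $\bu_T = f_T^u(\bX \bv_{T-1}; \bU_{T-1}, \bE_{u,T-1})$ is a Lipschitz function of past iterates plus a fresh Gaussian whose covariance matches $\sqrt{\Lambda} \hse_{T-1} - \uhatse_{T-1}$ in the SE relation~\eqref{eq:se-first-order}. Combining the inductive bound on the empirical second-order structure of past iterates with Gaussian concentration of the fresh randomness and a stability lemma for the SE map (whose Lipschitz dependence on its inputs is quantitative under Assumption~\ref{asm:regularity}) then propagates the high-probability bound forward with only a constant-factor loss in the exponent.

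The saddle case is the main technical obstacle and where the sequential CGMT enters. Substituting the Bolthausen decomposition $\bX = \bX_{\text{mean}} + \widetilde{\bX}$ into the defining saddle problem~\eqref{eq:saddle-update} reduces the randomness to the bilinear term $\bu^\top \widetilde{\bX} \bv$, where $\widetilde{\bX}$ is an independent Gaussian restricted to a low-dimensional complement. I would apply a Gordon-type comparison inequality in both directions to replace this term by a surrogate $\|\bu^\perp\|_2 \widetilde{\bg}^\top \bv + \|\bv^\perp\|_2 \widetilde{\bh}^\top \bu$ with independent Gaussian vectors $\widetilde{\bg}, \widetilde{\bh}$, and verify (using strong convexity and convex duality exactly as in the proof of Theorem~\ref{lem:fix-pt-exist-unique}) that the KKT conditions of the resulting auxiliary problem reproduce the SE relations~\eqref{eq:se-saddle}. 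Once saddle values are pinned by CGMT, $\mu$-strong convexity of $\phi_T^u$ and $\phi_T^v$ converts closeness of values into closeness of minimizers, but only with H\"older exponent $1/2$.

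The $(T!)^2 (\log(1/\delta)/n)^{2^{-T}}$ rate emerges naturally from this scheme: each saddle update incurs a square-root loss in the exponent (value-to-iterate through strong convexity), so after $T$ iterations the target scale $\sqrt{\log(1/\delta)/n}$ contracts to $(\log(1/\delta)/n)^{2^{-T}}$; the factorial prefactor absorbs the cumulative Lipschitz constants from invoking the inductive hypothesis once for each of the up-to-$T$ past iterates. The hardest part will be carrying out the sequential CGMT cleanly: showing that the surrogate auxiliary problem remains well-posed and retains strong convexity in the relevant variables after adding the Gordon surrogate, that the comparison holds with the right probability rather than merely in expectation, and that interleavings of first-order and saddle updates preserve the conditional-Gaussian structure without reusing randomness across iterations. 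A secondary difficulty is packaging the needed stability estimates for the SE in a form that accommodates both update types uniformly, since saddle updates introduce the new iterate implicitly via a variational problem rather than an explicit Lipschitz function.
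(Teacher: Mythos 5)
Your plan matches the paper's proof essentially step for step: an induction over iterations with a strengthened hypothesis controlling conditional expectations of pseudo-Lipschitz observables along the trajectory, Bolthausen-style Gaussian conditioning to split $\bX$ into a past-determined part plus fresh randomness on the orthogonal complement, direct pseudo-Lipschitz Gaussian concentration for the base case and first-order steps, and for saddle steps a two-sided CGMT reduction to an auxiliary objective followed by a strong-convexity local-stability argument converting closeness of saddle values into closeness of iterates at the cost of a square root per saddle step, which is exactly the paper's source of the $2^{-T}$ exponent and the $(T!)^2$ prefactor. The one point you flag but leave unresolved --- that the auxiliary objective is not convex--concave, so its ``KKT conditions'' do not directly pin down a saddle --- is handled in the paper by evaluating the auxiliary loss at explicitly constructed approximate stationary points (defined via prox problems mirroring the state evolution, following Celentano et al.) and comparing restricted objective values through the strongly convex/concave surrogates $G$ and $H$, rather than by working with an exact saddle of the auxiliary problem.
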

We provide the proof of this theorem in Section~\ref{sec:proof-exact-asymptotics}. The key takeaway is that empirical quantities (e.g. $\psi_d(\bV_T, \bG_T)$) are accurately predicted by deterministic quantities (e.g. $ \EE\bigl[\psi_d(\Vse_T, \Gse_T)\bigr]$) that are computable as simple expectations of Gaussian functionals. In particular, these are themselves low-dimensional integrals provided the test and iteration functions are ``simple'' (e.g. separable). In that sense, the state evolution provides predictions of the behavior of the random iterates, and Theorem~\ref{thm:exact-asymptotics} additionally gives non-asymptotic fluctuation guarantees showing consistency in $n$. A few further remarks are in order.  

First, a careful inspection of the proof reveals that the RHS of the bound~\eqref{eq:finite-sample} can be improved to 
$C_{\mathrm{SE}} \bigl(T! \bigr)^2 \Bigl(\frac{\log(10T^2/\delta)}{n}\Bigr)^{2^{-(\tau \vee 1)}}$, where $\tau$ is the number of saddle updates. In particular,
if every update is first order, we obtain the sharper bound
\begin{align} \label{eq:FO-deviation}
	\Bigl \lvert \psi_d(\bV_T, \bG_T) - \EE\bigl[\psi_d(\Vse_T, \Gse_T)\bigr] \Bigr \rvert \; \vee \; \Bigl \lvert \psi_n(\bU_T, \bH_T) - \EE\bigl[\psi_n(\Use_T, \Hse_T)\bigr] \Bigr \rvert \leq C_{\mathrm{SE}} \bigl(T! \bigr)^2 \Bigl(\frac{\log(10T^2/\delta)}{n}\Bigr)^{1/2},
\end{align}
Since, by Stirling's inequality, $T! \leq e^2(T/e)^{T + 1}$, this implies that (ignoring logarithmic factors) we have a deviation bound scaling as $C_{\mathrm{SE}} T^{2T + 2} n^{-1/2}$.  If the constant $C_{\mathrm{SE}}$ is a universal constant (e.g. independent of $T, n, d$), then this bound implies that the state evolution is valid for $T = o(\log{n}/\log\log{n})$ many iterations.  Results of a similar flavor were shown by~\citet{rush2018finite} for AMP (see also~\citet{cademartori2024non}), and GFOMs (see~\cite{han2024entrywise}, who also provides universality guarantees). Our proof techniques, however, are distinct from these papers and arguably easier to generalize to other situations.

We remark that, in general, the appearance of $(T!)^2$ in the bound~\eqref{eq:FO-deviation} should not be thought of as very loose.  Indeed, the quantity $C_{\mathrm{SE}}$ itself can depend on $T$ through properties of the state evolution. Thus, at the level of generality of the first-order update~\eqref{eq:first-order}, one should not expect a tighter deviation bound than one scaling as $C^{T} n^{-1/2}$ and hence the state evolution can only be accurate for $o(\log n)$ iterations. To see this, set some constant $C > 1$ and consider the explosive iteration 
\begin{align*}
	\bu_{k + 1} = \sqrt{\frac{C}{\Lambda}} \bX \bv_{k} \quad \text{ and } \quad \bv_{k + 1} = \bX^\top \bu_{k},
\end{align*}
which is a first-order update of the form~\eqref{eq:first-order}.
With any fixed, unit-norm initializations $(\bu_1, \bv_1)$, it can be verified that $\| \bu_{T + 1} \|_2^2 = C^T \mathsf{trace} \Big((\frac{\bX^\top \bX}{\Lambda})^T \Big)$. It can also be verified that $\EE[\| \use_{T +1} \|_2^2] = \int (Cx)^T \nu(dx)$, where $\nu$ is the Marchenko--Pastur measure. By standard results in random matrix theory~\citep{bai2010spectral,couillet2022random}, we should expect the deviation to scale as
\[
\big| \| \bu_{T + 1} \|_2^2 - \EE[\| \use_{T + 1} \|_2^2] \big| = \left| C^T \mathsf{trace}\Big( (\bX^\top \bX) / \Lambda )^T \Big) - \int (Cx)^T \nu(dx) \right|  \asymp C^T \Big( \frac{1}{n}\Big)^{1/2}
\]
with constant probability. 

While the argument above shows the optimality of the $n^{-1/2}$ rate for first-order methods in their most general form, sharper concentration results may be obtainable by exploiting specific properties of the state evolution in particular methods (see, e.g.,~\cite{li2022non,li2024non}). We also note that for saddle updates, the exponent $2^{-T}$ in Eq.~\eqref{eq:finite-sample} is an artifact of our analysis. This arises because of a sequential application of Gordon's Gaussian comparison inequality, and each application typically yields deviation bounds of a suboptimal order~\citep[see, e.g.,][for further discussion]{chandrasekher2023sharp,miolane2021distribution}. 

As previously noted, our proof technique for establishing Theorem~\ref{thm:exact-asymptotics} may be of independent interest. We introduce a sequential variant of the convex Gaussian minmax theorem (CGMT) to accommodate successive saddle updates, and apply it in conjunction with Bolthausen's conditioning technique~\citep{bolthausen2014iterative}. To establish finite-sample concentration results, we leverage certain stability properties of the established state evolution in conjunction with a concise proof of sub-exponential concentration for pseudo-Lipschitz functions (see Lemma~\ref{lem:pseudo-Lipschitz-concentration}).

We now proceed to proofs of the two main theorems.


\section{Proof of Theorem~\ref{lem:fix-pt-exist-unique}: State evolution has unique solution}
\label{sec:exist-unique}

Since the state evolution at time $k+1$ depends on its definition at time $k$, our proof will proceed via induction. In Section~\ref{sec:SE-exists-basecase}, we present the base case $k = 1$. The induction hypothesis is set up formally in Section~\ref{sec:SE-exists-indhyp}. The bulk of the proof is in the induction step, which is presented in Section~\ref{sec:indstep-FO} for a first-order update and Section~\ref{sec:indstep-saddle} for a saddle update.

\subsection{Base case: $k = 1$} \label{sec:SE-exists-basecase} The pair $(\bu_1, \bv_1)$ is fixed independently of the randomness in the data $\bX$, and the tuple $(\gse_1, \hse_1, \uhatse_1, \vhatse_1)$ is chosen---in particular, uniquely---based on this pair. In particular, since both $\uhatse_1$ and $\vhatse_1$ are equal to zero, we have the trivial inclusions $\uhatse_1 \in \spn\{\use_1 \}$ and $\uhatse_1 \in \spn\{\use_1 \}$. Furthermore, the random vectors $\gse_1$ and $\hse_1$ are drawn from standard Gaussians scaled by $\| \bu_1 \|_2 / \sqrt{d}$ and $\| \bv_1 \|_2 / \sqrt{n}$, respectively, so that we have the scalar relations $\bK^g_1 = \| \bu_1 \|^2_2$ and $\bK^h_1 = \| \bv_1 \|^2_2$. It remains to verify the fixed point equations~\eqref{eq:fix-pt}, but this is also immediate since
\begin{align*}
	&\inprod{\gse_1}{\gse_1}_{L^2} = \| \bu_1 \|_2^2 = \inprod{\use_1}{\use_1}_{L^2}, \qquad  &\inprod{\hse_1}{\hse_1}_{L^2} = \| \bv_1 \|_2^2 = \inprod{\vse_1}{\vse_1}_{L^2} \\
	&\inprod{\gse_1}{\vse_1}_{L^2} = \inprod{\use_1}{\uhatse_1}_{L^2} = 0, \qquad  &\inprod{\hse_1}{\use_1}_{L^2} = \inprod{\vse_1}{\vhatse_1}_{L^2} = 0,
\end{align*}
which completes the base case.  We turn now to stating the induction hypothesis.

\subsection{Induction hypothesis} \label{sec:SE-exists-indhyp}
Suppose that the fixed-point equations up to iteration $k$ are uniquely defined. In other words, we have that for covariance matrices $\bK^g_{k} \in \mathbb{R}^{k \times k}$ and $\bK^h_{k} \in \mathbb{R}^{k\times k}$ and Gaussian random matrices $\Gse_k \sim \mathsf{N}(0, \bK^g_{k} \otimes \bI_d/d)$ and $\Hse_k \sim \mathsf{N}(0, \bK^h_{k} \otimes \bI_n/n)$, the system of equations
\begin{equation}
	\label{eq:fix-pt-up-to-t}
	\begin{aligned}
		\bK^g_{k} = \llangle \Gse_k, \Gse_k \rrangle_{L^2} &\overset{(a)}{=} \llangle \Use_k, \Use_k \rrangle_{L^2},
		\qquad&
		\bK^h_{k} = \llangle \Hse_k, \Hse_k \rrangle_{L^2} &\overset{(b)}{=} \llangle \Vse_k, \Vse_k \rrangle_{L^2},
		\\
		\llangle \Gse_k, \Vse_k \rrangle_{L^2}, &\overset{(c)}{=} \llangle \Use_k, \Uhatse_k \rrangle_{L^2},
		\qquad&
		\llangle \sqrt{\Lambda} \Hse_k, \Use_k \rrangle_{L^2} &\overset{(d)}{=} \llangle \Vse_k, \Vhatse_k \rrangle_{L^2},
	\end{aligned}
\end{equation}
has a unique solution. Specifically, we have
\begin{align*}
	\vse_ 1 = \bv_1, \qquad \use_1 = \bu_1, \qquad \vhatse_1 = 0, \qquad \uhatse_1= 0, \\
	\gse_1 \sim \normal(\bzero, \| \bu_1 \|_2^2 \cdot \id_d/d) \qquad \hse_1 \sim \normal(\bzero, \| \bv \|_2^2 \cdot \id_n/n),
\end{align*}
and for all $2 \leq \ell \leq k$, we have 
\begin{align*}
	\hbv^{\SE}_\ell \in \spn( \{ \vse_{\ell'} \}_{\ell' \leq \ell}), \qquad &\uhatse_{\ell} \in \spn( \{ \use_{\ell'} \}_{\ell' \leq \ell}),
\end{align*}
and
\begin{align*}
	\use_{\ell} = f^u_{\ell} \bigl(\sqrt{\Lambda} \hse_{\ell - 1} - \uhatse_{\ell - 1}, \Use_{\ell-1}\bigr), 
	\qquad &\vse_{\ell} = f^v_{\ell} \bigl(\gse_{\ell - 1} - \vhatse_{\ell - 1}, \Vse_{\ell-1}\bigr)  &\text{ if first-order update at time } \ell,\\
	\sqrt{\Lambda} \hse_{\ell} - \uhatse_{\ell}
	=
	\nabla \phi_{\ell}^u\bigl(\use_{\ell}; \Use_{\ell - 1}\bigr)
	,
	\qquad 
	&\gse_{\ell}-\vhatse_{\ell}
	=
	\nabla \phi_{\ell}^v\bigl(\vse_{\ell}; \Vse_{\ell - 1}\bigr) \; \; \; &\text{ if saddle update at time } \ell.
\end{align*}
Furthermore, the joint laws of the matrices $(\Gse_k, \Vse_k, \Vhatse_k)$ and $(\Hse_k, \Use_k, \Uhatse_k)$ are uniquely defined.

\subsection{Proof of induction step for first-order updates} \label{sec:indstep-FO}
Our goal is to establish that the state evolution has a unique solution at time $k+1$ if we take a first-order update.  We first form the upper left $k \times k$ blocks of the covariance matrices $\bK^g_{k+1} \in \mathbb{R}^{(k+1) \times (k+1)}$ and $\bK^h_{k+1} \in \mathbb{R}^{(k+1) \times (k+1)}$ by setting them equal to $\bK^g_{k}$ and $\bK^h_{k}$, respectively. We must show that for some $d$-dimensional random vector $\gse_{k+1}$ with i.i.d. entries that is jointly Gaussian with $\Gse_k$ and $n$-dimensional random vector $\hse_{k+1}$ with i.i.d. entries that is jointly Gaussian with $\Hse_{k}$, we can construct $d$-dimensional random vectors $\vse_{k+1}$ and $\vhatse_{k+1}$ and $n$-dimensional random vectors $\use_{k+1}$ and $\uhatse_{k+1}$ satisfying 

\begin{itemize}
	\item The following system of equations for all $\ell = k+1, \ell' \leq k+1$ or  $\ell' = k+1, \ell\leq k+1$:
	\begin{equation}
		\label{eq:fix-pt-FO-at-t}
		\begin{aligned}
			\E[ \inprod{\gse_{\ell}}{\gse_{\ell'}}] &\overset{(a)}{=} \E[\inprod{\use_{\ell}}{\use_{\ell'}}],
			\qquad&
			\E[ \inprod{\hse_{\ell}}{\hse_{\ell'}}] &\overset{(b)}{=} \E[\inprod{\vse_{\ell}}{\vse_{\ell'}}] 
			\\
			\E[ \inprod{\gse_{\ell}}{\vse_{\ell'}}] &\overset{(c)}{=} \E[\inprod{\use_{\ell}}{\uhatse_{\ell'}}], 
			\qquad&
			\E[ \sqrt{\Lambda} \inprod{\hse_{\ell}}{\use_{\ell'}}] &\overset{(d)}{=} \E[\inprod{\vse_{\ell}}{\vhatse_{\ell'}}].
		\end{aligned}
	\end{equation}

	\item The inclusions
	\begin{align} \label{eq:span-inclusions-induction-step-FO}
		\vhatse_{k+1} \in \spn( \{ \vse_{\ell} \}_{\ell \leq k+1}), \qquad &\uhatse_{k+1} \in \spn( \{ \bu^{\SE}_{\ell} \}_{s \leq t}).
	\end{align}
	
	\item The relations
	\begin{align} \label{eq:FO-cond-induction-step}
		\use_{k+1} = f^u_{k+1} \bigl(\sqrt{\Lambda}\hse_{k} - \uhatse_{k}; \Use_{k}\bigr), 
		\qquad &\vse_{k+1} = f^v_{k+1} \bigl(\gse_{k} - \vhatse_{k}; \Vse_k\bigr). 
	\end{align}
\end{itemize}
Furthermore, we would like the joint (conditional) laws of 
\[
(\gse_{k+1}, \vse_{k+1}, \vhatse_{k+1}) \mid \Gse_k, \Vse_k, \Vhatse_k \quad \text{ and } \quad (\hse_{k+1}, \use_{k+1}, \uhatse_{k+1}) \mid \Hse_k, \Use_k, \Uhatse_k
\]
to be uniquely defined.

We now construct the random vectors satisfying the above desiderata.  To this end, let $\Gse_k \sim \normal(0,\bK^g_{k}\otimes \id_d/d)$, $\Hse_k \sim \normal(0,\bK^h_{k}\otimes \id_n/n)$ and let $\Vse_k, \Vhatse_k, \Use_k, \Uhatse_k$ be defined inductively via~\eqref{eq:se-first-order} and~\eqref{eq:se-saddle}.  From these, we explicitly construct the tuples $(\gse_{k+1}, \vse_{k+1}, \vhatse_{k+1})$ and $(\hse_{k+1}, \use_{k+1}, \uhatse_{k+1})$ in a unique fashion, thereby ensuring that the joint laws of $(\Gse_{k+1}, \Vse_{k+1}, \Vhatse_{k+1})$ and $(\Hse_{k+1}, \Use_{k+1}, \Uhatse_{k+1})$ are unique. 

Note that, given the past, Eq.~\eqref{eq:se-first-order} uniquely specifies 
\[
\use_{k+1} = f_{k+1}^{u} \bigl(\sqrt{\Lambda} \hse_{k} - \uhatse_{k}; \Use_{k}\bigr) \quad \text{ and } \quad \vse_{k+1} = f_{k+1}^{v}\bigl(\gse_{k} - \vhatse_{k}; \Vse_k \bigr),
\]
by definition, so that Eq.~\eqref{eq:FO-cond-induction-step} is satisfied.  It follows that the matrices $\Use_{k+1}$ and $\Vse_{k+1}$ are uniquely specified.  We thus use these to (uniquely) define 
\[
\bK^{g}_{k+1} = \llangle \Use_{k+1}, \Use_{k+1} \rrangle_{L^2} \quad \text{ and } \quad \bK^h_{k+1} =  \llangle \Vse_{k+1}, \Vse_{k+1} \rrangle_{L^2}.  
\]
By joint Gaussianity, we can write $\gse_{k+1} = \alpha_{k+1} \bxi_g + \widetilde{\bg}_{k}^{\mathrm{SE}}$, where $\bxi_g \sim \mathsf{N}(0, \id_d/d)$, independently of everything else and $\widetilde{\bg}_k^{\mathrm{SE}} \in \spn(\{\gse_{\ell}\}_{1 \leq \ell \leq k})$ so that $\llangle \Gse_{k+1}, \Gse_{k+1} \rrangle_{L^2} = \bK^g_{k+1}$ as well as $\hse_{k+1} = \beta_{k+1} \bxi_h + \widetilde{\bh}_{k}^{\mathrm{SE}}$, where $\bxi_h \sim \mathsf{N}(0, \id_n/n)$, independently of everything else and $\widetilde{\bh}_k^{\mathrm{SE}} \in \spn(\{\hse_{\ell}\}_{1 \leq \ell \leq k})$, so that $\llangle \Hse_{k+1}, \Hse_{k+1} \rrangle_{L^2} = \bK^h_{k+1}$.  By construction, relations $(a)$ and $(b)$ in Eq.~\eqref{eq:fix-pt-FO-at-t} are satisfied.  Note further that, by the fundamental theorem of linear algebra, there exists a unique $\uhatse_{k+1} \in \spn(\{\use_{\ell}\}_{1 \leq \ell \leq k+1})$ such that 
\begin{equation} \label{eq:uhat-fix-FO}
	\< \uhatse_{k+1} , \use_{\ell} \>_{L^2} = \< \vse_{k+1} , \gse_{\ell} \>_{L^2}, \quad \text{ for all } \quad \ell \in [k+1],
\end{equation}
and similarly for $\vhatse_{k+1}$.  This shows that the relation~\eqref{eq:span-inclusions-induction-step-FO} holds for the uniquely defined $\uhatse_{k+1}$ and $\vhatse_{k+1}$ and that relations $(c)$ and $(d)$ of Eq.~\eqref{eq:fix-pt-FO-at-t} hold by construction. \hfill \qed

\subsection{Proof of induction step for saddle updates} \label{sec:indstep-saddle}
The saddle update is quite a bit more involved.  We begin by outlining the desiderata.  Similar to the first order step, our goal is to establish that the state evolution has a unique solution at time $k+1$ if we take a saddle update.  We first form the upper left $k \times k$ blocks of the covariance matrices $\bK^g_{k+1} \in \mathbb{R}^{(k+1) \times (k+1)}$ and $\bK^h_{k+1} \in \mathbb{R}^{(k+1) \times (k+1)}$ by setting them equal to $\bK^g_{k}$ and $\bK^h_{k}$, respectively. We must show that for some $d$-dimensional random vector $\gse_{k+1}$ with i.i.d. entries that is jointly Gaussian with $\Gse_k$ and $n$-dimensional random vector $\hse_{k+1}$ with i.i.d. entries that is jointly Gaussian with $\Hse_{k}$, we can construct $d$-dimensional random vectors $\vse_{k+1}$ and $\vhatse_{k+1}$ and $n$-dimensional random vectors $\use_{k+1}$ and $\uhatse_{k+1}$ satisfying 

\begin{itemize}
	\item The following system of equations for all $\ell = k+1, \ell' \leq k+1$ or  $\ell' = k+1, \ell\leq k+1$:
	\begin{equation}
		\label{eq:fix-pt-HO-at-t}
		\begin{aligned}
			\E[ \inprod{\gse_{\ell}}{\gse_{\ell'}}] &\overset{(a)}{=} \E[\inprod{\use_{\ell}}{\use_{\ell'}}],
			\qquad&
			\E[ \inprod{\hse_{\ell}}{\hse_{\ell'}}] &\overset{(b)}{=} \E[\inprod{\vse_{\ell}}{\vse_{\ell'}}] 
			\\
			\E[ \inprod{\gse_{\ell}}{\vse_{\ell'}}] &\overset{(c)}{=} \E[\inprod{\use_{\ell}}{\uhatse_{\ell'}}], 
			\qquad&
			\E[ \sqrt{\Lambda} \inprod{\hse_{\ell}}{\use_{\ell'}}] &\overset{(d)}{=} \E[\inprod{\vse_{\ell}}{\vhatse_{\ell'}}].
		\end{aligned}
	\end{equation}

	\item The inclusions
	\begin{align} \label{eq:span-inclusions-induction-step-HO}
		\vhatse_{k+1} \in \spn( \{ \vse_{\ell} \}_{\ell \leq k+1}), \qquad &\uhatse_{k+1} \in \spn( \{ \bu^{\SE}_{\ell} \}_{s \leq t}).
	\end{align}
	
	\item The relations
	\begin{align}\label{eq:KKT-cond-induction-step}
				\sqrt{\Lambda} \hse_{k+1} - \uhatse_{k+1}
				=
				\nabla \phi_{k+1}^u\bigl(\use_{k+1}; \Use_k\bigr)
				,
				\qquad 
				\gse_{k+1} - \vhatse_{k+1} = \nabla \phi_{k+1}^{v}\bigl(\vse_{k+1}; \Vse_k\bigr).
	\end{align}
\end{itemize}
Furthermore, we would like the joint (conditional) laws of 
\[
(\gse_{k+1}, \vse_{k+1}, \vhatse_{k+1}) \mid \Gse_k, \Vse_k, \Vhatse_k \quad \text{ and } \quad (\hse_{k+1}, \use_{k+1}, \uhatse_{k+1}) \mid \Hse_k, \Use_k, \Uhatse_k
\]
to be uniquely defined.

 We perform the induction step by defining two Hilbert spaces as well as operators between them. We then define a useful optimization problem on these spaces that will help us argue uniqueness.

Consider auxiliary randomness $\bxi_g \sim \normal(0,\id_d / d)$ and $\bxi_h \sim \normal(0,\id_n / n)$ chosen independently of everything else.  
Define the Hilbert space $\mathcal{H}_d^{k+1}$ as all $d$-dimensional, square integrable functions of the tuple $(\Gse_k, \Vse_k, \Vhatse_k, \bxi_g)$ and the Hilbert space\footnote{Note that these are subspaces of the $L^2$ spaces considered in Section~\ref{sec:main-results}.} $\mathcal{H}_n^{k+1}$ of all $n$-dimensional, square integrable functions of the tuple $(\Hse_k, \Use_k, \Uhatse_k, \bxi_h)$, i.e.\footnote{Recall that the standard inner product between two deterministic vectors $\bx, \by$ of comparable dimension is denoted by $\inprod{\bx}{\by}$, and we have $\| \bx \|_2^2 = \inprod{\bx}{\bx}$. }
\begin{subequations}  \label{eq:Hilbert-defs}
	\begin{align}
		\mathcal{H}_d^{k+1} &= \{ \bmf:  \mathbb{R}^{d \times k} \times \mathbb{R}^{d \times k} \times \mathbb{R}^{d \times k} \times \mathbb{R}^d\to \mathbb{R}^d : \mathbb{E} [ \| \bmf(\Gse_k, \Vse_k, \Vhatse_k, \bxi_g,) \|_2^2] < \infty\}, \\
		\mathcal{H}_n^{k+1} &= \{ \bl: \mathbb{R}^{n \times k} \times \mathbb{R}^{n \times k} \times \mathbb{R}^{n \times k} \times \mathbb{R}^n \to \mathbb{R}^n : \mathbb{E} [ \| \bl(\Hse_k, \Use_k, \Uhatse_k, \bxi_h) \|_2^2 ]  < \infty\}.
	\end{align}
\end{subequations}
Equip each space with the inner product $\inprod{\cdot}{\cdot}_{L^2}$: for $\bmf, \bmf' \in \mathcal{H}_d^{k+1}$, we have 
\[
\inprod{\bmf}{\bmf'}_{L^2} = \mathbb{E} \bigl[ \inprod{\bmf(\Gse_k, \Vse_k, \Vhatse_k, \bxi_g)}{\bmf'(\Gse_k, \Vse_k, \Vhatse_k, \bxi_g)}  \bigr],
\]
and with a slight abuse of notation, we also use similar conventions for $\bl, \bl' \in \mathcal{H}_n^{k+1}$. For $\bmf \in \cH_d^{k+1}$, we write $\| \bmf \|_{L^2} = \sqrt{\inprod{\bmf}{\bmf}_{L^2}}$ and similarly define $\| \bl \|_{L^2}$ for $\bl \in \cH_n^{k+1}$.
For a matrix $\bZ = [\bx_1 \mid \cdots \mid \bx_N ]$ having columns $\bx_1, \ldots, \bx_N \in \cH_d^{k+1}$ and another $\bv \in \cH_d^{k+1}$, we write $\proj_{\bZ}^\perp \bv$ 
for the projection of $\bv$ onto the subspace orthogonal to the span of $\bx_1, \ldots, \bx_N$. Orthogonal projections in the Hilbert space $\mathcal{H}_n^{k+1}$ are defined similarly. Also recall our convention that $\llangle \bZ, \bv \rrangle_{L^2}$ is a deterministic $N$-dimensional vector with $j$-th entry equal to $\inprod{\bx_j}{\bv}_{L^2}$.

For $\ell = 1, 2, \ldots, k$, let 
\[
\useperp_{\ell} := \frac{\proj_{\Use_{\ell - 1}}^{\perp} \use_{\ell}}{\|\proj_{\Use_{\ell - 1}}^{\perp} \use_{\ell} \|_{L^2}}, \quad \vseperp_{\ell} := \frac{\proj_{\Vse_{\ell - 1}}^{\perp} \vse_{\ell}}{\|\proj_{\Vse_{\ell - 1}}^{\perp} \vse_{\ell} \|_{L^2}}, \quad \gseperp_{\ell} := \frac{\proj_{\Gse_{\ell - 1}}^{\perp} \gse_{\ell}}{\|\proj_{\Gse_{\ell - 1}}^{\perp} \gse_{\ell} \|_{L^2}}, \;\; \text{ and } \;\; \hseperp_{\ell} := \frac{\proj_{\Hse_{\ell - 1}}^{\perp} \use_{\ell}}{\|\proj_{\Hse_{\ell - 1}}^{\perp} \hse_{\ell} \|_{L^2}}.
\]
In all of these definitions, we adopt the convention that $\bm{0}/\| \bm{0} \|_{L^2} := \bm{0}$. Ignoring the zero elements, note that each of the collections $\{ \useperp_{\ell} \}_{\ell = 1}^{k}, \{ \vseperp_{\ell} \}_{\ell = 1}^{k}, \{ \gseperp_{\ell} \}_{\ell = 1}^{k}, \{ \hseperp_{\ell} \}_{\ell = 1}^{k}$ are orthonormal in their respective Hilbert spaces.

Next, define the linear operators $\mathsf{g}: \mathcal{H}_n^{k+1} \to \mathcal{H}_d^{k+1}$ and $\mathsf{h}: \mathcal{H}_d^{k+1} \to \mathcal{H}_n^{k+1}$ via 
\begin{equation} \label{eq:SE-g-hilbert-h-hilbert}
	\sg[\bu]
	=
	\sum_{\ell=1}^{k} \< \useperp_{\ell} , \bu \>_{L^2} \gseperp_{\ell}
	+
	\|\proj_{\Use_{k}}^\perp \bu \|_{L^2} \bxi_g,
	\qquad
	\sh[\bv]
	=
	\sum_{\ell=1}^{k} \< \vseperp_{\ell} , \bv \>_{L^2} \hseperp_{\ell}
	+
	\|\proj_{\Vse_{k}}^\perp \bv \|_{L^2} \bxi_h,
\end{equation}
and the function $\mathfrak{L}_{k+1}: \mathcal{H}_n^{k+1} \times \mathcal{H}_d^{k+1} \to \mathbb{R}$ via
\begin{align} \label{def:saddle-obj-Hilbert}
	\mathfrak{L}_{k+1}(\bu, \bv) := -\<\sg[\bu] , \bv \>_{L^2}
	+\sqrt{\Lambda} \<\sh[\bv] , \bu \>_{L^2}
	-\E[\phi_{k+1}^u(\bu;\Use_{k})]
	+\E[\phi_{k+1}^v(\bv;\Vse_k)].
\end{align}
Also define the linear functions $\mathfrak{G}: \mathcal{H}_d^{k+1} \to \mathbb{R}$ and $\mathfrak{H}: \mathcal{H}_n^{k+1} \to \mathbb{R}$ via
\begin{align} \label{def:saddle-const-Hilbert}
	\mathfrak{G}(\bv) := \inprod{\bv}{\bxi_g}_{L^2} \quad \text{ and } \quad \mathfrak{H}(\bu) := \inprod{\bu}{\bxi_h}_{L^2},
\end{align}
respectively.
Finally, define the constrained saddle point problem
\begin{equation}
	\label{eq:L2-inductive-saddle-new}
	\min_{ \substack{\bv \in \mathcal{H}_d^{k+1} \\ \mathfrak{G}(\bv) \geq 0}} \;\;
	\max_{ \substack{\bu \in \mathcal{H}_n^{k+1} \\ \mathfrak{H}(\bu) \geq 0}} \;\;
	\mathfrak{L}_{k+1}(\bu, \bv).
\end{equation}

Equipped with this setup, we next present several useful properties. 
\begin{lemma} \label{lem:Hilbert-saddle-lemma}
	The saddle problem~\eqref{eq:L2-inductive-saddle-new} has a unique solution $(\butilde, \bvtilde) \in \cH_n^{k+1} \times \cH_p^{k+1}$ that also satisfies the equalities
	\begin{subequations} \label{eq:unique-KKT-saddle}
		\begin{align}
			\sg[\butilde] 
			-
			\sqrt{\Lambda} \sum_{\ell=1}^{k} \< \hseperp_{\ell} , \butilde \>_{L^2} \vseperp_{\ell}
			-
			\sqrt{\Lambda} \< \bxi_h , \butilde \>_{L^2} \frac{\proj_{\Vse_k}^\perp\bvtilde}{\|\proj_{\Vse_{k}}^\perp\bvtilde\|_{L^2}}
			&=
			(\nabla \phi_{k+1}^v)(\bvtilde; \Vse_k), \label{eq:unique-KKT-saddle-g}
			\\
			\sqrt{\Lambda} \sh[\bvtilde] 
			-
			\sum_{\ell=1}^{k} \< \gseperp_{\ell}, \bvtilde \>_{L^2} \useperp_{\ell}
			-
			\< \bxi_g , \bvtilde \>_{L^2} \frac{\proj_{\Use_k}^\perp\butilde}{\|\proj_{\Use_k}^\perp\butilde\|_{L^2}}
			&=
			(\nabla \phi_t^u)(\butilde;\bU_{k}), \label{eq:unique-KKT-saddle-h}
		\end{align}
	\end{subequations}
	where we use the convention that $\bm{0} / \| \bm{0} \|_{L^2} = \bm{0}$.
\end{lemma}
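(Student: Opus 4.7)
The plan is to recognize~\eqref{eq:L2-inductive-saddle-new} as a strongly convex-concave saddle problem on a closed convex feasible region in a Hilbert space, invoke a standard existence-and-uniqueness result, and then read off the claimed identities as first-order optimality conditions. The key structural observation is that on the feasible region $\{(\bu, \bv) : \mathfrak{G}(\bv) \geq 0,\, \mathfrak{H}(\bu) \geq 0\}$, the objective $\mathfrak{L}_{k+1}$ is strongly concave in $\bu$ for each feasible $\bv$ and strongly convex in $\bv$ for each feasible $\bu$. To see this, expand the bilinear pairings in~\eqref{def:saddle-obj-Hilbert} using the definitions~\eqref{eq:SE-g-hilbert-h-hilbert}: all terms indexed by $\ell \leq k$ are jointly bilinear in $(\bu, \bv)$ and hence contribute nothing to individual convexity/concavity; the only nonlinear cross-terms are $-\|\proj_{\Use_k}^\perp \bu\|_{L^2}\langle\bxi_g, \bv\rangle_{L^2}$ and $\sqrt{\Lambda}\|\proj_{\Vse_k}^\perp \bv\|_{L^2}\langle\bxi_h, \bu\rangle_{L^2}$, which are concave in $\bu$ (respectively convex in $\bv$) precisely when $\mathfrak{G}(\bv) \geq 0$ (respectively $\mathfrak{H}(\bu) \geq 0$). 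Combined with the strong convexity of $\mathbb{E}[\phi_{k+1}^v]$ and the strong concavity of $-\mathbb{E}[\phi_{k+1}^u]$ inherited from Assumption~\ref{asm:regularity}, this yields strong convex-concavity of $\mathfrak{L}_{k+1}$ on the feasible set.

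With the convex-concave structure in hand, I would next invoke a standard Hilbert-space saddle-point existence theorem for strongly convex-concave, $C^1$ objectives on closed convex sets. Coercivity follows from the quadratic growth of $\mathbb{E}[\phi_{k+1}^u]$ and $\mathbb{E}[\phi_{k+1}^v]$, while all remaining terms grow at most linearly in each variable when the other is fixed. Strict convex-concavity then yields uniqueness of $(\widetilde{\bu}, \widetilde{\bv})$. To obtain~\eqref{eq:unique-KKT-saddle}, I would compute the Fréchet derivatives of $\mathfrak{L}_{k+1}$---using $\nabla_{\bu}\|\proj_{\Use_k}^\perp \bu\|_{L^2} = \proj_{\Use_k}^\perp \bu / \|\proj_{\Use_k}^\perp \bu\|_{L^2}$ and its $\bv$-analogue, interpreted via the subdifferential at zero consistent with the convention $\bm{0}/\|\bm{0}\|_{L^2}:=\bm{0}$---and write down the constrained stationarity conditions $\partial_{\bu}\mathfrak{L}_{k+1}(\widetilde{\bu},\widetilde{\bv}) + \nu_u \bxi_h = 0$ and $\partial_{\bv}\mathfrak{L}_{k+1}(\widetilde{\bu},\widetilde{\bv}) - \nu_v \bxi_g = 0$ for nonnegative Lagrange multipliers $\nu_u, \nu_v$ satisfying complementary slackness. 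A direct calculation matches these with~\eqref{eq:unique-KKT-saddle-g} and~\eqref{eq:unique-KKT-saddle-h} up to the extra terms $\nu_u \bxi_h$ and $\nu_v \bxi_g$. To conclude, I would argue that $\nu_u = \nu_v = 0$ by studying the multiplier-free KKT system directly: under strong convex-concavity it possesses at most one solution, and if one verifies this solution lies in the feasible region, then by sufficiency of KKT it must coincide with the unique saddle.

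The main obstacle will be the multiplier-vanishing step---specifically, verifying feasibility of the multiplier-free KKT solution. I expect this to proceed by projecting the two KKT equations onto $\bxi_g$ and $\bxi_h$, identifying the signs of the resulting scalars via monotonicity of $\nabla\phi_{k+1}^u$ and $\nabla\phi_{k+1}^v$, and exploiting the coupling between the two equations. A secondary point of care is the corner case $\|\proj_{\Use_k}^\perp \widetilde{\bu}\|_{L^2} = 0$ (and its analogue for $\widetilde{\bv}$), which must be handled consistently with the subdifferential convention for the normalized terms appearing in~\eqref{eq:unique-KKT-saddle}.
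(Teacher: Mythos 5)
Your skeleton matches the paper's: convex--concavity of $\mathfrak{L}_{k+1}$ holds only on the constrained set $\{\mathfrak{G}(\bv)\ge 0,\ \mathfrak{H}(\bu)\ge 0\}$, existence and uniqueness of the constrained saddle follow from a standard Hilbert-space result plus strong convexity/coercivity of the penalty terms, and the optimality system is the KKT system with multipliers $\lambda_v,\lambda_u\ge 0$ and complementary slackness. The genuine gap is in the step you yourself flag as the main obstacle: showing the multipliers vanish. Your plan has two problems. First, the claim that the multiplier-free KKT system ``possesses at most one solution'' by strong convex--concavity does not follow: off the feasible region the cross terms $-\|\proj_{\Use_k}^\perp\bu\|_{L^2}\langle\bxi_g,\bv\rangle_{L^2}$ and $\sqrt{\Lambda}\|\proj_{\Vse_k}^\perp\bv\|_{L^2}\langle\bxi_h,\bu\rangle_{L^2}$ lose the required curvature, so the multiplier-free system is not the optimality system of a convex--concave problem and you cannot count its solutions this way; moreover you never produce a solution of that system to feed into your feasibility-then-sufficiency argument (the only natural candidate is the constrained saddle itself, which is circular). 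Second, the mechanism you propose for sign identification --- projecting the KKT equations onto $\bxi_g,\bxi_h$ and using monotonicity of $\nabla\phi_{k+1}^u,\nabla\phi_{k+1}^v$ --- is not what makes the argument go through: monotonicity controls inner products of gradient differences against iterate differences, not the sign of $\langle\nabla\phi_{k+1}^v(\bvtilde),\bxi_g\rangle_{L^2}$, and nothing in your sketch uses the one fact the paper's proof hinges on, namely that $\bxi_g,\bxi_h$ are \emph{fresh Gaussians independent of the $\sigma$-algebra generated by the past SE iterates}.

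Concretely, the paper splits into two cases. In the degenerate case $\|\proj_{\Vse_k}^\perp\bvtilde\|_{L^2}=\|\proj_{\Use_k}^\perp\butilde\|_{L^2}=0$ the constraints are active, so complementary slackness gives you nothing about $\lambda_v,\lambda_u$; instead one argues that if $\lambda_v>0$ the KKT equation would force $\bvtilde$ to depend nontrivially on $\bxi_g$, while $\proj_{\Vse_k}^\perp\bvtilde=0$ makes $\bvtilde$ measurable with respect to $\{\vse_\ell\}_{\ell\le k}$, which is independent of $\bxi_g$ --- a contradiction. (Your remark that the corner case just needs the subdifferential convention understates this: without the independence argument a positive multiplier is not excluded there.) In the non-degenerate case, strict feasibility $\langle\bxi_g,\bvtilde\rangle_{L^2}>0$, $\langle\bxi_h,\butilde\rangle_{L^2}>0$ is what kills the multipliers, and the paper obtains it not from monotonicity but from a comparison argument (its Lemma~\ref{lem:prox-like}): fixing $\butilde$, the vector $\bvtilde$ minimizes a function of the form $c_1\|\proj_{\Vse_k}^\perp\bv\|_{L^2}+\beta(\bv)-\|\proj_{\Use_k}^\perp\butilde\|_{L^2}\langle\bxi_g,\bv\rangle_{L^2}$ under the constraint, and the corresponding \emph{unconstrained} minimizer of the first two terms is independent of $\bxi_g$ (hence orthogonal to it in $L^2$), which forces the constrained minimizer to satisfy the constraint strictly. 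If you want to complete your proof you need to supply these two independence-based arguments (or an equivalent substitute); as written, the plan stalls exactly at the step the lemma exists to resolve.
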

Lemma~\ref{lem:Hilbert-saddle-lemma} is proved in Section~\ref{sec:proof-saddle-fixed-pt-lemma}; operationally, it shows that provided we adopt the convention $\bm{0} / \| \bm{0} \|_{L^2} = \bm{0}$, the conditions~\eqref{eq:unique-KKT-saddle}---which are effectively the KKT conditions of the \emph{unconstrained} optimization problem $\min_{\bv \in \mathcal{H}_d^{k+1}} \;\;
\max_{\bu \in \mathcal{H}_n^{k+1}} \;\;
\mathfrak{L}_{k+1}(\bu, \bv)$---are also the same conditions characterizing optimality in the \emph{constrained} problem~\eqref{eq:L2-inductive-saddle-new}. This technical lemma is the key to our argument, and is crucial to establishing the following propositions.

\begin{proposition} [Existence of fixed point solutions] \label{prop:existence-SE-fixpt}
	Let $(\butilde, \bvtilde) \in \cH_n^{k+1} \times \cH_d^{k+1}$ denote the solution to the saddle problem~\eqref{eq:L2-inductive-saddle-new} guaranteed by Lemma~\ref{lem:Hilbert-saddle-lemma}. Then the tuple $(\gse_{k+1}, \vse_{k+1}, \vhatse_{k+1}, \hse_{k+1}, \use_{k+1}, \uhatse_{k+1})$ given by
	\begin{subequations} \label{eq:construction-fixedpoint}
		\begin{align}
			\use_{k+1} = \butilde, \qquad  \vse_{k+1} = \bvtilde,  \qquad \gse_{k+1} = \sg[\butilde], \qquad \hse_{k+1} = \sh[\bvtilde],
		\end{align}
		\begin{align}
			\uhatse_{k+1} &= \sum_{\ell=1}^{k} \< \gseperp_{\ell}, \bvtilde \>_{L^2} \useperp_{\ell} + \< \bxi_g , \bvtilde \>_{L^2} \frac{\proj_{\Use_k}^\perp\butilde}{\|\proj_{\Use_k}^\perp\butilde\|_{L^2}}, \label{eq:uhatse-defn}\\
			\vhatse_{k+1} &= \sqrt{\Lambda}\sum_{\ell=1}^{k} \< \hseperp_{\ell}, \butilde \>_{L^2} \vseperp_{\ell} + \sqrt{\Lambda} \< \bxi_h , \butilde \>_{L^2} \frac{\proj_{\Vse_k}^\perp\bvtilde}{\|\proj_{\Vse_k}^\perp\bvtilde\|_{L^2}}. \label{eq:vhatse-defn}
		\end{align}
	\end{subequations}
	satisfies Eqs.~\eqref{eq:fix-pt-HO-at-t},~\eqref{eq:span-inclusions-induction-step-HO} and~\eqref{eq:KKT-cond-induction-step}.
\end{proposition}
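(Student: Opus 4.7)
The plan is to verify in turn the three claimed properties~\eqref{eq:fix-pt-HO-at-t}, \eqref{eq:span-inclusions-induction-step-HO}, and~\eqref{eq:KKT-cond-induction-step}, each of which reduces to a short computation once Lemma~\ref{lem:Hilbert-saddle-lemma} and the induction hypothesis are in hand.

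I begin with the KKT identities~\eqref{eq:KKT-cond-induction-step}, which are the easiest piece. Comparing~\eqref{eq:unique-KKT-saddle-g} with~\eqref{eq:vhatse-defn}, the second and third terms on the left-hand side of~\eqref{eq:unique-KKT-saddle-g} are exactly $\vhatse_{k+1}$, so using $\sg[\butilde] = \gse_{k+1}$ and $\bvtilde = \vse_{k+1}$ rearranges this identity into $\gse_{k+1} - \vhatse_{k+1} = \nabla\phi^v_{k+1}(\vse_{k+1}; \Vse_k)$. The companion identity follows symmetrically from~\eqref{eq:unique-KKT-saddle-h} and~\eqref{eq:uhatse-defn}. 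Next, the span inclusions~\eqref{eq:span-inclusions-induction-step-HO} are immediate by inspecting~\eqref{eq:uhatse-defn}: each $\useperp_\ell$ lies in $\spn\{\use_{\ell'}\}_{\ell' \leq \ell}$ by construction, and $\proj_{\Use_k}^\perp\butilde = \butilde - \proj_{\Use_k}\butilde \in \spn(\use_1,\ldots,\use_{k+1})$ because $\butilde = \use_{k+1}$; the argument for $\vhatse_{k+1}$ is analogous.

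The main bookkeeping is in the covariance relations~\eqref{eq:fix-pt-HO-at-t}. The key observation is that the induction hypothesis~\eqref{eq:fix-pt-up-to-t} forces the $k \times k$ Gram matrices $(\langle \use_\ell, \use_{\ell'}\rangle_{L^2})$ and $(\langle \gse_\ell, \gse_{\ell'}\rangle_{L^2})$ to coincide, and likewise for $\{\vse_\ell\}$ versus $\{\hse_\ell\}$. Since the orthonormal systems $\{\useperp_\ell\}$ and $\{\gseperp_\ell\}$ arise from Gram--Schmidt applied to these matched Grams, one has $\langle \useperp_j, \use_\ell\rangle_{L^2} = \langle \gseperp_j, \gse_\ell\rangle_{L^2}$ for all $j,\ell \leq k$, with the analogous identity for $\vseperp, \hseperp$. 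Substituting this into the expansion $\gse_{k+1} = \sg[\butilde]$ via~\eqref{eq:SE-g-hilbert-h-hilbert}, and using (i) orthonormality of $\{\gseperp_\ell\}$, (ii) independence of $\bxi_g$ from $(\Gse_k, \Vse_k, \Vhatse_k)$, and (iii) the Pythagorean identity $\langle \butilde, \proj_{\Use_k}^\perp\butilde\rangle_{L^2} = \|\proj_{\Use_k}^\perp\butilde\|_{L^2}^2$, the relations $(a)$ and $(b)$ follow by direct algebra. The mixed-index cases of $(c)$ and $(d)$ require one additional observation: the normalized-residual term in~\eqref{eq:uhatse-defn} annihilates against any element of $\spn(\Use_k)$, so for $\ell \leq k$ only the Gram--Schmidt sum in~\eqref{eq:uhatse-defn} contributes to $\langle \use_\ell, \uhatse_{k+1}\rangle_{L^2}$, and this sum equals $\langle \gse_\ell, \bvtilde\rangle_{L^2}$ by the matched coefficients. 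The diagonal case $\ell = \ell' = k+1$ uses the Pythagorean identity in the same way to match the normalized-residual contribution.

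The only obstacle I anticipate is the index-juggling in the covariance step; no new ideas beyond Lemma~\ref{lem:Hilbert-saddle-lemma} and the matched-Gram consequence of the induction hypothesis are required, since all variational content has already been absorbed into the former. The convention $\bm{0}/\|\bm{0}\|_{L^2} = \bm{0}$ guarantees that the constructions~\eqref{eq:uhatse-defn}--\eqref{eq:vhatse-defn} remain well-defined in the degenerate case where one of the orthogonal projections collapses, so no case analysis is needed to handle rank-deficient iterates.
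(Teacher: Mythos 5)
Your proposal is correct and follows essentially the same route as the paper's own proof: the KKT relations~\eqref{eq:KKT-cond-induction-step} by directly comparing the constructions~\eqref{eq:uhatse-defn}--\eqref{eq:vhatse-defn} with the stationarity conditions~\eqref{eq:unique-KKT-saddle} of Lemma~\ref{lem:Hilbert-saddle-lemma}, the span inclusions by construction, and the covariance relations~\eqref{eq:fix-pt-HO-at-t} via the matched Gram--Schmidt coefficients implied by the induction hypothesis together with orthonormality, independence of $\bxi_g,\bxi_h$, and the Parseval/Pythagoras identity. The only cosmetic difference is that the paper also records up front that $\gse_{k+1}=\sg[\butilde]$ and $\hse_{k+1}=\sh[\bvtilde]$ are jointly Gaussian with the past and have i.i.d.\ entries, which is immediate from the definition of $\sg,\sh$ and worth stating alongside your verification.
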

Proposition~\ref{prop:existence-SE-fixpt} is proved in Section~\ref{sec:prop1-proof}.

\begin{proposition} [Uniqueness of fixed point solutions] \label{prop:uniqueness-SE-fixpt}
	If the tuple $\bigl(\bg_{k+1}^{\SE, *}, \bv_{k+1}^{\SE, *}, \hbv_{k+1}^{\SE, *}, \bh_{k+1}^{\SE, *}, \bu_{k+1}^{\SE, *}, \hbu_{k+1}^{\SE, *}\bigr)$ is a valid fixed point solution satisfying Eqs.~\eqref{eq:fix-pt-HO-at-t},~\eqref{eq:span-inclusions-induction-step-HO} and~\eqref{eq:KKT-cond-induction-step}, then $(\bv_{k+1}^{\SE, *}, \bu_{k+1}^{\SE, *})$ is a solution to the saddle problem~\eqref{eq:L2-inductive-saddle-new}. Furthermore, we must have
	\begin{subequations} \label{eq:construction-candidate-fixedpoint}
		\begin{align}
			\bg_{k+1}^{\SE, *} = \sg\bigl[\bu_{k+1}^{\SE, *}\bigr], \qquad \bh_{k+1}^{\SE, *} = \sh\bigl[\bv_{k+1}^{\SE, *}\bigr],
		\end{align}
		\begin{align}
			\hbu_{k+1}^{\SE, *} &= \sum_{\ell=1}^{k} \< \gseperp_{\ell}, \bv_{k+1}^{\SE, *} \>_{L^2} \bu_{\ell}^{\SE,\perp} + \< \bxi_g , \bv_{k+1}^{\SE, *} \>_{L^2} \frac{\proj_{\bU_{k}^{\SE}}^\perp \bu_{k+1}^{\SE, *}}{\|\proj_{\bU_{k}^{\SE}}^\perp \bu_{k+1}^{\SE, *} \|_{L^2}}, \\
			\hbv_{k+1}^{\SE, *} &= \sqrt{\Lambda} \sum_{\ell=1}^{k} \< \bh_{\ell}^{\SE,\perp}, \bv_{k+1}^{\SE, *} \>_{L^2} \bv_{\ell}^{\SE,\perp} + \sqrt{\Lambda} \< \bxi_h , \bv_{k+1}^{\SE, *} \>_{L^2} \frac{\proj_{\bV_{k}^{\SE}}^\perp \bv_{k+1}^{\SE, *}}{\|\proj_{\bV_{k}^{\SE}}^\perp \bv_{k+1}^{\SE, *}\|_{L^2}}.
		\end{align}
	\end{subequations}
\end{proposition}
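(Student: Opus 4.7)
My plan is to reverse-engineer any assumed fixed-point solution by pinning down each of its components using the Gaussian covariance structure in~\eqref{eq:fix-pt-HO-at-t}, and then to verify that $(\bu_{k+1}^{\SE, *}, \bv_{k+1}^{\SE, *})$ satisfies the KKT system that characterizes the unique solution to~\eqref{eq:L2-inductive-saddle-new} via Lemma~\ref{lem:Hilbert-saddle-lemma}. I proceed in three steps.

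\textbf{Step 1: Identify $\bg_{k+1}^{\SE, *}$ and $\bh_{k+1}^{\SE, *}$.} The inductive hypothesis gives that the Gram matrices of $\{\gse_\ell\}_{\ell \leq k}$ and $\{\use_\ell\}_{\ell \leq k}$ coincide, so the Gram--Schmidt coefficients of the two sequences match and yield the identity $\<\gseperp_\ell, \gse_m\>_{L^2} = \<\useperp_\ell, \use_m\>_{L^2}$ for all $\ell, m \leq k$. Applying relation (a) of~\eqref{eq:fix-pt-HO-at-t} with one index equal to $k+1$ and inverting the resulting triangular system then produces
\[
\<\bg_{k+1}^{\SE, *}, \gseperp_\ell\>_{L^2} = \<\bu_{k+1}^{\SE, *}, \useperp_\ell\>_{L^2}, \qquad \ell = 1, \dots, k,
\]
along with the norm identity $\|\bg_{k+1}^{\SE, *}\|_{L^2}^2 = \|\bu_{k+1}^{\SE, *}\|_{L^2}^2$. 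Since $\bg_{k+1}^{\SE, *}$ is jointly Gaussian with $\Gse_k$ by~\eqref{eq:se-guass-and-span}, its component orthogonal to $\spn\{\gse_\ell\}_{\ell \leq k}$ is an independent centered Gaussian of variance $\|\proj_{\Use_k}^\perp \bu_{k+1}^{\SE, *}\|_{L^2}^2$; identifying its normalized direction with the auxiliary randomness $\bxi_g$ yields $\bg_{k+1}^{\SE, *} = \sg[\bu_{k+1}^{\SE, *}]$. A symmetric argument using relation (b) yields $\bh_{k+1}^{\SE, *} = \sh[\bv_{k+1}^{\SE, *}]$.

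\textbf{Step 2: Identify $\hbu_{k+1}^{\SE, *}$ and $\hbv_{k+1}^{\SE, *}$.} By~\eqref{eq:span-inclusions-induction-step-HO}, $\hbu_{k+1}^{\SE, *}$ lies in the orthogonal sum of $\spn\{\useperp_\ell\}_{\ell \leq k}$ and the line through the normalized residual $\proj_{\Use_k}^\perp \bu_{k+1}^{\SE, *} / \|\proj_{\Use_k}^\perp \bu_{k+1}^{\SE, *}\|_{L^2}$. Its coefficients in that orthonormal basis are pinned down by the inner products $\<\hbu_{k+1}^{\SE, *}, \useperp_\ell\>_{L^2}$ and $\<\hbu_{k+1}^{\SE, *}, \bu_{k+1}^{\SE, *}\>_{L^2}$, which by relation (c) of~\eqref{eq:fix-pt-HO-at-t} --- combined with the Gram--Schmidt alignment from Step 1 and the identification $\bg_{k+1}^{\SE, *} = \sg[\bu_{k+1}^{\SE, *}]$ --- reduce (after multiplying by the appropriate normalizing factors) to $\<\bv_{k+1}^{\SE, *}, \gseperp_\ell\>_{L^2}$ and $\<\bv_{k+1}^{\SE, *}, \bxi_g\>_{L^2}$ respectively. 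Reading off the resulting expansion reproduces the formula for $\hbu_{k+1}^{\SE, *}$ in~\eqref{eq:construction-candidate-fixedpoint}; the analogous calculation based on relation (d) of~\eqref{eq:fix-pt-HO-at-t} and $\bh_{k+1}^{\SE, *} = \sh[\bv_{k+1}^{\SE, *}]$ gives the corresponding formula for $\hbv_{k+1}^{\SE, *}$.

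\textbf{Step 3: Conclude via Lemma~\ref{lem:Hilbert-saddle-lemma}.} Substituting the identifications from Steps 1 and 2 into the assumed fixed-point equations~\eqref{eq:KKT-cond-induction-step} reproduces verbatim the KKT relations~\eqref{eq:unique-KKT-saddle-g}--\eqref{eq:unique-KKT-saddle-h}. Since these characterize the unique saddle point $(\butilde, \bvtilde)$ of~\eqref{eq:L2-inductive-saddle-new} supplied by Lemma~\ref{lem:Hilbert-saddle-lemma}, we conclude $(\bu_{k+1}^{\SE, *}, \bv_{k+1}^{\SE, *}) = (\butilde, \bvtilde)$. I expect the main obstacle to lie in Step 1: to rigorously justify the equality $\bg_{k+1}^{\SE, *} = \sg[\bu_{k+1}^{\SE, *}]$ one must cleanly align the two Gram--Schmidt bases via the inductive Gram identity, verify independence of the residual from $\Gse_k$ directly from the prescribed cross-covariances, and track signs and scalings so that the coefficients extracted in Step 2 match the precise formulas in~\eqref{eq:construction-candidate-fixedpoint}.
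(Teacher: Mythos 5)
Your proposal takes essentially the same route as the paper's proof: expand $\bg^{\SE,*}_{k+1}, \bh^{\SE,*}_{k+1}, \hbu^{\SE,*}_{k+1}, \hbv^{\SE,*}_{k+1}$ in the orthonormal bases $\{\bg^{\SE,\perp}_\ell\}, \{\bh^{\SE,\perp}_\ell\}, \{\bu^{\SE,\perp}_\ell\}, \{\bv^{\SE,\perp}_\ell\}$ plus the residual/auxiliary directions, pin down every coefficient from Eqs.~\eqref{eq:fix-pt-HO-at-t}(a)--(d) together with the inductive Gram-matrix identities, and then observe that Eq.~\eqref{eq:KKT-cond-induction-step} becomes exactly Eq.~\eqref{eq:unique-KKT-saddle}, so Lemma~\ref{lem:Hilbert-saddle-lemma} identifies $(\bv^{\SE,*}_{k+1},\bu^{\SE,*}_{k+1})$ with the unique saddle point of~\eqref{eq:L2-inductive-saddle-new}; this is precisely the paper's argument. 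One small remark: carried out as you describe, Step 2 produces the coefficients $\sqrt{\Lambda}\langle \bh^{\SE,\perp}_{\ell}, \bu^{\SE,*}_{k+1}\rangle_{L^2}$ and $\sqrt{\Lambda}\langle \bxi_h, \bu^{\SE,*}_{k+1}\rangle_{L^2}$ for $\hbv^{\SE,*}_{k+1}$ (matching the paper's own derivation and the existence construction~\eqref{eq:construction-fixedpoint}), which is the intended formula.
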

Proposition~\ref{prop:uniqueness-SE-fixpt} is proved in Section~\ref{sec:prop2-proof}.

Armed with these two propositions, we now complete the inductive step. On the one hand, Proposition~\ref{prop:existence-SE-fixpt} and Lemma~\ref{lem:Hilbert-saddle-lemma} together guarantee a solution $(\bv^{\SE}_t, \bu^{\SE}_t)$ to the SE fixed point equations~\eqref{eq:fix-pt-HO-at-t},~\eqref{eq:span-inclusions-induction-step-HO} and~\eqref{eq:KKT-cond-induction-step}. 
On the other hand, by Proposition~\ref{prop:existence-SE-fixpt}, any solution to the SE fixed point equations can be used to construct a solution $(\bv_{k+1}^{\SE, *}, \bu_{k+1}^{\SE, *})$ to the saddle problem~\eqref{eq:L2-inductive-saddle-new}. But since Lemma~\ref{lem:Hilbert-saddle-lemma} also guarantees that the solution to the saddle problem~\eqref{eq:L2-inductive-saddle-new} is unique, we must have $(\bv_{k+1}^{\SE, *}, \bu_{k+1}^{\SE, *}) = (\bv_{k+1}^{\SE}, \bu_{k+1}^{\SE})$. Finally, comparing Eqs.~\eqref{eq:construction-candidate-fixedpoint} and Eq.~\eqref{eq:construction-fixedpoint} and noting that $(\bv_{k+1}^{\SE, *}, \bu_{k+1}^{\SE, *}) = (\bv_{k+1}^{\SE}, \bu_{k+1}^{\SE})$, we may conclude equality of the entire tuple $(\bg_{k+1}^{\SE, *}, \bv_{k+1}^{\SE, *}, \hbv_{k+1}^{\SE, *}, \bh_{k+1}^{\SE, *}, \bu_{k+1}^{\SE, *}, \hbu_{k+1}^{\SE, *}) = (\bg_{k+1}^{\SE}, \bv_{k+1}^{\SE}, \hbv_{k+1}^{\SE}, \bh_{k+1}^{\SE}, \bu_{k+1}^{\SE}, \hbu_{k+1}^{\SE})$. We have thus established existence and uniqueness of the SE fixed points at time $t$, and this completes the induction step for saddle update.
\qed

\subsubsection{Proof of Lemma~\ref{lem:Hilbert-saddle-lemma}} \label{sec:proof-saddle-fixed-pt-lemma}

Since the functions $\phi_{k+1}^u$, $\phi_{k+1}^v$ are strongly convex in their first arguments, the maps $\E[\phi^u_{k+1}]: \mathcal{H}_n^{k+1} \to \mathbb{R}$ and $\E[\phi^v_{k+1}]: \mathcal{H}_d^{k+1} \to \mathbb{R}$ are also strongly convex. Furthermore, both sets $\mathcal{V} = \{ \bv: \mathfrak{G}(\bv) \geq 0 \}$ and $\mathcal{U} = \{ \bu: \mathfrak{H}(\bu) \geq 0 \}$ are convex, and on $\mathcal{U} \times \mathcal{V}$ the function $\mathfrak{L}_{k+1}$~\eqref{def:saddle-obj-Hilbert} is strongly concave convex. 
Define the function $\overline{\mathfrak{L}}_{k+1}$ mapping $\mathcal{H}_n^{k+1} \times \mathcal{H}_d^{k+1}$ to values on the extended reals, via
\[
\overline{\mathfrak{L}}_{k+1}(\bu, \bv) :=  \mathfrak{L}_{k+1}(\bu, \bv) + \mathbb{I}_{\mathcal{V}}(\bv) - \mathbb{I}_{\mathcal{U}}(\bu),
\]
where $\mathbb{I}_{\mathcal{X}}(x)$ is an indicator that takes value $0$ if $x \in \mathcal{X}$ and the value $+\infty$ otherwise. The function $\overline{\mathfrak{L}}_{k+1}$ is a proper saddle function in the terminology of~\citet[Definition 2.106]{barbu2012convexity}.  Moreover, since $\mathfrak{L}_{k+1}$ is strongly convex in $\bv$ and strongly concave in $\bu$ (and hence coercive in each variable respectively) on $\mathcal{U}\times \mathcal{V}$, we see that $\lim_{\| \bu \|_{L^2} + \| \bv \|_{L^2} \rightarrow \infty, (\bu, \bv) \in \mathcal{U}\times \mathcal{V}} = -\infty$, so that by~\citep[Corollary 2.118]{barbu2012convexity}, there exists a solution to the saddle point problem 
\begin{align}\label{eq:extended-hilbert-saddle}
	\min_{\bv \in \mathcal{H}_d^{k+1}} \max_{\bu \in \mathcal{H}_n^{k+1}} \overline{\mathfrak{L}}_{k+1}(\bu, \bv).
\end{align}
Additionally, strong convexity of the map $\bv \mapsto \max_{\bu \in \mathcal{U}} \overline{\mathfrak{L}}_{k+1}(\bu, \bv)$ ensures that any solution to the above saddle point problem is unique.  Further,~\citet[Theorem 2.114]{barbu2012convexity} implies that the subdifferential $\partial \overline{\mathfrak{L}}_{k+1} = (\partial_{\bv} \overline{\mathfrak{L}}_{k+1}(\bu, \bv), -\partial_{\bu} \overline{\mathfrak{L}}_{k+1}(\bu, \bv))$ is maximally monotone and by~\citet[Eq.~(2.145)]{barbu2012convexity}, the solution $(\butilde, \bvtilde)$ to the saddle point problem~\eqref{eq:extended-hilbert-saddle} satisfies $(\bm{0}, \bm{0}) \in \partial \overline{\mathfrak{L}}_{k+1}(\butilde, \bvtilde)$. This, in turn, by dominated convergence, simplifies to the system
\begin{subequations} \label{eq:KKT-orig}
	\begin{align}
		\sg[\butilde] 
		-
		\sqrt{\Lambda} \sum_{\ell=1}^{k} \inprod{\bh_{\ell}^{\SE,\perp}}{\butilde}_{L^2} \bv_{\ell}^{\SE,\perp}
		-
		\sqrt{\Lambda} \inprod{\bxi_h}{\butilde}_{L^2} \cdot \partial_{\bv} \left(\|\proj_{\bV_{k}^{\SE}}^\perp\bv\|_{L^2} \right)_{\bv = \bvtilde}
		+
		\lambda_v \bxi_g
		&=
		(\nabla \phi_{k+1}^v)(\bvtilde;\Vse_k) \\
		\sqrt{\Lambda} \sh[\bvtilde] 
		-
		\sum_{\ell=1}^{k} \inprod{\bg_{\ell}^{\SE,\perp}}{\bvtilde}_{L^2} \bu_{\ell}^{\SE,\perp}
		-
		\inprod{\bxi_g}{\bvtilde}_{L^2} \cdot \partial_{\bu} \left(\|\proj_{\bU_{k}^{\SE}}^\perp\bu\|_{L^2} \right)_{\bu = \butilde}
		+
		\lambda_u \bxi_h
		&=
		(\nabla \phi_{k+1}^u)(\butilde;\Use_k),
	\end{align}
\end{subequations}
where $\lambda_v \geq 0$ should be interpreted as a slack variable for the constraint $\inprod{\bxi_g}{\bv}_{L^2} \geq 0$, and so $\lambda_v \cdot \inprod{\bxi_g}{\bv}_{L^2} = 0$. Similarly, $\lambda_u \geq 0$ should be interpreted as a slack variable for the constraint $\inprod{\bxi_h}{\bu}_{L^2} \geq 0$, and so $\lambda_u \cdot \inprod{\bxi_h}{\bu}_{L^2} = 0$.

The following claim is a key cog of the proof.
\begin{claim} \label{clm:chains}
	The following chains of implication hold:
	\begin{enumerate}
		\item[(i)] We have $\|\proj_{\bU_{k}^{\SE}}^\perp\butilde\|_{L^2} = 0$ if and only if $\|\proj_{\bV_{k}^{\SE}}^\perp\bvtilde\|_{L^2} = 0$, and if so then $\inprod{\bxi_h}{\butilde}_{L^2} = 0$ and $\inprod{\bxi_g}{\bvtilde}_{L^2} = 0$.
		\item[(ii)] If $\|\proj_{\bU_{k}^{\SE}}^\perp\butilde\|_{L^2} > 0$ and $\|\proj_{\bV_{k}^{\SE}}^\perp\bvtilde\|_{L^2} > 0$, then $\inprod{\bxi_g}{\bvtilde}_{L^2} > 0$ and $\inprod{\bxi_h}{\butilde}_{L^2} > 0$.
	\end{enumerate}
\end{claim}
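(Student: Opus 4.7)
The plan is to leverage the KKT conditions~\eqref{eq:KKT-orig}, combined with two structural identities that follow from the definitions~\eqref{eq:SE-g-hilbert-h-hilbert} of $\sg$ and $\sh$. Since $\bxi_g$ has mean zero and is independent of $\sigma(\Gse_k, \Vse_k, \Vhatse_k)$, it is $L^2$-orthogonal to both $\spn(\Gse_k)$ and $\spn(\Vse_k)$; this immediately yields $\langle \bxi_g, \sg[\bu]\rangle_{L^2} = \|\proj_{\Use_k}^\perp \bu\|_{L^2}$ and $\sg[\bxi_h] = \bxi_g$, together with analogous identities on the $\sh$ side. Taking $L^2$-inner products of the KKT equations~\eqref{eq:KKT-orig} with $\bxi_g$ and $\bxi_h$ then directly couples the projection norms $\|\proj_{\Use_k}^\perp \butilde\|_{L^2}$, $\|\proj_{\Vse_k}^\perp \bvtilde\|_{L^2}$, the scalar correlations $\langle \bxi_g, \bvtilde\rangle_{L^2}$, $\langle \bxi_h, \butilde\rangle_{L^2}$, and the multipliers $\lambda_u, \lambda_v$, with the convention $\bm{0}/\|\bm{0}\|_{L^2} = \bm{0}$ applied to the subdifferential terms.

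For the forward direction of part~(i), suppose $\|\proj_{\Use_k}^\perp \butilde\|_{L^2} = 0$, so $\butilde$ is a deterministic linear combination of $\use_1, \ldots, \use_k$. Since each $\use_\ell$ is $\sigma(\Hse_k, \Use_k, \Uhatse_k)$-measurable and therefore independent of $\bxi_h$, this yields $\langle \bxi_h, \butilde\rangle_{L^2} = 0$. Substituting into the $\bv$-minimization in~\eqref{eq:L2-inductive-saddle-new}, the $\bxi_g$-coupling term vanishes and the only remaining $\bxi_g$-dependence sits inside $\E[\phi_{k+1}^v(\bv; \Vse_k)]$. By conditional Jensen's inequality with respect to $\sigma(\Gse_k, \Vse_k, \Vhatse_k)$ and strong convexity of $\phi_{k+1}^v$, the unique minimizer $\bvtilde$ must be measurable with respect to this sub-$\sigma$-algebra, giving $\langle \bxi_g, \bvtilde\rangle_{L^2} = 0$, and then taking $\bxi_g$-inner product of the $\bv$-KKT forces $\lambda_v = 0$. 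To upgrade $\bvtilde$ from being $\sigma(\Gse_k, \Vse_k, \Vhatse_k)$-measurable to lying in the finite-dimensional $\spn(\Vse_k)$, I would use that the reduced $\bv$-minimization is parameterized by the finitely many scalar products $\{\langle \gseperp_\ell, \bv\rangle_{L^2}, \langle \vseperp_\ell, \bv\rangle_{L^2}\}_{\ell \leq k}$ plus the penalty, and combine this with strong convexity together with the SE fixed-point relations that tie $\spn(\Gse_k)$ to the structure of $\spn(\Use_k, \Uhatse_k)$. The reverse direction of~(i) follows by symmetric reasoning applied on the $\bu$-side.

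For part~(ii), suppose both projection norms are strictly positive. I argue by contradiction: if $\langle \bxi_g, \bvtilde\rangle_{L^2} = 0$, the direction $\bxi_g$ is feasible for perturbation since $\langle \bxi_g, \bvtilde + \epsilon \bxi_g\rangle_{L^2} = \epsilon \geq 0$. The directional derivative of $\mathfrak{L}_{k+1}(\butilde, \cdot)$ at $\bvtilde$ in direction $\bxi_g$ equals
\[
-\|\proj_{\Use_k}^\perp \butilde\|_{L^2} + \sqrt{\Lambda}\langle \bxi_h, \butilde\rangle_{L^2} + \langle \nabla \phi_{k+1}^v(\bvtilde; \Vse_k), \bxi_g\rangle_{L^2},
\]
and pairing this with the KKT-derived expression for $\langle \nabla \phi_{k+1}^v(\bvtilde), \bxi_g\rangle_{L^2}$---which simplifies dramatically via the identity $\langle \bxi_g, W_v\rangle_{L^2} = \langle \bxi_g, \bvtilde\rangle_{L^2}/\|\proj_{\Vse_k}^\perp \bvtilde\|_{L^2} = 0$ together with a parallel perturbation bound on the $\bu$-side---shows strict negativity of this derivative, contradicting the optimality of $\bvtilde$. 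A symmetric argument gives $\langle \bxi_h, \butilde\rangle_{L^2} > 0$. The main obstacle is the finite-dimensional reduction in part~(i) that upgrades measurability of $\bvtilde$ to membership in $\spn(\Vse_k)$; executing this cleanly requires a careful use of the structural relationships between $\spn(\Gse_k)$ and $\spn(\Vse_k, \Vhatse_k)$ that are dictated by the SE fixed-point equations.
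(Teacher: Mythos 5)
Your plan contains two genuine gaps, one in each part. For part~(i), the steps you do carry out are fine: $\inprod{\bxi_h}{\butilde}_{L^2}=0$ follows from $\butilde\in\spn(\Use_k)$ exactly as you say, and your conditional-expectation (Jensen-plus-strong-convexity) argument is a legitimate, if different, way to get $\inprod{\bxi_g}{\bvtilde}_{L^2}=0$ and $\lambda_v=0$. But the heart of part~(i) is the implication $\|\proj_{\Use_{k}}^\perp\butilde\|_{L^2}=0\Rightarrow\|\proj_{\Vse_{k}}^\perp\bvtilde\|_{L^2}=0$, and this is precisely the step you defer; moreover the route you sketch (a finite-dimensional reduction of the $\bv$-minimization plus ``SE fixed-point relations'') cannot deliver it. With $\butilde$ fixed, the $\bv$-stationarity forces $\nabla\phi^v_{k+1}(\bvtilde;\Vse_k)$ to equal an element of $\spn(\Gse_k)+\spn(\Vse_k)$ (plus possibly $\lambda_v\bxi_g$), so $\bvtilde$ is the image of such an element under $(\nabla\phi^v_{k+1})^{-1}(\,\cdot\,;\Vse_k)$; this is in general a \emph{nonlinear} function of $(\Gse_k,\Vse_k)$ and has no reason to lie in the linear span of $\Vse_k$, so measurability cannot be ``upgraded'' to span membership from the $\bv$-side alone. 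The information that forces $\|\proj_{\Vse_k}^\perp\bvtilde\|_{L^2}=0$ sits on the other side of the saddle: once $\butilde\in\spn(\Use_k)$, the gradient $\nabla\phi^u_{k+1}(\butilde;\Use_k)$ is independent of $\bxi_h$, whereas the $\bu$-side KKT condition~\eqref{eq:KKT-orig} contains the term $(\sqrt{\Lambda}\|\proj_{\Vse_{k}}^\perp\bvtilde\|_{L^2}+\lambda_u)\bxi_h$; pairing that identity with $\bxi_h$ forces this coefficient to vanish. This cross-coupling step (which is what the paper's argument around Eq.~\eqref{eq:helper-KKT-2} does) is absent from your plan.

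For part~(ii), the contradiction via a strictly negative directional derivative cannot be made to work. First, your displayed derivative is incorrect: the term $\sqrt{\Lambda}\inprod{\bxi_h}{\butilde}_{L^2}$ enters multiplied by the derivative of $\bv\mapsto\|\proj_{\Vse_k}^\perp\bv\|_{L^2}$ along $\bxi_g$, which equals $\inprod{\proj_{\Vse_k}^\perp\bvtilde}{\bxi_g}_{L^2}/\|\proj_{\Vse_k}^\perp\bvtilde\|_{L^2}=0$ under your hypothesis $\inprod{\bxi_g}{\bvtilde}_{L^2}=0$. More fundamentally, substituting the KKT relation for $\nabla\phi^v_{k+1}(\bvtilde)$ into the correct expression gives a directional derivative equal to exactly $\lambda_v\ge 0$: the constrained first-order conditions with an active constraint are perfectly consistent with $\inprod{\bxi_g}{\bvtilde}_{L^2}=0$, so no first-order perturbation argument can yield the strict inequality. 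What is needed is a global (second-order) comparison: starting from the \emph{unconstrained} minimizer of $\sqrt{\Lambda}\inprod{\bxi_h}{\butilde}_{L^2}\|\proj_{\Vse_k}^\perp\bv\|_{L^2}+\beta(\bv)$ (which is orthogonal to $\bxi_g$ by independence), move a finite distance $\delta$ along $\bxi_g$ and use $L$-smoothness so that the linear gain of order $\delta\,\|\proj_{\Use_k}^\perp\butilde\|_{L^2}$ dominates the $O(\delta^2)$ increase of the remaining terms, with a separate treatment of whether the nonsmooth norm term vanishes at that minimizer. This is precisely the content of the paper's Lemma~\ref{lem:prox-like}; without an ingredient of this type your argument for (ii) does not go through.
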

We return to prove this claim shortly. 
In view of the claim, there are two cases to consider.

\paragraph{Case 1: $\|\proj_{\bU_{k}^{\SE}}^\perp\butilde\|_{L^2} = \|\proj_{\bV_{k}^{\SE}}^\perp\bvtilde\|_{L^2} = \inprod{\bxi_h}{\butilde}_{L^2} = \inprod{\bxi_g}{\bvtilde}_{L^2} = 0$:} 
Since we have adopted the convention that $\bm{0} / \| \bm{0} \|_{L^2} = 0$, we may write 
\begin{align*}
0 &= \inprod{\bxi_h}{\butilde}_{L^2} \cdot \partial_{\bv} \left(\|\proj_{\bV_{k}^{\SE}}^\perp\bv\|_{L^2} \right)_{\bv = \bvtilde} = \inprod{\bxi_h}{\butilde}_{L^2} \cdot \frac{\proj_{\bV_{k}^{\SE}}^\perp\bvtilde}{\|\proj_{\bV_{k}^{\SE}}^\perp\bvtilde\|_{L^2}} \\
&= \inprod{\bxi_g}{\bvtilde}_{L^2} \cdot \partial_{\bu} \left(\|\proj_{\bU_{k}^{\SE}}^\perp\bu\|_{L^2} \right)_{\bu = \butilde} = \inprod{\bxi_g}{\bvtilde}_{L^2} \cdot \frac{\proj_{\bU_{k}^{\SE}}^\perp\butilde}{\|\proj_{\bU_{k}^{\SE}}^\perp\butilde\|_{L^2}}.
\end{align*}
The KKT conditions~\eqref{eq:KKT-orig} thus simplify to
\begin{subequations} \label{eq:simplified-KKT}
	\begin{align}
		\sum_{\ell=1}^{k} \< \bu_{\ell}^{\SE,\perp} , \butilde \>_{L^2} \bg_{\ell}^{\SE,\perp}
		-
		\sqrt{\Lambda} \sum_{\ell=1}^{k} \inprod{\bh_{\ell}^{\SE,\perp}}{\butilde}_{L^2} \bv_{\ell}^{\SE,\perp}
		+
		\lambda_v \bxi_g
		&=
		(\nabla \phi_{k+1}^v)(\bvtilde;\Vse_{k}), \\
		\sqrt{\Lambda} \sum_{\ell = 1}^{k} \< \bv_{\ell}^{\SE,\perp} , \bvtilde \>_{L^2} \bh_{\ell}^{\SE,\perp}
		-
		\sum_{\ell=1}^{k} \inprod{\bg_{\ell}^{\SE,\perp}}{\bvtilde}_{L^2} \bu_{\ell}^{\SE,\perp}
		+
		\lambda_u \bxi_h
		&=
		(\nabla \phi_{k+1}^u)(\butilde;\Use_{k}),
	\end{align}
\end{subequations}
where we have substituted for the operators $\sg$ and $\sh$ from Eq.~\eqref{eq:SE-g-hilbert-h-hilbert} and used $\|\proj_{\bU_{k}^{\SE}}^\perp\butilde\|_{L^2} = \|\proj_{\bV_{k}^{\SE}}^\perp\bvtilde\|_{L^2} = 0$. 

Now note that the random vectors $\bxi_g$ and $\bxi_h$ are chosen independently of the collections $\{ \bv^{\SE}_{\ell} \}_{\ell = 1}^{k}$ and $\{ \bu^{\SE}_{\ell} \}_{\ell = 1}^{k}$, respectively.
In Eqs.~\eqref{eq:simplified-KKT} defining the pair $(\butilde, \bvtilde)$, suppose for the sake of contradiction that $\lambda_v > 0$. Then $\bvtilde$ depends nontrivially on $\bxi_g$. On the other hand, $\proj_{\bV_{k}^{\SE}}^\perp\bvtilde = 0$, so the random vector $\hbv$ is a measurable function of the collection $\{ \bv^{\SE}_{\ell} \}_{\ell = 1}^{k}$, which is a contradiction. Thus, we must have $\lambda_v = 0$. Reasoning via a parallel argument, we also have $\lambda_u = 0$.

To conclude, note that the KKT conditions~\eqref{eq:simplified-KKT} simplify to Eq.~\eqref{eq:unique-KKT-saddle} when $\lambda_v = \lambda_u = 0$ and under the convention that $\bm{0} / \| \bm{0} \|_{L^2} = 0$.

\paragraph{Case 2: $\min(\|\proj_{\bU_{k}^{\SE}}^\perp\butilde\|_{L^2}, \|\proj_{\bV_{k}^{\SE}}^\perp\bvtilde\|_{L^2}, \inprod{\bxi_g}{\bvtilde}_{L^2}, \inprod{\bxi_h}{\butilde}_{L^2}) > 0$:} In this case, the subdifferentials in Eq.~\eqref{eq:KKT-orig} are derivatives, and we have
\begin{align}
	\partial_{\bv} \left(\|\proj_{\bV_{k}^{\SE}}^\perp\bv\|_{L^2} \right)_{\bv = \bvtilde} = \frac{\proj_{\bV_{k}^{\SE}}^\perp\bvtilde}{\|\proj_{\bV_{k}^{\SE}}^\perp\bvtilde\|_{L^2}} \quad \text{ and } \quad \partial_{\bu} \left(\|\proj_{\bU_{k}^{\SE}}^\perp\bu\|_{L^2} \right)_{\bu = \butilde} = \frac{\proj_{\bU_{k}^{\SE}}^\perp\butilde}{\|\proj_{\bU_{k}^{\SE}}^\perp\butilde\|_{L^2}}
\end{align}
Furthermore, since $\min(\inprod{\bxi_g}{\bvtilde}_{L^2}, \inprod{\bxi_h}{\butilde}_{L^2}) > 0$, we have $\lambda_v = \lambda_u = 0$, since neither of the constraints is met with equality. Consequently, Eq.~\eqref{eq:KKT-orig} simplifies to Eq.~\eqref{eq:unique-KKT-saddle}, as claimed.

\medskip

\paragraph{Proof of Claim~\ref{clm:chains}:} 
We prove each subclaim in turn.

\medskip

\noindent \underline{Establishing (i):}
We will first show that if $\| \proj_{\bU_{k}^{\SE}}^\perp \butilde \|_{L^2} = 0$, then $\|\proj_{\bV_{k}^{\SE}}^\perp\bvtilde\|_{L^2} = \inprod{\bxi_h}{\butilde}_{L^2} = \inprod{\bxi_g}{\bvtilde}_{L^2} = 0$. Since $\proj_{\bU_{k}^{\SE}}^\perp \butilde = 0$ and because $\bxi_h$ is chosen independently of $\bU_{k}^{\SE}$,
we must have that $\inprod{\bxi_h}{\butilde}_{L^2} = 0$.
The first KKT condition~\eqref{eq:KKT-orig} can then be written as
\begin{equation} \label{eq:helper-KKT}
	\sum_{\ell = 1}^{k} \< \bu_{\ell}^{\SE,\perp} , \butilde \>_{L^2} \bg_{\ell}^{\SE,\perp}
	-
	\sqrt{\Lambda} \sum_{\ell = 1}^{k} \< \bh_{\ell}^{\SE,\perp} , \butilde \>_{L^2} \bv_{\ell}^{\SE,\perp}
	+
	\lambda_v \bxi_g
	=
	(\nabla \phi_{k+1}^v)(\bvtilde;\Vse_{k}),
\end{equation}
and the second KKT condition~\eqref{eq:KKT-orig} as
\begin{equation} \label{eq:helper-KKT-2}
	\sqrt{\Lambda} \sum_{\ell = 1}^{k} \< \bv_{\ell}^{\SE,\perp} , \bvtilde \>_{L^2} \bh_{\ell}^{\SE,\perp}
	-
	\sum_{\ell = 1}^{k} \inprod{\bg_{\ell}^{\SE,\perp}}{\bvtilde}_{L^2} \bu_{\ell}^{\SE,\perp}
	+
	(\sqrt{\Lambda}\|\proj_{\bV_{k}^{\SE}}^\perp\bvtilde\|_{L^2} + \lambda_u) \bxi_h
	=
	(\nabla \phi_{k+1}^u)(\butilde;\Use_{k})
\end{equation}
Suppose that $\sqrt{\Lambda}\|\proj_{\bV_{k}^{\SE}}^\perp\bvtilde\|_{L^2} + \lambda_u > 0$. Then by condition~\eqref{eq:helper-KKT-2}, the vector $\butilde$ depends nontrivially on $\bxi_h$, which is in turn independent of everything else. But $\proj_{\bU_{k}^{\SE}}^\perp \butilde = 0$, and so $\butilde$ is a measurable function of $\{ \use_{\ell} \}_{\ell = 1}^{k}$ and cannot depend on $\bxi_h$. This leads to a contradiction, and so $\sqrt{\Lambda} \|\proj_{\bV_{k}^{\SE}}^\perp\bvtilde\|_{L^2} + \lambda_u = 0$. Since both terms are nonnegative, we must have $\|\proj_{\bV_{k}^{\SE}}^\perp\bvtilde\|_{L^2} = \lambda_u = 0$.

A parallel argument under the supposition that $\| \proj_{\bV_{k}^{\SE}}^\perp \bvtilde \|_{L^2} = 0$ yields $\|\proj_{\bU_{k}^{\SE}}^\perp\butilde\|_{L^2} = \lambda_v = 0$. Putting these two pieces together, we see that $\|\proj_{\bU_{k}^{\SE}}^\perp\butilde\|_{L^2} = 0$ if and only if $\|\proj_{\bV_{k}^{\SE}}^\perp\bvtilde\|_{L^2} = 0$, and if so then $\inprod{\bxi_h}{\butilde}_{L^2} = 0$ and $\inprod{\bxi_g}{\bvtilde}_{L^2} = 0$.

\medskip

\noindent \underline{Establishing (ii):}
Expanding the definition of the operator $\sg$ from Eq.~\eqref{eq:SE-g-hilbert-h-hilbert} and using the fact that \mbox{$\|\proj_{\bV_{k}^{\SE}}^\perp \bvtilde\|_{L^2} > 0$},
the first KKT condition~\eqref{eq:KKT-orig} can be written as
\begin{equation}
	(\|\proj_{\bU_{k}^{\SE}}^\perp \butilde\|_{L^2}+\lambda_v) \bxi_g
	+
	\sum_{\ell = 1}^{k} \inprod{\hbu^{\SE, \perp}_{\ell}}{\butilde}_{L^2} \bg_{\ell}^{\SE,\perp}
	-
	\sqrt{\Lambda}\sum_{\ell = 1}^{k} \inprod{\bh^{\SE, \perp}_{\ell}}{\butilde}_{L^2} \bv_{\ell}^{\SE,\perp}
	=
	\sqrt{\Lambda} \frac{\< \bxi_h , \butilde \>_{L^2}}{\|\proj_{\bV_{k}^{\SE}}^\perp\bvtilde\|_{L^2}} \cdot \proj_{\bV_{k}^{\SE}}^\perp\bvtilde
	+
	(\nabla \phi_{k+1}^v)(\bvtilde;\Vse_{k}).
\end{equation}
In particular, viewing $\butilde$ as fixed, the vector $\bvtilde$ can be viewed as the minimizer---under the constraint that $\inprod{\bxi_g}{\bv}_{L^2} \geq 0$---of the function $\alpha(\bv) + \beta(v) - \gamma(\bv)$, where
\begin{align*}
	\alpha(\bv) &= \sqrt{\Lambda} \< \bxi_h , \butilde \>_{L^2} \cdot \|\proj_{\bV_{k}^{\SE}}^\perp\bv\|_{L^2}, \\
	\beta(\bv) &= \phi_t^v(\bv;\bV_{k}) + \sqrt{\Lambda} \sum_{\ell = 1}^{k} \inprod{\bh^{\SE, \perp}_{\ell}}{\butilde}_{L^2} \inprod{\bv_{\ell}^{\SE,\perp}}{\bv}_{L^2} - \sum_{\ell = 1}^{k} \inprod{\hbu^{\SE, \perp}_{\ell}}{\butilde}_{L^2} \inprod{\bg_{\ell}^{\SE,\perp}}{\bv}_{L^2}, \text{ and } \\
	\gamma(\bv) &= \|\proj_{\bU_{k}^{\SE}}^\perp \butilde\|_{L^2} \cdot \inprod{\bxi_g}{\bv}_{L^2}.
\end{align*}

We now show that if in addition $\|\proj_{\bU_{k}^{\SE}}^\perp \butilde\|_{L^2} > 0$, then $\inprod{\bxi_g}{\bvtilde}_{L^2} > 0$.
Suppose that $\|\proj_{\bU_{k}^{\SE}}^\perp \butilde\|_{L^2} > 0$. 
In preparing to apply Lemma~\ref{lem:prox-like} from the appendix, set 
\begin{align}
	A &= \proj_{\bV_{k}^{\SE}}^\perp, \quad x_0 = \|\proj_{\bU_{k}^{\SE}}^\perp \butilde\|_{L^2} \cdot \bxi_g, \quad c_1 = \sqrt{\Lambda} \< \bxi_h , \butilde \>_{L^2}, \quad f = \beta,
\end{align}
and note that $f$ is $L$-smooth. We also have that $x_0 \neq 0$ since $\|\proj_{\bU_{k}^{\SE}}^\perp \butilde\|_{L^2} > 0$, and that $c_1 \geq 0$. Note that $x_1$ in the notation of Lemma~\ref{lem:prox-like} is the solution to the unconstrained problem
\begin{align}
	\widetilde{\bv} = \argmin_{\bv} \; \alpha(\bv) + \beta(\bv),
\end{align}
and is unique since the function $\alpha + \beta$ is strongly convex. Finally, we have $Ax_2 = \proj_{\bV_{k}^{\SE}}^\perp \bvtilde \neq 0$. Applying 
Lemma~\ref{lem:prox-like} then directly implies that $\|\proj_{\bU_{k}^{\SE}}^\perp \butilde\|_{L^2} \cdot \inprod{\bxi_g}{\bvtilde}_{L^2} > 0$, which in turn implies that $\inprod{\bxi_g}{\bvtilde}_{L^2} > 0$. 

Executing the same argument with the second KKT condition~\eqref{eq:KKT-orig}, we obtain that $\inprod{\bxi_h}{\butilde}_{L^2} > 0$. Putting together the two pieces completes the proof.
\qed

\subsubsection{Proof of Proposition~\ref{prop:existence-SE-fixpt}} \label{sec:prop1-proof}

Recall the objects constructed in Eq.~\eqref{eq:construction-fixedpoint}.
By definition~\eqref{eq:SE-g-hilbert-h-hilbert} of the operator $\sg[\,\cdot\,]$ and since $\bxi_g$ is an independent standard Gaussian scaled by $d^{-1/2}$, we see that $\bg_{k+1}^{\SE} \in \mathbb{R}^{d}$ is jointly Gaussian with $\bG^{\SE}_{k}$ and has independent entries. 
Similarly using the definition of the operator $\sh[\,\cdot\,]$ and since $\bxi_h$ is an independent standard Gaussian scaled by $n^{-1/2}$, the vector $\bh_{k+1}^{\SE} \in \mathbb{R}^{n}$ is jointly Gaussian with $\bH^{\SE}_{k}$ and has independent entries.
Next, we have that $\hbu_{k+1}^{\SE} \in \spn\{\bu_{\ell}^{\SE},\,\ell\leq k+1\}$ and $\hbv_{k+1}^{\SE} \in \spn\{\bv_{\ell}^{\SE},\,\ell\leq k+1\}$ by construction. Moreover, since $\bg^{\SE}_{k+1} = \sg[\butilde]$, combining Eqs.~\eqref{eq:vhatse-defn} and~\eqref{eq:unique-KKT-saddle-g} yields
\begin{subequations} \label{eq:verify-KKT}
\begin{align}
\bg_{k+1}^{\SE} - \hbv^{\SE}_{k+1} = \sg[\butilde] 
		-
		\sqrt{\Lambda} \sum_{\ell = 1}^{k} \< \bh_{\ell}^{\SE,\perp} , \butilde \>_{L^2} \bv_{\ell}^{\SE,\perp}
		-
		\sqrt{\Lambda} \< \bxi_h , \butilde \>_{L^2} \frac{\proj_{\bV_{k}^{\SE}}^\perp\bvtilde}{\|\proj_{\bV_{k}^{\SE}}^\perp\bvtilde\|_{L^2}}
		=
		(\nabla \phi_{k+1}^v)(\bv^{\SE}_{k+1};\Vse_{k}).
\end{align}
Similarly, combining Eqs.~\eqref{eq:uhatse-defn} and~\eqref{eq:unique-KKT-saddle-h} with the definition $\bh^{\SE}_{k+1} = \sh[\bvtilde]$ yields
\begin{align}
\sqrt{\Lambda} \bh_{k+1}^{\SE} - \hbu^{\SE}_{k+1} = \sqrt{\Lambda} \sh[\bvtilde] 
		-
		\sum_{\ell = 1}^{k} \< \bg_{\ell}^{\SE,\perp} , \bvtilde \>_{L^2} \bu_{\ell}^{\SE,\perp}
		-
		\< \bxi_g , \bvtilde \>_{L^2} \frac{\proj_{\bU_{k}^{\SE}}^\perp\butilde}{\|\proj_{\bU_{k}^{\SE}}^\perp\butilde\|_{L^2}}
		=
		(\nabla \phi_{k+1}^u)(\bu^{\SE}_{k+1};\Use_{k}).
\end{align}
\end{subequations}
Taken together, the calculations in Eq.~\eqref{eq:verify-KKT} verify Eq.~\eqref{eq:KKT-cond-induction-step}. Furthermore, Eq.~\eqref{eq:span-inclusions-induction-step-HO} is immediately true. To complete the induction step,
 it remains to verify Eq.~\eqref{eq:fix-pt-HO-at-t}.

By the definition~\eqref{eq:SE-g-hilbert-h-hilbert} of the operator $\sg[\,\cdot\,]$, we have that for $\ell \leq k$, 
\begin{align} 
\inprod{\bg^{\SE}_{\ell}}{\bg^{\SE}_{k+1}}_{L^2} 
= \inprod{\bg^{\SE}_{\ell}}{\sg[\butilde]}_{L^2} 
&\overset{\1}{=} \Bigl \langle\sum_{\ell' = 1}^{\ell} \inprod{\bg^{\SE}_{\ell}}{\bg^{\SE, \perp}_{\ell'}}_{L^2} \cdot \bg^{\SE, \perp}_{\ell'},  \sum_{\ell''=1}^{k} \< \bu_{\ell''}^{\SE,\perp} , \butilde \>_{L^2} \cdot \bg_{\ell''}^{\SE,\perp}
	+
	\|\proj_{\bU_{k}^{\SE}}^\perp \butilde \|_{L^2} \bxi_g \Bigr \rangle_{L^2} \notag \\
&\overset{\2}{=} \sum_{\ell' = 1}^{\ell} \inprod{\bg^{\SE}_{\ell}}{\bg^{\SE, \perp}_{\ell'}}_{L^2} \cdot \inprod{\bu_{\ell'}^{\SE,\perp}}{\butilde }_{L^2} = \sum_{\ell' = 1}^{\ell} \inprod{\bg^{\SE}_{\ell}}{\bg^{\SE, \perp}_{\ell'}}_{L^2} \cdot \inprod{\bu_{\ell'}^{\SE,\perp}}{\bu^{\SE}_{k+1} }_{L^2}. \label{eq:SE-verify-first}
\end{align}
Here step $\1$ follows from expanding $\bg^{\SE}_{\ell}$ as a linear combination of the orthonormal vectors $\{ \bg^{\SE, \perp}_{\ell'} \}_{\ell' = 1}^\ell$, and step $\2$ uses the orthonormality of these vectors along with the fact that $\inprod{\bxi_g}{\bg^{\SE}_{\ell}}_{L^2} = 0$ for all $\ell \leq k$.
Now by the induction hypothesis, we have $\inprod{\bg^{\SE}_{\ell}}{\bg^{\SE, \perp}_{\ell'}}_{L^2} = \inprod{\bu^{\SE}_{\ell}}{\bu_{\ell'}^{\SE,\perp}}_{L^2}$ for all $\ell, \ell' \leq k$. Putting together the pieces, we have
\begin{align*}
\inprod{\bg^{\SE}_{\ell}}{\bg^{\SE}_{k+1}}_{L^2}  = \sum_{\ell' = 1}^\ell \inprod{\bu^{\SE}_{\ell}}{\bu_{\ell'}^{\SE,\perp}}_{L^2} \cdot \inprod{\bu_{\ell'}^{\SE,\perp}}{\bu^{\SE}_{k+1} }_{L^2} \overset{\1}{=}  \inprod{\bu^{\SE}_{\ell}}{\bu^{\SE}_{k+1} }_{L^2} \quad \text{ for all } \ell \leq k,
\end{align*}
where step $\1$ follows from the orthonormality of the vectors $\{ \bu_{\ell'}^{\SE,\perp} \}_{\ell' = 1}^\ell$ and the fact that $\bu^{\SE}_{\ell}$ is in the span of these vectors. At the same time, we have
\begin{align}
\inprod{\sg[\butilde]}{\sg[\butilde]}_{L^2} &=  \Bigl\| \sum_{\ell'=1}^{k} \< \bu_{\ell'}^{\SE,\perp} , \butilde \>_{L^2} \cdot \bg_{\ell'}^{\SE,\perp}
	+
	\|\proj_{\bU_{k}^{\SE}}^\perp \butilde \|_{L^2} \bxi_g \Bigr\|_{L^2}^2 \notag \\
	&\overset{\1}{=} \sum_{\ell'=1}^{k} \< \bu_{\ell'}^{\SE,\perp} , \butilde \>^2_{L^2} + \| \proj_{\bU_{k}^{\SE}}^\perp \butilde \|_{L^2}^2 = \| \butilde \|_{L^2}^2 = \| \bu^{\SE}_{k+1} \|_{L^2}^2, \label{eq:SE-verify-second}
\end{align}
where in step $\1$ we have used the orthonormality of the collection $\{ \bg_{\ell'}^{\SE,\perp} \}_{\ell' = 1}^{k}, \bxi_g$.
Combining Eqs.~\eqref{eq:SE-verify-first} and~\eqref{eq:SE-verify-second} verifies the induction hypothesis Eq.~\eqref{eq:fix-pt-HO-at-t}(a). Proceeding in an identical manner with $(\bH^{\SE}_{k+1}, \bV^{\SE}_{k+1})$ instead of $(\bG^{\SE}_{k+1}, \bU^{\SE}_{k+1})$ verifies the induction hypothesis Eq.~\eqref{eq:fix-pt-HO-at-t}(b), since the collection $\{ \bh_{\ell'}^{\SE,\perp} \}_{\ell' = 1}^{k}, \bxi_h$ is also orthonormal.

To verify~\eqref{eq:fix-pt-HO-at-t}(c), we substitute the definition of $\hbu^{\SE}_{k+1}$ from Eq.~\eqref{eq:uhatse-defn} to see that for any $\ell \leq k$, we have
\begin{align} \label{eq:SE-verify-c-first}
\< \bu_{\ell}^{\SE} , \hbu_t^{\SE} \>_{L^2} = \sum_{\ell'=1}^{k} \< \bg_{\ell'}^{\SE,\perp} , \bv_t^{\SE} \>_{L^2} \< \bu_{\ell'}^{\SE,\perp}, \bu_{\ell}^{\SE}\>_{L^2} \overset{\1}{=} \sum_{\ell'=1}^{k} \< \bg_{\ell'}^{\SE,\perp} , \bv_t^{\SE} \>_{L^2} \< \bg_{\ell'}^{\SE,\perp}, \bg_{\ell}^{\SE}\>_{L^2} \overset{\2}{=} \< \bg_{\ell}^{\SE}, \bv_t^{\SE}  \>_{L^2}.
\end{align}
In step $\1$, we have used the induction hypothesis to write $\< \bu_{\ell'}^{\SE,\perp}, \bu_{\ell}^{\SE}\>_{L^2} = \< \bg_{\ell'}^{\SE,\perp}, \bg_{\ell}^{\SE}\>_{L^2}$ for all $\ell, \ell' \leq k$. On the other hand, step $\2$ follows from the orthonormality of the vectors $\{ \bg_{\ell'}^{\SE,\perp} \}_{\ell' = 1}^\ell$ and the fact that $\bg^{\SE}_{\ell}$ is in the span of these vectors.

At the same time, using the definition of $\hbu^{\SE}_{k+1}$ from Eq.~\eqref{eq:uhatse-defn} and noting that $\butilde = \bu^{\SE}_{k+1}$ and $\bvtilde = \bv^{\SE}_{k+1}$ once again yields
\begin{align} \label{eq:SE-verify-c-second}
\< \bu_{k+1}^{\SE} , \hbu_{k+1}^{\SE} \>_{L^2} &= \sum_{\ell'=1}^{k} \< \bg_{\ell'}^{\SE,\perp} , \bv_{k+1}^{\SE} \>_{L^2} \< \bu_{\ell'}^{\SE,\perp}, \bu_{k+1}^{\SE}\>_{L^2} + \< \bxi_g , \bv^{\SE}_{k+1} \>_{L^2} \frac{\inprod{\bu_{k+1}^{\SE}}{\proj_{\bU_{k}^{\SE}}^\perp \bu_{k+1}^{\SE}}_{L^2}}{\|\proj_{\bU_{k}^{\SE}}^\perp \bu^{\SE}_{k+1} \|_{L^2}} \notag \\
&= \sum_{\ell'=1}^{k} \< \bg_{\ell'}^{\SE,\perp} , \bv_{k+1}^{\SE} \>_{L^2} \< \bu_{\ell'}^{\SE,\perp}, \bu_{k+1}^{\SE}\>_{L^2} + \< \bxi_g , \bv^{\SE}_{k+1} \>_{L^2} \cdot \|\proj_{\bU_{k}^{\SE}}^\perp\bu^{\SE}_{k+1}\|_{L^2} \notag \\
&= \inprod{\sg[\butilde]}{\bv^{\SE}_{k+1}}_{L^2} = \inprod{\bg^{\SE}_{k+1}}{\bv^{\SE}_{k+1}}_{L^2}. 
\end{align}
Combining Eqs.~\eqref{eq:SE-verify-c-first} and~\eqref{eq:SE-verify-c-second} verifies Eq.~\eqref{eq:fix-pt-HO-at-t}(c). Eq.~\eqref{eq:fix-pt-HO-at-t}(d) can be similarly verified via a parallel argument on the term $\< \bv_{\ell}^{\SE} , \hbv_{k+1}^{\SE} \>_{L^2}$.
\qed

\subsubsection{Proof of Proposition~\ref{prop:uniqueness-SE-fixpt}} \label{sec:prop2-proof}

As hypothesized, any solution to the fixed-point equations must have $(\bg^{\SE, *}_{k+1}, \bG^{\SE}_{k})$ jointly Gaussian as well as $(\bh^{\SE, *}_{k+1}, \bH^{\SE}_{k})$ jointly Gaussian, with the entries of these drawn i.i.d. across the row dimension. Define the collections $\{ \bg^{\SE, \perp}_{\ell} \}_{\ell = 1}^{k}$ and $\{ \bh^{\SE, \perp}_{\ell} \}_{\ell = 1}^{k}$ as the orthonormal vectors from before, noting that each of these also has i.i.d. entries.
Owing to joint Gaussianity and the fact that $\bg_{k+1}^{\SE, *}$ must have i.i.d. entries, there must exist scalars $\{L_{k+1, \ell}^g,L_{k+1, \ell}^h \}_{\ell = 1}^{k+1}$ such that $\bg_{k+1}^{\SE, *} = \sum_{\ell = 1}^{k} L^g_{k+1, \ell} \bg_{\ell}^{\SE,\perp} + L^g_{k+1, k+1} \bxi_g$ for an independent Gaussian $\bxi_g \sim \mathsf{N}(0, \bI_d/d)$, and
similarly, $\bh_{k+1}^{\SE, *} = \sum_{\ell = 1}^{k} L^h_{k+1, \ell} \bh_{\ell}^{\SE,\perp} + L^h_{k+1, k+1} \bxi_h$ for $\bxi_h \sim \mathsf{N}(0, \bI_n/n)$.

Any solution to the fixed point equations must also satisfy Eq.~\eqref{eq:span-inclusions-induction-step-HO}, in that we have $\hbv^{\SE, *}_{k+1} \in \spn( \{ \bv^{\SE}_{\ell} \}_{\ell = 1}^{k}, \bv^{\SE, *}_{k+1})$ and $\hbu^{\SE, *}_{k+1} \in \spn(\{ \bu^{\SE}_{\ell} \}_{\ell = 1}^{k}, \bu^{\SE, *}_{k+1} )$. Define the collection $\{ \bu^{\SE, \perp}_{\ell} \}_{\ell = 1}^{k}$ and $\{ \bv^{\SE, \perp}_{\ell} \}_{\ell = 1}^{k}$ as the orthonormal vectors from before---there must exist scalars $\{ \zeta_{k+1, \ell}^u,\zeta_{k+1, \ell}^v \}_{\ell = 1}^{k+1}$ such that $\hbu_{k+1}^{\SE,*} = \sum_{\ell = 1}^{k} \zeta_{k+1, \ell}^u \bu_{\ell}^{\SE, \perp} + \zeta_{k+1, k+1}^u \proj^{\perp}_{\bU^{\SE}_{k}} \bu_{k+1}^{\SE, *}$ and $\hbv_{k+1}^{\SE, *} = \sum_{\ell = 1}^{k} \zeta_{k+1, \ell}^v \bv_{\ell}^{\SE, \perp} + \zeta_{k+1, k+1}^v \proj^{\perp}_{\bV^{\SE}_{k}} \bv_{k+1}^{\SE, *}$. 
The SE equations must hold: Explicitly writing out Eqs.~\eqref{eq:fix-pt-HO-at-t}(a) and (b), we must have that 
\begin{subequations} \label{eq:L-zeta}
\begin{align}
L^g_{k+1, \ell} &= \inprod{\bg^{\SE, *}_{k+1}}{\bg^{\SE, \perp}_{\ell}}_{L^2} = \inprod{\bu^{\SE, *}_{k+1}}{\bu^{\SE, \perp}_{\ell}}_{L^2} \quad \text{ and }\nonumber\\
L^h_{k+1, \ell} &= \inprod{\bh^{\SE, *}_{k+1}}{\bh^{\SE, \perp}_{\ell}}_{L^2} = \inprod{\bv^{\SE, *}_{k+1}}{\bv^{\SE, \perp}_{\ell}}_{L^2} \quad \text{ for all } \ell \leq k; \\
L^g_{k+1, k+1} &= \inprod{\bg^{\SE, *}_{k+1}}{\bg^{\SE, *}_{k+1}}^{1/2}_{L^2} = \inprod{\bu^{\SE, *}_{k+1}}{\bu^{\SE, *}_{k+1}}^{1/2}_{L^2} = \|\proj_{\bU_{k}^{\SE}}^\perp\bu^{\SE, *}_{k+1} \|_{L^2} \quad \text{ and }\nonumber\\
L^h_{k+1, k+1} &= \inprod{\bh^{\SE, *}_{k+1}}{\bh^{\SE, *}_{k+1}}^{1/2}_{L^2} = \inprod{\bv^{\SE, *}_{k+1}}{\bv^{\SE, *}_{k+1}}^{1/2}_{L^2} = \|\proj_{\bV_{k}^{\SE}}^\perp\bv^{\SE, *}_{k+1}\|_{L^2}.
\end{align}
Similarly, writing out Eqs.~\eqref{eq:fix-pt-HO-at-t}(c) and (d), we must have
\begin{align}
\zeta_{k+1, \ell}^u = \inprod{\hbu^{\SE, *}_{k+1}}{\bu^{\SE, \perp}_{\ell}}_{L^2} = \inprod{\bv^{\SE, *}_{k+1}}{\bg^{\SE, \perp}_{\ell}}_{L^2} \;\; \text{ and } \;
&\zeta_{k+1, \ell}^v = \inprod{\hbv^{\SE, *}_{k+1}}{\bv^{\SE, \perp}_{\ell}}_{L^2} = \sqrt{\Lambda} \inprod{\bu^{\SE, *}_{k+1}}{\bh^{\SE, \perp}_{\ell}}_{L^2} \text{ for all } \ell \leq k; 
\end{align}
as well as
\begin{align}
\zeta_{k+1, k+1}^u &= \frac{\inprod{\hbu^{\SE, *}_{k+1}}{\proj^{\perp}_{\bU^{\SE}_{k}}\bu^{\SE, *}_{k+1}}_{L^2}}{\|\proj_{\bU_{k}^{\SE}}^\perp\bu^{\SE, *}_{k+1}\|_{L^2}^2} = \frac{\inprod{\bv^{\SE, *}_{k+1}}{\proj^{\perp}_{\bG^{\SE}_{k}} \bg^{\SE, *}_{k+1}}_{L^2}}{\|\proj_{\bU_{k}^{\SE}}^\perp\bu^{\SE, *}_{k+1}\|_{L^2}^2} \;\; \text{ and } \nonumber\\
\zeta_{k+1, k+1}^v &= \frac{\inprod{\hbv^{\SE, *}_{k+1}}{\proj^{\perp}_{\bV^{\SE}_{k}} \bv^{\SE, *}_{k+1}}_{L^2}}{\|\proj_{\bV_{k}^{\SE}}^\perp\bv^{\SE, *}_{k+1}\|_{L^2}^2} = \sqrt{\Lambda} \frac{\inprod{\bu^{\SE, *}_{k+1}}{\proj^{\perp}_{\bH^{\SE}_{k}} \bh^{\SE, *}_{k+1}}_{L^2}}{\|\proj_{\bV_{k}^{\SE}}^\perp\bv^{\SE, *}_{k+1}\|_{L^2}^2}.
\end{align}
\end{subequations}

Finally, a fixed-point solution $(\bu^{\SE, *}_{k+1}, \bv^{\SE, *}_{k+1})$ must also satisfy Eq.~\eqref{eq:KKT-cond-induction-step}, in that
\begin{align*}
\sqrt{\Lambda} \left( \sum_{\ell = 1}^{k} L^h_{k+1,\ell} \bh_{\ell}^{\SE,\perp} + L^h_{k+1, k+1} \bxi_h \right) - \sum_{\ell = 1}^{k} \zeta_{k+1, \ell}^u \bu_{\ell}^{\SE, \perp} - \zeta_{k+1, k+1}^u \proj^{\perp}_{\bU^{\SE}_{k}} \bu_{k+1}^{\SE, *} &= \nabla (\phi^u_{k+1})(\bu^{\SE, *}_{k+1}; \Use_{k}) \text{ and } \\
\sum_{\ell = 1}^{k} L^g_{k+1, \ell} \bg_{\ell}^{\SE,\perp} + L^g_{k+1, k+1} \bxi_g - \sum_{\ell = 1}^{k} \zeta_{k+1, \ell}^v \bv_{\ell}^{\SE, \perp} - \zeta_{k+1, k+1}^v \proj^{\perp}_{\bV^{\SE}_{k}} \bv_{k+1}^{\SE, *} &= \nabla (\phi^v_{k+1})(\bv^{\SE, *}_{k+1}; \Vse_{k}). 
\end{align*}
Substituting Eq.~\eqref{eq:L-zeta} above and noting that $\proj^{\perp}_{\bG^{\SE}_{k}} \bg^{\SE, *}_{k+1} = L^g_{k+1, k+1} \bxi_g = \| \proj_{\bU_{k}^{\SE}}^\perp\bu^{\SE, *}_{k+1} \|_{L^2} \cdot \bxi_g$, the first display reads as 
\begin{subequations} \label{eq:final-solution-constructed}
\begin{align}
&\sqrt{\Lambda} \Bigg( \underbrace{\sum_{\ell = 1}^{k} \inprod{\bv^{\SE, \perp}_{\ell}}{\bv^{\SE, *}_{k+1}}_{L^2} \bh_{\ell}^{\SE,\perp} + \| \proj_{\bV^{\SE}_{k}} \bv^{\SE, *}_{k+1}\|_{L^2} \cdot \bxi_h}_{\sh[\bv^{\SE, *}_{k+1}]} \Bigg) \nonumber\\
& \qquad \qquad - \sum_{\ell = 1}^{k} \inprod{\bg^{\SE, \perp}_{\ell}}{\bv^{\SE, *}_{k+1}}_{L^2}  \bu_{\ell}^{\SE, \perp} - 
		\< \bxi_g , \bv^{\SE, *}_{k+1} \>_{L^2} \frac{\proj_{\bU_{k}^{\SE}}^\perp\bu^{\SE, *}_t}{\|\proj_{\bU_{k}^{\SE}}^\perp\bu^{\SE, *}_{k+1}\|_{L^2}} = \nabla (\phi^u_{k+1})(\bu^{\SE, *}_{k+1}; \Use_{k}).
\end{align}
Proceeding in an identical manner for the second display after noting that 
\[
\proj^{\perp}_{\bH^{\SE}_{k}} \bh^{\SE, *}_{k+1} = L^h_{k+1, k+1} \bxi_h = \| \proj_{\bV_{k}^{\SE}}^\perp\bv^{\SE, *}_{k+1} \|_{L^2} \cdot \bxi_h,
\]
we have
\begin{align}
\underbrace{\sum_{\ell = 1}^{k} \inprod{\bu^{\SE, \perp}_{\ell}}{\bu^{\SE, *}_{k+1}}_{L^2} \bg_{\ell}^{\SE,\perp} + \| \proj_{\bU^{\SE}_{k}} \bu^{\SE, *}_{k+1} \|_{L^2} \cdot \bxi_g}_{\sg[\bu^{\SE}_{k+1}]} - \sqrt{\Lambda} \sum_{\ell = 1}^{k} &\inprod{\bh^{\SE, \perp}_{\ell}}{\bu^{\SE, *}_{k+1}}_{L^2}  \bv_{\ell}^{\SE, \perp} - \sqrt{\Lambda}
		\< \bxi_h , \bu^{\SE, *}_{k+1}\>_{L^2} \frac{\proj_{\bV_{k}^{\SE}}^\perp\bv^{\SE, *}_{k+1}}{\|\proj_{\bV_{k}^{\SE}}^\perp\bv^{\SE, *}_{k+1}\|_{L^2}} \nonumber\\
		&= \nabla (\phi^v_{k+1})(\bv^{\SE, *}_{k+1}; \Vse_{k}).
\end{align}
\end{subequations}
Comparing Eq.~\eqref{eq:final-solution-constructed} with Eq.~\eqref{eq:unique-KKT-saddle} and applying Lemma~\ref{lem:Hilbert-saddle-lemma}, we see that $(\bv^{\SE, *}_{k+1}, \bu^{\SE, *}_{k+1})$ is a solution to the Hilbert saddle problem~\eqref{eq:L2-inductive-saddle-new}. The construction of relations above confirm how the tuple $(\bg^{\SE, *}_{k+1}, \bh^{\SE, *}_{k+1}, \hbv^{\SE, *}_{k+1}, \hbu^{\SE, *}_{k+1})$ can be obtained from $(\bv^{\SE, *}_{k+1}, \bu^{\SE, *}_{k+1})$ via Eq.~\eqref{eq:construction-candidate-fixedpoint}.
\qed


\section{Proof of Theorem \ref{thm:exact-asymptotics}: Finite sample deviation around SE}\label{sec:proof-exact-asymptotics}

Let us begin by recalling some definitions for convenience. Recall from Eq.~\eqref{eq:PO-g-h-def} that
\[
\bg_{k} = \sum_{\ell=1}^{k} \bigl(L^{v}_{k}\bigr)_{k, \ell}\bv_{\ell} - \bX^{\top} \bu_{k} \quad \text{and} \quad \sqrt{\Lambda} \cdot \bh_{k} :=  \sum_{\ell = 1}^{k} \bigl(L^{u}_{k}\bigr)_{k, \ell}\bu_{\ell} + \bX \bv_k,
\]
so that $\bX^{\top} \bu_{k} =  \sum_{\ell=1}^{k} \bigl(L^{v}_{k}\bigr)_{k, \ell}\bv_{\ell}  - \bg_{k}$ and similarly $\bX \bv_k = \sqrt{\Lambda} \cdot \bh_k - \sum_{\ell=1}^{k} \bigl(L^{u}_{k}\bigr)_{k, \ell}\bu_{\ell}$. Also, since $\Gse_{k}$ consists of jointly Gaussian vectors (and analogously $\Hse_{k}$), we have the decomposition $\gse_{k} = \sum_{\ell = 1}^{k - 1} \alpha_{k,\ell} \gse_{\ell} + \alpha_{k,k} \bxi_g$ and $\hse_{k} = \sum_{\ell = 1}^{k - 1} \beta_{k,\ell} \hse_{\ell} + \beta_{k, k} \bxi_h$ for deterministic scalars $\{\alpha_{k ,\ell} \}_{\ell=1}^{k}$ and $\{ \beta_{k ,\ell} \}_{\ell=1}^{k}$ constructible from $\bK^g_k$ and $\bK^h_k$, respectively, as well as standard Gaussians $\bxi_g \sim \normal(0, \bI_d/d)$ and $\bxi_h \sim \normal(0, \bI_n/n)$ chosen independently of the past. 
Also recall the pseudo-Lipschitz functions $\psi_d$ and $\psi_n$ from the statement of the theorem.

Let 
\[
\mathcal{V}^{\mathrm{SE}}_k := \sigma\bigl(\{\Vse_k, \Gse_k\}\bigr) \quad \text{ and } \quad \mathcal{U}^{\mathrm{SE}}_k := \sigma\bigl(\{\Use_k, \Hse_k\}\bigr)
\]
denote the $\sigma$-algebras generated by the state evolution at time $k$. The following structural lemma regarding the state evolution is also a key cog that will be used in the proof.
\begin{lemma}
	\label{lem:conditional-expectation-PL}
	Let $\tau^{v}: \mathbb{R}^{d \times T} \times \mathbb{R}^{d \times T}$ and $\tau^{u}: \mathbb{R}^{n \times T} \times \mathbb{R}^{n \times T}$ be order-$2$ pseudo-Lipschitz with constant $L$.  For each $k \in \mathbb{N}$ such that $k < T$, there exists a $\mathcal{V}_{T-k}^{\mathrm{SE}}$-measurable function $\zeta^{v}_{k}: \mathbb{R}^{d \times (T - k)} \times \mathbb{R}^{d \times (T - k)}$ and  a $\mathcal{U}_{T-k}^{\mathrm{SE}}$-measurable function $\zeta^{u}_{k}: \mathbb{R}^{n \times (T - k)} \times \mathbb{R}^{n \times (T - k)}$ such that
	\begin{align*}
		\zeta_{k}^{v}(\bA, \bB) &= \mathbb{E}\bigl[\tau^v(\Vse_T, \Gse_T) \mid \Vse_{T-k} = \bA, \Gse_{T-k} = \bB\bigr], \;\; \text{ a.s., }\\
		\zeta_{k}^{u}(\bC, \bD) &= \mathbb{E}\bigl[\tau^u(\Use_T, \Hse_T) \mid \Use_{T-k} = \bC, \Hse_{T-k} = \bD\bigr], \;\; \text{ a.s., }
	\end{align*}
	and $\zeta_{k}^v, \zeta_{k}^{u}$ are order-$2$ pseudo-Lipschitz with constant $C_{\mathrm{SE}, T} > 0$ which depends only on the state evolution and the total number of iterations $T$. 
\end{lemma}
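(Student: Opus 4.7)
I proceed by backward induction on $k$. The base case $k=0$ is immediate since $\zeta_0^v = \tau^v$ is pseudo-Lipschitz with constant $L$. For the inductive step, assume $\zeta_k^v$ is order-$2$ pseudo-Lipschitz with constant $C_k$. Writing $\Vse_{T-k} = [\Vse_{T-k-1} \; \vert \; \vse_{T-k}]$ and $\Gse_{T-k} = [\Gse_{T-k-1} \; \vert \; \gse_{T-k}]$, the tower property yields
\begin{equation*}
\zeta_{k+1}^v(\bA, \bB) = \mathbb{E}\Bigl[\zeta_k^v\bigl([\bA \; \vert \; \vse_{T-k}],\, [\bB \; \vert \; \gse_{T-k}]\bigr) \,\Big|\, \Vse_{T-k-1} = \bA,\, \Gse_{T-k-1} = \bB\Bigr].
\end{equation*}
The crux reduces to expressing $(\vse_{T-k}, \gse_{T-k})$ as $\Phi(\bA, \bB, \bxi)$ for an independent Gaussian $\bxi$ and a map $\Phi$ that is Lipschitz in $(\bA, \bB)$ uniformly in $\bxi$, with constant controlled by the state evolution and the tuple $(\mu, L, K)$. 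Once this holds, composing with the pseudo-Lipschitz $\zeta_k^v$ and integrating over $\bxi$ via Cauchy--Schwarz (to absorb the pseudo-Lipschitz prefactor using the quadratic moments of $\bxi$) yields the claim for $\zeta_{k+1}^v$, with $C_{k+1}$ inflating $C_k$ by a factor depending only on the state evolution and $(\mu, L, K)$. A symmetric argument handles $\zeta_k^u$, and iterating $T$ times produces the stated constant $C_{\mathrm{SE}, T}$.

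\textbf{First-order update at time $T-k$.} By Eq.~\eqref{eq:se-first-order}, $\vse_{T-k} = f_{T-k}^v(\gse_{T-k-1} - \vhatse_{T-k-1}; \Vse_{T-k-1})$. Since $\vhatse_{T-k-1}$ is a deterministic linear function of $\Vse_{T-k-1}$ via the coefficients of $L^v_{T-k-1}$ (which are themselves determined by the state-evolution covariances $\bK^g_{T-k-1}, \bK^h_{T-k-1}$), and since joint Gaussianity in Eq.~\eqref{eq:se-guass-and-span} together with Assumption~\ref{asm:non-degeneracy} permits the decomposition $\gse_{T-k} = \sum_{\ell=1}^{T-k-1} \alpha_\ell \gse_\ell + \alpha_{T-k}\, \bxi$ with deterministic $\alpha_\ell$ and an independent $\bxi \sim \mathsf{N}(\mathbf{0}, \mathbf{I}_d/d)$, the Lipschitzness of $f_{T-k}^v$ from Assumption~\ref{asm:regularity} immediately yields the required Lipschitz representation $\Phi$.

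\textbf{Saddle update at time $T-k$.} Here $(\vse_{T-k}, \use_{T-k})$ is the unique solution of the Hilbert-space saddle problem~\eqref{eq:L2-inductive-saddle-new} established in Lemma~\ref{lem:Hilbert-saddle-lemma}. Its parameters---the linear functionals built from the past iterates and the quadratic parts $\phi^v_{T-k}(\cdot\,;\,\Vse_{T-k-1})$ and $\phi^u_{T-k}(\cdot\,;\,\Use_{T-k-1})$---depend on $(\bA, \bB)$ only through deterministic maps, while the auxiliary Gaussians $\bxi_g, \bxi_h$ enter as independent noise. I write the KKT system~\eqref{eq:unique-KKT-saddle} for two nearby parameter realizations, subtract, and invoke strong monotonicity (from $\mu$-strong convexity of $\phi^v$ together with the corresponding strong concavity of $-\phi^u$) and $L$-smoothness of the associated gradient maps to bound the variation of $(\vse_{T-k}, \use_{T-k})$ by a Lipschitz-scaled variation of the parameters. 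The scalar $\gse_{T-k}$ is then recovered from $\vse_{T-k}$ via the KKT identity $\gse_{T-k} = \nabla \phi^v_{T-k}(\vse_{T-k}; \Vse_{T-k-1}) + \vhatse_{T-k}$ together with the explicit formula~\eqref{eq:vhatse-defn}.

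\emph{The main obstacle} lies in this last saddle case: quantifying the Lipschitz dependence of the Hilbert-space saddle solution on $(\bA, \bB)$ is delicate, because the bilinear cross-terms $-\inprod{\sg[\bu]}{\bv}_{L^2} + \sqrt{\Lambda}\,\inprod{\sh[\bv]}{\bu}_{L^2}$ couple the two variables across $\mathcal{H}_n^{k+1}$ and $\mathcal{H}_d^{k+1}$ and do not contribute strong monotonicity in either variable alone. The strongly convex-concave structure supplied by $\phi^u, \phi^v$ nonetheless yields a dimension-free, monotone-operator Lipschitz bound with effective condition number of order $L/\mu$, after which the pseudo-Lipschitz property passes cleanly through composition with $\zeta_k^v$ and Gaussian integration, completing the induction.
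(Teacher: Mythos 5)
Your overall scaffolding---backward induction on $k$, the tower property, and the reduction to a one-step representation $(\vse_{T-k},\gse_{T-k})=\Phi(\bA,\bB,\bxi)$ with $\Phi$ Lipschitz, followed by Gaussian integration using $\mathbb{E}\|\bxi\|_2\lesssim 1$---is exactly the paper's strategy, and your first-order case coincides with the paper's (Gaussian decomposition of $\gse_{T-k}$ plus Lipschitzness of $f^v_{T-k}$). The gap is in the saddle case. The paper never revisits the Hilbert-space saddle problem there: the marginal SE relations already give a closed-form Lipschitz representation. From Eq.~\eqref{eq:se-saddle} and the span condition in Eq.~\eqref{eq:se-guass-and-span}, writing $\vhatse_{T-k}=\sum_{\ell\le T-k}c_\ell\vse_\ell$, one gets $c_{T-k}\vse_{T-k}+\nabla\phi^v_{T-k}(\vse_{T-k};\Vse_{T-k-1})=\gse_{T-k}-\sum_{\ell<T-k}c_\ell\vse_\ell$, i.e.\ $\vse_{T-k}$ is the proximal operator of a strongly convex, smooth function evaluated at a point that is \emph{linear} in $(\bA,\bB,\widetilde{\bg})$ after decomposing $\gse_{T-k}$ into past Gaussians plus independent noise; nonexpansiveness of the prox \citep[Proposition 12.28]{bauschke2017convex} then gives the Lipschitz map $\Phi$ at once. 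Only $v$-side quantities appear, so no coupled system ever needs to be solved.

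Your proposed route---perturbing the KKT system~\eqref{eq:unique-KKT-saddle} of the Hilbert saddle problem~\eqref{eq:L2-inductive-saddle-new} in ``parameter realizations'' and invoking strong monotonicity of the convex--concave operator---contains a genuine conceptual flaw. That problem is posed over random variables, and the cross-coupling enters only through $L^2$ inner products such as $\langle\hseperp_\ell,\butilde\rangle_{L^2}$ and $\|\proj_{\Use_k}^\perp\butilde\|_{L^2}$; once the SE is fixed (as it is for this lemma), these are deterministic constants, not functions of the conditioning values $(\bA,\bB)$. Consequently: (i) the saddle problem is not parameterized by realizations, so ``two nearby parameter realizations'' of its KKT system is not meaningful until you descend to the realization level; (ii) at the realization level the two KKT equations decouple---the $v$-equation contains $\butilde$ only through fixed scalars---so the bilinear coupling you single out as ``the main obstacle'' is no obstacle at all, and a jointly realized coupled system cannot even be written down, since the SE places the $u$-side and $v$-side on different probability spaces; and (iii) an $L^2$-perturbation bound for the saddle solution controls how the solution changes when the problem's random data change, which is not the pointwise (realization-wise) Lipschitz dependence on the conditioned values that the conditional-expectation argument requires. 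The key quantitative step of your plan is thus asserted rather than proved, and in a framework where it is not well-posed; replacing it by the prox representation above (equivalently, by the realization-level $v$-equation with deterministic coefficients, which is a strongly monotone equation in $\vse_{T-k}$ alone) closes the gap and recovers the paper's proof.
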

Lemma~\ref{lem:conditional-expectation-PL} is proved in Section~\ref{sec:proof-PL-SE}.

With this technical lemma in hand, we begin by postulating a stronger induction hypothesis in Proposition~\ref{prop:events-induction}, from which we prove Theorem~\ref{thm:exact-asymptotics}. 
We first define a collection of relevant events. We begin by defining the scalar $\Delta_0(\delta) := 1 + \sqrt{\Lambda} + \sqrt{\frac{2\Lambda\bigl[T\log(3 + 6T) + \log(1/\delta)\bigr]}{n }}$ and the sequence of scalars
\begin{align}
	 \Delta_{1}(k,\delta) &:= C_{\mathrm{SE}} \bigl(k!\bigr)^2 \cdot \Bigl(\frac{T\log(3 + 6T) + \log(\frac{1}{\delta})}{n}\Bigr)^{2^{-k}} \quad \text{ for } \quad k \in [T],
\end{align}
where $C_{\mathrm{SE}}$ is constant depending purely on the state evolution quantities.

Our events are then defined as:
\begin{align*}
 	\Omega_{op} &:= \Bigl\{ \| \bX \|_{\mathrm{op}} \leq \Delta_0(\delta) \Bigr\},\\
	\Omega_k^{vv} &:= \Bigl\{ \bigl\| \mathbb{E}\bigl[\llangle \Vse_T, \Vse_{T} \rrangle \mid \Vse_{k} = \bV_{k}, \Gse_{k} = \bG_k\bigr] - \llangle \Vse_T, \Vse_T \rrangle_{L^2} \bigr\|_{\infty} \leq \Delta_{1}(k, \delta)\Bigr\},\\
	\Omega_k^{gv} &:= \Bigl\{ \bigl\| \mathbb{E}\bigl[\llangle \Gse_T, \Vse_{T} \rrangle \mid \Vse_{k} = \bV_{k}, \Gse_{k} = \bG_k\bigr] - \llangle \Gse_T, \Vse_T \rrangle_{L^2} \bigr\|_{\infty} \leq \Delta_{1}(k, \delta)\Bigr\},\\
	\Omega_k^{gg} &:= \Bigl\{ \bigl\| \mathbb{E}\bigl[\llangle \Gse_T, \Gse_{T} \rrangle \mid \Vse_{k} = \bV_{k}, \Gse_{k} = \bG_k\bigr] - \llangle \Gse_T, \Gse_T \rrangle_{L^2} \bigr\|_{\infty} \leq \Delta_{1}(k, \delta)\Bigr\},\\
	\Omega_k^{uu} &:= \Bigl\{ \bigl\| \mathbb{E}\bigl[\llangle \Use_T, \Use_{T} \rrangle \mid \Use_{k} = \bU_{k}, \Hse_{k} = \bH_k\bigr] - \llangle \Use_T, \Use_T \rrangle_{L^2} \bigr\|_{\infty} \leq \Delta_{1}(k, \delta)\Bigr\},\\
	\Omega_k^{hu} &:= \Bigl\{ \bigl\| \mathbb{E}\bigl[\llangle \Hse_T, \Use_{T} \rrangle \mid \Use_{k} = \bU_{k}, \Hse_{k} = \bH_k\bigr] - \llangle \Hse_T, \Use_T \rrangle_{L^2} \bigr\|_{\infty} \leq \Delta_{1}(k, \delta)\Bigr\}, \\
	\Omega_k^{hh} &:= \Bigl\{ \bigl\| \mathbb{E}\bigl[\llangle \Hse_T, \Hse_{T} \rrangle \mid \Use_{k} = \bU_{k}, \Hse_{k} = \bH_k\bigr] - \llangle \Hse_T, \Hse_T \rrangle_{L^2} \bigr\|_{\infty} \leq \Delta_{1}(k, \delta)\Bigr\}, \\
	\Omega_{k}^{\psi_d} &:= \Bigl\{ \bigl \lvert \mathbb{E}\bigl[ \psi_d(\Vse_T, \Gse_T) \mid \Vse_k = \bV_k, \Gse_k = \bG_k\bigr] - \mathbb{E}\bigl[\psi_d(\Vse_T, \Gse_T)\bigr] \bigr \rvert \leq \Delta_1(k, \delta)\Bigr\}, \\
	\Omega_{k}^{\psi_n} &:= \Bigl\{ \bigl \lvert \mathbb{E}\bigl[ \psi_n(\Use_T, \Hse_T) \mid \Use_k = \bU_k, \Hse_k = \bH_k\bigr] - \mathbb{E}\bigl[\psi_n(\Use_T, \Hse_T)\bigr] \bigr \rvert \leq \Delta_1(k, \delta)\Bigr\},\\
	\Omega_k^{\phi^v} &:= \Bigl \{\max_{\ell \in T} \Bigl \lvert \mathbb{E}\bigl[\phi_{\ell}^{v}(\vse_{\ell}) \mid \Vse_{k} = \bV_k, \Gse_k = \bG_k\bigr] - \EE\bigl[\phi_{\ell}^{v}(\vse_{\ell})\bigr] \Bigr \rvert \leq \Delta_1(k, \delta)\Bigr\},\\\
		\Omega_k^{\phi^u} &:= \Bigl \{\max_{\ell \in T} \Bigl \lvert \mathbb{E}\bigl[\phi_{\ell}^{u}(\use_{\ell}) \mid \Use_{k} = \bU_k, \Hse_k = \bH_k\bigr] - \EE\bigl[\phi_{\ell}^{u}(\use_{\ell})\bigr] \Bigr \rvert \leq \Delta_1(k, \delta)\Bigr\}
\end{align*}
Note that Lemma~\ref{lem:conditional-expectation-PL} guarantees that the conditional expectations above are all well-defined measurable functions.

Our aforementioned stronger claim is then given by the following proposition.
\begin{proposition} \label{prop:events-induction}
With the events defined above, let $\Omega_k = \Omega^{op} \cap \Omega_k^{vv} \cap \Omega_k^{gv} \cap  \Omega_k^{gg} \cap \Omega_k^{uu} \cap \Omega_k^{hu} \cap \Omega_k^{hh} \cap \Omega_k^{\psi_d} \cap \Omega_k^{\psi_n} \cap \Omega_k^{\phi^u} \cap \Omega_k^{\phi^v}$. Then for each $k \in [T]$, we have
\[
\mathbb{P}(\Omega_k^c) \leq \frac{2\delta \cdot 3^{k-1}}{3^{T} - 1}.
\]
\end{proposition}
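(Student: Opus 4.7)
The proof proceeds by induction on $k$. For the base case $k = 1$, note that $\bv_1 = \vse_1$ and $\bu_1 = \use_1$ are deterministic by the choice of initialization~\eqref{eq:initialization}, while $\bG_1 = -\bX^\top \bu_1 \sim \mathsf{N}(0, \|\bu_1\|_2^2 \bI_d/d) \overset{(d)}{=} \gse_1$ and similarly $\bH_1 \overset{(d)}{=} \hse_1$, so that $(\bV_1, \bG_1) \overset{(d)}{=} (\Vse_1, \Gse_1)$ exactly. By Lemma~\ref{lem:conditional-expectation-PL}, each conditional expectation appearing in $\Omega_1^{\bullet}$ is a pseudo-Lipschitz function of $(\bV_1, \bG_1)$ or $(\bU_1, \bH_1)$; a Gaussian pseudo-Lipschitz concentration inequality (Lemma~\ref{lem:pseudo-Lipschitz-concentration}) together with a standard operator-norm tail bound for $\Omega^{op}$ then gives $\mathbb{P}(\Omega_1^c) \leq \frac{2\delta}{3^T - 1}$ after a union bound.

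For the induction step from $k-1$ to $k$, the high-level strategy is to reveal $(\bV_k, \bG_k)$ (resp.\ $(\bU_k, \bH_k)$) as a measurable function of $(\bV_{k-1}, \bG_{k-1}, \bU_{k-1}, \bH_{k-1})$ plus an independent Gaussian vector whose distribution matches the corresponding state-evolution increment. The main tool is Bolthausen's conditioning identity: conditioning on the past iterates, $\bX$ decomposes as its projection onto the span of the previous $(\bv_\ell, \bu_\ell)_{\ell \leq k-1}$ plus an independent Gaussian matrix acting on the orthogonal complement. For a first-order update, this decomposition directly exhibits $\bX \bv_{k-1}$ as a deterministic linear combination of past iterates (producing the Onsager-type correction matching $\uhatse_{k-1}$) plus a Gaussian part with the covariance of $\sqrt{\Lambda}\hse_{k-1}$. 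Combining this with the inductive bounds contained in $\Omega_{k-1}$ and with $\Omega^{op}$ gives pseudo-Lipschitz closeness of $(\bV_k, \bG_k)$ to $(\Vse_k, \Gse_k)$; Lemma~\ref{lem:conditional-expectation-PL} then certifies that each conditional expectation in the definition of $\Omega_k^{\bullet}$ is pseudo-Lipschitz in $(\bV_k, \bG_k)$ with a constant depending only on the state evolution and $T$, so that a final application of Lemma~\ref{lem:pseudo-Lipschitz-concentration} verifies each event at level $k$.

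For a saddle update at iteration $k$, we invoke a sequential variant of the convex Gaussian minmax theorem: after conditioning, the primal objective $\bu^\top \bX \bv - \phi^u_k(\bu; \bU_{k-1}) + \phi^v_k(\bv; \bV_{k-1})$ is comparable via Gordon's inequality to an auxiliary objective in which the bilinear term is replaced by independent Gaussian vectors acting on $\bv$ and $\bu$ respectively. The KKT conditions of this auxiliary saddle coincide with the state-evolution relations~\eqref{eq:se-saddle} by construction, paralleling the Hilbert-space argument in Section~\ref{sec:indstep-saddle}. Strong convexity--concavity from Assumption~\ref{asm:regularity} localizes the saddle, which together with Gaussian concentration of scalar summaries of the auxiliary problem transfers quantitative closeness of $(\bu_k, \bv_k)$ to $(\use_k, \vse_k)$; each such transfer typically degrades the deviation exponent by a factor of two, which is precisely the source of the $2^{-k}$ in $\Delta_1(k,\delta)$. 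The factor $3^{k-1}$ in the probability budget accommodates a three-way split at each induction step, for (i) inheriting $\Omega_{k-1}$, (ii) controlling the Bolthausen remainder through $\Omega^{op}$, and (iii) applying Gaussian/Gordon concentration to the new auxiliary problem, after which a union bound over the finitely many events at level $k$ and the two possible update types closes the induction.

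The main technical obstacle is the sequential Gordon reduction in the saddle case: the auxiliary Gaussian vectors at step $k$ must be constructed so as to respect the orthonormal basis of the past generated by Bolthausen's conditioning, and the resulting auxiliary objective must retain enough strong convexity--concavity---despite having past-iterate-dependent penalty terms---to yield genuinely quantitative deviation bounds. The Hilbert-space reference object from Lemma~\ref{lem:Hilbert-saddle-lemma} is the natural target of the comparison, and verifying that its solution is the correct ``proxy'' for $(\bu_k, \bv_k)$ at the finite-sample level, with the deviation compatible with $\Delta_1(k,\delta)$, is the crux of the argument.
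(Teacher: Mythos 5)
Your base case and first-order induction step follow essentially the same route as the paper: distributional equality of $(\bV_1,\bG_1,\bU_1,\bH_1)$ with the SE at $k=1$, then Bolthausen conditioning (Lemma~\ref{lem:X-decomp}) to split $\bX\bv_k$ into a past-measurable part matching $\uhatse_k$ plus a fresh Gaussian, followed by Lemma~\ref{lem:conditional-expectation-PL} and Lemma~\ref{lem:pseudo-Lipschitz-concentration}. You omit the quantitative bookkeeping (Lemma~\ref{lem:facts}, Claim~\ref{claim:first-order-hat-value}, and the surrogate $\bh^{\mathsf{surr}}_{k+1}$), but as a sketch that part is sound.

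The genuine gap is in the saddle-update step, and you have in effect flagged it yourself by calling the transfer from the auxiliary problem to $(\bu_k,\bv_k)$ ``the crux'' without supplying it. Two specific problems: first, your claim that the auxiliary objective ``must retain enough strong convexity--concavity'' is false as stated --- the paper is explicit that $\mathfrak{L}^{\mathrm{AO}}_{k+1}$ is \emph{not} convex-concave (the cross terms $\|\proj^{\perp}_{\bU_k}\bu\|_2\,\bgamma_d^{\top}\bv$ and $\|\proj^{\perp}_{\bV_k}\bv\|_2\,\bgamma_n^{\top}\bu$ destroy this), so it has no well-defined saddle and its ``KKT conditions'' cannot be said to coincide with Eq.~\eqref{eq:se-saddle}. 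Second, the infinite-dimensional solution of Lemma~\ref{lem:Hilbert-saddle-lemma} cannot by itself serve as the finite-sample proxy; the paper's resolution is to build \emph{finite-dimensional approximate stationary points} $\vapx_{k+1},\uapx_{k+1}$ by plugging the empirical past $(\bV_k,\bG_k,\bU_k,\bH_k)$ into the SE proximal maps (Eq.~\eqref{eq:apx-general}), exploit the identity $\vse_{k+1}=\vapx_{k+1}(\Vse_k,\Gse_k,\bgamma_d/\sqrt{d})$ to prove they inherit the inductive concentration bounds (Lemma~\ref{lem:apx-concentration-useful}), and then restore convexity/concavity by passing to the surrogate functions $G$ and $H$, whose strong convexity/concavity rests on the sign facts $\langle\bxi_g,\vse_{k+1}\rangle_{L^2}\geq 0$ and $\langle\bxi_h,\use_{k+1}\rangle_{L^2}\geq 0$ from Claim~\ref{clm:chains}; approximate stationarity of $G,H$ at these points (Lemma~\ref{lem:approximate-stationarity-F-func}) then drives a local-stability argument over deviation sets $\mathbb{D}_d$, with the CGMT applied in both directions to the restricted and unrestricted saddle values. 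Without this construction (or an equivalent replacement), the step from ``Gordon comparison'' to a quantitative bound on $|\tau^{\psi_d,k+1}(\vpo_{k+1},\ldots)-\EE\,\tau^{\psi_d,k+1}(\vse_{k+1},\ldots)|$ does not go through, so your induction does not close in the saddle case.
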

Let us confirm that Proposition~\ref{prop:events-induction} implies the claimed theorem. 
By definition, on event $\Omega_T$, we have
\begin{align*}
	&\bigl \lvert \psi_d(\bV_T, \bG_T)  - \mathbb{E}\bigl[\psi_d(\Vse_T, \Gse_T)\bigr] \bigr \rvert \leq \Delta_1(T, \delta) \quad \text{ and } \quad \bigl \lvert \psi_n(\bU_T, \bH_T)  - \mathbb{E}\bigl[\psi_n(\Use_T, \Hse_T)\bigr] \bigr \rvert \leq \Delta_1(T,\delta).
\end{align*}
Additionally, the event $\Omega_k$ occurs with probability at least $1 - \delta$ for each $k \in [T]$, by Proposition~\ref{prop:events-induction}. Applying the proposition for $k = T$ thus proves Theorem~\ref{thm:exact-asymptotics}. 

We are now ready to proceed to the proof of Proposition~\ref{prop:events-induction}, by induction.

\subsection{Base case: $k = 1$}
First note that by, e.g.,~\citet[][Theorem 6.1]{wainwright2019high}, the event $\Omega_{op}$ holds with probability at least $1-\delta/\{T \log(3 + 6T)\}$.  
By Eq.~\eqref{eq:SE-base-case}, we have $\use_{1} = \bu_1$ and $\vse_1 = \bv_1$. Furthermore, since $\vhatse_1 = 0$ and $\uhatse_1 = 0$, we have $L^{v}_1 = L^u_1 = 0$. Consequently, we have
$\bg_1 = - \bX^{\top} \bu_1$ and $\sqrt{\Lambda} \cdot \bh_1 = \bX \bv_1$.
 Since $\bu_1$ and $\bv_1$ are chosen independently of $\bX$, we have $\bg_1 \sim \normal(0, \frac{\| \bu_1 \|_2^2}{p} \bI_p)$ and $\bh_1 \sim \normal(0, \frac{\| \bv_1 \|_2^2}{n} \bI_n)$. Thus, we have $(\bg_1, \bv_1, \bh_1, \bu_1) \overset{(d)}{=} (\gse_1, \vse_1, \hse_1, \use_1)$.  
We establish that $\mathbb{P}(\Omega_1^{vv}) \geq 1 - \frac{\delta}{10(3^{T} - 1)}$ and , noting that similar arguments can be used to control the probability of the other events by the same value. Taking a union bound then proves the base case. 

Since $\vse_1 = \bv_1$ by construction, conditioning on the event $\{ \vse_1 = \bv_1 \}$ is trivial and we have
\begin{align} \label{eq:base1}
\mathbb{E}\bigl[\llangle \Vse_T, \Vse_{T} \rrangle \mid \vse_{1} = \bv_{1}, \gse_{1} = \bg_1 \bigr] = \mathbb{E}\bigl[\llangle \Vse_T, \Vse_{T} \rrangle \mid \gse_{1} = \bg_1 \bigr].
\end{align}
But each entry of the matrix on the RHS, by Lemma~\ref{lem:conditional-expectation-PL}, is an order-2 pseudo-Lipschitz function of $\bg_1$. Thus, for each each $1 \leq \ell \leq \ell' \leq T$, 
Lemma~\ref{lem:pseudo-Lipschitz-concentration} applied in conjunction with Bernstein's inequality~\citep[see, e.g.,][Theorem 2.8.1]{vershynin2018high} yields
\begin{align} \label{eq:base2}
\left| \EE[ \inprod{\vse_\ell}{\vse_{\ell'}} \mid \gse_{1} = \bg_{1} ] - \EE[\inprod{\vse_\ell}{\vse_{\ell'}}] \right| \leq \Delta_1(1, \delta) \quad \text{ with probability } \quad \geq 1 - \frac{\delta}{10T^2\log(3 + 6T)}.
\end{align}
Putting together Eqs.~\eqref{eq:base1} and~\eqref{eq:base2} with a union bound over the $\binom{T}{2}$ entries in the matrix, we obtain that with probability at least $1 - \frac{\delta}{10T^2\log(3 + 6T)}$, the following deviation bound holds:
\[
\bigl\| \mathbb{E}\bigl[\llangle \Vse_T, \Vse_{T} \rrangle \mid \Vse_{k} = \bV_{k}, \Gse_{k} = \bG_k\bigr] - \llangle \Vse_T, \Vse_T \rrangle_{L^2} \bigr\|_{\infty} \leq \Delta_{1}(1, \delta).
\]
This completes the proof of the base case.

\subsection{Induction hypothesis}

We now explicitly state the induction hypothesis and collect some facts that are guaranteed by it.
The induction hypothesis is that $\Omega_{k'}$ occurs with high probability for all $1 \leq k' \leq k$.
In particular, on $\cap_{k' = 1}^k \Omega_{k'}$, we have (among other bounds) that for all $k' \leq k$:
\begin{subequations} \label{eq:ind-events-four}
\begin{align}
\max_{\ell, \ell' \in [T]} \left| \EE[ \inprod{\vse_\ell}{\vse_{\ell'}} \mid \Vse_{k'} = \bV_{k'}, \Gse_{k'} = \bG_{k'} ] - \EE[\inprod{\vse_\ell}{\vse_{\ell'}}] \right| &\leq \Delta_1(k', \delta), \label{eq:ind-events-four-1} \\
\max_{\ell, \ell' \in [T]} \left| \EE[ \inprod{\gse_\ell}{\vse_{\ell'}} \mid \Vse_{k'} = \bV_{k'}, \Gse_{k'} = \bG_{k'} ] - \EE[\inprod{\gse_\ell}{\vse_{\ell'}}] \right| &\leq \Delta_1(k', \delta),  \label{eq:ind-events-four-2} \\
\max_{\ell, \ell' \in [T]} \left| \EE[ \inprod{\gse_\ell}{\gse_{\ell'}} \mid \Vse_{k'} = \bV_{k'}, \Gse_{k'} = \bG_{k'} ] - \EE[\inprod{\gse_\ell}{\gse_{\ell'}}] \right| &\leq \Delta_1(k', \delta),  \label{eq:ind-events-four-gg} \\
\max_{\ell, \ell' \in [T]} \left| \EE[ \inprod{\use_\ell}{\use_{\ell'}} \mid \Use_{k'} = \bU_{k'}, \Hse_{k'} = \bH_{k'} ] - \EE[\inprod{\use_\ell}{\use_{\ell'}}] \right| &\leq \Delta_1(k', \delta), \label{eq:ind-events-four-3} \\
\max_{\ell, \ell' \in [T]} \left| \EE[ \inprod{\hse_\ell}{\use_{\ell'}} \mid \Use_{k'} = \bU_{k'}, \Hse_{k'} = \bH_{k'} ] - \EE[\inprod{\hse_\ell}{\use_{\ell'}}] \right| &\leq \Delta_1(k', \delta), \label{eq:ind-events-four-4} \\
\max_{\ell, \ell' \in [T]} \left| \EE[ \inprod{\hse_\ell}{\hse_{\ell'}} \mid \Use_{k'} = \bU_{k'}, \Hse_{k'} = \bH_{k'} ] - \EE[\inprod{\hse_\ell}{\hse_{\ell'}}] \right| &\leq \Delta_1(k', \delta), \label{eq:ind-events-four-hh} \\
\bigl \lvert \mathbb{E}\bigl[ \psi_d(\Vse_T, \Gse_T) \mid \Vse_{k'} = \bV_{k'}, \Gse_{k'} = \bG_{k'}\bigr] - \mathbb{E}\bigl[\psi_d(\Vse_T, \Gse_T)\bigr] \bigr \rvert &\leq \Delta_1(k', \delta), \label{eq:ind-events-four-psid}\\
\bigl \lvert \mathbb{E}\bigl[ \psi_n(\Use_T, \Hse_T) \mid \Use_{k'} = \bU_{k'}, \Hse_{k'} = \bH_{k'}\bigr] - \mathbb{E}\bigl[\psi_n(\Use_T, \Hse_T)\bigr] \bigr \rvert &\leq \Delta_1(k', \delta), \label{eq:ind-events-four-psin}\\
\max_{\ell \in T} \Bigl \lvert \mathbb{E}\bigl[\phi_{\ell}^{v}(\vse_{\ell}) \mid \Vse_{k'} = \bV_{k'}, \Gse_{k'} = \bG_{k'}\bigr] - \EE\bigl[\phi_{\ell}^{v}(\vse_{\ell})\bigr] \Bigr \rvert &\leq \Delta_1(k', \delta), \label{eq:ind-events-four-phiv}\\
\max_{\ell \in T} \Bigl \lvert \mathbb{E}\bigl[\phi_{\ell}^{u}(\use_{\ell}) \mid \Use_{k'} = \bU_{k'}, \Hse_{k'} = \bH_{k'}\bigr] - \EE\bigl[\phi_{\ell}^{u}(\use_{\ell})\bigr] \Bigr \rvert &\leq \Delta_1(k', \delta) \label{eq:ind-events-four-phiu}.
\end{align}
\end{subequations}
Setting $\ell = \ell' = k'$ above, we obtain that on $\Omega_k$
\[
|\| \bu_{k'} \|_2^2 - \EE[ \| \use_{k'} \|_2^2]| \;\vee\; |\| \bv_{k'} \|_2^2 - \EE[ \| \vse_{k'} \|_2^2]| \;\vee\; |\| \bg_{k'} \|_2^2 - \EE[ \| \gse_{k'} \|_2^2]| \;\vee\; |\| \bh_{k'} \|_2^2 - \EE[ \| \hse_{k'} \|_2^2]| \leq \Delta_1(k', \delta).
\]
Since the state evolution quantities themselves are bounded and we have $n$ sufficiently large, this implies that on $\Omega_k$,
\begin{align} \label{eq:bounded-norms}
\max( \| \bu_{k'} \|^2_2, \| \bv_{k'} \|^2_2, \| \bg_{k'} \|^2_2, \| \bh_{k'} \|^2_2, \| \use_{k'} \|^2_2, \| \vse_{k'} \|^2_2, \| \gse_{k'} \|^2_2, \| \hse_{k'} \|^2_2) \lesssim 1 \quad \text{for all } k' \in [k]. 
\end{align}

Let us now turn to listing a key result that is guaranteed on event $\Omega_k$.
The following useful lemma that decomposes $\bX$ at iteration $k$, allowing us to approximate the projection of the data matrix $\bX$ onto the subspace spanned by the previous iterates. The lemma is proved in Section~\ref{sec:helper-thm2}. Define the pair of matrices 
\begin{align} \label{eq:def-Tg-Th}
	\bT_g := \bG_k \bigl(\bK^g_k\bigr)^{-1} (\bU_k)^{\top} \qquad \text{ and } \qquad \bT_h := \sqrt{\Lambda} \cdot \bH_k \bigl(\bK^h_k\bigr)^{-1} (\bV_k )^{\top}.
\end{align}
\begin{lemma}
	\label{lem:X-decomp}
	Consider the random matrix $\bX^\parallel = \bX - \proj^{\perp}_{\bU_k} \bX \proj_{\bV_k}^{\perp}$.  There exists a constant $C_{\mathrm{SE}} > 0$, depending only on the state evolution quantities, such that on the event $\Omega_k$, we have
	\[
	\Big\| \bX^\| - (-\bT_g^\top + \bT_h) \Big\|_{\op} \leq C_{\mathrm{SE}} \cdot k \cdot \Delta_1(k, \delta),
	\]
	where $\bT_g$ and $\bT_h$ are defined in Eq.~\eqref{eq:def-Tg-Th}.
	Moreover, if $\widetilde{\bX}$ denotes an independent copy of the data matrix $\bX$, then $\bX \; \vert \; \bU_{k}, \bV_{k} \overset{d}{=} \bX^{\parallel} + \proj^{\perp}_{\bU_k} \widetilde{\bX} \proj_{\bV_k}^{\perp}$.
\end{lemma}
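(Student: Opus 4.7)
The plan is to establish the two parts of the lemma separately: the operator norm approximation, which uses the algorithm relations together with the induction hypothesis, and the conditional distribution identity, which follows from standard Gaussian conditioning.

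First I would translate the algorithm's action on $\bU_k, \bV_k$ into algebraic identities. Unpacking the definitions of $\bg_\ell, \bh_\ell$ in Eq.~\eqref{eq:PO-g-h-def}, define upper triangular matrices $\bM^v, \bM^u \in \mathbb{R}^{k \times k}$ by $(\bM^v)_{j,\ell} = (L^v_\ell)_{j,\ell}$ and $(\bM^u)_{\ell, j} = (L^u_\ell)_{\ell, j}$ for $j \leq \ell$ (zero otherwise), so that
\begin{equation*}
\bX^\top \bU_k = \bV_k \bM^v - \bG_k, \qquad \bX \bV_k = \sqrt{\Lambda}\, \bH_k - \bU_k (\bM^u)^\top.
\end{equation*}
Using the decomposition $\bX^\parallel = \proj_{\bU_k} \bX + \proj_{\bU_k}^\perp \bX \proj_{\bV_k}$ together with these identities (and noting that $\proj_{\bU_k}^\perp \bU_k = 0$) yields
\begin{equation*}
\bX^\parallel = \bU_k (\bU_k^\top \bU_k)^{-1} \bigl[(\bM^v)^\top \bV_k^\top - \bG_k^\top\bigr] + \sqrt{\Lambda} \proj_{\bU_k}^\perp \bH_k (\bV_k^\top \bV_k)^{-1} \bV_k^\top.
\end{equation*}

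Next I would bound the discrepancy with $-\bT_g^\top + \bT_h = -\bU_k (\bK^g_k)^{-1} \bG_k^\top + \sqrt{\Lambda} \bH_k (\bK^h_k)^{-1} \bV_k^\top$. This splits into three pieces: errors from replacing $(\bU_k^\top \bU_k)^{-1}$ by $(\bK^g_k)^{-1}$ and $(\bV_k^\top \bV_k)^{-1}$ by $(\bK^h_k)^{-1}$, plus a residual term
\begin{equation*}
\bU_k (\bU_k^\top \bU_k)^{-1} \bigl[(\bM^v)^\top \bV_k^\top \bV_k - \sqrt{\Lambda} \bU_k^\top \bH_k\bigr] (\bV_k^\top \bV_k)^{-1} \bV_k^\top.
\end{equation*}
The first two are controlled on $\Omega_k$ via $\Omega_k^{uu}, \Omega_k^{vv}$ and standard matrix perturbation bounds (using Assumption~\ref{asm:non-degeneracy} to bound $\|(\bK^g_k)^{-1}\|, \|(\bK^h_k)^{-1}\|$). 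The residual requires showing $\sqrt{\Lambda} \bU_k^\top \bH_k \approx (\bM^v)^\top \bV_k^\top \bV_k$. Here I would use the algebraic identity $\Vhatse_k = \Vse_k \bM^v$, read off directly from the definitions, which combined with the SE fixed-point relation $\llangle \sqrt{\Lambda}\Hse_k, \Use_k\rrangle_{L^2} = \llangle \Vse_k, \Vhatse_k\rrangle_{L^2}$ in Eq.~\eqref{eq:fix-pt} gives $\sqrt{\Lambda}\, \mathbb{E}[\Use_k^\top \Hse_k] = (\bM^v)^\top \bK^h_k$. Then $\Omega_k^{hu}$ and $\Omega_k^{vv}$ together yield $\|\sqrt{\Lambda} \bU_k^\top \bH_k - (\bM^v)^\top \bV_k^\top \bV_k\|_\infty \lesssim \Delta_1(k,\delta)$. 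Finally, using the a priori operator norm bounds $\|\bU_k\|_{\mathrm{op}}, \|\bV_k\|_{\mathrm{op}} \lesssim 1$ from Eq.~\eqref{eq:bounded-norms} and converting entrywise bounds on a $k \times k$ matrix to operator-norm bounds (with a loss of at most a factor of $k$) yields $\|\bX^\parallel - (-\bT_g^\top + \bT_h)\|_{\mathrm{op}} \lesssim k \Delta_1(k, \delta)$.

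For the distributional identity, I would invoke the Bolthausen conditioning trick. Inductively, $(\bU_k, \bV_k)$ is a measurable function of $\{\bX \bV_j, \bX^\top \bU_j\}_{j < k}$ and the auxiliary randomness: this is immediate for first-order updates, and for saddle updates it follows from the KKT conditions which characterize the unique saddle point via $\bX \bv_j, \bX^\top \bu_j$. Hence conditioning on $(\bU_k, \bV_k)$ reduces to conditioning the i.i.d. Gaussian matrix $\bX$ on the linear constraints $\bX \bV_k = \sqrt{\Lambda} \bH_k - \bU_k (\bM^u)^\top$ and $\bX^\top \bU_k = \bV_k \bM^v - \bG_k$. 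The conditional distribution then splits as the deterministic parallel component $\bX^\parallel$ plus an independent Gaussian supported on the orthogonal complement, which has the same law as $\proj_{\bU_k}^\perp \widetilde{\bX} \proj_{\bV_k}^\perp$ for an independent copy $\widetilde{\bX}$. The main obstacle will be establishing the cancellation in the residual term: it requires simultaneously invoking the SE fixed-point equation, the algebraic identity $\Vhatse_k = \Vse_k \bM^v$, and two pieces of the induction hypothesis, and then carefully propagating entrywise bounds through matrix inversions and products to obtain an operator-norm bound of the right order in $k$.
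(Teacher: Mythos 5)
Your proposal is correct and follows essentially the same route as the paper: substitute the iteration identities $\bX^\top \bU_k = \bV_k \bM^v - \bG_k$ and $\bX \bV_k = \sqrt{\Lambda}\,\bH_k - \bU_k(\bM^u)^\top$ into a projection decomposition of $\bX^\parallel$, control the replacement of the empirical Gram inverses by $(\bK^g_k)^{-1},(\bK^h_k)^{-1}$ via the induction events and Assumption~\ref{asm:non-degeneracy}, cancel the residual cross term using the SE fixed-point relation $\sqrt{\Lambda}\,\mathbb{E}[(\Use_k)^\top \Hse_k] = (\bM^v)^\top \bK^h_k$ together with $\Omega_k^{hu},\Omega_k^{vv}$, and obtain the conditional law by identifying $\sigma(\bU_k,\bV_k)$ with linear constraints on the Gaussian matrix (the content of the paper's Lemmas~\ref{lem:measurable-saddle} and~\ref{lem:conditioning}). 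The only cosmetic differences are that the paper splits the error into three terms, two of which invoke the event $\Omega_{op}$ bounding $\|\bX\|_{\mathrm{op}}$ (your substitution of the iteration identities avoids this), and that your entrywise-to-operator-norm bookkeeping should route the contribution of $\bM^v$ through $\|\bM^v\|_{\mathrm{op}}$ (a state-evolution constant) rather than through an entrywise bound, so as to land on the stated $C_{\mathrm{SE}}\, k\, \Delta_1(k,\delta)$ rather than an extra factor of $k$.
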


\subsection{Proof of induction step for first order update} \label{sec:proof-exact-asymptotics-iterative}
We first require a preliminary lemma.  Recall the coefficient matrices $(\bL^u_\ell)_{\ell = 1}^T, (\bL^v_\ell)_{\ell = 1}^T$ that define expansions of $(\uhatse_\ell, \vhatse_\ell)$, and through them $(\bh_\ell, \bg_\ell)$. Also recall the scalars $(\alpha_{\ell, \ell'}, \beta_{\ell, \ell'})_{1 \leq \ell' \leq \ell \leq T}$ defining expansions of $(\gse_\ell, \hse_{\ell})_{\ell = 1}^T$. The following fact guarantees some useful relations between these coefficients, the matrices $\bT_g, \bT_h$, and the iterates.

\begin{lemma} \label{lem:facts}
	On the event $\Omega_k$, we have
	\begin{align}
		&\Bigl \| \sum_{\ell=1}^{k}\bigl(L^{u}_{k+1}\bigr)_{k+1, \ell} \bu_{\ell} -\bT_g^{\top} \bv_{k+1} \Bigr \|_2 \overset{(a)}{\leq} C_{\mathrm{SE}} k \Delta_1(k, \delta), \;\; &\Bigl \| \sum_{\ell=1}^{k}\bigl(L^{v}_{k+1}\bigr)_{k+1, \ell} \bv_{\ell} -\bT_h^{\top} \bu_{k+1} \Bigr \|_2 \overset{(b)}{\leq} C_{\mathrm{SE}} k \Delta_1(k, \delta), \label{eq:facts1} \\
		&\Bigl \| \sqrt{\Lambda} \cdot \sum_{\ell=1}^{k} \beta_{k+1, \ell} \bh_{\ell} - \bT_h \bv_{k+1} \Bigr \|_2 \overset{(a)}{\leq} C_{\mathrm{SE}} k \Delta_1(k, \delta), \quad &\Bigl \| \sum_{\ell=1}^{k} \alpha_{k+1, \ell} \bg_{\ell} - \bT_g \bu_{k+1} \Bigr \|_2 \overset{(b)}{\leq} C_{\mathrm{SE}} k \Delta_1(k, \delta), \label{eq:facts2} \\
		&\Bigl \lvert \| \proj_{\bV_k}^{\perp} \bv_{k+1}  \|_2 - \beta_{k+1, k + 1} \Bigr \rvert \overset{(a)}{\leq} C_{\mathrm{SE}} k^2\Delta_1(k, \delta), \quad &\Bigl \lvert \| \proj_{\bU_k}^{\perp} \bu_{k+1}  \|_2 - \alpha_{k+1, k + 1} \Bigr \rvert \overset{(b)}{\leq} C_{\mathrm{SE}} k^2\Delta_1(k, \delta). \label{eq:facts3}
	\end{align}
\end{lemma}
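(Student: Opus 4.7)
The strategy common to all six bounds is to express the coefficients in each identity in closed form via SE covariance matrices, reduce each discrepancy to the difference between an empirical Gram matrix and its SE counterpart, and then invoke the concentration estimates Eq.~\eqref{eq:ind-events-four} on $\Omega_k$ together with the norm bounds from Eq.~\eqref{eq:bounded-norms}. The key preliminary observation, particular to the first-order update at step $k+1$, is that $\bv_{k+1} = f^v_{k+1}(\bg_k - \sum_{\ell} (L^v_k)_{k,\ell}\bv_\ell;\, \bV_k)$ is the same deterministic function of $(\bV_k, \bg_k)$ as $\vse_{k+1}$ is of $(\Vse_k, \gse_k)$, and analogously for $\bu_{k+1}$. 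Hence evaluating the conditional expectations inside $\Omega_k^{gv}, \Omega_k^{vv}, \Omega_k^{uu}, \Omega_k^{hu}$ at the empirical point collapses them to the empirical inner products $\inprod{\bg_\ell}{\bv_{k+1}}, \inprod{\bv_\ell}{\bv_{k+1}}, \inprod{\bu_\ell}{\bu_{k+1}}, \inprod{\bh_\ell}{\bu_{k+1}}$, yielding at once that each of these differs from its $L^2$-analog by at most $\Delta_1(k, \delta)$ for every $\ell \in [k]$.

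For \eqref{eq:facts1}(a), I would substitute the expansion $\uhatse_{k+1} = \sum_\ell (L^u_{k+1})_{k+1,\ell}\use_\ell$ into the SE identity $\langle \uhatse_{k+1}, \use_j\rangle_{L^2} = \langle \vse_{k+1}, \gse_j\rangle_{L^2}$ for $j \in [k+1]$; using the block structure of $\bK^u_{k+1} = \bK^g_{k+1}$ together with $\langle \vse_{k+1}, \bxi_g\rangle_{L^2} = 0$ (since $\vse_{k+1}$ is measurable with respect to the past), one finds $(L^u_{k+1})_{k+1, k+1} = 0$ and $(L^u_{k+1})_{k+1, :k}^\top = (\bK^g_k)^{-1} \llangle \Gse_k, \vse_{k+1}\rrangle_{L^2}$. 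Consequently,
\[
\sum_{\ell=1}^k (L^u_{k+1})_{k+1,\ell}\bu_\ell - \bT_g^\top \bv_{k+1} = \bU_k(\bK^g_k)^{-1}\bigl[\llangle \Gse_k, \vse_{k+1}\rrangle_{L^2} - \bG_k^\top \bv_{k+1}\bigr],
\]
whose $\ell_2$ norm is at most $\|\bU_k\|_{\op}\cdot\|(\bK^g_k)^{-1}\|_{\op}\cdot\sqrt{k}\,\Delta_1(k,\delta) \lesssim k \Delta_1(k,\delta)$, using $\|\bU_k\|_F \lesssim \sqrt{k}$ from Eq.~\eqref{eq:bounded-norms} and Assumption~\ref{asm:non-degeneracy}. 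Part (b) follows by a symmetric calculation on the $(\bv, \bh)$-side. For \eqref{eq:facts2}(b), one writes $\alpha_{k+1,:k}^\top = (\bK^g_k)^{-1}\llangle \Gse_k, \gse_{k+1}\rrangle_{L^2}$ and uses Eq.~\eqref{eq:fix-pt}(a) to identify this with $(\bK^g_k)^{-1}\llangle \Use_k, \use_{k+1}\rrangle_{L^2}$, so that the discrepancy reduces to $\bG_k(\bK^g_k)^{-1}\bigl[\llangle \Use_k, \use_{k+1}\rrangle_{L^2} - \bU_k^\top \bu_{k+1}\bigr]$, bounded by the same three-factor argument; part (a) is dual via the index-$(k+1)$ extension of Eq.~\eqref{eq:fix-pt}(b).

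For \eqref{eq:facts3}(a), Pythagoras gives
\[
\|\proj_{\bV_k}^\perp \bv_{k+1}\|_2^2 = \|\bv_{k+1}\|_2^2 - \bv_{k+1}^\top\bV_k(\bV_k^\top\bV_k)^{-1}\bV_k^\top\bv_{k+1},
\]
and $\beta_{k+1,k+1}^2$ admits the corresponding SE form in terms of $\|\vse_{k+1}\|_{L^2}^2$ and $\bK^h_k = \llangle \Vse_k, \Vse_k\rrangle_{L^2}$. Entrywise concentration of $\|\bv_{k+1}\|_2^2, \bV_k^\top\bv_{k+1}$, and $\bV_k^\top\bV_k$ around their SE analogs, combined with the inverse-stability identity $\|M^{-1}-N^{-1}\|_{\op} \le \|M^{-1}\|_{\op}\|N^{-1}\|_{\op}\|M-N\|_{\op}$, yields $\bigl|\|\proj_{\bV_k}^\perp\bv_{k+1}\|_2^2 - \beta_{k+1,k+1}^2\bigr| \lesssim k^2\Delta_1(k,\delta)$; dividing by the sum of the square roots, which is bounded below by $\beta_{k+1,k+1}>0$ thanks to Assumption~\ref{asm:non-degeneracy}, gives the claimed linear bound. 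Part (b) is analogous. The principal obstacle is the quantitative handling of matrix inverses in \eqref{eq:facts3}: one must show that the empirical Gram $\bV_k^\top\bV_k$ has smallest eigenvalue bounded away from zero uniformly in $k \in [T]$, obtained by combining Weyl's inequality with the SE-level lower bound from Assumption~\ref{asm:non-degeneracy} and the entrywise concentration from $\Omega_k^{vv}$; the compounded inversion error, together with the Pythagorean quadratic combination of errors, is the source of the $k^2$ factor.
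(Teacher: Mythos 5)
Your proposal is correct and follows essentially the same route as the paper's proof: on $\Omega_k$ the conditional expectations collapse to the empirical inner products because iteration $k+1$ is first-order, the coefficients $(L^u_{k+1})_{k+1,\ell}$, $\beta_{k+1,\ell}$, $\alpha_{k+1,\ell}$ are identified as $(\bK^g_k)^{-1}$ or $(\bK^h_k)^{-1}$ applied to SE cross-covariances (the content of Claim~\ref{claim:first-order-hat-value}), and the remaining discrepancies are controlled by perturbation bounds on the empirical Gram matrices and their inverses, with the Pythagorean identity producing the $k^2$ factor in \eqref{eq:facts3}. If anything, your final step for \eqref{eq:facts3} --- passing from the squared-norm deviation to the claimed bound by dividing by $\beta_{k+1,k+1}>0$, which is a state-evolution constant under Assumption~\ref{asm:non-degeneracy} --- is spelled out more carefully than in the paper's own argument.
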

We must show that $\Omega_{k+1}$ occurs with high probability when iteration $k$ is a first-order method. In particular, when $(\bu_{k + 1}, \bv_{k + 1})$ are generated by a GFOM, we must show that
\begin{subequations} \label{eq:ind-events-new-four}
\begin{align}
\max_{\ell, \ell' \in [T]} \left| \EE[ \inprod{\vse_\ell}{\vse_{\ell'}} \mid \Vse_{k+1} = \bV_{k+1}, \Gse_{k+1} = \bG_{k+1} ] - \EE[\inprod{\vse_\ell}{\vse_{\ell'}}] \right| &\leq \Delta_1(k+1, \delta), \label{eq:ind-events-new-four-1} \\
\max_{\ell, \ell' \in [T]} \left| \EE[ \inprod{\gse_\ell}{\vse_{\ell'}} \mid \Vse_{k+1} = \bV_{k+1}, \Gse_{k+1} = \bG_{k+1} ] - \EE[\inprod{\gse_\ell}{\vse_{\ell'}}] \right| &\leq \Delta_1(k+1, \delta), \label{eq:ind-events-new-four-2} \\
\max_{\ell, \ell' \in [T]} \left| \EE[ \inprod{\gse_\ell}{\gse_{\ell'}} \mid \Vse_{k+1} = \bV_{k+1}, \Gse_{k+1} = \bG_{k+1} ] - \EE[\inprod{\gse_\ell}{\gse_{\ell'}}] \right| &\leq \Delta_1(k+1, \delta), \label{eq:ind-events-new-four-gg} \\
\max_{\ell, \ell' \in [T]} \left| \EE[ \inprod{\use_\ell}{\use_{\ell'}} \mid \Use_{k+1} = \bU_{k+1}, \Hse_{k+1} = \bH_{k+1} ] - \EE[\inprod{\use_\ell}{\use_{\ell'}}] \right| &\leq \Delta_1(k+1, \delta), \label{eq:ind-events-new-four-3} \\
\max_{\ell, \ell' \in [T]} \left| \EE[ \inprod{\hse_\ell}{\use_{\ell'}} \mid \Use_{k+1} = \bU_{k+1}, \Hse_{k+1} = \bH_{k+1} ] - \EE[\inprod{\hse_\ell}{\use_{\ell'}}] \right| &\leq \Delta_1(k+1, \delta), \label{eq:ind-events-new-four-4} \\
\max_{\ell, \ell' \in [T]} \left| \EE[ \inprod{\hse_\ell}{\hse_{\ell'}} \mid \Use_{k+1} = \bU_{k+1}, \Hse_{k+1} = \bH_{k+1} ] - \EE[\inprod{\hse_\ell}{\hse_{\ell'}}] \right| &\leq \Delta_1(k+1, \delta), \label{eq:ind-events-new-four-hh}\\
\bigl \lvert \mathbb{E}\bigl[ \psi_d(\Vse_T, \Gse_T) \mid \Vse_{k+1} = \bV_{k+1}, \Gse_{k+1} = \bG_{k+1}\bigr] - \mathbb{E}\bigl[\psi_d(\Vse_T, \Gse_T)\bigr] \bigr \rvert &\leq \Delta_1(k+1, \delta), \label{eq:ind-events-new-four-psid}\\
\bigl \lvert \mathbb{E}\bigl[ \psi_n(\Use_T, \Hse_T) \mid \Use_{k+1} = \bU_{k+1}, \Hse_{k+1} = \bH_{k+1}\bigr] - \mathbb{E}\bigl[\psi_n(\Use_T, \Hse_T)\bigr] \bigr \rvert &\leq \Delta_1(k+1, \delta). \label{eq:ind-events-new-four-psin},\\
\max_{\ell \in T} \Bigl \lvert \mathbb{E}\bigl[\phi_{\ell}^{v}(\vse_{\ell}) \mid \Vse_{k+1} = \bV_{k+1}, \Gse_{k+1} = \bG_{k+1}\bigr] - \EE\bigl[\phi_{\ell}^{v}(\vse_{\ell})\bigr] \Bigr \rvert &\leq \Delta_1(k+1, \delta), \label{eq:ind-events-new-four-phiv}\\
\max_{\ell \in T} \Bigl \lvert \mathbb{E}\bigl[\phi_{\ell}^{u}(\use_{\ell}) \mid \Use_{k+1} = \bU_{k+1}, \Hse_{k+1} = \bH_{k+1}\bigr] - \EE\bigl[\phi_{\ell}^{u}(\use_{\ell})\bigr] \Bigr \rvert &\leq \Delta_1(k+1, \delta) \label{eq:ind-events-new-four-phiu}.
\end{align}
\end{subequations}

Fix $\ell, \ell' \in [T]$. By Lemma~\ref{lem:conditional-expectation-PL}, the $\mathbb{R}^{n \times (k + 1)} \times \mathbb{R}^{n \times (k + 1)} \to \mathbb{R}$ maps 
\begin{subequations}\label{eq:tau-definitions}
\begin{align} 
\tau^{uu, k+1}_{\ell, \ell'}: (\bC, \bD) &\mapsto \EE[ \langle \use_{\ell}, \use_{\ell'} \rangle \mid \Use_{k + 1} = \bC, \Hse_{k + 1} = \bD] \\
 \tau^{hu, k+1}_{\ell, \ell'}: (\bC, \bD) &\mapsto \EE[ \langle \hse_{\ell}, \use_{\ell'} \rangle \mid \Use_{k + 1} = \bC, \Hse_{k + 1} = \bD] \\
 \tau^{hh, k+1}_{\ell, \ell'}: (\bC, \bD) &\mapsto \EE[ \langle \hse_{\ell}, \hse_{\ell'} \rangle \mid \Use_{k + 1} = \bC, \Hse_{k + 1} = \bD]\\
 \tau^{\psi_n, k+1}: (\bC, \bD) &\mapsto \EE[\psi_n(\Use_T, \Hse_T) \mid \Use_{k+1} = \bC, \Hse_{k+1} = \bD],\\
 \tau^{\phi^{u}_{\ell}, k+1}: (\bC, \bD) &\mapsto \EE[\phi^{u}_{\ell}(\use_{\ell}) \mid \Use_{k+1} = \bC, \Hse_{k+1} = \bD] \text{ for all } \ell \in [T], 
\end{align}
\end{subequations}
all exist, and are pseudo-Lipschitz. Analogously define the $\mathbb{R}^{d \times (k + 1)} \times \mathbb{R}^{d \times (k + 1)} \to \mathbb{R}$ pseudo-Lipschitz maps $\tau^{vv, k+1}_{\ell, \ell'}, \tau^{gv, k+1}_{\ell, \ell'}, \tau^{gg, k+1}_{\ell, \ell'}, \tau^{\psi_d, k+1}, \tau^{\phi^v_{\ell}, k+1}$.
We establish Eq.~\eqref{eq:ind-events-new-four} by making use of the pseudo-Lipschitz property of these maps. In particular, we carry out the argument for establishing Eq.~\eqref{eq:ind-events-new-four-4}, noting that all the other arguments can be carried out in the same fashion.

Write $\tau^{hu, k+1}_{\ell, \ell'}(\bU_{k + 1}, \bH_{k + 1}) = \tau^{hu, k+1}_{\ell, \ell'}(\bU_{k}, \bu_{k + 1}, \bH_{k}, \bh_{k + 1})$ for convenience. To establish Eq.~\eqref{eq:ind-events-new-four-4}, we must bound the quantity
$\left \lvert \tau^{hu, k+1}_{\ell, \ell'}(\bU_{k}, \bu_{k+1}, \bH_{k}, \bh_{k+1} ) - \mathbb{E} \tau^{hu, k+1}_{\ell, \ell'}(\Use_{k}, \use_{k+1}, \Hse_{k}, \hse_{k+1}) \right \rvert$ by $\Delta_1(k+1, \delta)$ with high probability. Toward that end, we use the triangle inequality to decompose the desired quantity as
\begin{align}
&\left| \EE[ \inprod{\hse_\ell}{\use_{\ell'}} \mid \Use_{k+1} = \bU_{k+1}, \Hse_{k+1} = \bH_{k+1} ] - \EE[\inprod{\hse_\ell}{\use_{\ell'}}] \right| \notag \\
&\quad \leq \left| \EE[ \inprod{\hse_\ell}{\use_{\ell'}} \mid \Use_{k+1} = \bU_{k+1}, \Hse_{k+1} = \bH_{k+1} ] - \EE[ \inprod{\hse_\ell}{\use_{\ell'}} \mid \Use_{k} = \bU_{k}, \Hse_{k} = \bH_{k} ] \right| \notag \\
&\qquad \qquad + \left| \EE[ \inprod{\hse_\ell}{\use_{\ell'}} \mid \Use_{k} = \bU_{k}, \Hse_{k} = \bH_{k} ] - \EE[\inprod{\hse_\ell}{\use_{\ell'}}] \right| \notag \\
&\quad \overset{\1}{\leq}  \left| \EE[ \inprod{\hse_\ell}{\use_{\ell'}} \mid \Use_{k+1} = \bU_{k+1}, \Hse_{k+1} = \bH_{k+1} ] - \EE[ \inprod{\hse_\ell}{\use_{\ell'}} \mid \Use_{k} = \bU_{k}, \Hse_{k} = \bH_{k} ] \right| + \Delta_1(k, \delta), \label{eq:first-bd-tau}
\end{align}
where in step $\1$ we have used the induction hypothesis~\eqref{eq:ind-events-four-4}. 
We now make the following (deterministic) claim regarding state evolution --- we prove the claim at the end of the section.
\begin{claim}
	\label{claim:first-order-hat-value}
	If iteration $k$ is a first-order update, then $(L^{u}_{k+1})_{k+1,k+1} = (L^v_{k+1})_{k+1,k+1} = 0$.
\end{claim}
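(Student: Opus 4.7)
The plan is to reduce the claim to showing that $\uhatse_{k+1} \in \spn(\{\use_\ell\}_{\ell=1}^k)$ and $\vhatse_{k+1} \in \spn(\{\vse_\ell\}_{\ell=1}^k)$, and then verify each containment using the uniqueness characterization Eq.~\eqref{eq:uhat-fix-FO} from the first-order induction step, which identifies $\uhatse_{k+1}$ as the unique element of $\spn(\{\use_\ell\}_{\ell=1}^{k+1})$ satisfying $\<\uhatse_{k+1}, \use_{\ell'}\>_{L^2} = \<\vse_{k+1}, \gse_{\ell'}\>_{L^2}$ for all $\ell' \in [k+1]$. I will carry out the argument for $\uhatse_{k+1}$; the $v$-side is entirely symmetric after exchanging the roles of $(g,v)$ and $(h,u)$.

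The key structural observation is the Gaussian decomposition $\gse_{k+1} = \sum_{\ell=1}^k \alpha_{k+1,\ell} \gse_\ell + \alpha_{k+1,k+1} \bxi_g$, where $\bxi_g \sim \mathsf{N}(0, \bI_d/d)$ is independent of $\{\gse_\ell\}_{\ell=1}^k$ and the coefficients are determined by $\bK^g_{k+1}$. Since iteration $k+1$ is first-order, $\vse_{k+1} = f^v_{k+1}(\gse_k - \vhatse_k; \Vse_k)$ is a measurable function of $(\gse_1, \ldots, \gse_k)$ alone, hence independent of $\bxi_g$. Consequently $\<\vse_{k+1}, \bxi_g\>_{L^2} = 0$, and so
\begin{equation*}
    \<\vse_{k+1}, \gse_{k+1}\>_{L^2} = \sum_{\ell=1}^k \alpha_{k+1,\ell} \<\vse_{k+1}, \gse_\ell\>_{L^2}.
\end{equation*}

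Next, by Assumption~\ref{asm:non-degeneracy} and the inductive SE identity $\llangle \Use_k, \Use_k\rrangle_{L^2} = \bK^g_k$, the family $\{\use_\ell\}_{\ell=1}^k$ is linearly independent in $L^2$, so there is a unique candidate $\widetilde{\bu} \in \spn(\{\use_\ell\}_{\ell=1}^k)$ solving the first $k$ of the $k+1$ defining equations for $\uhatse_{k+1}$. It then remains to verify that $\widetilde{\bu}$ also satisfies the $(k+1)$-th equation, in which case the uniqueness clause of Eq.~\eqref{eq:uhat-fix-FO} forces $\uhatse_{k+1} = \widetilde{\bu}$ and the claim follows. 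This final step is a short algebraic check: by construction of $\bK^g_{k+1}$ in the first-order induction step and the same Gaussian decomposition, $\<\use_{\ell'}, \use_{k+1}\>_{L^2} = \<\gse_{\ell'}, \gse_{k+1}\>_{L^2} = \sum_{\ell=1}^k \alpha_{k+1,\ell} \<\use_{\ell'}, \use_\ell\>_{L^2}$ for each $\ell' \leq k$, and substituting these into $\<\widetilde{\bu}, \use_{k+1}\>_{L^2}$ yields exactly the right-hand side of the displayed equation above.

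I do not anticipate a substantive obstacle here: the argument is purely algebraic, relying only on the inductive SE identities and the crucial fact that in a first-order update, $\vse_{k+1}$ carries no component along the ``new'' independent Gaussian direction $\bxi_g$ orthogonal to $\spn(\{\gse_\ell\}_{\ell=1}^k)$. The only care required is notational bookkeeping when the mirror argument for $(L^v_{k+1})_{k+1,k+1}$ is performed in the Hilbert space underlying $(\Hse, \Use, \Uhatse)$ rather than $(\Gse, \Vse, \Vhatse)$.
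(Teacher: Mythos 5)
Your proposal is correct and takes essentially the same route as the paper's proof: both build the candidate $\sum_{\ell \leq k} c_\ell \use_{\ell}$ with $\boldsymbol{c} = (\bK^g_k)^{-1}\llangle \Gse_k, \vse_{k+1} \rrangle_{L^2}$ (using Assumption~\ref{asm:non-degeneracy}), check the $(k+1)$-th defining equation via the fact that $\vse_{k+1}$ is independent of the fresh Gaussian component of $\gse_{k+1}$ together with the SE identity $\llangle \Gse_{k+1},\Gse_{k+1}\rrangle_{L^2} = \llangle \Use_{k+1},\Use_{k+1}\rrangle_{L^2}$, and then invoke the uniqueness in Eq.~\eqref{eq:uhat-fix-FO}, with the mirror argument for $\vhatse_{k+1}$. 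The only cosmetic differences are that the paper wraps this in an induction over $k$ whose hypothesis is not actually needed, and that your phrase ``measurable function of $(\gse_1,\ldots,\gse_k)$ alone'' is slightly imprecise when earlier steps are saddle updates, though all that is needed (and what you use) is independence of $\vse_{k+1}$ from the fresh $\bxi_g$.
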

Note that if the $k$-th step is first-order, then we have 
\[
\bu_{k + 1} = f^u_{k + 1}(\bX \bv_k; \bU_k) \overset{\1}{=} f^u_{k + 1}\Bigl(\sqrt{\Lambda} \cdot \bh_k - \sum_{\ell=1}^{k}\bigl(L^{u}_{k+1}\bigr)_{k+1, \ell} \bu_{\ell}; \bU_k\Bigr)
\]
where step $\1$ follows by definition of $\bh_k$ and as a result of Claim~\ref{claim:first-order-hat-value}.
At the same time, by definition of the state evolution for a GFOM we have
\[
\use_{k + 1} = f^u_{k + 1}(\sqrt{\Lambda} \cdot \hse_k - \uhatse_k; \Use_k) = f^u_{k + 1}\Bigl(\sqrt{\Lambda} \cdot \hse_k - \sum_{\ell=1}^{k}\bigl(L^{u}_{k+1}\bigr)_{k+1, \ell} \use_{\ell}; \Use_k\Bigr).
\]
\begin{align} \label{eq:norms-k+1}
\text{(From these expressions, the following bound can also be verified):} \qquad \qquad  \| \bu_{k + 1} \|_2 \vee \| \use_{k + 1} \|_2 \leq C_{\mathrm{SE}}.
\end{align}

Consequently, we have the equivalence of events $\{\Use_k = \bU_k, \Hse_k = \bH_k \} = \{ \Use_{k + 1} = \bU_{k + 1}, \Hse_k = \bH_k \}$, and thus, 
\begin{align} \label{eq:tau-proof-exp}
\EE[ \inprod{\hse_\ell}{\use_{\ell'}} \mid \Use_{k} = \bU_{k}, \Hse_{k} = \bH_{k} ] = \EE[ \inprod{\hse_\ell}{\use_{\ell'}} \mid \Use_{k + 1} = \bU_{k + 1}, \Hse_{k} = \bH_{k} ].
\end{align}
Note that by definition of $\tau^{hu, k+1}_{\ell, \ell'}$, we also have
\begin{align} \label{eq:tau-proof-term}
\EE[ \inprod{\hse_\ell}{\use_{\ell'}} \mid \Use_{k+1} = \bU_{k+1}, \Hse_{k+1} = \bH_{k+1} ] = \tau^{hu, k+1}_{\ell, \ell'}(\bU_{k}, \bu_{k+1}, \bH_{k}, \bh_{k+1} ).
\end{align}
Putting together Eqs.~\eqref{eq:first-bd-tau},~\eqref{eq:tau-proof-exp} and~\eqref{eq:tau-proof-term}, we have
\begin{align*}
&\mathbb{P} \Bigl( \left| \EE[ \inprod{\hse_\ell}{\use_{\ell'}} \mid \Use_{k+1} = \bU_{k+1}, \Hse_{k+1} = \bH_{k+1} ] - \EE[\inprod{\hse_\ell}{\use_{\ell'}}] \right|  \geq \Delta_1(k+1, \delta) \Bigr) \\
	&\leq \mathbb{P} \Bigl( \Bigl\{ \left \lvert \tau^{hu, k+1}_{\ell, \ell'}(\bU_{k}, \bu_{k+1}, \bH_{k}, \bh_{k+1} ) - \EE[ \inprod{\hse_\ell}{\use_{\ell'}} \mid \Use_{k + 1} = \bU_{k + 1}, \Hse_{k} = \bH_{k} ] \right \rvert \geq \Delta_1(k+1, \delta) - \Delta_1(k, \delta) \Bigr\} \cap \Omega_k\Bigr) \\
	& \qquad \qquad + \frac{2\delta \cdot 3^{k-1}}{3^T - 1}.
\end{align*}

It remains to bound the probability on the RHS.
Recall the quantity $\bX^{\|}$ from Lemma~\ref{lem:X-decomp}. Applying this together with Claim~\ref{claim:first-order-hat-value} yields
\begin{align*}
\sqrt{\Lambda} \cdot \bh_{k+1} = \sum_{\ell=1}^{k+1}\bigl(L^{u}_{k+1}\bigr)_{k+1, \ell} \bu_{\ell} + \bX\bv_{k+1} &= \sum_{\ell=1}^{k}\bigl(L^{u}_{k+1}\bigr)_{k+1, \ell} \bu_{\ell} + \bX\bv_{k+1}\\
&= \sum_{\ell=1}^{k}\bigl(L^{u}_{k+1}\bigr)_{k+1, \ell} \bu_{\ell} + \bX^{\parallel} \bv_{k+1} + \proj_{\bU_k}^{\perp} \bX\proj_{\bV_k}^{\perp} \bv_{k+1}.
\end{align*}
Recalling the approximation matrices $\bT_{g}, \bT_{h}$ from Eq.~\eqref{eq:def-Tg-Th}, define the quantity $\widebar{\bh}_{k+1}$ and the random variable $\widetilde{\bh}_{k+1}$ via the relations
\begin{align*}
\sqrt{\Lambda} \cdot \widebar{\bh}_{k+1} &:= \sum_{\ell=1}^{k}\bigl(L^{u}_{k+1}\bigr)_{k+1, \ell} \bu_{\ell} -\bT_g^{\top} \bv_{k+1} + \bT_h \bv_{k+1} + \proj_{\bU_k}^{\perp} \bX\proj_{\bV_k}^{\perp} \bv_{k+1},\\
\sqrt{\Lambda} \cdot \widetilde{\bh}_{k+1} &:= \sum_{\ell=1}^{k}\bigl(L^{u}_{k+1}\bigr)_{k+1, \ell} \bu_{\ell} -\bT_g^{\top} \bv_{k+1} + \bT_h \bv_{k+1} + \sqrt{\Lambda} \cdot \| \proj_{\bV_k}^{\perp} \bv_{k+1}  \|_2 \bgamma_n,
\end{align*}
where $\bgamma_n \sim \mathsf{N}(0, I_n/n)$ is independent of everything else.  Note, by Lemma~\ref{lem:X-decomp}, on $\Omega_k$,
\[
\| \bh_{k+1} - \widebar{\bh}_{k+1} \|_2 \leq C_{\mathrm{SE}}k \Delta_1(k, \delta) \quad \text{ and } \quad \sqrt{\Lambda} \cdot \widebar{\bh}_{k+1} \overset{(d)}{=}\sum_{\ell=1}^{k}\bigl(L^{u}_{k+1}\bigr)_{k+1, \ell} \bu_{\ell} -\bT_g^{\top} \bv_{k+1} + \bT_h \bv_{k+1} + \proj_{\bU_k}^{\perp} \widetilde{\bX}\proj_{\bV_k}^{\perp} \bv_{k+1},
\]
where $\widetilde{\bX}$ is an independent copy of $\bX$.  Moreover,  $\proj_{\bU_k}^{\perp} \widetilde{\bX} \proj_{\bV_k}^{\perp} \bv_{k+1} \overset{(d)}{=} \sqrt{\Lambda} \| \proj_{\bV_k}^{\perp} \bv_{k+1}  \|_2 \proj_{\bU_k}^{\perp}\bgamma_n$.  Thus, we set
\begin{align}
	\label{eq:def-delta'}
	\delta' = \frac{20T^2 \delta 3^{k-1}}{3^{T} - 1}
\end{align}
and note that, by rotational invariance of the Gaussian distribution, Bernstein's inequality, and Lemma~\ref{lem:facts}, on $\Omega_k$, the event
\[
\Omega' := \Bigl\{ \sqrt{\Lambda} \bigl \| \| \proj_{\bV_k}^{\perp} \bv_{k+1} \|_2 \proj_{\bU_k} \bgamma_n \bigr\|_2 \leq C_{\mathrm{SE}}\sqrt{\frac{k\log(10T^2/\delta')}{n}}\Bigr\} \quad \text{ satisfies } \quad \mathbb{P}(\Omega') \geq 1 - \frac{\delta'}{10T^2}.
\]
Thus, there exists a coupling of $\widetilde{\bX}$ and $\bgamma_n$ such that on $\Omega'$, we have
\begin{align} \label{eq:htilde-close}
\| \bh_{k+1} - \widetilde{\bh}_{k+1} \|_2 \leq k C_{\mathrm{SE}} \Delta_1(k, \delta) + C_{\mathrm{SE}}\sqrt{\frac{k\log(10T^2/\delta')}{n}} \overset{\1}{\leq} k C_{\mathrm{SE}} \Delta_1(k, \delta),
\end{align}
where step $\1$ follows from our choice of $\delta'$ since $n$ is large enough. 
Note that the constants $C_{\mathrm{SE}}$ may be different in the two instances but depend only on the state evolution quantities.

Moreover, note from Eq.~\eqref{eq:bounded-norms} and and Eq.~\eqref{eq:norms-k+1} that $\max_{k' \in [k]} \| \bh_k \|_2 \lesssim 1$ and $\max_{k' \in [k+1]} \| \bu_{k+1} \|_2 \lesssim 1$. Since $\tau^{hu, k+ 1}_{\ell, \ell'}$ is pseudo-Lipschitz in its final argument, we have
\begin{align}
\left \lvert \tau^{hu, k+1}_{\ell, \ell'}(\bU_{k}, \bu_{k+1}, \bH_{k}, \bh_{k+1} ) -  \tau^{hu, k+1}_{\ell, \ell'}(\bU_{k}, \bu_{k+1}, \bH_{k}, \widetilde{\bh}_{k+1}) \right \rvert &\lesssim k \cdot \| \bh_{k + 1} - \widetilde{\bh}_{k + 1} \|_2 \notag \\
&\overset{\1}{\leq} k^2 C_{\mathrm{SE}} \cdot \Delta_1(k, \delta), \label{eq:close-to-final}
\end{align}
where step $\1$ follows from Eq.~\eqref{eq:htilde-close}.
Putting the pieces together yields
\small
\begin{align*}
&\mathbb{P} \Bigl( \Bigl\{ \left \lvert \tau^{hu, k+1}_{\ell, \ell'}(\bU_{k}, \bu_{k+1}, \bH_{k}, \bh_{k+1} ) -  \EE[ \inprod{\hse_\ell}{\use_{\ell'}} \mid \Use_{k + 1} = \bU_{k + 1}, \Hse_{k} = \bH_{k} ] \right \rvert \geq \Delta_1(k+1, \delta) - \Delta_1(k, \delta) \Bigr\} \cap \Omega_k\Bigr) \\
&\leq \mathbb{P} \Bigl( \Bigl\{ \left \lvert \tau^{hu, k+1}_{\ell, \ell'}(\bU_{k}, \bu_{k+1}, \bH_{k}, \bh_{k+1} ) -  \EE[ \inprod{\hse_\ell}{\use_{\ell'}} \mid \Use_{k + 1} = \bU_{k + 1}, \Hse_{k} = \bH_{k} ] \right \rvert \geq \Delta_1(k+1, \delta) - \Delta_1(k, \delta) \Bigr\} \cap \Omega_k \cap \Omega' \Bigr) + \mathbb{P}\{(\Omega')^c \} \\
&\overset{\1}{\leq} \mathbb{P} \Bigl( \Bigl\{ \left \lvert \tau^{hu, k+1}_{\ell, \ell'}(\bU_{k}, \bu_{k+1}, \bH_{k}, \widetilde{\bh}_{k+1} ) -  \EE[ \inprod{\hse_\ell}{\use_{\ell'}} \mid \Use_{k + 1} = \bU_{k + 1}, \Hse_{k} = \bH_{k} ] \right \rvert \geq \Delta_1(k+1, \delta) - k^2 C_{\mathrm{SE}} \cdot \Delta_1(k, \delta) \Bigr\} \cap \Omega_k \cap \Omega' \Bigr) \\
&\qquad \qquad \qquad \qquad + \delta'/(10T^2) \\
&\overset{\2}{\leq} \mathbb{P} \Bigl( \Bigl\{ \left \lvert \tau^{hu, k+1}_{\ell, \ell'}(\bU_{k}, \bu_{k+1}, \bH_{k}, \widetilde{\bh}_{k+1} ) -  \EE[ \inprod{\hse_\ell}{\use_{\ell'}} \mid \Use_{k + 1} = \bU_{k + 1}, \Hse_{k} = \bH_{k} ] \right \rvert \geq \frac{2\Delta_1(k+1, \delta)}{3} \Bigr\} \cap \Omega_k \cap \Omega' \Bigr) + \delta'/(10T^2),
\end{align*}
\normalsize
where step $\1$ follows from Eq.~\eqref{eq:close-to-final} and step $\2$ from the definition of $\Delta_1(k +1, \delta)$, taking $C_{\mathrm{SE}}$ to be large enough. 

The final step is to relate $\widetilde{\bh}_{k + 1}$ to another RV that resembles $\hse_{k + 1}$ conditional on the event $\{ \Use_{k + 1} = \bU_{k + 1}, \Hse = \bH_k \}$. Using the shorthand \mbox{$\rho_k = \sum_{\ell=1}^{k}\bigl(L^{u}_{k+1}\bigr)_{k+1, \ell} \bu_{\ell} -\bT_g^{\top} \bv_{k+1}$}, $\sigma_k = \bT_h \bv_{k+1} - \sqrt{\Lambda}\sum_{\ell=1}^{k} \beta_{k+1, \ell} \bh_{\ell}$ and $\epsilon_k = \sqrt{\Lambda}\Bigl( \| \proj_{\bV_k}^{\perp} \bv_{k+1}  \|_2  - \beta_{k+1, k+1} \Bigr) \bgamma_n$, we have
\begin{align*}
\sqrt{\Lambda} \cdot \widetilde{\bh}_{k + 1} = \rho_k + \sigma_k + \epsilon_k + \sqrt{\Lambda} \sum_{\ell=1}^{k} \beta_{k+1, \ell} \bh_{\ell} + \sqrt{\Lambda} \cdot \beta_{k+1, k+1} \bgamma_n, 
\end{align*}
But by Lemma~\ref{lem:facts}, we have $\| \rho_k \|_2 \vee \| \sigma_k \|_2 \vee \| \epsilon_k \|_2 \leq C_{\mathrm{SE}} \cdot \Delta_1(k, \delta)$. Setting $\bh^{\mathsf{surr}}_{k + 1} = \sum_{\ell=1}^{k} \beta_{k+1, \ell} \bh_{\ell} + \beta_{k+1, k+1} \bgamma_n$, we obtain
\[
\| \widetilde{\bh}_{k + 1} - \bh^{\mathsf{surr}}_{k + 1} \|_2 \leq kC_{\mathrm{SE}} \cdot \Delta_1(k, \delta).
\]
Invoking pseudo-Lipschitzness of $\tau^{hu, k+1}_{\ell, \ell'}$ as before, we obtain
\small
\begin{align*}
&\mathbb{P} \Bigl( \Bigl\{ \left \lvert \tau^{hu, k+1}_{\ell, \ell'}(\bU_{k}, \bu_{k+1}, \bH_{k}, \widetilde{\bh}_{k+1} ) -  \EE[ \inprod{\hse_\ell}{\use_{\ell'}} \mid \Use_{k + 1} = \bU_{k + 1}, \Hse_{k} = \bH_{k} ] \right \rvert \geq \frac{2\Delta_1(k+1, \delta)}{3} \Bigr\} \cap \Omega_k \cap \Omega' \Bigr) \\
&\leq \mathbb{P} \Bigl( \Bigl\{ \left \lvert \tau^{hu, k+1}_{\ell, \ell'}(\bU_{k}, \bu_{k+1}, \bH_{k}, \bh^{\mathsf{surr}}_{k+1} ) -  \EE[ \inprod{\hse_\ell}{\use_{\ell'}} \mid \Use_{k + 1} = \bU_{k + 1}, \Hse_{k} = \bH_{k} ] \right \rvert \geq \frac{2\Delta_1(k+1, \delta)}{3} - k^2 C_{\mathrm{SE}} \cdot \Delta_1(k, \delta) \Bigr\} \cap \Omega_k \cap \Omega' \Bigr) \\
&\overset{\1}{\leq} \mathbb{P} \Bigl( \left \lvert \tau^{hu, k+1}_{\ell, \ell'}(\bU_{k}, \bu_{k+1}, \bH_{k}, \bh^{\mathsf{surr}}_{k+1} ) -  \EE[ \inprod{\hse_\ell}{\use_{\ell'}} \mid \Use_{k + 1} = \bU_{k + 1}, \Hse_{k} = \bH_{k} ] \right \rvert \geq \frac{\Delta_1(k+1, \delta)}{3} \Bigr)
\end{align*}
\normalsize
where in step $\1$ we once again use the definition of $\Delta_1(k + 1, \delta)$, taking the defining constant $C_{\mathrm{SE}}$ to be large enough.

Moreover, by definition of the state evolution, we have
$\hse_{k + 1} = \sum_{\ell=1}^{k} \beta_{k+1, \ell} \hse_{\ell} + \beta_{k+1, k+1} \bgamma_n$,
and so by definition of $\tau^{hu, k+1}_{\ell, \ell'}$ we have
\[
\EE[ \inprod{\hse_\ell}{\use_{\ell'}} \mid \Use_{k + 1} = \bU_{k + 1}, \Hse_{k} = \bH_{k} ] = \EE_{\bgamma_n} \Bigl[ \tau^{hu, k+1}_{\ell, \ell'}\Bigl(\bU_k, \bu_{k + 1}, \bH_k, \sum_{\ell=1}^{k} \beta_{k+1, \ell} \bh_{\ell} + \beta_{k+1, k+1} \bgamma_n \Bigr) \Bigr].
\]
Consequently, by definition of $\bh^{\mathsf{surr}}_{k + 1}$ we must bound the tails of the random variable
\begin{align*}
R := \tau^{hu, k+1}_{\ell, \ell'}\Bigl(\bU_k, \bu_{k + 1}, \bH_k, \sum_{\ell=1}^{k} \beta_{k+1, \ell} \bh_{\ell} + \beta_{k+1, k+1} \bgamma_n \Bigr) -  \EE_{\bgamma_n} \Bigl[ \tau^{hu, k+1}_{\ell, \ell'}\Bigl(\bU_k, \bu_{k + 1}, \bH_k, \sum_{\ell=1}^{k} \beta_{k+1, \ell} \bh_{\ell} + \beta_{k+1, k+1} \bgamma_n \Bigr) \Bigr].
\end{align*}
But $\tau^{hu, k+1}_{\ell, \ell'}$ is pseudo-Lipschitz in $\bgamma_n$, so applying Lemma~\ref{lem:pseudo-Lipschitz-concentration} in conjunction with Bernstein's inequality yields
\begin{align*}
	\mathbb{P} \biggl( |R| > C_{\mathrm{SE}}\sqrt{\frac{k \log(10T^2/\delta')}{n}} \biggr) \leq \frac{
	\delta'}{10T^2}.
\end{align*}
Hence, since $\Delta_1(k + 1, \delta) \geq C_{\mathrm{SE}}\sqrt{\frac{k \log(10T^2/\delta')}{n}}$, combining the elements gives
\begin{align*}
	\mathbb{P} \Bigl( \left| \EE[ \inprod{\hse_\ell}{\use_{\ell'}} \mid \Use_{k+1} = \bU_{k+1}, \Hse_{k+1} = \bH_{k+1} ] - \EE[\inprod{\hse_\ell}{\use_{\ell'}}] \right|  \geq \Delta_1(k+1, \delta) \Bigr) \leq \frac{2\delta'}{10T^2} + \frac{2\delta 3^{k-1}}{3^T - 1}.
\end{align*}
Taking a union bound, and recalling the definition of $\delta'$ in Eq.~\eqref{eq:def-delta'} yields the desired result.

\paragraph{Proof of Claim~\ref{claim:first-order-hat-value}.}
	The proof proceeds by induction.  For the base case, note that by definition, $\uhatse_1 = \vhatse_1 = 0$.  Suppose now that the claim holds for all $\ell \in [k]$.  Note that the state evolution implies that we have 
	\begin{align} \label{eq:SE-FO-system}
		\langle \uhatse_{k+1}, \use_{\ell} \rangle_{L^2}  = \langle \vse_{k+1}, \gse_{\ell} \rangle_{L^2}, \quad \text{ for all } \quad \ell \in [k +1].
	\end{align}
	Moreover, by the state evolution, there is a unique $\uhatse_{k+1}$ which satisfies the above relation.  It thus suffices to show that there exist constants $\{c_{\ell}\}_{\ell=1}^{k}$ such that $\uhatse_{k+1} = \sum_{\ell=1}^{k} c_{\ell} \use_{\ell}$.  Expanding the first $k$ equations in~\eqref{eq:SE-FO-system} and letting $\boldsymbol{c} = [c_1 \;\vert \; \cdots \; \vert \; c_k]$ yields $
	\bK^g_k \boldsymbol{c} = \llangle \vse_{k+1}, \Gse_k \rrangle_{L^2}$ so that by Assumption~\ref{asm:non-degeneracy}, $\boldsymbol{c} = (\bK^g_k)^{-1} \llangle \vse_{k+1}, \Gse_k \rrangle_{L^2}$.  We now check that this choice of $\boldsymbol{c}$ satisfies the final relation $\langle \uhatse_{k+1}, \use_{k+1} \rangle_{L^2} = \langle \vse_{k+1}, \gse_{k+1} \rangle_{L^2}$.  To this end, we recall that, by joint Gaussianity, we can find scalars so that $\gse_{k+1} = \alpha_{k+1}\bxi_{g} + \widetilde{\bg}^{\mathrm{SE}}_{k}$, for some $\widetilde{\bg}^{\mathrm{SE}}_{k} \in \text{span}\{\gse_1, \gse_2, \ldots, \gse_{k}\}$ and where $\bxi_g$ is independent of $\{\gse_1, \ldots, \gse_k\}$.  Hence, since $\vse_{k+1}$ is independent of $\bxi_{g}$, we have $\langle \vse_{k+1}, \gse_{k+1} \rangle_{L^2} = \langle \vse_{k+1}, \widetilde{\bg}^{\mathrm{SE}}_{k} \rangle_{L^2}$.  Letting $\bw = [(\bK^g_{k+1})_{k+1, 1} \; \vert \; \cdots \; \vert \; (\bK^g_{k+1})_{k+1, k}] \in \mathbb{R}^k$, we have
	\[
	\langle \uhatse_{k+1}, \use_{k+1} \rangle_{L^2} = \langle \Use_{k+1} \boldsymbol{c}, \use_{k+1} \rangle_{L^2} = \bw^{\top} \boldsymbol{c} = \bw^{\top} (\bK^g_k)^{-1} \llangle \vse_{k+1}, \Gse_k \rrangle_{L^2}. 
	\]
	The proof is complete upon noting that $\widetilde{\bg}_k^{\mathrm{SE}} = \Gse_k (\bK^g_k)^{-1} \bw$ and executing an identical argument for $\vhatse_{k+1}$.  
\qed

\subsection{Proof of the induction step for saddle point updates} \label{sec:proof-saddle}
We must verify the deviation bounds in Eq.~\eqref{eq:ind-events-new-four}.  To be concrete, throughout this section, we will consider $\tau^{\psi_d, k+1}$~\eqref{eq:tau-definitions}, noting that an identical argument holds for each of the other functions.  Since 
\[
\EE[\tau^{\psi_d, k+1}(\vse_{k+1}, \Vse_k, \gse_{k+1}, \Gse_k)] = \EE[\psi_d(\Vse_{T}, \Gse_T)],
\]
it suffices to show that
\begin{align} \label{eq:f-desired}
\left \lvert \tau^{\psi_d, k+1}(\bv_{k+1}, \bV_k, \bg_{k+1}, \bG_k) - \mathbb{E} \tau^{\psi_d, k+1}_{d}\left( \vse_{k+1}, \Vse_{k}, \gse_{k+1}, \Gse_k \right) \right \rvert \leq \Delta_1(k+1, \delta).
\end{align}
The proof for saddle point updates is not as explicit as that for first order updates.  To set up the proof, recall that the iterates $(\bu_{k+1}, \bv_{k+1})$ are identified as the unique saddle point of the update function
\begin{align} \label{def:update-function}
	F_{k+1}(\bu, \bv) := \bu^{\top} \bX \bv 	-
	\phi_{k+1}^u(\bu)
	+
	\phi_{k+1}^v(\bv).
\end{align}

\paragraph{Proof roadmap:} The proof proceeds in several steps --- we first sketch these steps before presenting them in detail below.  As in the proof for first order updates, a key barrier to showing deviation results of the form~\eqref{eq:f-desired} is 
the dependence between the previous iterates $(\bu_{\ell}, \bv_{\ell})_{\ell=1}^{k}$ and the data matrix $\bX$.  Towards unraveling this dependence, we first apply Lemma~\ref{lem:X-decomp} to decompose---in step 0---the data $\bX$ into a component determined by the past iterates and an orthogonal component, allowing us to write an alternative strongly convex-strongly concave function $\Lpo_{k + 1}$ that---uniformly over a constant sized ball---approximates $F_{k + 1}$. 
Recalling the definitions of $\bg_{k+1}$ and $\bh_{k+1}$ for saddle updates, and noting that $\nabla_{v} F_{k+1}(\bu_{k+1}, \bv_{k+1}) = 0$ and $\nabla_{u}F_{k+1}(\bu_{k+1}, \bv_{k+1}) = 0$, we see that
\begin{align*}
	\bg_{k+1} &:= \bigl(L^{v}_{k+1}\bigr)_{k+1, k+1} \bv_{k+1} + \sum_{\ell=1}^{k} \bigl(L^{v}_{k+1}\bigr)_{k+1, \ell} \bv_k - \bX^{\top} \bu_{k+1} \\
	&= \bigl(L^{v}_{k+1}\bigr)_{k+1, k+1} \bv_{k+1} + \sum_{\ell=1}^{k} \bigl(L^{v}_{k+1}\bigr)_{k+1, \ell} \bv_k + \nabla \phi_{k+1}^{v}(\bv_{k+1}) \quad \text{ and }\\
	\sqrt{\Lambda} \cdot \bh_{k+1} &:= \bigl(L^{u}_{k+1}\bigr)_{k+1, k+1} \bu_{k+1} + \sum_{\ell=1}^{k} \bigl(L^{u}_{k+1}\bigr)_{k+1, \ell} \bu_k + \bX \vpo_{k+1}\\
	&= \bigl(L^{u}_{k+1}\bigr)_{k+1, k+1} \bu_{k+1} + \sum_{\ell=1}^{k} \bigl(L^{u}_{k+1}\bigr)_{k+1, \ell} \bu_k + \nabla \phi_{k+1}^{u}(\bu_{k+1}).
\end{align*}
This motivates the definition of 
\begin{subequations} \label{eq:g-h-po}
	\begin{align}
		\bg_{k+1}(\bv) &:= \bigl(L^{v}_{k+1}\bigr)_{k+1, k+1} \bv + \sum_{\ell=1}^{k} \bigl(L^{v}_{k+1}\bigr)_{k+1, \ell} \bv_k + \nabla \phi_{k+1}^{v}(\bv)\quad \text{ and }\\
		\sqrt{\Lambda} \cdot \bh_{k+1}(\bu) &:= \bigl(L^{u}_{k+1}\bigr)_{k+1, k+1} \bu + \sum_{\ell=1}^{k} \bigl(L^{u}_{k+1}\bigr)_{k+1, \ell} \bu_k + \nabla \phi^{u}_{k+1}(\bu_{k+1}).
	\end{align}
\end{subequations}
Writing $(\upo_{k + 1}, \vpo_{k + 1})$ as the unique saddle of $\Lpo_{k + 1}$ and taking $\gpo_{k+1} = \bg_{k+1}(\vpo_{k+1}), \hpo_{k+1} = \bh_{k+1}(\upo_{k+1})$, we note that in order to establish Eq.~\eqref{eq:f-desired}, it thus suffices to show that
\[
\left \lvert \tau^{\psi_d, k+1}(\vpo_{k+1}, \bV_k, \gpo_{k+1}, \bG_k) - \mathbb{E} \tau^{\psi_d, k+1}_{d}\left( \vse_{k+1}, \Vse_{k}, \gse_{k+1}, \Gse_k \right) \right \rvert \lesssim \Delta_1(k+1, \delta).
\]
Instead of directly bounding this quantity, we note that if these deviations are large, then two saddle values obtained by restrictions on the function $\Lpo_{k + 1}$ must obey certain properties. The problem is thus reduced to controlling the probability of these events by analyzing the restricted saddle objectives of $\Lpo_{k + 1}$.  This proof strategy is occasionally referred to in the literature as a local stability argument~\citep{miolane2021distribution}.

In step 1, we reduce the problem of analyzing the restricted saddle objectives of $\Lpo_{k + 1}$ to analyzing the restricted saddle objectives of a different loss function $\mathfrak{L}^{\mathrm{AO}}_{k+1}$, which in the typical terminology of the CGMT is known as the auxiliary loss. Unlike $\Lpo_{k + 1}$, the auxiliary loss may not be convex-concave, and therefore does not have a well-defined saddle point. Thus, the previous proof technique of reduction to studying deviations around a surrogate saddle point faces a roadblock.
To circumvent this, we invoke an idea of~\citet{celentano2023challenges} and introduce approximate stationary points $\vapx_{k+1}$ and $\uapx_{k+1}$ in step 2, proving certain properties of these points along the way.  
In step 3, we show that the minmax \emph{value} of $\mathfrak{L}^{\mathrm{AO}}_{k+1}$ is very close to pure maximization (respectively minimization) of $\mathfrak{L}^{\mathrm{AO}}_{k+1}(\bu, \vapx_{k + 1})$ and $\mathfrak{L}^{\mathrm{AO}}_{k+1}(\uapx_{k + 1}, \bv)$ over $\bu$ (respectively $\bv$). 
%
We conclude the proof in step 4 by considering each of the functions~\eqref{eq:tau-definitions} to control every required deviation in the inductive hypothesis.  Applying a union bound over each of these and taking the probability of error suitably small to ensure the geometric series sums to $\delta$ after $T$ iterations then yields the inductive step for a saddle update.

\paragraph{Step 0:} Recall the approximation matrices $\bT_g, \bT_h$~\eqref{eq:def-Tg-Th}.  In order to interpret these matrices, we define
\begin{align*}
	\bG_k^{\perp} &:= \bG_k\bigl(\bK_k^g\bigr)^{-1/2}, \qquad \bH_k^{\perp} := \bH_k \bigl(\bK_k^h\bigr)^{-1/2},\\
	\bU_k^{\perp} &:= \bU_k  \bigl(\bK_k^g\bigr)^{-1/2}, \qquad \bV_k^{\perp} := \bV_k  \bigl(\bK_k^h\bigr)^{-1/2},
\end{align*}
noting that on the event $\Omega_k$, the columns of each of the four matrices defined above are approximately orthonormal by an argument similar to Lemma~\ref{lem:facts}.  With these definitions, we have $\bT_g = \bG_k^{\perp} \bigl(\bU_k^{\perp}\bigr)^{\top}$ and $\bT_h =  \sqrt{\Lambda} \cdot \bH_k^{\perp} \bigl(\bV_k^{\perp}\bigr)^{\top}$.  

Lemma~\ref{lem:X-decomp} then inspires us to define the primary objective\footnote{We emphasize to the reader that here we use the terminology primary objective in anticipation of our application of the CGMT~\citep{thrampoulidis2018precise} in the sequel.} $\Lpo_{k+1}: \mathbb{R}^n \times \mathbb{R}^d \rightarrow \mathbb{R}$ as
\begin{align}
	\Lpo_{k+1}(\bu, \bv) := \bu^{\top}\Bigl(\bT_h^{\top} - \bT_g^{\top} + \proj^{\perp}_{\bU_k} \widetilde{\bX} \proj_{\bV_k}^{\perp}\Bigl)\bv
	-
	\phi_{k+1}^u(\bu)
	+
	\phi_{k+1}^v(\bv)
\end{align}
and its (unique) saddle point as $(\upo_{k+1}, \vpo_{k+1}) = \mathsf{saddle} \{L^{\mathrm{PO}}_{k+1}(\bu, \bv)\}$.

Note that the primary objective approximates the update function $F_{k+1}$.  Indeed, by Lemma~\ref{lem:X-decomp}, on $\Omega_k$, we have the uniform bound
\begin{align*} 
	\sup_{\bv \in \mathbb{B}_2(M), \bu \in \mathbb{B}_2(M')}\; \Bigl \lvert \Lpo_{k+1}(\bu, \bv) - F_{k+1}(\bu, \bv) \Bigr \rvert \leq C_{\mathrm{SE}, M, M'} \cdot k \cdot \Delta_1(k, \delta),
\end{align*}
where $M, M' > 0$ are small enough constants.  The following lemma, whose proof we provide in Section~\ref{sec:proof-lem-approximation-by-po}, is a straightforward corollary of this uniform approximation.
\begin{lemma}
	\label{lem:approximation-by-po}
	Let $F_{k+1}$ be as in~\eqref{def:update-function} and let $(\bu_{k+1}, \bv_{k+1})$ denote its unique saddle point.  There exist universal constants $c_{\mathrm{SE}}, C_{\mathrm{SE}} > 0$ such that if
	\begin{align*} 
		\sup_{\bv \in \mathbb{B}_2(M), \bu \in \mathbb{B}_2(M')}\; \Bigl \lvert \Lpo_{k+1}(\bu, \bv) - F_{k+1}(\bu, \bv) \Bigr \rvert \leq C_{M, M'} \cdot k \cdot \Delta_1(k, \delta),
	\end{align*}
	then 
		\begin{align}
		\label{eq:approximation-by-po}
		\| \bv_{k+1} \|_2 \vee \| \bu_{k+1} \|_2 \leq c_{\mathrm{SE}}, \quad \text{ and } \quad \| \vpo_{k+1} - \bv_{k+1} \|_2 \vee \| \upo_{k+1} - \bu_{k+1} \|_2  \leq C_{\mathrm{SE}} \cdot\sqrt{ k \cdot \Delta_1(k, \delta)}.
	\end{align}
\end{lemma}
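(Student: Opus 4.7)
The plan is to reduce the saddle problem to an ordinary strongly convex minimization via partial dualization in the $\bu$ variable, and then chain a stability-of-minimizer argument to the claimed bounds.

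\textbf{Step 1 (a-priori boundedness of both saddle points).} I first show that $\|\bu_{k+1}\|_2, \|\bv_{k+1}\|_2, \|\upo_{k+1}\|_2, \|\vpo_{k+1}\|_2 \leq c_{\mathrm{SE}}$ for a constant depending only on $(\mu, L, K)$, \emph{independent} of the coupling matrix. The key observation is that for \emph{any} linear operator $\bA$, the partial dual
\[
\widebar{F}_{\bA}(\bv) \;:=\; \max_{\bu}\bigl\{ \bu^\top \bA \bv - \phi^u_{k+1}(\bu)\bigr\} + \phi^v_{k+1}(\bv) \;=\; (\phi^u_{k+1})^*(\bA\bv) + \phi^v_{k+1}(\bv)
\]
is $\mu$-strongly convex in $\bv$ (since $\phi^v_{k+1}$ is and $(\phi^u_{k+1})^*$ is convex). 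The hypotheses $\|\nabla \phi^u_{k+1}(0)\|_2 \leq K$ and $\mu$-strong convexity of $\phi^u_{k+1}$ force $\inf_\bu \phi^u_{k+1}(\bu) \geq \phi^u_{k+1}(0) - K^2/(2\mu)$, which implies the uniform bound $(\phi^u_{k+1})^*(0) - (\phi^u_{k+1})^*(\bA\bv) \leq K^2/(2\mu)$ for every $\bv$ (the lower bound comes from plugging $\bu=0$ into the defining supremum of $(\phi^u_{k+1})^*(\bA\bv)$). Applying strong convexity at the minimizer together with $\|\nabla \phi^v_{k+1}(0)\|_2 \leq K$ yields a quadratic inequality $\mu \|\vpo_{k+1}\|_2^2 - K \|\vpo_{k+1}\|_2 - K^2/(2\mu) \leq 0$ and hence $\|\vpo_{k+1}\|_2 \leq c_{\mathrm{SE}}$. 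The symmetric argument in $\bu$, and the same reasoning applied to $F_{k+1}$, bound all four norms.

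\textbf{Step 2 (transfer of function-value approximation).} Fix $M, M'$ larger than $c_{\mathrm{SE}}$ (so that all saddle points and all inner maximizers encountered below lie in the interior of the ball). Define
\[
\widebar{F}(\bv) := \max_{\bu \in \mathbb{R}^n} F_{k+1}(\bu, \bv), \qquad \widebar{L}(\bv) := \max_{\bu \in \mathbb{R}^n} \Lpo_{k+1}(\bu, \bv),
\]
both of which are $\mu$-strongly convex; the inner maximizers $\bu^F(\bv), \bu^L(\bv)$ both lie in $\mathbb{B}_2(M')$ by Step~1 applied to $\Lpo_{k+1}$ and $F_{k+1}$ respectively. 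Writing $\epsilon := C_{M,M'}\,k\,\Delta_1(k, \delta)$, the hypothesized uniform closeness on $\mathbb{B}_2(M') \times \mathbb{B}_2(M)$ yields, for any $\bv \in \mathbb{B}_2(M)$,
\[
\widebar{F}(\bv) = F_{k+1}(\bu^F(\bv), \bv) \leq \Lpo_{k+1}(\bu^F(\bv), \bv) + \epsilon \leq \widebar{L}(\bv) + \epsilon,
\]
and analogously $\widebar{F}(\bv) \geq \widebar{L}(\bv) - \epsilon$, so $|\widebar{F}(\bv) - \widebar{L}(\bv)| \leq \epsilon$ on $\mathbb{B}_2(M)$.

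\textbf{Step 3 (strong convexity converts value gap to distance).} Since $\bv_{k+1}$ minimizes $\widebar{F}$ and $\widebar{F}$ is $\mu$-strongly convex,
\[
\widebar{F}(\vpo_{k+1}) \geq \widebar{F}(\bv_{k+1}) + \tfrac{\mu}{2} \|\vpo_{k+1} - \bv_{k+1}\|_2^2.
\]
On the other hand, since $\vpo_{k+1}$ minimizes $\widebar{L}$,
\[
\widebar{F}(\vpo_{k+1}) \leq \widebar{L}(\vpo_{k+1}) + \epsilon \leq \widebar{L}(\bv_{k+1}) + \epsilon \leq \widebar{F}(\bv_{k+1}) + 2\epsilon.
\]
Combining these two inequalities yields $\|\vpo_{k+1} - \bv_{k+1}\|_2 \leq 2\sqrt{\epsilon/\mu}$. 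Running the symmetric argument for the dualized-in-$\bv$ functions $\underline{F}(\bu) := \min_{\bv} F_{k+1}(\bu, \bv)$ and $\underline{L}(\bu) := \min_{\bv} \Lpo_{k+1}(\bu, \bv)$ (which are $\mu$-strongly concave in $\bu$) gives the companion bound on $\|\upo_{k+1} - \bu_{k+1}\|_2$. Plugging in $\epsilon = C_{M,M'}\, k\, \Delta_1(k, \delta)$ gives the advertised $\sqrt{k \Delta_1(k, \delta)}$ rate.

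\textbf{Main obstacle.} The only subtle step is Step~1: since the bilinear term $\bu^\top \bA \bv$ has uncontrolled operator norm (for either choice of $\bA$), a naive first-order optimality bound on $\|\vpo_{k+1}\|_2$ would require $\|\bA\|_{\mathrm{op}} < \mu$, which is false. The Fenchel-conjugate trick sidesteps this because $(\phi^u_{k+1})^*(\bA\bv)$ is bounded \emph{above} by $(\phi^u_{k+1})^*(0) + K^2/(2\mu)$ uniformly in $\bA\bv$, independent of $\|\bA\|_{\mathrm{op}}$.
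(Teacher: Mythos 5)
There is a genuine gap in Step~2. Your chain $\widebar{F}(\vpo_{k+1}) \leq \Lpo_{k+1}(\bu^F(\vpo_{k+1}), \vpo_{k+1}) + \epsilon$ and, symmetrically, $\widebar{L}(\bv_{k+1}) \leq F_{k+1}(\bu^L(\bv_{k+1}), \bv_{k+1}) + \epsilon$ invokes the uniform closeness hypothesis at the \emph{cross} best responses $\bu^F(\vpo_{k+1})$ (the maximizer of $F_{k+1}(\cdot, \vpo_{k+1})$) and $\bu^L(\bv_{k+1})$ (the maximizer of $\Lpo_{k+1}(\cdot, \bv_{k+1})$), and you justify their membership in $\mathbb{B}_2(M')$ ``by Step~1.'' But Step~1 only controls the four saddle-point components $\bu_{k+1}=\bu^F(\bv_{k+1})$, $\upo_{k+1}=\bu^L(\vpo_{k+1})$, $\bv_{k+1}$, $\vpo_{k+1}$; it says nothing about the best response of one objective evaluated at the other objective's minimizer. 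For those cross points the first-order condition gives $\bX\bv = \nabla\phi^u_{k+1}(\bu^F(\bv))$, hence $\mu\,\|\bu^F(\bv)\|_2 \leq \|\bX\|_{\mathrm{op}}\|\bv\|_2 + K$ (and similarly with the coupling matrix of $\Lpo_{k+1}$), so their boundedness genuinely requires operator-norm control of the coupling matrices. This directly contradicts your ``Main obstacle'' paragraph: the Fenchel-conjugate trick does give operator-norm-free bounds on the saddle components (a nice touch, cleaner than the paper's own boundedness argument, which invokes $\|\bX\|_{\mathrm{op}} \leq \Delta_0(\delta)$ on $\Omega_{op}$ via Danskin), but your value-transfer step is \emph{not} independent of $\|\bX\|_{\mathrm{op}}$, and as written the closeness hypothesis may be applied outside $\mathbb{B}_2(M')$.

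The gap is fixable in two ways. Either (a) add the missing estimate $\mu\,\|\bu^F(\vpo_{k+1})\|_2 \leq \|\bX\|_{\mathrm{op}}\,\|\vpo_{k+1}\|_2 + K$ (and its three analogues), using the operator-norm bound that is anyway available on the event where the lemma is applied — which is exactly what the paper itself does for its norm bound; or (b) avoid cross best responses altogether by arguing as the paper does: chain the saddle inequalities $F_{k+1}(\bu_{k+1},\vpo_{k+1}) \leq \Lpo_{k+1}(\bu_{k+1},\vpo_{k+1}) + \epsilon \leq \Lpo_{k+1}(\upo_{k+1},\bv_{k+1}) + \epsilon \leq F_{k+1}(\bu_{k+1},\bv_{k+1}) + 2\epsilon$, which touches only the four saddle components controlled by your Step~1, and then conclude with strong convexity of $\bv \mapsto F_{k+1}(\bu_{k+1},\bv)$ exactly as in your Step~3. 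With either repair, your argument becomes a valid (and, for the boundedness part, somewhat slicker) variant of the paper's proof; without it, Steps~2--3 do not go through.
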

Invoking the order-$2$ pseudo-Lipschitz nature of $f_d$ and $f_n$ in conjunction with the bound $\Delta_1(k+1, \delta) \geq \sqrt{k \cdot \Delta_1(k, \delta)}$ yields
\[
\left \lvert \tau^{\psi_d, k+1}(\vpo_{k+1}, \bV_k, \gpo_{k+1}, \bG_k) -  \tau^{\psi_d, k+1}(\bv_{k+1}, \bV_k, \bg_{k+1}, \bG_k) \right \rvert \leq \frac{\Delta_1(k+1, \delta)}{2}.
\]
Hence, to establish Eq.~\eqref{eq:f-desired}, it suffices to show that
\[
\left \lvert \tau^{\psi_d, k+1}(\vpo_{k+1}, \bV_k, \gpo_{k+1}, \bG_k)  - \mathbb{E} \tau^{\psi_d, k+1}(\vse_{k+1}, \Vse_k, \gse_{k+1}, \Gse_k)  \right \rvert \leq \frac{\Delta_1(k+1, \delta)}{2}.
\]
To this end, we define deviation sets
\begin{align*}
	\mathbb{D}_{d}(\tau^{\psi_d, k+1}) &:= \Bigl\{\bv \in \mathbb{B}_2(c_{\mathrm{SE}}): \,\left \lvert \tau^{\psi_d, k+1}(\bv, \bV_k, \bg_{k+1}(\bv), \bG_k)  - \mathbb{E} \tau^{\psi_d, k+1}(\vse_{k+1}, \Vse_k, \gse_{k+1}, \Gse_k)  \right \rvert \leq \Delta_1(k+1, \delta)\Bigr\},
\end{align*}
and study both the unrestricted behavior of the loss $L_{k+1}^{\mathrm{PO}}$ as well as the behavior of the loss when its arguments are restricted to the deviation sets.  To do so, we define the quantity $\mathrm{OPT}_{k+1}$ as the value of the auxiliary state evolution saddle point problem in Eq.~\eqref{def:saddle-obj-Hilbert}.  That is, 
\begin{align*}
	\mathrm{OPT}_{k+1} := \min_{\bv \in \mathcal{H}^{k + 1}_d} \max_{\bu \in \mathcal{H}^{k+1}_n}\; 	\big\{
	\mathfrak{L}_{k+1}(\bu, \bv)
	\big\}.
\end{align*}
Equipped with this notation, we define pair of events $\Omega_1^{\mathrm{PO}}$ and $\Omega_2^{\mathrm{PO}}$ as
\begin{align*}
	\Omega^{\mathsf{PO}}_1 &:= \Bigl\{ \min_{\bv \in \mathbb{R}^d} \max_{\bu \in \mathbb{R}^n} \; \bigl\{\Lpo_{k+1}(\bu, \bv)\bigr\} < \mathrm{OPT}_{k+1} + \Delta_1(k, \delta) \Bigr\} \quad \text{ and }\\
	\Omega^{\mathsf{PO}}_2 &:= \Bigl\{ \min_{\bv \in \mathbb{D}_d^{c}(f_d)} \max_{\bu \in \mathbb{R}^n} \; \bigl\{\Lpo_{k+1}(\bu, \bv)\bigr\} > \mathrm{OPT}_{k+1} +  C_{\mathrm{SE}} \frac{\Delta_1(k+1, \delta)^2}{k^2} \Bigr\}.
\end{align*}
In words, $\Omega^{\mathsf{PO}}_1$ encodes the event that the unrestricted saddle point has value bounded above by $\mathrm{OPT}_{k+1} + \Delta_1(k, \delta)$ and $\Omega^{\mathsf{PO}}_2$ encodes the event that the value of the saddle problem when the minimization variable is restricted to the set $\mathbb{D}_d^{c}$ is bounded below by $\mathrm{OPT}_{k+1} +  C_{\mathrm{SE}} \frac{\Delta_1(k+1, \delta)^2}{k^2}$.  Since $\Delta_1(k+1, \delta)/k \asymp \sqrt{\Delta_1(k, \delta)}$, this implies that on the intersection $\Omega^{\mathsf{PO}}_1 \cap \Omega^{\mathsf{PO}}_2$, we have $\vpo_{k+1}\in \mathbb{D}_d(\tau^{\psi_d, k+1})$.  Formally, we have
\begin{align*}
	\mathbb{P}\Bigl\{\left \lvert \tau^{\psi_d, k+1}(\vpo_{k+1}, \bV_k, \gpo_{k+1}, \bG_k)  - \mathbb{E} \tau^{\psi_d, k+1}(\vse_{k+1}, \Vse_k, \gse_{k+1}, \Gse_k)  \right \rvert > \Delta_1(k+1, \delta)\Bigr\} &\leq \mathbb{P}\bigl\{ (\Omega^{\mathsf{PO}}_1)^{c} \cup (\Omega^{\mathsf{PO}}_2)^{c} \bigr\} \\
	&\leq  \mathbb{P}\bigl\{ (\Omega^{\mathsf{PO}}_1)^{c} \bigr\} +  \mathbb{P}\bigl\{(\Omega^{\mathsf{PO}}_2)^{c} \bigr\}.
\end{align*}
In the rest of the proof, we show that the probability of each of the events $(\Omega^{\mathsf{PO}}_1)^{c}$ and $(\Omega^{\mathsf{PO}}_2)^{c}$ is small.

\paragraph{Step 1: Reduction to the auxiliary objective via the CGMT.} To bound the probability of our events, the direct objects to analyze
are the (restricted) saddles of $\Lpo_{k+1}$. However, this object is complex because the loss is defined by a random matrix. To simplify the loss under consideration, we define a closely related auxiliary objective.  Towards defining the auxiliary objective, let $\bgamma_d \sim \mathsf{N}(0, I_d)$ and $\bgamma_n \sim \mathsf{N}(0, I_n)$ be independent of all other randomness, and define the auxiliary objective $\mathfrak{L}^{\mathrm{AO}}_{k+1}: \mathbb{R}^n \times \mathbb{R}^{d} \rightarrow \mathbb{R}$ as
\begin{align}\label{eq:def-aux-objective}
	\mathfrak{L}^{\mathrm{AO}}_{k+1}(\bu, \bv) := &-\Bigl( \bT_{g} \bu + \bigl \| \proj^{\perp}_{\bU_k} \bu \|_2 \frac{\bgamma_d}{\sqrt{d}}\Bigr)^{\top} \bv \nonumber\\
	& + \Bigl( \bT_{h} \bv + \sqrt{\Lambda}\bigl \| \proj^{\perp}_{\bV_k} \bv \|_2 \frac{\bgamma_n}{\sqrt{n}}\Bigr)^{\top} \bu - \phi^{u}_{k+1}(\bu) + \phi^{v}_{k+1}(\bv). 
\end{align}
We bound the probability of our events of interest by relating them to the auxiliary loss $\mathfrak{L}^{\mathrm{AO}}_{k+1}$, using the convex Gaussian minmax theorem.  In particular, we have
\begin{subequations}
\begin{align*}
	\mathbb{P}\bigl\{(\Omega^{\mathsf{PO}}_1)^{c}\bigr\} &\leq \mathbb{P}\Bigl(\Bigl\{\min_{\bv \in \mathbb{R}^d} \max_{\bu \in \mathbb{R}^n} \{\Lpo_{k+1}(\bu, \bv)\} \geq \mathrm{OPT}_{k+1} + \Delta_1(k, \delta)\Bigr\}\bigcap \Omega_k \Bigr) + \mathbb{P}(\Omega_k^c)\\
	& \overset{\1}{\leq} \mathbb{P}\Bigl(\min_{\bv \in \mathbb{B}_2(c_{\mathrm{SE}})} \max_{\bu \in \mathbb{B}_2(c_{\mathrm{SE}})} \{\Lpo_{k+1}(\bu, \bv)\} \geq \mathrm{OPT}_{k+1} + \Delta_1(k, \delta)\Bigr) + \mathbb{P}(\Omega_k^c)\\
	&\overset{\2}{\leq} 2\mathbb{P}\Bigl(\min_{\bv \in \mathbb{B}_2(c_{\mathrm{SE}})} \max_{\bu \in \mathbb{B}_2(c_{\mathrm{SE}})} \{\mathfrak{L}^{\mathrm{AO}}_{k+1}(\bu, \bv)\} \geq \mathrm{OPT}_{k+1} + \Delta_1(k, \delta)\Bigr) +  \mathbb{P}(\Omega_k^c) \numberthis \label{ineq:AO-Omega1'} \\
	&=: 2\mathbb{P}\Bigl((\Omega^{\mathsf{AO}}_1)^{c}\Bigr) +  \mathbb{P}(\Omega_k^c).
\end{align*}
where step $\1$ follows from Lemma~\ref{lem:approximation-by-po} from step 1 and step $\2$ follows from the first part of the CGMT (see, e.g.,~\citet[][Theorem 5.1]{miolane2021distribution} or ~\citet[][Proposition 1(b)]{chandrasekher2023sharp}). Similarly, we have
\begin{align*}
	\mathbb{P}\bigl\{(\Omega^{\mathsf{PO}}_2)^{c}\bigr\} &\leq \mathbb{P}\Bigl(\Bigl\{\min_{\bv \in \mathbb{D}_d^c(\tau^{\psi_d, k+1})} \max_{\bu \in \mathbb{R}^n} \{\Lpo_{k+1}(\bu, \bv)\} \leq \mathrm{OPT}_{k+1} + C_{\mathrm{SE}} \frac{\Delta_1(k+1, \delta)^2}{k^2}\Bigr\}\bigcap \Omega_k\biggr) +\mathbb{P}(\Omega_k^c)\\
	& \leq \mathbb{P}\Bigl(\min_{\bv \in \mathbb{D}_d^{c}(\tau^{\psi_d, k+1})} \max_{\bu \in \mathbb{B}_2(c_{\mathrm{SE}})} \{\Lpo_{k+1}(\bu, \bv)\} \leq \mathrm{OPT}_{k+1} + C_{\mathrm{SE}} \frac{\Delta_1(k+1, \delta)^2}{k^2}\Bigr) + \mathbb{P}(\Omega_k^c)\\
	&\overset{\1}{\leq} 2\mathbb{P}\Bigl(\min_{\bv \in \mathbb{D}_d^{c}(\tau^{\psi_d, k+1})} \max_{\bu \in \mathbb{B}_2(c_{\mathrm{SE}})} \{\mathfrak{L}^{\mathrm{AO}}_{k + 1}(\bu, \bv)\} \leq \mathrm{OPT}_{k+1} +C_{\mathrm{SE}} \frac{\Delta_1(k+1, \delta)^2}{k^2}\Bigr) + \mathbb{P}(\Omega_k^c) \numberthis \label{ineq:AO-Omega2'} \\
	&=: 2\mathbb{P}\Bigl((\Omega^{\mathsf{AO}}_2)^{c}\Bigr) +  \mathbb{P}(\Omega_k^c),
\end{align*}
\end{subequations}
where step $\1$ follows from the second part of the CGMT (see, e.g.,~\citet[][Theorem 5.1]{miolane2021distribution} or ~\citet[][Proposition 1(a)]{chandrasekher2023sharp}).  It thus suffices to the probabilities of the events $(\Omega^{\mathsf{AO}}_1)^{c}$ and $(\Omega^{\mathsf{AO}}_2)^{c}$, which involve studying (restricted) saddles of the auxiliary loss.
While the auxiliary loss has the advantage that there is no longer a random matrix involves, it is no longer convex-concave. Accordingly, to understand its restricted saddle values, we will construct \emph{approximate} stationary points of the auxiliary loss.

\paragraph{Step 2: Approximate stationary points and their properties.} Before defining the approximate stationary points, we require some preliminary notation.  Recalling our invertibility assumption on $\bK^g_k$ and $\bK^h_k$, define functions $\bg_{k+1}^{\mathrm{apx}}: \mathbb{R}^{d \times k} \times \mathbb{R}^d \to \mathbb{R}^d, \widehat{\bv}_{k+1}^{\mathrm{apx}}: \mathbb{R}^{d \times k} \times \mathbb{R}^d \to \mathbb{R}^d$,  $\bh_{k+1}^{\mathrm{apx}}: \mathbb{R}^{n \times k} \times \mathbb{R}^n \to \mathbb{R}^n$, and $\widehat{\bu}_{k+1}^{\mathrm{apx}}: \mathbb{R}^{n \times k} \times \mathbb{R}^n \to \mathbb{R}^n$ as
\begin{align*}
	\gapx_{k+1}(\bB, \bw) &:= \bB \cdot \bigl(\bK^g_k\bigr)^{-1/2}\bigl\langle \! \bigl \langle\Use_k \bigl(\bK^g_k\bigr)^{-1/2}, \use_{k+1} \bigr\rangle \!\bigr\rangle_{L^2} +	\|\proj_{\Use_{k}}^\perp \use_{k+1} \|_{L^2} \bw,\\
	\sqrt{\Lambda} \cdot \hapx_{k+1}(\bE, \bz) &:= \bE  \cdot \bigl(\bK^h_k\bigr)^{-1/2}\bigl\langle \! \bigl \langle\Vse_k \bigl(\bK^h_k\bigr)^{-1/2}, \vse_{k+1} \bigr\rangle \!\bigr\rangle_{L^2} +	\|\proj_{\Vse_{k}}^\perp \vse_{k+1} \|_{L^2} \bz, \\
	\widehat{\bv}_{k+1}^{\mathrm{apx}}(\bA) &:= \bA \cdot \bigl(\bK^h_k\bigr)^{-1} \Bigl\{  \langle \! \langle \Hse_k, \use_{k+1} \rangle \! \rangle_{L^2}  - L^{v}_{k+1, k+1} \langle \! \langle \Vse_k, \vse_{k+1} \rangle \! \rangle_{L^2}\Bigr\} \quad \text{and}\\
	\widehat{\bu}_{k+1}^{\mathrm{apx}}(\bD) &:= \bD \cdot \bigl(\bK^g_k\bigr)^{-1} \Bigl\{  \langle \! \langle \Gse_k, \vse_{k+1} \rangle \! \rangle_{L^2}  - L^{u}_{k+1, k+1} \langle \! \langle \Use_k, \use_{k+1} \rangle \! \rangle_{L^2}\Bigr\}.
\end{align*}
Use these objects to define the functions $\vapx_{k+1}: \mathbb{R}^{d \times k} \times \mathbb{R}^{d \times k} \times \mathbb{R}^d \to \mathbb{R}^d$ and $\uapx_{k+1}: \mathbb{R}^{n \times k} \times \mathbb{R}^{n \times k} \times \mathbb{R}^n \to \mathbb{R}^n$ as
\begin{subequations} \label{eq:apx-general}
	\begin{align} \label{eq:def-vapx}
		\bv_{k+1}^{\mathrm{apx}}(\bA, \bB, \bw) = \argmin_{\bv \in \mathbb{R}^d} \biggl\{\frac{L^v_{k+1, k+1}}{2} \Bigl \| \bv - (L^v_{k+1, k+1})^{-1} \cdot \bigl[\bg_{k+1}^{\mathrm{apx}}(\bB, \bw) - \widehat{\bv}_{k+1}^{\mathrm{apx}}(\bA)\bigr] \Bigr \|_2^2 + \phi^v_{k+1}(\bv) \biggr\},
	\end{align}
	and 
	\begin{align}\label{eq:def-uapx}
		\bu_{k+1}^{\mathrm{apx}}(\bD, \bE, \bz) = \argmin_{\bu \in \mathbb{R}^n} \biggl\{\frac{L^u_{k+1, k+1}}{2} \Bigl \| \bu - (L^u_{k+1, k+1})^{-1} \cdot \bigl[\sqrt{\Lambda} \cdot \bh_{k+1}^{\mathrm{apx}}(\bE, \bz) - \widehat{\bu}_{k+1}^{\mathrm{apx}}(\bD)\bigr] \Bigr \|_2^2 + \phi^u_{k+1}(\bu) \biggr\}.
	\end{align}
\end{subequations}
Note that we have
\begin{align} \label{eq:apx-se-equivalence}
\vse_{k+1} = \vapx_{k+1}(\Vse_{k}, \Gse_k, \bgamma_d/\sqrt{d}) \quad \text{ and } \quad \use_{k+1} = \uapx_{k+1}(\Use_k, \Hse_k, \bgamma_n/\sqrt{n})
\end{align}
for $\bgamma_d \sim \normal(0, \bI_d)$ and $\bgamma_n \sim \normal(0, \bI_n)$.
To exploit these relations to the SE, we work with a different evaluation of these functions, which we will then show are approximate stationary points of the auxiliary loss. Abusing notation slightly, define 
\[
\vapx_{k + 1} := \vapx_{k+1}(\bV_k, \bG_k, \bgamma_d/\sqrt{d})
\]
according to Eq.~\eqref{eq:def-vapx} and 
\[
\uapx_{k + 1} := \uapx_{k+1}(\bU_k, \bH_k, \bgamma_n/\sqrt{n})
\]
according to Eq.~\eqref{eq:def-uapx}, where $\bgamma_d \sim \normal(0, \bI_d)$ and $\bgamma_n \sim \normal(0, \bI_n)$.
The following lemma establishes several useful concentration properties of the approximate stationary points, and parallels guarantees in Proposition~\ref{prop:events-induction}.
\begin{lemma} \label{lem:apx-concentration-useful}
	Let $\bV_{k+1}^{\mathrm{apx}} = [\bV_k \; \vert \; \vapx_{k+1}]$, $\bU_{k+1}^{\mathrm{apx}} = [\bU_k \; \vert \; \uapx_{k+1}]$, $\bG_{k+1}^{\mathrm{apx}} = [\bG_k \; \vert \; \gapx_{k+1}]$, and $\bH_{k+1}^{\mathrm{apx}} = [ \bH_k \; \vert \; \hapx_{k+1}]$.  If $\delta \in (e^{-c_{\mathrm{SE}}n}, 1)$, then with probability at least $1 - \delta' - \mathbb{P}(\Omega_k^{c})$, the following hold simultaneously.  
	\small
	\begin{subequations}\label{ineq:induction-concentration-apx}
		\begin{align}
			\max_{\ell, \ell' \in [T]} \left| \EE[ \inprod{\vse_\ell}{\vse_{\ell'}} \mid \Vse_{k+1} = \bV_{k+1}^{\mathrm{apx}}, \Gse_{k+1} = \bG_{k+1}^{\mathrm{apx}} ] - \EE[\inprod{\vse_\ell}{\vse_{\ell'}}] \right| &\leq \Delta_1(k, \delta) + C_{\mathrm{SE}}\sqrt{\frac{\log(\frac{10T^2}{\delta'})}{n}}, \label{eq:apx-ind-events-new-four-1} \\
			\max_{\ell, \ell' \in [T]} \left| \EE[ \inprod{\gse_\ell}{\vse_{\ell'}} \mid \Vse_{k+1} = \bV_{k+1}^{\mathrm{apx}}, \Gse_{k+1} = \bG_{k+1}^{\mathrm{apx}} ] - \EE[\inprod{\gse_\ell}{\vse_{\ell'}}] \right| &\leq \Delta_1(k, \delta) + C_{\mathrm{SE}}\sqrt{\frac{\log(\frac{10T^2}{\delta'})}{n}}, \label{eq:apx-ind-events-new-four-2} \\
			\max_{\ell, \ell' \in [T]} \left| \EE[ \inprod{\gse_\ell}{\gse_{\ell'}} \mid \Vse_{k+1} = \bV_{k+1}^{\mathrm{apx}}, \Gse_{k+1} = \bG_{k+1}^{\mathrm{apx}} ] - \EE[\inprod{\gse_\ell}{\gse_{\ell'}}] \right| &\leq\Delta_1(k, \delta) + C_{\mathrm{SE}}\sqrt{\frac{\log(\frac{10T^2}{\delta'})}{n}}, \label{eq:apx-ind-events-new-four-gg} \\
			\max_{\ell, \ell' \in [T]} \left| \EE[ \inprod{\use_\ell}{\use_{\ell'}} \mid \Use_{k+1} = \bU_{k+1}^{\mathrm{apx}}, \Hse_{k+1} = \bH_{k+1}^{\mathrm{apx}} ] - \EE[\inprod{\use_\ell}{\use_{\ell'}}] \right| &\leq \Delta_1(k, \delta) + C_{\mathrm{SE}}\sqrt{\frac{\log(\frac{10T^2}{\delta'})}{n}}, \label{eq:apx-ind-events-new-four-3} \\
			\max_{\ell, \ell' \in [T]} \left| \EE[ \inprod{\hse_\ell}{\use_{\ell'}} \mid \Use_{k+1} = \bU_{k+1}^{\mathrm{apx}}, \Hse_{k+1} = \bH_{k+1}^{\mathrm{apx}} ] - \EE[\inprod{\hse_\ell}{\use_{\ell'}}] \right| &\leq \Delta_1(k, \delta) + C_{\mathrm{SE}}\sqrt{\frac{\log(\frac{10T^2}{\delta'})}{n}}, \label{eq:apx-ind-events-new-four-4} \\
			\max_{\ell, \ell' \in [T]} \left| \EE[ \inprod{\hse_\ell}{\hse_{\ell'}} \mid \Use_{k+1} = \bU_{k+1}^{\mathrm{apx}}, \Hse_{k+1} = \bH_{k+1}^{\mathrm{apx}} ] - \EE[\inprod{\hse_\ell}{\hse_{\ell'}}] \right| &\leq \Delta_1(k, \delta) + C_{\mathrm{SE}}\sqrt{\frac{\log(\frac{10T^2}{\delta'})}{n}}, \label{eq:apx-ind-events-new-four-hh}\\
			\bigl \lvert \mathbb{E}\bigl[ \psi_d(\Vse_T, \Gse_T) \mid \Vse_{k+1} = \bV_{k+1}^{\mathrm{apx}}, \Gse_{k+1} = \bG_{k+1}^{\mathrm{apx}}\bigr] - \mathbb{E}\bigl[\psi_d(\Vse_T, \Gse_T)\bigr] \bigr \rvert &\leq \Delta_1(k, \delta) + C_{\mathrm{SE}}\sqrt{\frac{\log(\frac{10T^2}{\delta'})}{n}}, \label{eq:apx-ind-events-new-four-psid}\\
			\bigl \lvert \mathbb{E}\bigl[ \psi_n(\Use_T, \Hse_T) \mid \Use_{k+1} = \bU_{k+1}^{\mathrm{apx}}, \Hse_{k+1} = \bH_{k+1}^{\mathrm{apx}}\bigr] - \mathbb{E}\bigl[\psi_n(\Use_T, \Hse_T)\bigr] \bigr \rvert &\leq \Delta_1(k, \delta) + C_{\mathrm{SE}}\sqrt{\frac{\log(\frac{10T^2}{\delta'})}{n}} \label{eq:apx-ind-events-new-four-psin},\\
			\max_{\ell \in T} \Bigl \lvert \mathbb{E}\bigl[\phi_{\ell}^{v}(\vse_{\ell}) \mid \Vse_{k+1} = \bV_{k+1}^{\mathrm{apx}}, \Gse_{k+1} = \bG_{k+1}^{\mathrm{apx}}\bigr] - \EE\bigl[\phi_{\ell}^{v}(\vse_{\ell})\bigr] \Bigr \rvert &\leq \Delta_1(k, \delta) + C_{\mathrm{SE}}\sqrt{\frac{\log(\frac{10T^2}{\delta'})}{n}}, \label{eq:apx-ind-events-four-phiv}\\
			\max_{\ell \in T} \Bigl \lvert \mathbb{E}\bigl[\phi_{\ell}^{u}(\use_{\ell}) \mid \Use_{k+1} = \bU_{k+1}^{\mathrm{apx}}, \Hse_{k+1} = \bH_{k+1}^{\mathrm{apx}}\bigr] - \EE\bigl[\phi_{\ell}^{u}(\use_{\ell})\bigr] \Bigr \rvert &\leq \Delta_1(k, \delta) + C_{\mathrm{SE}}\sqrt{\frac{\log(\frac{10T^2}{\delta'})}{n}} \label{eq:apx-ind-events-four-phiu}.
		\end{align}
	\normalfont
	\end{subequations}
\end{lemma}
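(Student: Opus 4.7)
The plan is to prove each of the inequalities in Eq.~\eqref{ineq:induction-concentration-apx} by a single scheme, which I illustrate on Eq.~\eqref{eq:apx-ind-events-new-four-1}. Write $\tau(\bA, \bB) := \EE[\langle \vse_\ell, \vse_{\ell'}\rangle \mid \Vse_{k+1} = \bA, \Gse_{k+1} = \bB]$, which is order-$2$ pseudo-Lipschitz by Lemma~\ref{lem:conditional-expectation-PL}. The goal is to bound $|\tau(\bV^{\mathrm{apx}}_{k+1}, \bG^{\mathrm{apx}}_{k+1}) - \EE[\langle \vse_\ell, \vse_{\ell'}\rangle]|$ with high probability.

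The central observation is a tower-property identification: evaluating $\tau$ through the augmentation maps $\vapx_{k+1}, \gapx_{k+1}$ on a deterministic past returns the SE conditional expectation given that past. More precisely, using Eq.~\eqref{eq:apx-se-equivalence}, for any conforming deterministic $(\bA, \bB)$ and an independent $\bgamma_d \sim \mathsf{N}(0,\bI_d)$, the function
\[
\tilde\tau(\bA, \bB) := \EE_{\bgamma_d}\Bigl[\tau\bigl([\bA \,|\, \vapx_{k+1}(\bA, \bB, \bgamma_d/\sqrt{d})],\, [\bB \,|\, \gapx_{k+1}(\bA, \bB, \bgamma_d/\sqrt{d})]\bigr)\Bigr]
\]
satisfies $\tilde\tau(\bA, \bB) = \EE[\langle \vse_\ell, \vse_{\ell'}\rangle \mid \Vse_k = \bA, \Gse_k = \bB]$ by the tower property. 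I then split
\[
\tau(\bV^{\mathrm{apx}}_{k+1}, \bG^{\mathrm{apx}}_{k+1}) - \EE[\langle \vse_\ell, \vse_{\ell'}\rangle] = \bigl[\tau(\bV^{\mathrm{apx}}_{k+1}, \bG^{\mathrm{apx}}_{k+1}) - \tilde\tau(\bV_k, \bG_k)\bigr] + \bigl[\tilde\tau(\bV_k, \bG_k) - \EE[\langle \vse_\ell, \vse_{\ell'}\rangle]\bigr]
\]
and control the two summands separately.

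For the second summand, the induction hypothesis (specifically Eq.~\eqref{eq:ind-events-four-1}) combined with the identification of $\tilde\tau$ gives a bound of $\Delta_1(k, \delta)$ on $\Omega_k$. For the first summand, I would condition on $(\bV_k, \bG_k)$ so that only the fresh Gaussian $\bgamma_d$ is random. On $\Omega_k$, Eq.~\eqref{eq:bounded-norms} ensures the Frobenius norms of $\bV_k, \bG_k$ are bounded. Since $\vapx_{k+1}$ is Lipschitz in $\bgamma_d$ (nonexpansiveness of the proximal operator in Eq.~\eqref{eq:def-vapx}) and $\gapx_{k+1}$ is linear in $\bgamma_d$ with bounded coefficient $\|\proj^\perp_{\Use_k}\use_{k+1}\|_{L^2}$, composition with the order-$2$ pseudo-Lipschitz $\tau$ renders the map $\bgamma_d \mapsto \tau(\bV^{\mathrm{apx}}_{k+1}, \bG^{\mathrm{apx}}_{k+1})$ order-$2$ pseudo-Lipschitz in a standard Gaussian with constants controlled by state evolution quantities. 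Applying Lemma~\ref{lem:pseudo-Lipschitz-concentration} together with Bernstein's inequality then yields a deviation of at most $C_{\mathrm{SE}} \sqrt{\log(10T^2/\delta')/n}$ with conditional probability at least $1 - \delta'/(10T^2)$.

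Union-bounding over the $O(T^2)$ scalar quantities appearing on the left sides of Eq.~\eqref{ineq:induction-concentration-apx} with weight $\delta'/(10T^2)$ each, and paying $\mathbb{P}(\Omega_k^c)$ for departure from $\Omega_k$, then delivers the lemma. Eqs.~\eqref{eq:apx-ind-events-new-four-3}--\eqref{eq:apx-ind-events-four-phiu} follow by the symmetric argument using $\uapx_{k+1}, \hapx_{k+1}, \bgamma_n$ and the remaining $\tau$'s from Eq.~\eqref{eq:tau-definitions}, all of which are pseudo-Lipschitz by Lemma~\ref{lem:conditional-expectation-PL}. I expect the most delicate ingredient to be verifying the tower-property identification $\tilde\tau(\bA, \bB) = \EE[\,\cdot \mid \Vse_k = \bA, \Gse_k = \bB]$, which leverages precisely Eq.~\eqref{eq:apx-se-equivalence}---the fact that $\vapx_{k+1}, \gapx_{k+1}$ were constructed so that, when driven by the SE past and an independent Gaussian, they reproduce the state evolution at step $k+1$.
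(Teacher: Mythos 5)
Your proposal is correct and follows essentially the same route as the paper's proof: both use the tower-property identification enabled by Eq.~\eqref{eq:apx-se-equivalence} to rewrite the level-$k$ conditional expectation as an average over the fresh Gaussian of the level-$(k{+}1)$ pseudo-Lipschitz conditional expectation (Lemma~\ref{lem:conditional-expectation-PL}), split the deviation into an induction-hypothesis term of size $\Delta_1(k,\delta)$ plus a fluctuation of a pseudo-Lipschitz function of the fresh Gaussian controlled by Lemma~\ref{lem:pseudo-Lipschitz-concentration}, and finish with a union bound over the $O(T^2)$ quantities. No substantive differences or gaps.
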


\paragraph{Step 3: Relating auxiliary saddle value to evaluations on approximate stationary points.}
We now consider evaluations of the auxiliary objective at the approximate stationary points $\vapx_{k + 1}$ and $\uapx_{k + 1}$.
  Toward this end, we define the pair of functions $G: \mathbb{R}^n \rightarrow \mathbb{R}$ and $H: \mathbb{R}^d \rightarrow \mathbb{R}$ as
\begin{subequations}
	\begin{align}
		G(\bv) &:= \bv^{\top}\Bigl(-\bT_g \uapx_{k+1}+ \bT_h^{\top} \uapx_{k+1} + \| \proj^{\perp}_{\bU_k} \uapx_{k+1} \|_2 \frac{\bgamma_d}{\sqrt{d}} \Bigr) \nonumber\\
		&\qquad \qquad \qquad \qquad \qquad+ \bigl \| \proj^{\perp}_{\bV_k} \bv \|_2 \cdot \langle \bxi_h, \bu^{\mathrm{SE}}_{k+1} \rangle_{L^2} - \phi_{k+1}^{u}(\uapx_{k+1}) + \phi_{k+1}^{v}(\bv), \\
		H(\bu) &:= \bu^{\top}\Bigl(\bT_h \vapx_{k+1} - \bT_g^{\top} \vapx_{k+1} + \sqrt{\Lambda}\| \proj^{\perp}_{\bV_k} \vapx_{k+1} \|_2 \frac{\bgamma_n}{\sqrt{n}} \Bigr) \nonumber\\
		&\qquad \qquad \qquad \qquad \qquad - \bigl \| \proj^{\perp}_{\bU_k} \bu \|_2 \cdot \langle \bxi_g, \bv^{\mathrm{SE}}_{k+1} \rangle_{L^2} - \phi_{k+1}^{u} (\bu) + \phi_{k+1}^{v}(\vapx_{k+1}).
	\end{align}
\end{subequations}
Importantly, by Claim~\ref{clm:chains},  $\langle \bxi_g, \bv^{\mathrm{SE}}_{k+1} \rangle_{L^2} \geq 0$ and $\langle \bxi_h, \bu^{\mathrm{SE}}_{k+1} \rangle_{L^2} \geq 0$ so that $G$ is strongly convex and $H$ is strongly concave. Also, by construction, we have
\begin{subequations}
\begin{align}
	\mathfrak{L}^{\mathrm{AO}}_{k+1}(\bu, \vapx_{k+1}) &= H(\bu) + \Bigl( \bigl \| \proj^{\perp}_{\bU_k} \bu \|_2 \cdot \langle \bxi_g, \bv^{\mathrm{SE}}_{k+1} \rangle_{L^2} -  \bigl \| \proj^{\perp}_{\bU_k} \bu \|_2 \cdot \frac{\bgamma_d^{\top}\vapx_{k+1}}{\sqrt{d}}\Bigr), \text{ and } \label{eq:aux-loss-H} \\
	\mathfrak{L}^{\mathrm{AO}}_{k+1}(\uapx_{k + 1}, \bv) &= G(\bv) - \Bigl( \bigl \| \proj^{\perp}_{\bV_k} \bv \|_2 \cdot \langle \bxi_h, \bu^{\mathrm{SE}}_{k+1} \rangle_{L^2} -  \sqrt{\Lambda} \bigl \| \proj^{\perp}_{\bV_k} \bv \|_2 \cdot \frac{\bgamma_n^{\top}\uapx_{k+1}}{\sqrt{n}}\Bigr). \label{eq:aux-loss-G}
\end{align} 
\end{subequations}
We will show shortly (invoking Lemma~\ref{lem:apx-concentration-useful}) that the terms in brackets above are appropriately small, so that $H(\uapx_{k + 1})$ and $G(\vapx_{k + 1})$ carry (approximate) information about the  desired restricted saddle values.
The following lemma (proved in Section~\ref{sec:proof-lem-approximate-stationarity-F-func}) shows properties of $H$ and $G$ at $\uapx_{k+1}$ and $\vapx_{k + 1}$ respectively. 
\begin{lemma} \label{lem:approximate-stationarity-F-func}
	If $\delta' \in (e^{-c_{\mathrm{SE}}n}, 1)$, then with probability at least $1 - \delta' - \mathbb{P}(\Omega_k^c)$, the following hold simultaneously
	\begin{subequations}
		\label{eq:gradient-bound-F}
		\begin{align}
			\lvert G(\vapx_{k+1}) - \mathrm{OPT}_{k+1} \rvert \leq  C_{\mathrm{SE}}k^2\Delta_1(k, \delta)  C_{\mathrm{SE}} k^2&\sqrt{\frac{\log(10T^2/\delta')}{n}} \quad \text{ and }\label{ineq:gradient-bound-F-c}\\
			\| \partial G(\vapx_{k+1}) \|_2 \leq C_{\mathrm{SE}}k^2\Delta_1(k, \delta) + C_{\mathrm{SE}}k^2& \sqrt{\frac{\log(10T^2/\delta')}{n}}. \label{ineq:gradient-bound-F-d}\\
			\lvert H(\uapx_{k+1}) - \mathrm{OPT}_{k+1} \rvert \leq  C_{\mathrm{SE}}k^2\Delta_1(k, \delta)  C_{\mathrm{SE}} k^2&\sqrt{\frac{\log(10T^2/\delta')}{n}} \quad \text{ and }\label{ineq:gradient-bound-F-a}\\
			\| \partial H(\uapx_{k+1}) \|_2 \leq C_{\mathrm{SE}}k^2\Delta_1(k, \delta) + C_{\mathrm{SE}}k^2& \sqrt{\frac{\log(10T^2/\delta')}{n}}. \label{ineq:gradient-bound-F-b}
		\end{align}
	\end{subequations}
\end{lemma}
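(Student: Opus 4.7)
The plan is to first bound $\|\partial G(\vapx_{k+1})\|_2$, then upgrade this to the value bound~\eqref{ineq:gradient-bound-F-c} using strong convexity of $G$ combined with Hilbert-space saddle duality; the arguments for $H$ at $\uapx_{k+1}$ in~\eqref{ineq:gradient-bound-F-a}--\eqref{ineq:gradient-bound-F-b} are entirely symmetric and will not be rewritten.

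For the gradient bound, the first-order optimality of the prox problem~\eqref{eq:def-vapx} defining $\vapx_{k+1}$ yields the identity
\[
\nabla\phi^v_{k+1}(\vapx_{k+1}) \;=\; \gapx_{k+1}(\bG_k, \bgamma_d/\sqrt{d}) \;-\; \widehat{\bv}^{\mathrm{apx}}_{k+1}(\bV_k) \;-\; L^v_{k+1,k+1}\,\vapx_{k+1}.
\]
Substituting this into the subgradient expression for $\partial G(\vapx_{k+1})$ and comparing to the Hilbert-space KKT condition~\eqref{eq:unique-KKT-saddle-g} evaluated at the state-evolution pair $(\use_{k+1},\vse_{k+1})$, the discrepancy decomposes into an $O(k^2)$ number of residuals of three types: (i) finite-sample inner products like $\langle\bu_\ell,\uapx_{k+1}\rangle$ versus the state-evolution quantities $\langle\use_\ell,\use_{k+1}\rangle_{L^2}$; (ii) norms like $\|\proj^\perp_{\bU_k}\uapx_{k+1}\|_2$ versus $\|\proj^\perp_{\Use_k}\use_{k+1}\|_{L^2}$; and (iii) Gaussian cross-fluctuations such as $\bgamma_d^\top\vapx_{k+1}/\sqrt{d} - \langle\bxi_g,\vse_{k+1}\rangle_{L^2}$. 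Residuals of type (i)--(ii) are controlled by Lemma~\ref{lem:apx-concentration-useful} after recognizing each as the conditional expectation of a pseudo-Lipschitz observable on the state-evolution $\sigma$-algebra; (iii) is controlled by scalar sub-Gaussian concentration. A union bound over all $O(k^2)$ residuals, combined with boundedness of $(\bK^g_k)^{-1}$ and $(\bK^h_k)^{-1}$ enforced by $\Omega_k$, yields~\eqref{ineq:gradient-bound-F-d}.

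For the value bound, observe that $G$ is $\mu$-strongly convex: $\phi^v_{k+1}$ is $\mu$-strongly convex, and the coefficient $\langle\bxi_h,\use_{k+1}\rangle_{L^2}$ multiplying the convex norm term is non-negative by Claim~\ref{clm:chains}. Letting $\vstar = \argmin_{\bv} G(\bv)$, strong convexity together with the gradient bound gives $|G(\vapx_{k+1}) - G(\vstar)| \leq \tfrac{1}{2\mu}\|\partial G(\vapx_{k+1})\|_2^2$. From Eq.~\eqref{eq:aux-loss-G}, $G(\bv) = \mathfrak{L}^{\mathrm{AO}}_{k+1}(\uapx_{k+1},\bv) + \eta\|\proj^\perp_{\bV_k}\bv\|_2$ where $\eta := \langle\bxi_h,\use_{k+1}\rangle_{L^2} - \sqrt{\Lambda}\bgamma_n^\top\uapx_{k+1}/\sqrt{n}$ has magnitude $O(\sqrt{\log(1/\delta')/n})$ by scalar Gaussian concentration. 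Hence (using boundedness of iterate norms from $\Omega_k$), $G(\vstar)$ agrees with $\mathfrak{L}^{\mathrm{AO}}_{k+1}(\uapx_{k+1},\vapx_{k+1})$ up to the claimed error, and this latter quantity concentrates around $\mathfrak{L}_{k+1}(\use_{k+1},\vse_{k+1}) = \mathrm{OPT}_{k+1}$ by the same concentration machinery of Lemma~\ref{lem:apx-concentration-useful}, establishing~\eqref{ineq:gradient-bound-F-c}.

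The main obstacle will be the bookkeeping in the gradient step: matching each subgradient term of $G$ against its Hilbert-space counterpart requires care with the linear-algebraic contractions against $(\bK^g_k)^{-1}$ and $(\bK^h_k)^{-1}$, and more subtly a case split mirroring the one in the proof of Lemma~\ref{lem:Hilbert-saddle-lemma}. In the non-degenerate case one verifies that $\proj^\perp_{\bV_k}\vapx_{k+1}/\|\proj^\perp_{\bV_k}\vapx_{k+1}\|_2$ concentrates around its population analogue; in the degenerate case $\|\proj^\perp_{\Vse_k}\vse_{k+1}\|_{L^2} = 0$, the multiplying coefficient $\langle\bxi_h,\use_{k+1}\rangle_{L^2}$ vanishes by Claim~\ref{clm:chains}, and one must instead verify that the empirical norm $\|\proj^\perp_{\bV_k}\vapx_{k+1}\|_2$ is small enough that the corresponding direction contributes negligibly.
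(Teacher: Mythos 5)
Your proposal follows essentially the route of the paper's proof in Section~\ref{sec:proof-lem-approximate-stationarity-F-func}: both the value and the gradient bounds are obtained by matching each term of $G$ (resp.\ $H$) and of the stationarity condition defining $\vapx_{k+1}$ (resp.\ $\uapx_{k+1}$) against its Hilbert-space/state-evolution counterpart, controlling the discrepancies via Lemma~\ref{lem:apx-concentration-useful}, with the degenerate/non-degenerate case split driven by Claim~\ref{clm:chains}. Two remarks. First, for the value bound the paper does not need your strong-convexity detour through the minimizer of $G$: it expands $\lvert H(\uapx_{k+1})-\mathrm{OPT}_{k+1}\rvert$ directly into four differences ($T_1$--$T_4$) and bounds each by the concentration events; your chain $G(\vapx_{k+1})=\mathfrak{L}^{\mathrm{AO}}_{k+1}(\uapx_{k+1},\vapx_{k+1})+\eta\,\|\proj^{\perp}_{\bV_k}\vapx_{k+1}\|_2$ with $\eta$ small reduces to the same term-by-term comparison, so the detour is harmless but redundant. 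Second, the one place where your sketch as written would not go through is attributing the type-(iii) residuals, e.g.\ $\bgamma_d^{\top}\vapx_{k+1}/\sqrt{d}-\langle\bxi_g,\vse_{k+1}\rangle_{L^2}$ and the smallness of your $\eta$, to ``scalar sub-Gaussian concentration'': since $\vapx_{k+1}$ (resp.\ $\uapx_{k+1}$) is itself a function of $\bgamma_d$ (resp.\ $\bgamma_n$), these are not linear statistics of an independent Gaussian. As in Eq.~\eqref{eq:corollary-73a-bound}, one must rewrite $\bgamma_d^{\top}\vapx_{k+1}/\sqrt{d}$ through the definition of $\gapx_{k+1}$ and the identity~\eqref{eq:SE-facts-saddle1} (and~\eqref{eq:SE-facts-saddle2} for the $\bgamma_n$ side), and then invoke the inner-product and norm bounds of Lemma~\ref{lem:apx-concentration-useful} on $\Omega_k\cap\Omega'$; with that substitution your argument closes exactly as in the paper.
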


We are now in a position to bound the probabilities of $(\Omega_1^{\mathrm{AO}})^c$ and $(\Omega_2^{\mathrm{AO}})^c$. The following facts about the state evolution quantities will be useful in the proof. Recall that $\bxi_g \sim \normal(0, \bI_d/d)$ and $\bxi_h \sim \normal(0, \bI_n/n)$; we have
\begin{subequations}
\begin{align} 
\langle \bxi_g, \vse_{k+1}\rangle_{L^2} &= \frac{ \langle \vse_{k+1}, \gse_{k+1} \rangle_{L^2} - \llangle \vse_{k+1}, \Gse_k \rrangle_{L^2} \bigl(\bK^g_k\bigr)^{-1}\llangle \Use_k, \use_{k+1}\rrangle_{L^2}}{\| \proj_{\Vse_k}^{\perp} \vse_{k+1} \|_{L^2}}, \label{eq:SE-facts-saddle1} \\
\langle \bxi_h, \use_{k+1}\rangle_{L^2} &= \frac{ \langle \use_{k+1}, \hse_{k+1} \rangle_{L^2} - \llangle \use_{k+1}, \Hse_k \rrangle_{L^2} \bigl(\bK^h_k\bigr)^{-1}\llangle \Vse_k, \vse_{k+1}\rrangle_{L^2}}{\| \proj_{\Use_k}^{\perp} \use_{k+1} \|_{L^2}}. \label{eq:SE-facts-saddle2}
\end{align}
\end{subequations}
These relations can be directly obtained from the definition of the state evolution and Eq.~\eqref{eq:SE-g-hilbert-h-hilbert}; also see Eq.~\eqref{eq:SE-algebra} to follow.

Let $\Omega'$ denote the event that all the inequalities in Lemmas~\ref{lem:apx-concentration-useful} and~\ref{lem:approximate-stationarity-F-func} hold with parameter $\delta'$ so that $\PP(\Omega') \geq 1 - \delta' - \PP(\Omega_k^c)$ and let $\Omega_0' = \bigl\{\| \gamma_n \|_2 \vee \| \gamma_d \|_2 \leq C_{\Lambda} \sqrt{d}\bigr\}$, which satisfies $\mathbb{P}(\Omega_0') \geq 1 - e^{-cn}$. 
\medskip

\noindent \underline{Bounding $\mathbb{P}(\Omega_1^{\mathrm{AO}})^c$.}
 Note that by definition of $\gapx_{k + 1}$, we have
\begin{align*}
	\frac{\bgamma_d^{\top} \vapx_{k+1}}{\sqrt{d}} &= \frac{\bigl(\vapx_{k+1}\bigr)^{\top} \gapx_{k+1} - \bigl(\vapx_{k+1}\bigr)^{\top} \bG_k \bigl(\bK^g_k\bigr)^{-1}\llangle \Use_k, \use_{k+1}\rrangle_{L^2}}{\| \proj_{\Vse_k}^{\perp} \vse_{k+1} \|_{L^2}}.
\end{align*}
We also have $\| \proj_{\bU_k}^{\perp} \uapx_{k+1} \|_2 = \sqrt{\| \uapx_{k+1}\|_2^2 - \| \proj_{\bU_k} \uapx_{k+1} \|_2^2}$. But
by Lemma~\ref{lem:apx-concentration-useful}, on $\Omega' \cap \Omega_0'$, we have
\begin{align*}
|\bigl(\vapx_{k+1}\bigr)^{\top} \gapx_{k+1} - \llangle \vse_{k +1}, \gse_{k + 1} \rrangle_{L^2} | &\leq \Delta_1(k, \delta) + C_{\mathrm{SE}}\sqrt{\frac{\log(\frac{10T^2}{\delta'})}{n}} \\
\| \bigl(\vapx_{k+1}\bigr)^{\top} \bG_k - \llangle \vse_{k + 1}, \Gse_{k} \rrangle_{L^2} \|_{2} &\leq \sqrt{k} \Delta_1(k, \delta) + \sqrt{k} C_{\mathrm{SE}}\sqrt{\frac{\log(\frac{10T^2}{\delta'})}{n}}.
\end{align*}
Putting together the pieces, we obtain
\begin{align} \label{eq:corollary-73a-bound}
	\Bigl \lvert \frac{\bgamma_d^{\top} \vapx_{k+1}}{\sqrt{d}} - \langle \bxi_g, \vse_{k+1}\rangle_{L^2} \Bigr \rvert \vee \bigl \lvert \| \proj_{\bU_k}^{\perp} \uapx_{k+1} \|_2 - \| \proj_{\Use_k}^{\perp} \use_{k+1} \|_{L^2} \|_{L^2} \bigr \rvert \leq \sqrt{k} C_{\mathrm{SE}}\Delta_1(k, \delta) + \sqrt{k} C_{\mathrm{SE}} \sqrt{\frac{\log(10T^2/\delta')}{n}}.
\end{align}
Now note that on the event $\Omega' \cap \Omega_0'$, 
\begin{align*}
	\min_{\bv \in \mathbb{B}_2(c_{\mathrm{SE}})} \max_{\bu \in \mathbb{B}_2(c_{\mathrm{SE}})} \bigl\{&\mathfrak{L}^{\mathrm{AO}}_{k+1}(\bu, \bv)\bigr\} \leq \max_{\bu \in \mathbb{B}_2(c_{\mathrm{SE}})} \bigl\{\mathfrak{L}^{\mathrm{AO}}_{k+1}(\bu, \vapx_{k+1})\bigr\}\\
	&\overset{\1}{=} \max_{\bu \in \mathbb{B}_2(c_{\mathrm{SE}})} \Bigl\{H(\bu) + \Bigl( \bigl \| \proj^{\perp}_{\bU_k} \bu \|_2 \cdot \langle \bxi_g, \bv^{\mathrm{SE}}_{k+1} \rangle_{L^2} -  \bigl \| \proj^{\perp}_{\bU_k} \bu \|_2 \cdot \frac{\bgamma_d^{\top}\vapx_{k+1}}{\sqrt{d}}\Bigr) \Bigr\} \\
	&\overset{\2}{\leq} \max_{\bu \in \mathbb{B}_2(c_{\mathrm{SE}})} \bigl\{H(\bu)\bigr\} + \sqrt{k} C_{\mathrm{SE}} \Delta_1(k, \delta) + \sqrt{k} C_{\mathrm{SE}}\sqrt{\frac{\log(10T^2/\delta')}{n}} \numberthis \label{eq:bound-minmax-local-stability},
\end{align*}
where step $\1$ follows from Eq.~\eqref{eq:aux-loss-H} and
step $\2$ from the inequality~\eqref{eq:corollary-73a-bound}.  
Hence, invoking the concavity of $H$ in conjunction with the Cauchy--Schwarz inequality, we deduce that, for any $\bu \in \mathbb{B}_2(K_2)$, 
\[
H(\bu) \leq H(\bu^{\mathrm{apx}}) + \partial H(\bu^{\mathrm{apx}})^{\top}(\bu - \bu^{\mathrm{apx}}) \overset{\text{Lemma}~\ref{lem:approximate-stationarity-F-func}}{\leq} \mathrm{OPT}_{k+1} + C_{\mathrm{SE}} k^2 \Delta_1(k, \delta) + C_{\mathrm{SE}}k^2\sqrt{\frac{\log(10T^2/\delta')}{n}}.
\]
Combining the elements yields that on $\Omega' \cap \Omega_0'$,
\begin{align*} 
	\min_{\bv \in \mathbb{B}_2(c_{\mathrm{SE}})} \max_{\bu \in \mathbb{B}_2(c_{\mathrm{SE}})} \bigl\{\mathfrak{L}^{\mathrm{AO}}_{k+1}(\bu, \bv)\bigr\}\leq \mathrm{OPT}_{k+1} + C_{\mathrm{SE}} k^2 \Delta_1(k, \delta) + C_{\mathrm{SE}}k^2\sqrt{\frac{\log(10T^2/\delta')}{n}}, 
\end{align*}
so that
\begin{align}\label{ineq:conclusion-step2a}
	\mathbb{P}\Bigl(\min_{\bv \in \mathbb{B}_2(c_{\mathrm{SE}})} \max_{\bu \in \mathbb{B}_2(c_{\mathrm{SE}})} \{\mathfrak{L}^{\mathrm{AO}}_{k+1}(\bu, \bv)\} \geq \mathrm{OPT}_{k+1} + \Delta_1(k, \delta)\Bigr) \leq \mathbb{P}\left\{(\Omega' \cap \Omega_0')^{c}\right\} \leq \delta' + \mathbb{P}(\Omega_k^{c}).
\end{align}

\medskip
\noindent \underline{Bounding $\mathbb{P}(\Omega_2^{\mathrm{AO}})$.}
In order to reduce the notational burden, we set 
\[
\Delta := 2C_{\mathrm{SE}}' k \cdot \biggl\{k^2\Delta_1(k, \delta) + k^2\sqrt{\frac{\log\bigl(10T^2/\delta'\bigr)}{n}} \biggr\}^{1/2} = 2C_{\mathrm{SE}} k \cdot \Delta_1(k+1, \delta),
\]
which is by assumption larger than the right-hand side of the inequalities in Lemma~\ref{lem:apx-concentration-useful}.  We thus have, on $\Omega' \cap \Omega_0'$, 
\begin{align*}
	\mathbb{D}_d^{c}(\tau^{\psi_d, k+1}) &= \Bigl\{\bv \in \mathbb{B}_2(c_{\mathrm{SE}}):\; \bigl \lvert \tau^{\psi_d, k+1}(\bv, \bV_k, \bg_{k+1}(\bv), \bG_k) - \mathbb{E}\bigl[\tau^{\psi_d, k+1}\bigl(\vse_{k+1}, \Vse_{k}, \gse_{k+1}, \Gse_{k}\bigr)\bigr] \bigr \rvert > \Delta \Bigr\}\\
	&\overset{\1}{\subseteq} \Bigl\{\bv \in \mathbb{B}_2(c_{\mathrm{SE}}):\; \bigl \lvert \tau^{\psi_d, k+1}(\bv, \bV_k, \bg_{k+1}(\bv), \bG_k) - \tau^{\psi_d, k+1}\bigl(\vapx_{k+1}, \bV_{k}, \bg_{k+1}(\vapx_{k+1}), \bG_{k}\bigr) \bigr \rvert > \Delta/2 \Bigr\} \\
	&\overset{\2}{\subseteq} \Bigl\{\bv\in \mathbb{B}_2(c_{\mathrm{SE}}): \; \| \bv- \vapx_{k+1} \|_2 > \frac{\Delta}{k c_{\mathrm{SE}}} \Bigr\},
\end{align*}
where the inclusion $\1$ follows from Lemma~\ref{lem:apx-concentration-useful} (in particular the inequality~\eqref{eq:apx-ind-events-new-four-psin}) and the inclusion $\2$ follows from the order-$2$ pseudo-Lipschitz nature of $\tau^{\psi_n, k+1}$.  Consequently, on $\Omega' \cap \Omega_0'$, we have
\begin{align*}
	\max_{\bu \in \mathbb{B}_2(c_{\mathrm{SE}})} &\min_{\bv \in \mathbb{D}_d^{c}} \bigl\{\mathfrak{L}^{\mathrm{AO}}_{k+1}(\bu, \bv)\bigr\} \geq \min_{\bv \in \mathbb{D}_d^{c}} \bigl\{\mathfrak{L}^{\mathrm{AO}}_{k+1}(\uapx_{k+1}, \bv)\bigr\} \geq \min_{\bv\in \mathbb{B}_2(c_{\mathrm{SE}}): \; \| \bv - \vapx_{k+1} \|_2 > \frac{c_{\mathrm{SE}}\Delta}{k}} \bigl\{\mathfrak{L}^{\mathrm{AO}}_{k+1}(\uapx_{k+1}, \bv)\bigr\}  \\
	&\overset{\2}{=} \min_{\bv\in \mathbb{B}_2(c_{\mathrm{SE}}): \; \| \bv - \vapx_{k+1} \|_2 >\frac{c_{\mathrm{SE}}\Delta}{k}} \Bigl\{G(\bv) - \Bigl( \bigl \| \proj^{\perp}_{\bV_k} \bv \|_2 \cdot \langle \bxi_h, \bu^{\mathrm{SE}}_{k+1} \rangle_{L^2} -  \bigl \| \proj^{\perp}_{\bV_k} \bv \|_2 \cdot \frac{\bgamma_n^{\top}\uapx_{k+1}}{\sqrt{d}}\Bigr) \Bigr\} \\
	&\overset{\2}{\geq} \max_{\bv \in \bv\in \mathbb{B}_2(c_{\mathrm{SE}}): \; \| \bv - \vapx_{k+1} \|_2 > \frac{c_{\mathrm{SE}}\Delta}{k}} \bigl\{V(\bv)\bigr\} + \sqrt{k} C_{\mathrm{SE}} \Delta_1(k, \delta) + \sqrt{k} C_{\mathrm{SE}} \sqrt{\frac{\log(10T^2/\delta')}{n}}, \numberthis \label{eq:bound-minmax-local-stability-new}
\end{align*}
where step $\1$ follows from Eq.~\eqref{eq:aux-loss-G} and step $\2$ from an argument identical to the one above but using Eq.~\eqref{eq:SE-facts-saddle2}.
Invoking the strong convexity of $G$ in conjunction with the Cauchy--Schwarz inequality, we deduce that 
\begin{align*}
	G(\bv) &\geq G(\bv^{\mathrm{apx}}_{k+1}) + \partial G(\bv^{\mathrm{apx}}_{k+1})^{\top} (\bv - \bv^{\mathrm{apx}}_{k+1}) + \frac{\mu}{2} \| \bv - \bv^{\mathrm{apx}}_{k+1} \|_2^2 \\
	&\overset{\mathrm{Lemma}~\ref{lem:approximate-stationarity-F-func}}{\geq} \mathrm{OPT}_{k+1}  -   C_{\mathrm{SE}} k^2\Delta_1(k, \delta) - C_{\mathrm{SE}} k^2\sqrt{\frac{\log(10T^2/\delta')}{n}} + \frac{\mu}{2} \| \bv - \bv^{\mathrm{apx}}_{k+1} \|_2^2 \geq  \mathrm{OPT}_{k+1} +C_{\mathrm{SE}}\frac{\Delta^2}{k^2},
\end{align*}
where the final inequality follows upon taking $C_{\mathrm{SE}}'$ in the definition of $\Delta$ large enough.  Consequently, we find that 
\begin{align} \label{ineq:conclusion-step2b}
\mathbb{P}\Bigl(\min_{\bv \in \mathbb{D}_d} \max_{\bu \in \mathbb{B}_2(c_{\mathrm{SE}})} \{\mathfrak{L}_{n}^{(k+1)}(\bu, \bv)\} \leq &\mathrm{OPT}_{k+1} + C_{\mathrm{SE}} \frac{\Delta_1(k+1, \delta)^2}{k^2}\Bigr) \leq \mathbb{P}\left\{ (\Omega' \cap \Omega_0')^{c}\right\} \leq \delta' + \mathbb{P}(\Omega_k^{c}). 
\end{align}

\paragraph{Step 4: Putting the pieces together.} Combining the inequalities~\eqref{ineq:AO-Omega1'},~\eqref{ineq:AO-Omega2'},~\eqref{ineq:conclusion-step2a}, and~\eqref{ineq:conclusion-step2b}, we find that
$\mathbb{P}(\Omega^{\mathsf{PO}}_1 \cap \Omega^{\mathsf{PO}}_2) \geq 1 - \delta' - 3\mathbb{P}(\Omega_k^c)$.
Taking
\[
\delta' = \frac{2\delta \cdot 3^{k-1}}{10T^2 \cdot (3^{T} - 1)},
\]
we conclude by specifying every function $\tau$ in~\eqref{eq:tau-definitions} and repeating the previous step (all on the event $\Omega' \cap \Omega_0'$) so that
\[
\PP(\Omega_{k+1}^{c}) \leq 2\delta' + \PP(\Omega_k^{c}) \leq \frac{2\delta \cdot 3^k}{3^{T} - 1},
\]
as desired. \hfill \qed

\subsection{Proofs of helper lemmas} \label{sec:helper-thm2}

In this section, we collect proofs of Lemmas~\ref{lem:conditional-expectation-PL} and~\ref{lem:X-decomp}, which were used in the proof of Theorem~\ref{thm:exact-asymptotics} before the inductions steps.

\subsubsection{Proof of Lemma~\ref{lem:conditional-expectation-PL}} \label{sec:proof-PL-SE}
	The first part follows from the definition of conditional expectation, so we restrict ourselves to proving pseudo-Lipschitzness.  Note that the case $k=0$ holds by assumption, so we begin with the case $k=1$.  We have two cases, when the final step is a first-order update and when the final step is a saddle update.
	
	\medskip
	\noindent \underline{Case 1: The final step is first-order.}  In this case, we have
	\[
	\vse_{T} = f_T^{v}\Bigl(\gse_{T-1} - \sum_{\ell=1}^{T-1} c_{\ell} \vse_{\ell}\Bigr) \quad \text{ and } \quad \gse_{T} = \alpha_T \widetilde{\bg} + \sum_{\ell=1}^{T-1} \alpha_{\ell} \gse_{\ell},
	\]
	for some constants $\alpha_1, \ldots, \alpha_T$ and $c_{1}, \ldots, c_{T - 1}$, where $\widetilde{\bg} \sim \mathsf{N}(0, I_d/d)$ is independent of everything else.  Hence, 
	\begin{align*}
		\zeta_1^{v}(\bA, \bB) &= \mathbb{E}\bigl[ \tau^{v}(\vse_T, \Vse_{T-1}, \gse_{T}, \gse_{T-1}) \mid \Vse_{T-1} =\bA, \Gse_{T-1} = \bB\bigr] \\
		&= \mathbb{E}\Bigl[ \tau^{v}\Bigl( f_T^{v}\Bigl(\bb_{T-1} - \sum_{\ell=1}^{T-1}c_{\ell} \ba_{\ell}\Bigr), \bA, \alpha_T \widetilde{\bg} + \sum_{\ell=1}^{T-1} \alpha_{\ell} \bb_{T-1}\Bigr)\Bigr],
	\end{align*}
	where the expectation is taken with respect to $\widetilde{\bg}$.  Invoking the pseudo-Lipschitz nature of $\tau^{v}$ in conjunction with the Lipschitz nature of $f_{T}^v$, it follows that
	\begin{align*}
		\zeta_1^{v}(\bA, \bB) - &\zeta_1^{v}(\bA', \bB') =  \mathbb{E}\Bigl[ \tau^{v}\Bigl( f_T^{v}\Bigl(\bb_{T-1} - \sum_{\ell=1}^{T-1} c_{\ell} \ba_{\ell}\Bigr), \bA, \alpha_T \widetilde{\bg} + \sum_{\ell=1}^{T-1} \alpha_{\ell} \bb_{T-1}\Bigr) \\
		& \hspace{6cm}-  \tau^{v}\Bigl( f_T^{v}\Bigl(\bb_{T-1}' - \sum_{\ell=1}^{T-1} c_{\ell} \ba_{\ell}'\Bigr), \bA', \alpha_T \widetilde{\bg} + \sum_{\ell=1}^{T-1} \alpha_{\ell} \bb_{T-1}'\Bigr) \Bigr]\\
		&\lesssim \mathbb{E}\Bigl[ \Bigl\{ 1 + \Bigl \| f_T^{v}\Bigl(\bb_{T-1} - \sum_{\ell=1}^{T-1} c_{\ell} \ba_{\ell}\Bigr) \Bigr \|_2 + \Bigl \| f_T^{v}\Bigl(\bb_{T-1'} - \sum_{\ell=1}^{T-1} c_{\ell} \ba_{\ell}'\Bigr) \Bigr\|_2 + \| \widetilde{\bg}\|_2 +\\
		&\hspace{5cm} \sum_{\ell=1}^{T-1} \|\ba_{\ell} \|_2 + \| \bb_{\ell}\|_2 + \| \ba_{\ell}' \|_2 + \|\bb_{\ell}' \|_2 \Bigr\} \cdot  \sum_{\ell=1}^{T-1} \| \ba_{\ell} - \ba_{\ell}' \|_2 + \| \bb_{\ell} - \bb_{\ell}' \|_2 \Bigr]\\
		&\lesssim \Bigl\{1 + \sum_{\ell=1}^{T-1} \|\ba_{\ell} \|_2 + \| \bb_{\ell}\|_2 + \| \ba_{\ell}' \|_2 + \|\bb_{\ell}' \|_2 \Bigr\} \cdot  \sum_{\ell=1}^{T-1} \| \ba_{\ell} - \ba_{\ell}' \|_2 + \| \bb_{\ell} - \bb_{\ell}' \|_2,
	\end{align*}
	where in the final inequality we used the fact that $\mathbb{E}[\| \widetilde{\bg} \|_2] \lesssim 1$.  This concludes the proof in the case when the last step is first-order.  
	
	\medskip
	\noindent \underline{Case 2: The final step is saddle update.}  By definition, there exist scalars $c_1, \ldots, c_T$ such that $\vhatse_T = \sum_{\ell=1}^{T} c_{\ell} \vse_T$ and $\alpha_1, \ldots, \alpha_T$ such that $\gse_T = \alpha_T \widetilde{\bg} + \sum_{\ell=1}^{T-1} \alpha_{\ell} \gse_{\ell}$.  It follows from~\eqref{eq:se-saddle} that $\vse_{T}$ admits the equivalent representation
	\[
	\vse_T = \argmin_{\bv \in \mathbb{R}^d}\; \Bigl\{\frac{c_T}{2} \cdot \Bigl \| \bv - \frac{1}{c_T}\cdot \Bigl(\alpha_T \widetilde{\bg} + \sum_{\ell=1}^{T-1} \beta_{\ell} \gse_{\ell}  - \sum_{\ell=1}^{T-1} c_{\ell} \vse_{\ell}\Bigr) \Bigr\|_2^2 + \phi^v_{T}(\bv) \Bigr\}.
	\]
	Hence, by~\citet[Proposition 12.28]{bauschke2017convex}, $\vse_T$ is a Lipschitz function of $\widetilde{\bg}, \gse_1, \ldots, \gse_{T-1}$ and $\vse_{1}, \ldots, \vse_{T-1}$.  For notational convenience, we let $\eta: \mathbb{R}^d \times \mathbb{R}^{d \times (T - 1)} \times \mathbb{R}^{d \times (T-1)}$ denote this function so that $\vse_T = \eta(\widetilde{\bg}, \Vse_{T-1}, \Gse_{T-1})$.  We thus have
	\begin{align*}
	\zeta^v_{1}(\bA, \bB) &= \mathbb{E}\bigl[\tau^v(\Vse_T, \Gse_T) \mid \Vse_{T-1} = \bA, \Gse_{T-1} = \bB\bigr] \\
	&= \mathbb{E}\Bigl[\tau^v\Bigl(\eta(\widetilde{\bg}, \Vse_{T-1}, \Gse_{T-1}), \Vse_{T-1}, \alpha_T \widetilde{\bg} + \sum_{\ell=1}^{T-1} \alpha_{\ell} \gse_{\ell}, \Gse_{T-1}\Bigr) \mid \Vse_{T-1} = \bA, \Gse_{T-1} = \bB\Bigr]\\
	&= \mathbb{E}\Bigl[\tau^v\Bigl(\eta(\widetilde{\bg}, \bA, \bB), \bA, \alpha_T \widetilde{\bg} + \sum_{\ell=1}^{T-1} \alpha_{\ell} \bb_{\ell}, \bB\Bigr)\Bigr],
	\end{align*}
where the first equality is almost sure and the expectation in the final relation is only with respect to $\widetilde{\bg}$.  Invoking the pseudo-Lipschitz nature of $\tau^{v}$ in conjunction with the Lipschitz nature of $\eta$ yields
\begin{align*}
\zeta^v_{1}(\bA, \bB) - &\zeta^v_{1}(\bA', \bB') \\
&= \mathbb{E}\Bigl[\tau^v\Bigl(\eta(\widetilde{\bg}, \bA, \bB), \bA, \alpha_T \widetilde{\bg} + \sum_{\ell=1}^{T-1} \alpha_{\ell} \bb_{\ell}, \bB\Bigr) - \tau^v\Bigl(\eta(\widetilde{\bg}, \bA', \bB'), \bA', \alpha_T \widetilde{\bg} + \sum_{\ell=1}^{T-1} \alpha_{\ell} \bb_{\ell}', \bB'\Bigr)\Bigr]\\
&\lesssim \mathbb{E}\Bigl[\Bigl\{1 + \| \eta(\widetilde{\bg}, \bA, \bB) \|_2 + \| \eta(\widetilde{\bg}, \bA', \bB') \|_2 +  \| \widetilde{\bg}\|_2 + \sum_{\ell=1}^{T-1} \| \ba_{\ell} \|_2 + \| \ba_{\ell}' \|_2 + \| \bb_{\ell} \|_2 + \| \bb_{\ell}' \|_2 \Bigr\} \\
& \qquad \qquad \cdot \Bigl\{\| \eta(\widetilde{\bg}, \bA, \bB) - \eta(\widetilde{\bg}, \bA', \bB') \|_2 + \sum_{\ell=1}^{T-1} \| \ba_{\ell} - \ba_{\ell}' \|_2 + \| \bb_{\ell} - \bb_{\ell}' \|_2 \Bigr\}\Bigr]\\
&\lesssim \Bigl\{1 + \sum_{\ell=1}^{T-1} \| \ba_{\ell} \_2 + \| \ba_{\ell}' \|_2 + \| \bb_{\ell} \|_2 + \| \bb_{\ell}' \|_2 \Bigr\} \cdot \sum_{\ell=1}^{T-1} \| \ba_{\ell} - \ba_{\ell}' \|_2 + \| \bb_{\ell} - \bb_{\ell}' \|_2,
\end{align*}
where the final inequality again uses $\mathbb{E}[\| \widetilde{\bg} \|_2] \lesssim 1$. 
	
\medskip
\noindent \underline{Conclusion for $k > 1$.} To conclude for $k > 2$, note that
	\begin{align*}
		\zeta_2^{v}(\bA, \bB) &= \mathbb{E}\bigl[ \tau(\Vse_T, \Gse_T) \mid \Vse_{T-2} = \bA, \Gse_{T-2} = \bB\bigr]\\
		&= \mathbb{E}\bigl[ \zeta_1^{v}(\vse_{T-1}, \Vse_{T-2}, \gse_{T-1}, \Gse_{T-2}) \mid \Vse_{T-2} = \bA, \Gse_{T-2} = \bB\bigr],
	\end{align*}
	and proceed inductively since $\zeta_1^{v}$ is pseudo-Lipschitz. 
\qed

\subsubsection{Proof of Lemma~\ref{lem:X-decomp}}

In parallel with the SE notation, define
\[
\widehat{\bv}_k = \sum_{\ell = 1}^{k} \bigl(L^{v}_k\bigr)_{k, \ell} \bv_{\ell} \quad \text{and} \quad \widehat{\bu}_k = \sum_{\ell = 1}^{k} \bigl(L^{u}_k\bigr)_{k, \ell} \bu_{\ell},
\]
and stack these as columns to form the matrices $\widehat{\bV}_k$ and $\widehat{\bU}_k$, respectively.

Recall that $\bX^{\parallel} = \proj_{\bU_k} \bX + \bX \proj_{\bV_k} - \proj_{\bU_k} \bX \proj_{\bV_k}$, and that $\bT_g = \bG_k (\bK^g_k)^{-1} \bU_k^\top =  \widehat{\bV}_k (\bK^g_k)^{-1} \bU_k^\top - \bX^\top \bU_k (\bK^g_k)^{-1} \bU_k^{\top}$ and $\bT_h = \sqrt{\Lambda} \cdot \bH_k (\bK^h_k)^{-1} \bV_k^\top = \bX \bV_k (\bK^h_k)^{-1} \bV_k^{\top} - \widehat{\bU}_k (\bK^{h}_k)^{-1} \bV_k^{\top}$, where we have used the definitions~\eqref{eq:PO-g-h-def} of $\bG_k$ and $\bH_k$. We thus have 
\[
\bigl \| \bX^{\parallel} - (-\bT_g^{\top} + \bT_h) \bigr \|_{\mathrm{op}}  \leq I. + II. + III.,
\]
where 
\begin{align*}
	I. &= \bigl \| \proj_{\bU_k} \bX - \bU_k (\bK^g_k)^{-1} \bU_k^{\top} \bX\bigr\|_{\mathrm{op}}, \quad II. = \bigl\|  \bX \proj_{\bV_k} - \bX \bV_k (\bK^h_k)^{-1} \bV_k^{\top} \bigr\|_{\mathrm{op}} \\ &\quad \text{and} \quad
	III. = \bigl \| \proj_{\bU_k} \bX \proj_{\bV_k} - \bigl( \widehat{\bU}_k (\bK^{h}_k)^{-1} \bV_k^{\top} -  \bU_k^{\top} (\bK^g_k)^{-1} \widehat{\bV}_k \bigr) \bigr \|_{\mathrm{op}}.
\end{align*}
We bound each term in turn, beginning with term $I.$  To this end, we note that by the Woodbury matrix identity, for two invertible matrices $\bA, \bB \in \mathbb{R}^{k \times k}$, the following holds
\begin{align} 
	\| \bA^{-1} - \bB^{-1} \|_{\mathrm{op}} = \| \bB^{-1} (\bA- \bB) \bA^{-1} \|_{\mathrm{op}} &\leq \frac{1}{\lambda_{\min} (\bA) \lambda_{\min} (\bB)} \bigl\| \bA - \bB \bigr\|_{\mathrm{op}} \notag \\
	&\leq \frac{k}{\lambda_{\min} (\bA) \lambda_{\min} (\bB)} \| \bA - \bB \|_{\infty}. \label{eq:woodbury-consequence}
\end{align}
Hence, since $\proj_{\bU_k} = \bU_k ( \bU_k^{\top} \bU_k)^{-1} \bU_k^{\top}$, we deduce that on $\Omega_k$,
\begin{align*}
	\bigl \| \proj_{\bU_k} \bX - \bU_k \bK_g^{-1} (\bU_k)^{\top} \bX\bigr\|_{\mathrm{op}} 
	&\leq \Delta_0(k, \delta) \cdot \| \bU_k \|_{\mathrm{op}}^{2} \bigl \| (\bU_k^{\top} \bU_k)^{-1} - (\bK^{g}_k)^{-1} \bigr\|_{\mathrm{op}}\\
	&\overset{\1}{\leq} C_{\mathrm{SE}}\frac{k \cdot \| \bU_k \|^2_{\mathrm{op}} \cdot \Delta_0(k,\delta) \cdot \Delta_1(k, \delta)}{\lambda_{\min}\bigl( \bigl\{\bigl(\bU_k\bigr)^{\top} \bU_k\bigr\}^{-1}\bigr) \lambda_{\min}(\bK^g_k)} \\
	&\overset{\2}{\leq} C_{\mathrm{SE}}\frac{k \cdot \| \bK^g_k \|_{\mathrm{op}} \Delta_0(k,\delta) \cdot \Delta_1(k, \delta)}{ \lambda_{\min}^2(\bK^g_k)}  \\
	&\leq C_{\mathrm{SE}} \cdot k  \cdot \Delta_0(k,\delta) \cdot \Delta_1(k, \delta),
\end{align*}
where step $\1$ follows from Eq.~\eqref{eq:woodbury-consequence} and step $\2$ follows by taking $c_{\mathrm{SE}}$ in the assumed lower bound on $\delta$ small enough and applying the eigenvalue stability inequality~\citep[e.g.,][Eq. (1.63)]{tao2012topics}.  Term $II.$ can be bounded in an identical manner, so we omit the details for brevity.  

Turning to term $III.$, we expand 
\[
\proj_{\bU_k} \bX \proj_{\bV_k} = \bU_k (\bU_k^{\top} \bU_k)^{-1} \bU_k^{\top} \bX \bV_k (\bV_k^{\top} \bV_k)^{-1} \bV_k^{\top}.
\]
Since by definition~\eqref{eq:PO-g-h-def}, $\bX \bV_k = \bH_k - \bU_k \bigl(\bL_k^{u}\bigr)^{\top}$, we see that on $\Omega_k$, 
\[
\bigl \| \bU_k^{\top} \bX \bV_k - \bigl\{  \bigl(\bL_k^{v}\bigr)^{\top} \bK^h_k - \bK^g_k \bL_k^{u}\bigr\} \bigr \|_{\mathrm{op}} \leq C_{\mathrm{SE}} \cdot k \cdot \Delta_1(k, \delta).
\]
Further, following the same steps as in the bound on term $I.$ yields the pair of inequalities 
\begin{align*}
	\bigl\| (\bU_k^{\top} \bU_k)^{-1} - (\bK^g_h)^{-1} \|_{\mathrm{op}} &\leq C_{\mathrm{SE}} \cdot k \cdot \Delta_1(k, \delta) \quad \text{and} \\
	\bigl\| (\bV_k^{\top} \bV_k)^{-1} - (\bK^h_k)^{-1} \|_{\mathrm{op}} &\leq C_{\mathrm{SE}} \cdot k  \cdot \Delta_1(k, \delta).
\end{align*}
Combining these three inequalities along with the assumption $\Delta_1(k, \delta) \leq T^{-1}$ yields the inequality 
\[
III. \leq C_{\mathrm{SE}} \cdot k \cdot \Delta_1(k, \delta). 
\]
Combining the pieces yields the desired result.   The second part of the lemma follows from Lemma~\ref{lem:conditioning}. 
\qed

\section{Discussion} \label{sec:discussion}

In this paper, we provided a rigorous state evolution for the class of saddle point updates (possibly interleaved with first-order updates) as well as finite-sample guarantees. We hope that the tools introduced in this paper both for proving uniqueness of state evolution in nonseparable settings and finite-sample concentration results will find broader application in studying the dynamics of learning algorithms. Multiple intriguing open questions remain and we detail a few here.

First, we made simplifying assumptions in this paper that we believe can be weakened.  For instance, it would be of interest to consider penalty functions in our saddle updates that are not smooth and strongly convex. Second, while we gave an explicit description of the state evolution for a large class of methods, we did not use the state evolution to derive any optimization-theoretic consequences. 
This has been the focus of some previous work in the sample-split setting~\citep{chandrasekher2023sharp}, and it would be interesting to provide similar characterizations without sample-splitting. Indeed, our companion paper makes progress in this direction for some concrete iterative algorithms.
Third, we restricted our attention to the setting in which the entries of the data matrix $\bX$ are standard Gaussian.  It is known in the literature on AMP and first-order methods that these predictions are to some extent universal~\citep{chen2021universality,han2024entrywise,lovig2025universality}.  Given the universality of $M$-estimators~\citep[see, e.g.,][]{han2023universality} under delocalization assumptions (see also~\citet{oymak2018universality}), we expect such results to hold for the broader class of iterations considered here and it would be interesting to show such a result rigorously.  
Fourth, and as previously mentioned, non-asymptotic guarantees for saddle updates suffer from a known drawback of working directly with loss functions in the CGMT~\citep{miolane2021distribution}.  It was shown in, e.g.,~\citet{chandrasekher2024alternating} that, for a class of sample-split algorithms, the leave-one-out method of~\citet{bean2013optimal,karoui2013asymptotic} can be utilized to provide sharper non-asymptotic guarantees. It would be interesting to prove such results in the present case, without sample-splitting. The finite-sample deviation rate also currently scales as $C_{\mathrm{SE}} (T!)^{2}$. While we argued that this may be impossible to substantially improve for general algorithms, it would be interesting to establish faster concentration results for specific first-order and saddle algorithms.
Fifth, in the context of mean-field spin glasses, lower bounds on the performance of iterative methods  have been shown for large classes of algorithms~\citep{huang2025tight}. It would be interesting to study the relationship between this class of algorithms and our saddle updates, and to use this understanding to design optimal families of saddle updates. Finally, it is of interest to go beyond the saddle update formalism to encompass more methods, such as those in numerical optimization~\citep{conn2000trust} and semiparametric inference~\citep{kakade2011efficient,pananjady2021single}. This would likely require significantly different techniques from those introduced in the present paper.

\subsection*{Acknowledgements}
AP and KAV were supported in part by the National Science Foundation through grants CCF-2107455 and DMS-2210734, and by research awards from Adobe, Amazon, Google and Mathworks.  Part of this work was completed while MC was supported
by the Miller Institute for Basic Research in Science, University of California, Berkeley.  
AP thanks Johannes Milz for helpful discussions, and for pointing him to the book by~\cite{barbu2012convexity}.

\bibliographystyle{abbrvnat}
\bibliography{gordon} 

\vspace*{.1in}

\appendix
\begin{center}
{\bf{\Large{Appendix}}}
\end{center}

\section{Examples illustrating parameterization of state evolution} \label{sec:SE-scalars}

\subsection{Recovering AMP state evolution} \label{sec:SE-AMP-example}
Consider the expanded AMP dynamics in Eq.~\eqref{eq:AMP-linear-expanded}.  Note that the corresponding state evolution given by Eqs.~\eqref{eq:SE-base-case}--\eqref{eq:fix-pt} is
\begin{align*}
		&\vse_{2k} = \vse_{2k-1} \qquad &\text{ and } \qquad \qquad &\use_{2k} =  \by_{2k}^{\mathrm{SE}} + \uhatse_{2k-1} - \sqrt{\Lambda}\hse_{2k-1} + \lambda_{k-1} \use_{2k-1} , \nonumber\\
	&\vse_{2k+1} = \lambda_k \cdot \bigl(\vhatse_{2k} - \gse_{2k} + \Lambda \vse_{2k}\bigr) \qquad &\text{ and } \qquad \qquad &\use_{2k+1} = \use_{2k},
\end{align*}
where $\by_{2k}^{\mathrm{SE}} \sim \mathsf{N}(0, \sigma^2 \bI_n)$, independent of everything else with initialization $\vse_1 = \bv_1$, $\use_1 = \bu_1$, $\gse_1 = \mathsf{N}(0, \| \bv_1 \|_2^2 \bI_d/d)$, $\hse_1 = \mathsf{N}(0, \| \bu_1 \|_2^2 \bI_n/n)$, $\vhatse_1 = 0$, and $\uhatse_1 = 0$.  We claim that $\uhatse_{2k+2} = \uhatse_{2k+1} = - \lambda_{k} \use_{2k}$ and $\vhatse_{2k+1} = \vhatse_{2k} = -\Lambda \vse_{2k-1}$.  Since $\use_{2k-1} = \use_{2k-2}$ and $\vse_{2k} = \vse_{2k-1}$, this implies that 
\begin{align*}
	&\vse_{2k} = \vse_{2k-1} \qquad &\text{ and } \qquad \qquad &\use_{2k} =  \by^{\mathrm{SE}}_{2k}  - \sqrt{\Lambda}\hse_{2k-1}, \nonumber\\
	&\vse_{2k+1} = -\lambda_k \gse_{2k}  \qquad &\text{ and } \qquad \qquad &\use_{2k+1} = \use_{2k}.
\end{align*}
We additionally take $\gse_{2k} \sim \mathsf{N}(0, \tau_k^2)$, independent of all other randomness, and $\gse_{2k+1} = \gse_{2k}$ as well as $\hse_{2k+1} \sim \mathsf{N}\bigl(0, \lambda_k^2 \cdot \{ \sigma^2 + \Lambda \tau_{k-1}^2\}\bigr)$, independent of all other randomness, and $\hse_{2k} = \hse_{2k-1}$ where $\tau_k^2 = \sigma^2 + \Lambda \lambda_{k-1}^2\tau_{k-1}^2$.

Let us now verify that these choices verify the fixed point equations in Eq.~\eqref{eq:fix-pt}.  We proceed by induction.  Note that the base case holds by construction as the fixed point equations are indeed verified at the first iteration.  Suppose that it holds for all positive integers $\ell \leq 2k-1$.  From the induction hypothesis, since $\vse_{2k} = \vse_{2k-1}$ and $\hse_{2k} = \hse_{2k-1}$, we see that $\llangle \Vse_{2k}, \Vse_{2k} \rrangle_{L^2} = \llangle \Hse_{2k}, \Hse_{2k} \rrangle_{L^2}$.  Note further that
\[
\langle \use_{2k}, \use_{2k} \rangle_{L^2} = \langle \by^{\mathrm{SE}}_{2k} - \sqrt{\Lambda} \hse_{2k-1}, \by^{\mathrm{SE}}_{2k} - \sqrt{\Lambda} \hse_{2k-1} \rangle_{L^2} = \sigma^2 + \Lambda \| \hse_{2k-1} \|_{L^2}^2 = \sigma^2 + \Lambda \lambda_{k-1}^2 \tau_{k-1}^2 = \tau_{k}^2, 
\]
which is equivalent to $\langle \gse_{2k}, \gse_{2k} \rangle_{L^2}$ by construction.  Moreover, for any $\ell \leq 2k-1$, we have
\[
\langle \use_{2k}, \use_{\ell} \rangle_{L^2} = \langle \by^{\mathrm{SE}}_{2k} - \sqrt{\Lambda} \hse_{2k-1}, \use_{\ell} \rangle_{L^2} = \langle \by^{\mathrm{SE}}_{2k}, \use_{\ell}\rangle_{L^2} - \sqrt{\Lambda} \langle \hse_{2k-1}, \use_{\ell} \rangle = 0,
\]
where the final inequality follows from independence.  By construction, we see that $\langle \gse_{2k}, \gse_{\ell} \rangle_{L^2} = 0$, so that $\llangle \Use_{2k}, \Use_{2k} \rrangle_{L^2} = \llangle \Gse_{2k}, \Gse_{2k} \rrangle_{L^2}$.  Additionally, we have, for each $\ell \leq k$ that 
\[
\langle \gse_{2k}, \vse_{2\ell} \rangle_{L^2} = \langle \gse_{2k}, -\lambda_{\ell-1} \gse_{2\ell-2} \rangle_{L^2} = -\lambda_{\ell-1} \langle \use_{2k}, \use_{2\ell-2} \rangle_{L^2} = \langle \use_{2k}, \uhatse_{2\ell} \rangle_{L^2},
\]
and for each $\ell \leq k-1$ that
\[
\langle \gse_{2k}, \vse_{2\ell + 1} \rangle_{L^2} = \langle \gse_{2k}, -\lambda_{\ell} \gse_{2\ell} \rangle_{L^2} = -\lambda_{\ell} \langle \use_{2k}, \use_{2\ell} \rangle_{L^2} = \langle \use_{2k}, \uhatse_{2\ell+1} \rangle_{L^2}.
\]
Thus, $\llangle \Gse_{2k}, \Vse_{2k} \rrangle_{L^2} = \llangle \Use_{2k}, \Uhatse_{2k} \rrangle_{L^2}$.  It remains to verify that $\sqrt{\Lambda} \llangle \Hse_{2k}, \Use_{2k} \rrangle_{L^2} = \llangle \Vse_{2k}, \Vhatse_{2k} \rrangle_{L^2}$.  We have 
\[
\sqrt{\Lambda} \langle \hse_{2k}, \use_{2k} \rangle_{L^2} = \sqrt{\Lambda} \langle \hse_{2k-1}, \by^{\mathrm{SE}}_{2k} - \sqrt{\Lambda} \hse_{2k-1} \rangle_{L^2} = -\Lambda \| \hse_{2k-1} \|_{L^2}^2. 
\]
On the other hand,
\[
\langle \vse_{2k}, \vhatse_{2k} \rangle_{L^2} = \langle \vse_{2k-1}, -\Lambda \vse_{2k-1} \rangle_{L^2} = -\Lambda \| \vse_{2k-1} \|_{L^2}^2 = -\Lambda \| \hse_{2k-1} \|_{L^2}^2. 
\]
Moreover, for each $\ell \leq k-1$, we have
\[
\sqrt{\Lambda} \langle \hse_{2k}, \use_{2\ell} \rangle_{L^2} = 0 = \langle \vse_{2k}, \vhatse_{2\ell} \rangle_{L^2} \qquad \text{ and } \qquad \sqrt{\Lambda} \langle \hse_{2k}, \use_{2\ell + 1} \rangle_{L^2} = 0 = \langle \vse_{2k}, \vhatse_{2\ell + 1} \rangle_{L^2}.
\]
A similar set of calculations then verifies that $\sqrt{\Lambda} \llangle \Hse_{2k}, \Use_{2k} \rrangle_{L^2} = \llangle \Vse_{2k}, \Vhatse_{2k} \rrangle_{L^2}$, as desired.

\subsection{Rigorous state evolution for one step of CGMT}\label{sec:appendix-m-est-gordon}
In this section, we use the state evolution in Eqs.~\eqref{eq:se-guass-and-span}--\eqref{eq:fix-pt} to recover the nonlinear system in~\citet[Eq. (15)]{thrampoulidis2018precise} (see also~\citet[Eq. (1.11)]{bellec2023existence}). We are interested in regularized $M$-estimation problems where we minimize the loss
\begin{align*}
L(\bvtilde) &= \frac{1}{d}\rho(\by - \bX \bvtilde) + \frac{\lambda}{d} f(\bvtilde) = \frac{1}{d}\sum_{i=1}^{n} \rho(y_i - \bx_i^{\top} \bvtilde) +\frac{ \lambda}{d} \sum_{j=1}^{d} f(\widetilde{v}_j)
\end{align*}
where 
$\rho$ is smooth and separable and $f$ is strongly convex and separable.
For simplicity of exposition, we will consider the setting in which the observations $\by = 0$ (with true signal equal to $0$), whereby we obtain
\begin{align*}
L(\bvtilde) &= \overset{(d)}{=}\frac{1}{d}\rho(\bX \bvtilde) + \frac{\lambda}{d} f(\bvtilde) = \frac{1}{d}\sum_{i=1}^{n} \rho(\bx_i^{\top} \bvtilde) +\frac{ \lambda}{d} \sum_{j=1}^{d} f(\widetilde{v}_j).
\end{align*}
Note that our normalization matches that of~\citet[see Eq. (64)]{thrampoulidis2018precise}.  Dualizing and scaling the decision variables so that $\bv = \bvtilde / \sqrt{d}$, we obtain the equivalent representation
\[
L(\bv) = \sup_{\bu \in \mathbb{R}^n} \; \bu^{\top} \bX \bv - \frac{1}{d} \sum_{j=1}^{d} \partial \rho^{\star}\bigl( u_j \sqrt{d} \bigr) + \frac{\lambda}{d} \sum_{j=1}^{d} f\bigl(v_j \sqrt{d}\bigr),
\]
where $\rho^{\star}$ denotes the convex conjugate of $\rho$. Note that the minimizer $\bv^{\mathsf{opt}}$ of this loss can be written in conjunction with the maximizing $\bu^{\mathsf{opt}}$ as
\[
(\bu^{\mathsf{opt}}, \bv^{\mathsf{opt}}) = \underset{\bv \in \mathbb{R}^d, \bu \in \mathbb{R}^n}\saddle \Big\{ \bu^{\top} \bX \bv - \frac{1}{d} \sum_{j=1}^{d} \partial \rho^{\star}\bigl( u_j \sqrt{d} \bigr) + \frac{\lambda}{d} \sum_{j=1}^{d} f\bigl(v_j \sqrt{d}\bigr) \Big\}.
\]

The state evolution from Eqs.~\eqref{eq:se-guass-and-span}--\eqref{eq:fix-pt} now applies, and yields
\begin{subequations} \label{eq:Gordon-M-SE}
\begin{align}
	\sqrt{\Lambda} \hse - \uhatse
	&= \frac{1}{\sqrt{d}} \bigl[\partial \rho^{\star}\bigl(u^{\mathrm{SE}}_1 \sqrt{d}\bigr) \; \vert \; \cdots \; \vert \; \partial \rho^{\star}\bigl(u^{\mathrm{SE}}_n \sqrt{d}\bigr)\bigr]^{\top} \quad \text{ and } \label{eq:ex-gordon-use}\\ 
	\gse -\vhatse
	&=
	\frac{\lambda}{\sqrt{d}} \bigl[\partial f\bigl(v^{\mathrm{SE}}_1 \sqrt{d}\bigr) \; \vert \; \cdots \; \vert \; \partial f\bigl(v^{\mathrm{SE}}_d \sqrt{d}\bigr)\bigr]^{\top}. \label{eq:ex-gordon-vse}
\end{align}
Using the span constraints to take $\uhatse = \kappa \use$, $\vhatse = \nu \vse$ and letting $\| \use \|_{L^2}^2 = \beta^2$, and $\| \vse \|_{L^2}^2 = \alpha^2$ then simplifies the fixed point equations to
\begin{equation}
	\begin{aligned}
		 \beta^2 &= \| \gse \|_{L^2}^2 = \| \use \|_{L^2}^2,
		\qquad&
		\alpha^2 = \| \hse \|_{L^2}^2 &= \| \vse \|_{L^2}^2
		\\
		\langle \gse, \vse \rangle_{L^2} &= \langle \uhatse, \use \rangle_{L^2} = \kappa \beta^2,
		\qquad&
		\langle \sqrt{\Lambda} \hse, \use \rangle_{L^2} &= \langle \vse, \vhatse \rangle_{L^2} = \nu \alpha^2. \label{ex:gordon-fix-pt}
	\end{aligned}
\end{equation}
\end{subequations}
We next show that the system of equations~\eqref{eq:Gordon-M-SE} is equivalent to the system of equations in~\citet[Eq. (1.11)]{bellec2023existence}; see Eq.~\eqref{eq:BK-SE}.   
Let us show equivalence of Eqs.~\eqref{eq:BK-SE} with our state evolution~\eqref{eq:Gordon-M-SE} by considering each equation in turn. We begin with Eq.~\eqref{eq:example-gordon-norm-v} and define the random variable $V^{\mathrm{BK}} := \mathsf{prox}_{\lambda \nu^{-1} f}(\nu^{-1} \beta Z)$ so that $V^{\mathrm{BK}}$ satisfies the KKT condition
\[
\nu V^{\mathrm{BK}} -  \beta Z + \lambda  \partial f(V^{\mathrm{BK}}) = 0.
\]
We recognize from~\eqref{eq:ex-gordon-vse} that, since $\vhatse = \kappa \vse$,
\[
\nu v^{\mathrm{SE}}_j - g^{\mathrm{SE}}_j + \frac{\lambda}{\sqrt{d}} \partial f\bigl(v^{\mathrm{SE}}_j \sqrt{d}\bigr) = 0 \quad \text{ for all } j \in [d].
\]
It thus follows that the entries of $\sqrt{d} \vse$ are i.i.d. with the same law as $V^{\mathrm{BK}}$ so that Eq.~\eqref{eq:example-gordon-norm-v} is equivalent to $\alpha^2 = \| \vse \|_{L^2}^2$.  

We next turn to Eq.~\eqref{eq:example-gordon-norm-u}.  Note that by Moreau's identity,
\[
\alpha W - \mathsf{prox}_{\kappa \rho}(\alpha W) = \mathsf{prox}_{(\kappa \rho)^{\star}}(\alpha W)
\]
Define $U^{\mathrm{BK}} := \frac{1}{\kappa}\mathsf{prox}_{(\kappa \rho)^{\star}}(\alpha W)$,
so that Eq.~\eqref{eq:example-gordon-norm-u} is equivalent to $\beta^2 = \Lambda \mathbb{E}[(U^{\mathrm{BK}})^2]$.  Next, noting that $(\kappa \rho)^{\star}(x) = \kappa \rho^{\star}(x/\kappa)$, we note that $U^{\mathrm{BK}}$ satisfies the KKT condition
\[
\kappa U^{\mathrm{BK}} - \alpha W + \partial \rho^{\star} \bigl(U^{\mathrm{BK}}\bigr) = 0.
\]
On the other hand, from Eq.~\eqref{eq:ex-gordon-use}, we see that 
\[
\kappa u^{\mathrm{SE}}_i - \sqrt{\Lambda} h^{\mathrm{SE}}_i + \frac{1}{\sqrt{d}} \partial \rho^{\star}\bigl(u^{\mathrm{SE}}_i \sqrt{d}\bigr) = 0 \quad \text{ for all } i \in [n].
\]
It thus follows that the entries of $\sqrt{d} \use$ are i.i.d. with the same law as $U^{\mathrm{BK}}$ so that $\| \use \|_{L^2}^2 = \Lambda \mathbb{E}[(U^{\mathrm{BK}})^2]$.  Hence, Eq.~\eqref{eq:example-gordon-norm-u} is equivalent to $\beta^2 \| \use \|_{L^2}^2$.

We next turn to Eq.~\eqref{eq:example-gordon-uh}, which is equivalent to the equation
$\nu \alpha = \Lambda \mathbb{E}\bigl[W U^{\mathrm{BK}}]$.  On the other hand, our state evolution implies that
\begin{align}\label{eq:relation-penultimate-uh-example}
\nu \alpha^2 = \nu \| \vse \|_{L^2}^2 = \langle \vse, \vhatse \rangle_{L^2} = \sqrt{\Lambda} \langle \hse, \use\rangle_{L^2}.  
\end{align}
Note that the entries of $\sqrt{d\Lambda} \hse$ are i.i.d. with the same law as $\mathsf{N}(0, \alpha^2)$ and the entries of $\sqrt{d} \use$ are i.i.d. with the same law as $U^{\mathrm{BK}}$, so that
\[
\sqrt{\Lambda} \langle \hse, \use\rangle_{L^2} = \Lambda \alpha \mathbb{E}[W U^{\mathrm{BK}}].
\]
Re-arranging we thus see that Eq.~\eqref{eq:relation-penultimate-uh-example} is equivalent to Eq.~\eqref{eq:example-gordon-uh}.  

We finally consider Eq.~\eqref{eq:example-gordon-vg}.  By definition, we have $ \mathbb{E}\Bigl[Z \cdot \mathsf{prox}_{\lambda \nu^{-1} f}(\nu^{-1} \beta Z)\Bigr] = \mathbb{E}[Z V^{\mathrm{BK}}]$.  On the other hand, since the entries of $\sqrt{d} \vse$ are i.i.d. with the same law as $V^{\mathrm{BK}}$ and the entries of $\sqrt{d} \gse$ are i.i.d. $\mathsf{N}(0, \beta^2)$, and invoking the fixed point relation in Eq.~\eqref{ex:gordon-fix-pt}, we have
\[
\kappa \beta^2 = \langle \gse, \vse \rangle_{L^2} = \beta \mathbb{E} [Z V^{\mathrm{BK}}],
\]
which in turn is equivalent to Eq.~\eqref{eq:example-gordon-vg}, as desired.

\section{Proofs of technical lemmas from Section~\ref{sec:proof-exact-asymptotics}}

In this section, we prove the technical lemmas that were stated in our proofs of the finite-sample guarantees. We first prove Lemma~\ref{lem:facts}, which was used in the first-order induction step, and then Lemmas~\ref{lem:approximation-by-po},~\ref{lem:apx-concentration-useful}, and~\ref{lem:approximate-stationarity-F-func}, which were used in the saddle induction step.

\subsection{Proof of Lemma~\ref{lem:facts}}
We prove each of the statements in turn, restricting our proofs to inequalities (a) in the left column; inequalities (b) admit identical proofs.  Starting with inequality~\eqref{eq:facts1}(a), we expand
\[
\bT_{g}^{\top} \bv_{k+1} = \bU_k\bigl(\bK_k^{g}\bigr)^{-1} \bG_k^{\top} \bv_{k+1} = \sum_{\ell=1}^{k} \bigl\{\bigl(\bK_k^{g}\bigr)^{-1} \bG_k^{\top} \bv_{k+1}\bigr\}_{\ell} \bu_{\ell}.
\]
Since for first-order methods, the events $\{\Vse_{k+1} = \bV_{k+1}, \Gse_{k} = \bG_{k}, \Use_{k+1} = \bU_{k+1}, \Hse_{k} = \bH_k\}$ and $\{\Vse_k = \bV_k, \Gse_k = \bG_k, \Use_k = \bU_k, \Hse_k = \bH_k\}$ are equivalent, on $\Omega_k$, we have
\[
\bigl \| \bG_k^{\top} \bv_{k+1} - \llangle \Gse_k, \vse_{k+1} \rrangle_{L^2} \|_{\infty} \leq  \Delta_1(k, \delta).
\] 
As in the proof of Claim~\ref{claim:first-order-hat-value}, we expand the state evolution equations in~\eqref{eq:SE-FO-system} to deduce that 
\[
\llangle \Gse_k, \vse_{k+1} \rrangle_{L^2} = \bK^{g}_k [\bigl(L^{u}_{k+1}\bigr)_{k+1, 1} \; \vert \; \cdots \; \vert \; \bigl(L^{u}_{k+1}\bigr)_{k+1, k}].
\]
Putting the pieces together, we find that 
\[
\Bigl\| \bT_g^{\top} \bv_{k+1} - \sum_{\ell=1}^{k} \bigl(L^{u}_{k+1}\bigr)_{k+1, \ell} \bu_{\ell} \Bigr\|_2 \leq \sum_{\ell=1}^{k} \bigl \lvert \bigl\{\bigl(\bK_k^{g}\bigr)^{-1} \bG_k^{\top} \bv_{k+1}\bigr\}_{\ell} - \bigl(L^{u}_{k+1}\bigr)_{k+1, \ell} \bigr \rvert \leq C_{\mathrm{SE}} k \Delta_1(k, \delta),
\]
as desired. 

Turning to inequality~\eqref{eq:facts2}(a), we expand
\[
\bT_h \bv_{k+1} = \sqrt{\Lambda} \cdot \bH_k \bigl(\bK^h_k\bigr)^{-1} \bV_k^{\top} \bv_{k+1} = \sqrt{\Lambda} \sum_{\ell=1}^{k}\bigl\{\bigl(\bK_k^{h}\bigr)^{-1} \bV_k^{\top} \bv_{k+1}\bigr\}_{\ell} \bh_{\ell}.
\]
Similar to the previous argument, we note that on $\Omega_k$, 
\[
\bigl \| \bV_k^{\top} \bv_{k+1} - \llangle \Vse_k, \vse_{k+1} \rrangle_{L^2} \|_{\infty} \leq  \Delta_1(k, \delta).
\] 
Further, recall from the proof of Claim~\ref{claim:first-order-hat-value} that, for $ \ell \in [k]$,  $\beta_{k+1, \ell} = \bigl\{ \bigl(\bK^{h}_k\bigr)^{-1} \llangle \Vse_{k}, \vse_{k+1} \rrangle_{L^2}\bigr\}_{\ell}$.  Putting the pieces together, it thus follows that 
\[
\Bigl\| \bT_h \bv_{k+1} - \sqrt{\Lambda}\sum_{\ell=1}^{k} \beta_{k+1, \ell} \bh_{\ell} \Bigr\|_2 \leq \sqrt{\Lambda} \sum_{\ell=1}^{k} \bigl \lvert \bigl\{\bigl(\bK_k^{h}\bigr)^{-1} \bV_k^{\top} \bv_{k+1}\bigr\}_{\ell} - \beta_{k+1, \ell} \bigr \rvert \leq C_{\mathrm{SE}} k \Delta_1(k, \delta),
\]
as desired. 

Finally, we turn to inequality~\eqref{eq:facts3}(a).  We have
\[
\| \proj_{\bV_k}^{\perp} \bv_{k+1} \|_2^2 = \| \bv_{k+1} \|_2^2 - \| \proj_{\bV_k} \bv_{k+1}\|_2^2 = \| \bv_{k+1} \|_2^2 - \langle \bV_{k}^{\top} \bv_{k+1}, (\bV_k^{\top} \bV_k^{\top})^{-1} \bV_k^{\top} \bv_{k+1} \rangle. 
\]
Note that on $\Omega_k$, $\lvert \| \bv_{k+1} \|_2^2 - \| \vse_{k+1} \|_{L^2}^2 \rvert \leq \Delta_1(k, \delta)$.  We thus focus on the second term.  As in Eq.~\eqref{eq:woodbury-consequence}, we note that for two invertible matrices $\bA,\bB \in \mathbb{R}^{k \times k}$, it holds that
\[
\| \bA^{-1} - \bB^{-1} \|_{\mathrm{op}} \leq \frac{k}{\lambda_{\min}(\bA) \lambda_{\min}(\bB)} \| \bA- \bB \|_{\infty}.
\]
Hence on $\Omega_k$, 
\begin{align*}
\Bigl \lvert \langle \bV_{k}^{\top} \bv_{k+1}, (\bV_k^{\top} \bV_k^{\top})^{-1} \bV_k^{\top} \bv_{k+1} \rangle - \langle \bV_{k}^{\top} \bv_{k+1}, (\bK^{h}_k)^{-1} \bV_k^{\top} \bv_{k+1} \rangle \Bigr \rvert &\leq  \| \bV_k^{\top} \bv_{k+1} \|_2^2 \bigl \| (\bV_k^{\top} \bV_k^{\top})^{-1} - \bigl(\bK^h_k\bigr)^{-1} \bigr\|_{\mathrm{op}} \\
&\leq C_{\mathrm{SE}} k^2 \Delta_1(k, \delta).
\end{align*}
Further, applying the Cauchy--Schwartz inequality, we deduce that
\begin{align*}
	\Bigl \lvert \langle &\bV_{k}^{\top} \bv_{k+1}, (\bK^{h}_k)^{-1} \bV_k^{\top} \bv_{k+1} \rangle - \langle \llangle \Vse_k, \vse_{k+1} \rrangle, (\bK^{h}_k)^{-1} \llangle \Vse_{k}, \vse_{k+1} \rrangle \rangle \Bigr \rvert \\
	&\leq 2 \bigl \| (\bK^{h}_k)^{-1} \bigr \|_{\mathrm{op}} \bigl \| \bV_k^{\top} \bv_{k+1} - \llangle \Vse_{k}, \vse_{k+1} \rrangle \bigr \|_2 \cdot \Bigl\{ \bigl \| \bV_k^{\top} \bv_{k+1} \bigr\|_2 + \bigl \|  \llangle \Vse_{k}, \vse_{k+1} \rrangle \bigr\|_2 \Bigr\} \leq C_{\mathrm{SE}} k^2 \Delta_1(k, \delta)
\end{align*}
Putting the pieces together, we find that
\[
\Bigl \lvert \| \proj_{\bV_k}^{\perp} \bv_{k+1} \|_2^2 - \Bigl\{ \| \vse_{k+1} \|_{L^2}^2 - \langle \llangle \Vse_k, \vse_{k+1} \rrangle, (\bK^{h}_k)^{-1} \llangle \Vse_{k}, \vse_{k+1} \rrangle \Bigr \} \Bigr \rvert \leq C_{\mathrm{SE}} k^2 \Delta_1(k, \delta).
\]
Recognizing that 
\[
\beta_{k+1, k+1} = \bK^h_{k+1, k+1} - \bigl \langle \bigl[ \bigl(\bK^h_{k+1}\bigr)_{k+1, 1} \;\vert \; \cdots \; \vert \; \bigr(\bK^h_{k+1}\bigr)_{k+1, k}\bigr], (\bK^{h}_k)^{-1}\bigl[ \bigl(\bK^h_{k+1}\bigr)_{k+1, 1} \;\vert \; \cdots \; \vert \; \bigr(\bK^h_{k+1}\bigr)_{k+1, k}\bigr]\bigr \rangle,
\]
yields the desired result.
\qed


\subsection{Proof of Lemma~\ref{lem:approximation-by-po}} \label{sec:proof-lem-approximation-by-po}
Suppose as in the lemma that
\begin{align} \label{ineq:uniform-po-bound}
	\sup_{\bv \in \mathbb{B}_2(c_{\mathrm{SE}}), \bu \in \mathbb{B}_2(c_{\mathrm{SE}})}\; \Bigl \lvert \Lpo_{k+1}(\bu, \bv) - F_{k+1}(\bu, \bv) \Bigr \rvert \leq C_{\mathrm{SE}} \cdot k \cdot \Delta_1(k, \delta).
\end{align}
Note that we have the following sandwich relation:
\begin{align} \label{ineq:penultimate-step1-saddle}
	\frac{\mu}{2} \| \bv_{k+1} - \vpo_{k+1} \|_2^2 \overset{\1}{\leq} F_{k+1}(\bu_{k+1}, \vpo_{k+1}) - F(\bu_{k+1}, \bv_{k+1}) \overset{\2}{\leq} C_{\mathrm{SE}} \cdot k \cdot \Delta_1(k, \delta),
\end{align}
where the inequality $\1$ follows from strong convexity of the map $\bv \mapsto F_{k+1}(\bu_{k+1}, \bv)$ and the inequality $\2$ follows from combining the uniform bound~\eqref{ineq:uniform-po-bound} with the saddle relations
\begin{align*}
	&F_{k+1}(\upo_{k+1}, \bv_{k+1}) \leq F_{k+1}(\bu_{k+1}, \bv_{k+1}) \leq F_{k+1}(\bu_{k+1}, \vpo_{k+1}) \quad \text{ and }\\
	& \Lpo_{k+1}(\bu_{k+1}, \vpo_{k+1}) \leq \Lpo_{k+1}(\upo_{k+1}, \vpo_{k+1}) \leq \Lpo_{k+1}(\upo_{k+1}, \bv_{k+1}).
\end{align*}
Re-arranging inequality~\eqref{ineq:penultimate-step1-saddle} and repeating the same steps for $\bu_{k+1}$ thus yields the pair of claimed bounds
\begin{align}
	\label{eq:approximation-by-po-new}
	\| \vpo_{k+1} - \bv_{k+1} \|_2 \leq C_{\mathrm{SE}} \cdot\sqrt{ k \cdot \Delta_1(k, \delta)} \quad \text{ and } \quad \| \upo_{k+1} - \bu_{k+1} \|_2 \leq C_{\mathrm{SE}} \cdot\sqrt{ k \cdot \Delta_1(k, \delta)}.
\end{align}

It remains to prove that 
\begin{align}
	\label{ineq:boundedness-of-iterates}
	\| \bv_{k+1} \|_2 \vee \| \bu_{k+1} \|_2 \leq c_{\mathrm{SE}}.
\end{align}
Note that, since $\phi_{k+1}^{u}$ is strongly convex, the function $\gamma: \bv \mapsto \max_{\bu \in \mathbb{R}^n}\, F_{k+1}(\bu, \bv)$ is well-defined and strongly convex as it is the maximum of strongly convex functions.  Moreover, by Danskin's theorem~\cite[][Corollary 1]{rockafellar1971saddle}, $\nabla \gamma(\bv) = \bX^{\top} \argmax_{\bu \in \mathbb{R}^d} \{ \phi_{k+1}^{u}(\bu)\} + \nabla \phi_{k+1}^{v}(\bv)$.  Thus, since $\gamma$ is $\mu$-strongly convex, on $\Omega_k$, we have
\begin{align} \label{ineq:norm-bound-v}
	\bigl\| \bv_{k+1} \bigr\|_2 \overset{\1}{\leq} \frac{2}{\mu} \| \nabla \gamma(0) \|_2 &\leq \frac{2}{\mu} \cdot \bigl(\| \bX \|_{\mathrm{op}} \cdot \| \argmax_{\bu \in \mathbb{R}^d} \{ \phi_{k+1}^{u}(\bu)\} \|_2 + \| \nabla \phi_{k+1}^{v}(0) \|_2\bigr) \nonumber \\
	&\leq \frac{2}{\mu} \cdot \bigl(\Delta_0(k, \delta) + K\bigr) \leq c_{\mathrm{SE}}',
\end{align}
where in step $\1$ we have used the fact that for a differentiable, $\mu$-strongly convex function $f: \mathbb{R}^{m} \times \mathbb{R}$, $\| \argmin_{x \in \mathbb{R}^m}\, f(x) \|_2 \leq \frac{2}{\mu} \| \nabla f(0) \|_2$ and the final inequality follows from Assumption~\ref{asm:regularity}.  Continuing, since $(\bu_{k+1}, \bv_{k+1})$ is a saddle point of $F_{k+1}$, it satisfies the sandwich relation 
\[
F_{k+1}(\bu, \bv_{k+1}) \leq F(\bu_{k+1}, \bv_{k+1}) \leq F(\bu_{k+1}, \bv), \quad \text{ for all } \quad \bu \in \mathbb{R}^n, \bv \in \mathbb{R}^d. 
\]
Taking $\bu = 0$ in left-hand side of the sandwich relation above and invoking strong concavity of the map $\bu \mapsto F_{k+1}(\bu, \bv_{k+1})$ yields 
\begin{align*}
	0 &\geq F_{k+1}(0, \bv_{k+1}) - F_{k+1}(\bu_{k+1}, \bv_{k+1}) \geq - \nabla_{\bu} F_{k+1}(0, \bv_{k+1})^{\top} \bu_{k+1} + \frac{\mu}{2} \| \bu_{k+1} \|_2^2 \\
	&\geq - \| \nabla_{\bu} F_{k+1}(0, \bv_{k+1}) \|_2 \| \bu_{k+1} \|_2 + \frac{\mu}{2} \| \bu_{k+1}\|_2^2 \geq - \| \bu_{k+1} \|_2 \cdot \bigl( \| \bX \|_{\mathrm{op}} \| \bv_{k+1} \|_2 + \| \nabla \phi_{k+1}^{u}(0) \|_2 \bigr). 
\end{align*}
Re-arranging and invoking Assumption~\ref{asm:regularity} in conjunction with the bound on $\| \bv_{k+1}\|_2$~\eqref{ineq:norm-bound-v} then implies that $\| \bu_{k+1} \|_2 \leq c_{\mathrm{SE}}$, for a small enough constant $c_{\mathrm{SE}} > 0$.  We then take $(M, M')$ in the uniform bound~\eqref{ineq:uniform-po-bound} to be $(c_{\mathrm{SE}}, c_{\mathrm{SE}}')$, as desired. \hfill \qed

\subsection{Proof of Lemma~\ref{lem:apx-concentration-useful}}\label{sec:proof-lem-apx-concentration-useful}
Each of the proofs follows a similar template and proceeds by using a pseudo-Lipschitz property, so we restrict ourselves to the proof of~\eqref{eq:apx-ind-events-four-phiv}.  We will exploit Eq.~\eqref{eq:apx-se-equivalence}, reproduced below for convenience:
\begin{align*}
\vse_{k+1} = \vapx_{k+1}(\Vse_{k}, \Gse_k, \bgamma_d/\sqrt{d}) \quad \text{ and } \quad \use_{k+1} = \uapx_{k+1}(\Use_k, \Hse_k, \bgamma_n/\sqrt{n})
\end{align*}
for $\bgamma_d \sim \normal(0, \bI_d)$ and $\bgamma_d \sim \normal(0, \bI_n)$.

\paragraph{Proof of the inequality~\eqref{eq:apx-ind-events-four-phiv}.}
For $\ell \leq k$, on $\Omega_k$, 
\[
\bigl \lvert \mathbb{E}\bigl[\phi_{\ell}^{v}(\bv_{\ell}) \mid \Vse_{k} = \bV_k, \vse_{k+1} = \vapx_{k+1}, \Gse_{k} = \bG_k, \gse_{k+1} = \bg_{k+1}(\vapx_{k+1})\bigr] - \EE[\phi_{\ell}^{v}(\bv_{\ell})] \bigr \rvert \leq \Delta_1(\ell, \delta) \leq \Delta_1(k, \delta),
\]
so that it suffices to consider $\ell > k$.  In this case, Lemma~\ref{lem:conditional-expectation-PL} implies that the function $\tau_{\ell}: \mathbb{R}^{d} \times \mathbb{R}^{d \times k} \times \mathbb{R}^d \times \mathbb{R}^{d \times k}$, defined as
\[
\tau_{\ell}(\bc, \bC, \bd, \bD) := \mathbb{E}\bigl[\phi_{\ell}^{v}(\vse_{\ell}) \mid \Vse_{k} = \bC, \vse_{k+1} = \bc, \Gse_{k} = \bD, \gse_{k+1} = \bd\bigr]
\]
is order-$2$ pseudo-Lipschitz.  Recalling the definition of $\tau^{\phi_{\ell}^{v},k}$~\eqref{eq:tau-definitions}, we deduce that on $\Omega_k$
\begin{align*}
	\bigl \lvert \tau_{\ell}(\vapx_{k+1}, \bV_k, \bg_{k+1}(\vapx_{k+1}), \bG_k) - \EE\bigl[\phi_{\ell}^{v}(\vse_{\ell})\bigr] \bigr \rvert &\leq \bigl \lvert \tau_{\ell}(\vapx_{k+1}, \bV_k, \bg_{k+1}(\vapx_{k+1}), \bG_k) - \tau^{\phi_{\ell}^v, k}(\bV_k, \bG_k) \bigr \rvert \\
	&\hspace{4cm}+ \bigl \lvert \tau^{\phi_{\ell}^v, k}(\bV_k, \bG_k)  - \EE\bigl[\phi_{\ell}^{v}(\vse_{\ell})\bigr] \bigr \rvert \\
	&\leq \bigl \lvert \tau_{\ell}(\vapx_{k+1}, \bV_k, \bg_{k+1}(\vapx_{k+1}), \bG_k) - \tau^{\phi_{\ell}^v, k}(\bV_k, \bG_k) \bigr \rvert + \Delta_1(k, \delta).
\end{align*}
Now, recalling the definition of $\bg_{k+1}(\cdot)$ from~\eqref{eq:g-h-po}, we have
\begin{align*}
	\bg_{k+1}(\vapx_{k+1}) &= \bg_{k+1}(\vapx_{k+1}(\bV_k, \bG_k, \bgamma_d/\sqrt{d}))\\
	&= \bigl(L^{v}_{k+1}\bigr)_{k+1, k+1} \vapx_{k+1}(\bV_k, \bG_k, \bgamma_d/\sqrt{d}) + \sum_{\ell=1}^{k} \bigl(L^{v}_{k+1}\bigr)_{k+1, \ell} \bv_k + \nabla \phi_{k+1}^{v}(\vapx_{k+1}(\bV_k, \bG_k, \bgamma_d/\sqrt{d})).
\end{align*}
Since by assumption $\phi_{k+1}^{v}$ is smooth and $\vapx_{k+1}$ is a Lipschitz function of its arguments, it follows that $\bg_{k+1}(\vapx_{k+1})$ is a Lipschitz function of $\bV_k, \bG_k$, and $\bgamma_d/\sqrt{d}$.  Hence, $\tau_{\ell}$ is an order-$2$ pseudo-Lipschitz function of $\bV_k, \bG_k, \bgamma_{d}/\sqrt{d}$. Also note that due to the equivalence~\eqref{eq:apx-se-equivalence}, we have
\[
\tau^{\phi_{\ell}^v, k}(\bV_k, \bG_k)  = \EE_{\bgamma_d} [\tau_{\ell}(\vapx_{k+1}(\bV_k, \bG_k, \bgamma_d/\sqrt{d}), \bV_k, \bg_{k+1}(\vapx_{k+1}(\bV_k, \bG_k, \bgamma_d/\sqrt{d})), \bG_k) ].
\]
Thus, applying Lemma~\ref{lem:pseudo-Lipschitz-concentration} yields
\[
\bigl \lvert \tau_{\ell}(\vapx_{k+1}, \bV_k, \bg_{k+1}(\vapx_{k+1}), \bG_k) - \tau^{\phi_{\ell}^v, k}(\bV_k, \bG_k) \bigr \rvert \leq C_{\mathrm{SE}} \sqrt{\frac{\log(10T^2/\delta')}{n}}, \quad \text{ with probability } \quad \geq 1 - \frac{\delta'}{10T^2}.
\]
Putting the pieces together yields the desired result. 

To conclude, we note that there are $6T^2 + 2T + 2 \leq 10T^2$ events to be controlled, and applying a union bound completes the proof. \hfill \qed

\subsection{Proof of Lemma~\ref{lem:approximate-stationarity-F-func}}\label{sec:proof-lem-approximate-stationarity-F-func}
We first recall the Hilbert spaces $\mathcal{H}^{k+1}_n$ and $\mathcal{H}^{k + 1}_d$ from Eq.~\eqref{eq:Hilbert-defs} as well as the orthonormal collections $\{\vseperp_{\ell}\}_{\ell=1}^{k} \in \mathcal{H}^{k+1}_d, \{\useperp_{\ell}\}_{\ell=1}^{k} \in \mathcal{H}^{k+1}_n, \{\gseperp_{\ell}\}_{\ell=1}^{k} \in \mathcal{H}^{k+1}_d,$ and $\{\hseperp_{\ell}\}_{\ell=1}^{k} \in \mathcal{H}^{k+1}_n$ and collect them to form 
\begin{align*}
	\Vseperp_k &:= [\vseperp_1 \; \vert \; \cdots \; \vert \; \vseperp_k], \quad \Useperp_k := [\useperp_1 \; \vert \; \cdots \; \vert \; \useperp_k],\\
	\Gseperp_k &:= [\gseperp_1 \; \vert \; \cdots \; \vert \; \gseperp_k], \quad \Hseperp_k := [\hseperp_1 \; \vert \; \cdots \; \vert \; \hseperp_k].
\end{align*} 
Note further that by assumption, the matrices $\bK^g_k, \bK^h_k$ are invertible and act as whitening transformations, so that we have the identities 
\begin{align}\label{eq:SE-whitening}
	\Vseperp_k &= \Vse_k \bigl(\bK^h_k\bigr)^{-1/2}, \quad \Useperp_k = \Use_k \bigl(\bK^g_k\bigr)^{-1/2}, \nonumber\\
	\Gseperp_k &= \Gse_k \bigl(\bK^g_k\bigr)^{-1/2}, \quad \text{ and } \quad\Hseperp_k = \Hse_k \bigl(\bK^h_k\bigr)^{-1/2},
\end{align}
where these matrix multiplications should be understood as acting on the matrices realized by the random vectors.  We will additionally exploit Eq.~\eqref{eq:apx-se-equivalence}, i.e., the fact that 
\[
\vse_{k+1} = \vapx_{k+1}(\Vse_{k}, \Gse_k, \bgamma_d/\sqrt{d}) \quad \text{ and } \quad \use_{k+1} = \uapx_{k+1}(\Use_k, \Hse_k, \bgamma_n/\sqrt{n}).
\]  

Equipped with these relations, we prove each part of the lemma in turn.

\paragraph{Proof of the inequality~\eqref{ineq:gradient-bound-F-a}.}
We expand
\begin{align*}
H(\uapx_{k+1}) = \Bigl\{&\bigl(\uapx_{k+1}\bigr)^{\top} \bT_h \vapx_{k+1} + \sqrt{\Lambda}\| \proj^{\perp}_{\bV_k} \vapx_{k+1} \|_2 \frac{\bigl(\uapx_{k+1}\bigr)^{\top}\bgamma_n}{\sqrt{n}}\Bigr\} \\
&- \Bigl\{\bigl(\uapx_{k+1}\bigr)^{\top} \bT_g^{\top} \vapx_{k+1} + \bigl \| \proj^{\perp}_{\bU_k} \uapx_{k+1} \|_2 \cdot \langle \bxi_g, \bv^{\mathrm{SE}}_{k+1} \rangle_{L^2}\Bigr\} - \phi_{k+1}^{u} (\uapx_{k+1}) + \phi_{k+1}^{v}(\vapx_{k+1}),
\end{align*}
and 
\[
\mathfrak{L}_{k+1}(\use_{k+1}, \vse_{k+1}) = - \langle \gse_{k+1}, \vse_{k+1} \rangle_{L^2} + \sqrt{\Lambda} \langle \hse_{k+1}, \use_{k+1} \rangle_{L^2} - \EE[\phi_{k+1}^{u}(\use_{k+1})] + \EE[\phi_{k+1}^{v}(\vse_{k+1})].
\]
Hence, by the triangle inequality,
\[
\bigl \lvert H(\uapx_{k+1}) - \mathrm{OPT}_{k+1} \bigr \rvert = \bigl \lvert H(\uapx_{k+1}) - \mathfrak{L}(\use_{k+1}, \vse_{k+1}) \bigr \rvert \leq T_1 + T_2 + T_3 + T_4,
\]
where 
\begin{align*}
	T_1 &= \Bigl \lvert \bigl(\uapx_{k+1}\bigr)^{\top} \bT_h \vapx_{k+1} + \sqrt{\Lambda}\| \proj^{\perp}_{\bV_k} \vapx_{k+1} \|_2 \frac{\bigl(\uapx_{k+1}\bigr)^{\top}\bgamma_n}{\sqrt{n}} - \sqrt{\Lambda} \langle \hse_{k+1}, \use_{k+1} \rangle_{L^2} \Bigr \rvert,\\
	T_2 & = \Bigl \lvert \bigl(\uapx_{k+1}\bigr)^{\top} \bT_g^{\top} \vapx_{k+1} + \bigl \| \proj^{\perp}_{\bU_k} \uapx_{k+1} \|_2 \cdot \langle \bxi_g, \bv^{\mathrm{SE}}_{k+1} \rangle_{L^2} - \langle \gse_{k+1}, \vse_{k+1} \rangle_{L^2} \Bigr \rvert,\\
	T_3 &= \Bigl \lvert \phi_{k+1}^{u} (\uapx_{k+1}) - \EE[\phi_{k+1}^{u}(\use_{k+1})] \Bigr \rvert \quad \text{ and } \quad T_4 = \Bigl \lvert \phi_{k+1}^{v} (\vapx_{k+1}) - \EE[\phi_{k+1}^{v}(\vse_{k+1})] \Bigr \rvert.
\end{align*}
Let $\Omega'$ denote the event that each of the inequalities in Lemma~\ref{lem:apx-concentration-useful} hold for parameter $\delta'$, and for the remainder of the proof, work on the intersection $\Omega' \cap \Omega_k$.  Note that on this intersection, from the inequalities~\eqref{eq:apx-ind-events-four-phiu} and~\eqref{eq:apx-ind-events-four-phiv}, we have
\[
T_3 \vee T_4 \leq \Delta_1(k, \delta) + C_{\mathrm{SE}}\sqrt{\frac{\log(10T^2/\delta')}{n}}.
\]
We claim that
\begin{subequations}
	\begin{align}
		T_1 &\leq C_{\mathrm{SE}} k^2 \Delta_1(k, \delta) + C_{\mathrm{SE}}k^2\sqrt{\frac{\log(10T^2/\delta')}{n}}, \quad \text{ and } \label{ineq:apx-useful-T1-bound}\\
		T_2 &\leq C_{\mathrm{SE}} k^2 \Delta_1(k, \delta) + C_{\mathrm{SE}}k^2\sqrt{\frac{\log(10T^2/\delta')}{n}} \label{ineq:apx-useful-T2-bound}.
	\end{align}
\end{subequations}
Combining these bounds on terms $T_1, T_2, T_3$, and $T_4$ yields the desired result.  It remains to establish the inequalities~\eqref{ineq:apx-useful-T1-bound} and~\eqref{ineq:apx-useful-T2-bound}.  The proofs are nearly identical, so we explicitly prove only inequality~\eqref{ineq:apx-useful-T1-bound}.

\medskip
\noindent \underline{Proof of the inequality~\eqref{ineq:apx-useful-T1-bound}.}
Recall from Eq.~\eqref{eq:SE-g-hilbert-h-hilbert} that $\hse_{k+1} = \mathsf{h}[\vse_{k+1}]$ and
\begin{align}
	\langle \use_{k+1}, \hse_{k+1}\rangle_{L^2} &= \sum_{\ell=1}^{k} \langle \vseperp_{\ell}, \vse_{k+1} \rangle_{L^2} \langle \use_{k+1}, \hseperp_{\ell}\rangle_{L^2} + \| \proj_{\Vse_{k}}^{\perp} \vse_{k+1}\|_{L^2} \langle \use_{k+1}, \bxi_h\rangle_{L^2} \notag \\
	&= \llangle \use_{k+1}, \Hseperp_k\rrangle_{L^2} \cdot  \llangle \Vseperp_k,  \vse_{k+1} \rrangle_{L^2} + \| \proj_{\Vse_{k}}^{\perp} \vse_{k+1}\|_{L^2} \langle \use_{k+1}, \bxi_h\rangle_{L^2} \notag \\
	&\overset{\1}{=} \llangle \use_{k+1}, \Hse_{k} \rrangle_{L^2} \bigl(\bK^{h}_k\bigr)^{-1} \llangle \Vse_{k}, \vse_{k+1} \rrangle_{L^2} +  \| \proj_{\Vse_{k}}^{\perp} \vse_{k+1}\|_{L^2} \langle \use_{k+1}, \bxi_h\rangle_{L^2}, \label{eq:SE-algebra}
\end{align}
where the final relation follows by applying the whitening transformations in Eq.~\eqref{eq:SE-whitening}.
On the other hand, recalling the definition of $\bT_h$, we expand
\[
\bigl(\uapx_{k+1}\bigr)^{\top} \bT_h \vapx_{k+1} + \bigl \| \proj_{\bV_k}^{\perp} \vapx_{k+1} \bigr\|_2 \frac{(\uapx_{k+1})^{\top} \bgamma_n}{\sqrt{d}} = \bigl(\uapx_{k+1}\bigr)^{\top} \sqrt{\Lambda} \bH_k \bigl(\bK^h_k\bigr)^{-1}\bV_k^{\top} \vapx_{k+1} + \bigl \| \proj_{\bV_k}^{\perp} \vapx_{k+1} \bigr\|_2 \frac{(\uapx_{k+1})^{\top} \bgamma_n}{\sqrt{d}}.
\]
It thus suffices to show that 
\begin{align*}
	\Bigl \lvert \bigl(\uapx_{k+1}\bigr)^{\top}\bH_k \bigl(\bK^h_k\bigr)^{-1}\bV_k^{\top} \vapx_{k+1} - \EE\bigl[\bigl(\use_{k+1}\bigr)^{\top} \Hse_k\bigr] \bigl(\bK^h_k\bigr)^{-1} \EE\bigl[\bigl(\Vse_k\bigr)^{\top} \vse_{k+1} \bigr] \Bigr \rvert  &\leq C_{\mathrm{SE}} k \Delta_1(k, \delta) + C_{\mathrm{SE}} k \sqrt{\frac{\log(10T^2/\delta')}{n}}, \\
	\Bigl \lvert \bigl \| \proj_{\bV_k}^{\perp} \vapx_{k+1} \bigr\|_2 \frac{(\uapx_{k+1})^{\top} \bgamma_n}{\sqrt{d}} -  \| \proj_{\Vse_{k}}^{\perp} \vse_{k+1}\|_{L^2} \langle \use_{k+1}, \bxi_h\rangle_{L^2}\Bigr \rvert &\leq C_{\mathrm{SE}} k \Delta_1(k, \delta) + C_{\mathrm{SE}} k \sqrt{\frac{\log(10T^2/\delta')}{n}}
\end{align*}
We handle each term in turn:
\begin{align*}
	\Bigl \lvert \bigl(\uapx_{k+1}\bigr)^{\top}\bH_k \bigl(\bK^h_k\bigr)^{-1}\bV_k^{\top} \vapx_{k+1} - \EE\bigl[\bigl(\use_{k+1}\bigr)^{\top} \Hse_k\bigr] \bigl(\bK^h_k\bigr)^{-1} \EE\bigl[\bigl(\Vse_k\bigr)^{\top} \vse_{k+1} \bigr] \Bigr \rvert \leq A + B, 
\end{align*}
where
\begin{align*}
	A &= \Bigl \lvert \bigl(\uapx_{k+1}\bigr)^{\top}\bH_k \bigl(\bK^h_k\bigr)^{-1}\bV_k^{\top} \vapx_{k+1} - \EE\bigl[\bigl(\use_{k+1}\bigr)^{\top} \Hse_k\bigr] \bigl(\bK^h_k\bigr)^{-1} \bV_k^{\top} \vapx_{k+1}\Bigr \rvert, \quad \text{ and }\\
	B &= \Bigl \lvert \EE\bigl[\bigl(\use_{k+1}\bigr)^{\top} \Hse_k\bigr] \bigl(\bK^h_k\bigr)^{-1} \bV_k^{\top} \vapx_{k+1} - \EE\bigl[\bigl(\use_{k+1}\bigr)^{\top} \Hse_k\bigr] \bigl(\bK^h_k\bigr)^{-1} \EE\bigl[\bigl(\Vse_k\bigr)^{\top} \vse_{k+1} \bigr]  \Bigr \rvert. 
\end{align*}
On $\Omega' \cap \Omega_k$, we invoke the inequalities~\eqref{eq:apx-ind-events-new-four-1} and~\eqref{eq:apx-ind-events-new-four-4} in conjunction with the triangle inequality to deduce that both
\begin{align*}
	\bigl \| \bigl(\uapx_{k+1}\bigr)^{\top} \bH_k -  \EE\bigl[\bigl(\use_{k+1}\bigr)^{\top} \Hse_k\bigr] \bigr \|_2 &\leq C_{\mathrm{SE}}\sqrt{k} \Delta_1(k, \delta) + C_{\mathrm{SE}} \sqrt{k} \sqrt{\frac{\log(10T^2/\delta')}{n}}, \quad \text{ and }\\
	\bigl \| \bigl(\vapx_{k+1}\bigr)^{\top} \bV_k -  \EE\bigl[\bigl(\vse_{k+1}\bigr)^{\top} \Vse_k\bigr] \bigr \|_2 &\leq C_{\mathrm{SE}} \sqrt{k} \Delta_1(k, \delta) + C_{\mathrm{SE}} \sqrt{k} \sqrt{\frac{\log(10T^2/\delta')}{n}},
\end{align*}
so that
\[
A \leq C_{\mathrm{SE}} \sqrt{k} \Delta_1(k, \delta) + C_{\mathrm{SE}} \sqrt{k} \sqrt{\frac{\log(10T^2/\delta')}{n}} \quad \text{ and } \quad B \leq C_{\mathrm{SE}} \sqrt{k} \Delta_1(k, \delta) + C_{\mathrm{SE}} \sqrt{k} \sqrt{\frac{\log(10T^2/\delta')}{n}}.
\]
Hence, 
\begin{align*}
	\Bigl \lvert \bigl(\uapx_{k+1}\bigr)^{\top}\bH_k \bigl(\bK^h_k\bigr)^{-1}\bV_k^{\top} \vapx_{k+1} - \EE\bigl[\bigl(\use_{k+1}\bigr)^{\top} \Hse_k\bigr] \bigl(\bK^h_k\bigr)^{-1} \EE\bigl[\bigl(\Vse_k\bigr)^{\top} \vse_{k+1} \bigr] \Bigr \rvert \leq C_{\mathrm{SE}} \sqrt{k} \Delta_1(k, \delta) + C_{\mathrm{SE}} \sqrt{k} \sqrt{\frac{\log(10T^2/\delta')}{n}}. 
\end{align*}
Turning to the second term, we have 
\begin{align*}
	\Bigl \lvert &\bigl \| \proj_{\bV_k}^{\perp} \vapx_{k+1} \bigr\|_2 \frac{(\uapx_{k+1})^{\top} \bgamma_n}{\sqrt{d}} -  \| \proj_{\Vse_{k}}^{\perp} \vse_{k+1}\|_{L^2} \langle \use_{k+1}, \bxi_h\rangle_{L^2}\Bigr \rvert \leq  \| \proj_{\Vse_{k}}^{\perp} \vse_{k+1}\|_{L^2} \cdot \Bigl \lvert \frac{(\uapx_{k+1})^{\top} \bgamma_n}{\sqrt{d}} -\langle \use_{k+1}, \bxi_h\rangle_{L^2} \Bigr \rvert \\
	&+ \langle \use_{k+1}, \bxi_h\rangle_{L^2} \cdot \bigl \lvert  \bigl \| \proj_{\bV_k}^{\perp} \vapx_{k+1} \bigr\|_2 - \| \proj_{\Vse_{k}}^{\perp} \vse_{k+1}\|_{L^2} \bigr \rvert + \Bigl \lvert \frac{(\uapx_{k+1})^{\top} \bgamma_n}{\sqrt{d}} -\langle \use_{k+1}, \bxi_h\rangle_{L^2} \Bigr \rvert \cdot \bigl \lvert  \bigl \| \proj_{\bV_k}^{\perp} \vapx_{k+1} \bigr\|_2 - \| \proj_{\Vse_{k}}^{\perp} \vse_{k+1}\|_{L^2} \bigr \rvert\\
	&\leq  C_{\mathrm{SE}} \Delta_1(k, \delta) + C_{\mathrm{SE}}  \sqrt{\frac{\log(10T^2/\delta')}{n}} + C_{\mathrm{SE}} \bigl \lvert  \bigl \| \proj_{\bV_k}^{\perp} \vapx_{k+1} \bigr\|_2 - \| \proj_{\Vse_{k}}^{\perp} \vse_{k+1}\|_{L^2} \bigr \rvert
\end{align*}
where the final inequality follows with probability at least $1 - \delta'$ by the inequality~\eqref{eq:corollary-73a-bound}.  It remains to bound the quantity $\bigl \lvert  \bigl \| \proj_{\bV_k}^{\perp} \vapx_{k+1} \bigr\|_2 - \| \proj_{\Vse_{k}}^{\perp} \vse_{k+1}\|_{L^2} \bigr \rvert$.  Following similar steps to establishing the bounds on $A$ and $B$, we deduce that
\begin{align} \label{ineq:vapx-perp-concentration}
\bigl \lvert  \bigl \| \proj_{\bV_k}^{\perp} \vapx_{k+1} \bigr\|_2 - \| \proj_{\Vse_{k}}^{\perp} \vse_{k+1}\|_{L^2} \bigr \rvert \leq C_{\mathrm{SE}} k^2 \Delta_1(k, \delta) + C_{\mathrm{SE}} k^2 \sqrt{\frac{\log(10T^2/\delta')}{n}},
\end{align}
Putting the pieces together yields the desired result. 


\paragraph{Proof of the inequality~\eqref{ineq:gradient-bound-F-b}.}
We consider two cases: when $\useperp_{k+1} = 0$ (almost surely) and otherwise.  

\medskip
\noindent \emph{Case 1: $\useperp_{k+1}= 0$ almost surely:} In this case, by Claim~\ref{clm:chains}, we see that $L^{u}_{k+1, k+1} = 0$ so that the stationary condition defining $\uapx_{k+1}$reads
\[
\Bigl\{ \sqrt{\Lambda} \hapx_{k+1}(\bH_k, \bgamma_n) - \uhatapx_{k+1}(\bU_k)\Bigr\} + \nabla \phi_{k+1}^u\bigl(\uapx_{k+1}(\bU_k, \bH_k, \bgamma_n)\bigr) = 0.
\]
Moreover, since by Claim~\ref{clm:chains}, $\langle \bxi_g, \vse_{k+1} \rangle_{L^2} = 0$, we have
\[
\partial H(\uapx_{k+1}) = \bT_h \bv_{k+1}^{\mathrm{apx}} - \bT_g^{\top} \bv_{k+1}^{\mathrm{apx}} + \| \proj^{\perp}_{\bV_k} \bv_{k+1}^{\mathrm{apx}} \|_2 \frac{\bgamma_n}{\sqrt{d}} - \nabla \phi_{k+1}^{u}(\uapx_{k+1}).
\]
Combining the previous displays and applying the triangle inequality yields
\[
	\| \partial H(\uapx_{k+1}) \|_2 \leq \Bigl \| \bT_h \vapx_{k+1} + \| \proj^{\perp}_{\bV_k} \bv_{k+1}^{\mathrm{apx}} \|_2 \frac{\bgamma_n}{\sqrt{d}} - \sqrt{\Lambda} \hapx_{k+1}(\bH_k, \bgamma_n/\sqrt{n}) \Bigr\|_2 + \Bigl \| \uhatapx_{k+1}(\bU_k) - \bT_g^{\top} \vapx_{k+1} \Bigr\|_2.
\]
We bound each of the terms on the RHS of the display above in turn.  Expanding
\[
\bT_h \vapx_{k+1}= \sqrt{\Lambda} \bH_k \bigl(\bK^h_k\bigr)^{-1} \langle \! \langle \bV_k, \vapx_{k+1} \rangle \! \rangle \quad \text{ and } \quad \hapx_{k+1}(\bH_k, \bgamma_n/\sqrt{n}) = \bH_k \bigl(\bK^h_k\bigr)^{-1} \langle \! \langle \Vse_k, \vse_{k+1} \rangle \! \rangle_{L^2} +  \| \proj_{\Vse_k}^{\perp} \vse_{k+1} \|_2  \cdot \frac{\bgamma_n }{\sqrt{n}},
\]
we thus have, on $\Omega_k \cap \Omega'$ that
\begin{align*}
	\Bigl \| \bT_h &\vapx_{k+1} + \| \proj^{\perp}_{\bV_k} \bv_{k+1}^{\mathrm{apx}} \|_2 \frac{\bgamma_n}{\sqrt{d}} - \sqrt{\Lambda} \hapx_{k+1}(\bH_k, \bgamma_n/\sqrt{n}) \Bigr\|_2\\
	&\leq \sqrt{\Lambda} \Bigl \| \bH_k \bigl(\bK^h_k\bigr)^{-1} \langle \! \langle \bV_k, \vapx_{k+1} \rangle \! \rangle - \bH_k \bigl(\bK^h_k\bigr)^{-1} \langle \! \langle \Vse_k, \vse_{k+1} \rangle \! \rangle_{L^2} \Bigr \|_2 +  \Bigl \lvert \| \proj_{\bV_k}^{\perp} \vapx_{k+1} \|_2 - \| \proj_{\Vse_k}^{\perp} \vse_{k+1} \|_2 \Bigr \rvert \cdot \frac{\| \bgamma_n \|_2}{\sqrt{d}}\\
	&\leq \sqrt{k \Lambda} \| \bH_k \|_{\mathrm{op}} \bigl \| \bigl(\bK^h_k\bigr)^{-1} \bigr\|_{\mathrm{op}} \bigl\|  \langle \! \langle \bV_k, \vapx_{k+1} \rangle \! \rangle -   \langle \! \langle \Vse_k, \vse_{k+1} \rangle \! \rangle_{L^2} \bigr \|_{\infty} + \Bigl \lvert \| \proj_{\bV_k}^{\perp} \vapx_{k+1} \|_2 - \| \proj_{\Vse_k}^{\perp} \vse_{k+1} \|_2 \Bigr \rvert \cdot \frac{\| \bgamma_n \|_2}{\sqrt{d}}\\
	&\leq C_{\mathrm{SE}} k^2 \Delta_1(k, \delta) + C_{\mathrm{SE}}k^2 \sqrt{\frac{\log(10T^2/\delta')}{n}} \numberthis \label{ineq:H-l2-bound},
\end{align*}
where in the final inequality, we invoked the bounds~\eqref{eq:apx-ind-events-new-four-1},~\eqref{eq:apx-ind-events-new-four-hh}, and~\eqref{ineq:vapx-perp-concentration}, which hold on $\Omega_k \cap \Omega'$.  On the other hand, 
\begin{align*}
\Bigl \| \uhatapx_{k+1}(\bU_k) - \bT_g^{\top} \vapx_{k+1} \Bigr\|_2 
	&= \Bigl \| \bU_k \bigl(\bK^g_k\bigr)^{-1} \langle \! \langle \bG_k, \vapx_{k+1} \rangle \! \rangle - \bU_k \bigl(\bK^g_k\bigr)^{-1} \langle \! \langle \Gse_k, \vse_{k+1} \rangle \! \rangle_{L^2} \Bigr \|_2\\
	&\leq \sqrt{k} \| \bG_k \|_{\mathrm{op}} \bigl \| \bigl(\bK^g_k\bigr)^{-1} \bigr\|_{\mathrm{op}} \bigl \| \llangle \bG_k, \vapx_{k+1} \rrangle - \llangle \Gse_k, \vse_{k+1} \rrangle_{L^2} \bigr \|_{\infty}\\
	&\leq C_{\mathrm{SE}} k^2 \Delta_1(k, \delta) + C_{\mathrm{SE}}k^2 \sqrt{\frac{\log(10T^2/\delta')}{n}} \numberthis \label{eq:bound-partial-F},
\end{align*}
where in the final inequality, we invoked the bounds~\eqref{eq:apx-ind-events-new-four-2} and~\eqref{eq:apx-ind-events-new-four-gg}, which hold on the event $\Omega' \cap \Omega_k$.  Combining the elements establishes the claim in the first case. 

\medskip
\noindent \emph{Case 2: $\| \useperp_{k+1} \|_2 > 0$, almost surely:}
Note that on $\Omega_k \cap \Omega'$, $\| \uapx_{k+1}\|_2 > 0$.  It thus follows that $F$ is differentiable at $\uapx_{k+1}$, and we compute
\[
\nabla H(\uapx_{k+1}) = \bT_h \vapx_{k+1} - \bT_g^{\top} \vapx_{k+1} + \| \proj_{\bV_k}^{\perp} \vapx_{k+1} \|_2 \frac{\bgamma_n}{\sqrt{d}} - \frac{\proj_{\bU_k}^{\perp} \uapx_{k+1}}{\| \proj_{\bU_k}^{\perp} \uapx_{k+1} \|_2} \cdot \langle \bxi_g, \vse_{k+1} \rangle_{L^2} - \nabla \phi_{k+1}^{u}(\uapx_{k+1}).
\]
On the other hand, from the KKT condition defining $\uapx_{k+1}$, we have 
\begin{align*}
	L_{k+1, k+1}^{u} \proj_{\bU_k} \uapx_{k+1} &+ L_{k+1, k+1}^u \proj^{\perp}_{\bU_k}\uapx_{k+1} -\Bigl\{ \sqrt{\Lambda}\hapx_{k+1}(\bH_k, \bgamma_n) - \uhatapx_{k+1}(\bU_k)\Bigr\} + \nabla \phi_{k+1}^u\bigl(\uapx_{k+1}\bigr) = 0.
\end{align*}
Adding the two preceding displays yields
\begin{align*}
	\| \nabla H(\uapx_{k+1}) \|_2 &\leq \Bigl \| \bT_h \vapx_{k+1} + \| \proj^{\perp}_{\bV_k} \bv_{k+1}^{\mathrm{apx}} \|_2 \frac{\bgamma_n}{\sqrt{d}} - \sqrt{\Lambda} \hapx_{k+1}(\bH_k, \bgamma_n) \Bigr\|_2 \\
	&\quad + \Bigl \| \uhatapx_{k+1}(\bU_k) + L_{k+1, k+1}^{u} \proj_{\bU_k} \uapx_{k+1} - \bT_g^{\top} \vapx_{k+1} + L_{k+1, k+1}^{u}\proj_{\bU_k}^{\perp} \uapx_{k+1} -  \frac{\proj_{\bU_k}^{\perp} \uapx_{k+1}}{\| \proj_{\bU_k}^{\perp} \uapx_{k+1} \|_2} \cdot \langle \bxi_g, \vse_{k+1} \rangle_{L^2}\Bigr\|_2 \\
	&\leq C_{\mathrm{SE}} k^2 \Delta_1(k, \delta) + C_{\mathrm{SE}}k^2 \sqrt{\frac{\log(10T^2/\delta')}{n}} + \Bigl \| \uhatapx_{k+1}(\bU_k) + L_{k+1, k+1}^{u} \proj_{\bU_k} \uapx_{k+1}  - \bT_g^{\top} \vapx_{k+1} \Bigr \|_2\\
	&\hspace{8cm}+\Bigl \lvert L_{k+1, k+1}^{u} -  \frac{\langle \bxi_g, \vse_{k+1} \rangle_{L^2}}{\| \proj_{\bU_k}^{\perp} \uapx_{k+1} \|_2}\Bigr \rvert \cdot \bigl \| \proj_{\bU_k}^{\perp} \uapx_{k+1}\bigr\|_2,
\end{align*}
where the final inequality follows from the bound~\eqref{ineq:H-l2-bound}.  We next bound each of the latter two terms.  Expanding, we have
\begin{align*}
	\Bigl \| \uhatapx_{k+1}(\bU_k) + L_{k+1, k+1}^{u} \proj_{\bU_k} \uapx_{k+1}  - \bT_g^{\top} \vapx_{k+1} \Bigr \|_2 &\leq \Bigl \|\bU_k \bigl(\bK_g\bigr)^{-1} \llangle \Gse_k, \vse_{k+1}\rrangle_{L^2} - \bT_g^{\top} \vapx_{k+1} \Bigr \|_2 \\
	&\quad + \Bigl \| L_{k+1, k+1}^{u} \proj_{\bU_k} \uapx_{k+1} - L_{k+1, k+1}^{u} \bU_k \bigl(\bK^g_k\bigr)^{-1} \llangle \Use_k, \use_{k+1} \rrangle_{L^2} \Bigr \|_2\\
	&\overset{\1}{\leq} C_{\mathrm{SE}} k^2 \Delta_1(k, \delta) + C_{\mathrm{SE}}k^2 \sqrt{\frac{\log(10T^2/\delta')}{n}} \\
	&\quad + L_{k+1, k+1}^{u} \Bigr \| \bU_k \bigl(\bU_k^{\top} \bU_k\bigr)^{-1} \bU_k^{\top} \uapx_{k+1} -  \bU_k \bigl(\bK^g_k\bigr)^{-1} \llangle \Use_k, \use_{k+1} \rrangle_{L^2}\Bigr \|_2\\
	&\leq C_{\mathrm{SE}} k^2 \Delta_1(k, \delta) + C_{\mathrm{SE}}k^2 \sqrt{\frac{\log(10T^2/\delta')}{n}},
\end{align*}
where step $\1$ follows from the bound~\eqref{eq:bound-partial-F} and the final inequality follows on $\Omega_k \cap \Omega'$.  It remains to bound
\begin{align*}
\Bigl \lvert L_{k+1, k+1}^{u} -  \frac{\langle \bxi_g, \vse_{k+1} \rangle_{L^2}}{\| \proj_{\bU_k}^{\perp} \uapx_{k+1} \|_2}\Bigr \rvert \cdot \bigl \| \proj_{\bU_k}^{\perp} \uapx_{k+1}\bigr\|_2 &\leq \Bigl \lvert \frac{\langle \bxi_g, \vse_{k+1} \rangle_{L^2}}{\| \proj_{\Use_{k}}^{\perp} \use_{k+1}\|_{L^2}} -  \frac{\langle \bxi_g, \vse_{k+1} \rangle_{L^2}}{\| \proj_{\bU_k}^{\perp} \uapx_{k+1} \|_2} \Bigr \rvert \cdot \| \uapx_{k+1}\|_2\\
&\leq C_{\mathrm{SE}} k^2 \Delta_1(k, \delta) + C_{\mathrm{SE}}k^2 \sqrt{\frac{\log(10T^2/\delta')}{n}},
\end{align*}
where the first inequality follows from non-expansiveness of projections as well as the representation $L_{k+1,k+1}^{u} =  \frac{\langle \bxi_g, \vse_{k+1} \rangle_{L^2}}{\| \proj_{\Use_{k}}^{\perp} \use_{k+1}\|_{L^2}}$, and the final inequality follows on $\Omega_k \cap \Omega'$.  Putting the pieces together yields the desired result. \hfill \qed 

\section{Ancillary results}\label{sec:aux-results}

We organize this appendix into three sections.

\subsection{Optimization problems on Hilbert spaces}

Let $\mathcal{L}(\mathbb{X}, \mathbb{Y})$ denote the vector space of linear maps from $\mathbb{X} \rightarrow \mathbb{Y}$. 
\begin{lemma}
	Let $\mathbb{X}$ and $\mathbb{Y}$ be separable Hilbert spaces.  Further, let $f: \mathbb{X}\rightarrow \mathbb{R}$ and $g: \mathbb{Y} \rightarrow \mathbb{R}$ be Fr{\'e}chet differentiable and strongly convex.  For a linear map $A\in \mathcal{L}(\mathbb{X}, \mathbb{Y})$, define the function $L: \mathbb{X} \times \mathbb{Y} \times \mathcal{L}(\mathbb{X}, \mathbb{Y}) \rightarrow \mathbb{R}$ as $L(x, y, A) := \langle Ax, y \rangle + f(x) - g(y)$.  Then, for each $A \in \mathcal{L}(\mathbb{X}, \mathbb{Y})$, there is a unique pair $(\bar{x}, \bar{y}) \in \mathbb{X} \times \mathbb{Y}$ such that $L(\bar{x}, \bar{y}) = \min_{x \in \mathbb{X}} \max_{y \in \mathbb{Y}}\, L(x, y;A)$.
\end{lemma}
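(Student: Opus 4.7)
The plan is to reduce the saddle point problem to a scalar minimization problem using Fenchel duality. For fixed $x \in \mathbb{X}$, observe that $y \mapsto L(x,y,A) = \langle Ax, y \rangle + f(x) - g(y)$ is strongly concave, continuous, and coercive in $y$ (since $g$ is strongly convex and coercive, dominating the linear term $\langle Ax, \cdot \rangle$). By standard Hilbert-space arguments (weak lower semicontinuity of convex continuous functionals, combined with coercivity and the Banach--Alaoglu theorem), this function attains its supremum at a unique point $y^*(x) \in \mathbb{Y}$, characterized by the first-order condition $\nabla g(y^*(x)) = Ax$, i.e.\ $y^*(x) = \nabla g^*(Ax)$ where $g^*$ denotes the Fenchel--Legendre conjugate.

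Next, I would define the reduced objective $\phi(x) \defn \max_{y \in \mathbb{Y}} L(x,y,A) = f(x) + g^*(Ax)$. Because $g^*$ is convex (as the supremum of affine functions) and $A$ is linear, $g^* \circ A$ is convex; combined with strong convexity of $f$, this makes $\phi$ strongly convex on $\mathbb{X}$. The function $\phi$ is also continuous and coercive (the lower bound $\phi(x) \geq f(x) - g(0)$, using $L(x,0,A) = f(x) - g(0)$, suffices by coercivity of $f$). Another invocation of weak lower semicontinuity plus coercivity in the reflexive space $\mathbb{X}$ yields the existence of a unique minimizer $\bar{x}$, characterized by $\nabla f(\bar{x}) + A^* \nabla g^*(A\bar{x}) = 0$.

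I would then set $\bar{y} \defn y^*(\bar{x}) = \nabla g^*(A\bar{x})$ and verify that $(\bar{x}, \bar{y})$ is the unique saddle point. The first-order conditions reduce to the KKT system $\nabla f(\bar{x}) + A^* \bar{y} = 0$ and $A \bar{x} - \nabla g(\bar{y}) = 0$. Combined with strong convexity-concavity, this gives the two-sided inequality
\[
L(\bar{x}, y, A) \leq L(\bar{x}, \bar{y}, A) - \tfrac{\mu_g}{2}\|y - \bar{y}\|^2 \quad \text{and} \quad L(x, \bar{y}, A) \geq L(\bar{x}, \bar{y}, A) + \tfrac{\mu_f}{2}\|x - \bar{x}\|^2,
\]
where $\mu_f, \mu_g > 0$ are the strong convexity moduli. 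This simultaneously establishes the saddle property and uniqueness: for any other saddle point $(\bar{x}', \bar{y}')$, the displayed inequalities force $\bar{x}' = \bar{x}$ and $\bar{y}' = \bar{y}$.

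The arguments are essentially classical; the main point requiring care is existence of the minimizer and maximizer on an infinite-dimensional Hilbert space (rather than uniqueness, which is immediate from strong convexity). The cleanest route is through weak compactness of bounded sets and weak lower semicontinuity of continuous convex functionals, as outlined above. If one prefers to avoid invoking weak topology machinery, an alternative is a direct Picard iteration on the KKT system exploiting the fact that $\nabla g^*$ is $(1/\mu_g)$-Lipschitz and $\nabla f^*$ is $(1/\mu_f)$-Lipschitz; this is the main step that would need more care, but it is tangential to the core geometric content of the lemma.
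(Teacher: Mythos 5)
Your proposal is correct and follows essentially the same route as the paper: both fix $A$, solve the inner (strongly concave) maximization in $y$ for each $x$, observe that the resulting value function $x \mapsto \max_y L(x,y;A)$ is strongly convex (the paper cites Bauschke--Combettes for existence/uniqueness of its minimizer, you rewrite it as $f(x) + g^{\star}(Ax)$ and use coercivity plus weak lower semicontinuity), and then set $\bar{y}$ to be the maximizer at $\bar{x}$. Your explicit verification of the KKT system, the two-sided strong convexity--concavity inequalities, and uniqueness of the saddle point is a slightly more detailed rendering of the same argument, not a different approach.
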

\begin{proof}
Fix $A \in \mathcal{L}(\mathbb{X}, \mathbb{Y})$.  Since $y \mapsto -L(x, y)$ is strongly convex, it admits a unique minimizer~\citep[][Corollary 11.16]{bauschke2017convex}.  We thus define the function $\psi: \mathbb{X} \mapsto \mathbb{R}$ as $\psi(x) := \max_{y \in \mathbb{Y}}\, L(x, y)$.  Then, since $\psi$ is the maximum of a collection of strongly convex functions, it is strongly convex over $\mathbb{X}$ and hence admits a unique minimizer $\bar{x} := \argmin_{x \in \mathbb{X}}\, \psi(x)$.  Next, take $\bar{y} := \argmax_{y \in \mathbb{Y}}\, L(\bar{x}, y)$.
\end{proof}

\begin{lemma} \label{lem:prox-like}
Let $\mathbb{X}$ be a Hilbert space equipped with the inner product $\langle \cdot, \cdot \rangle$ that in turn induces the norm $\| \cdot \|$. Let $f: \mathbb{X} \to \mathbb{R}$ be some $C^1$ function with $L$-Lipschitz gradients, in that
\begin{align*}
\| \nabla f(x) - \nabla f(x') \| \leq L \| x - x' \| \quad \text{ for all } x, x' \in \mathbb{X}.
\end{align*}
Suppose we have a non-expansive linear map $A \in \mathcal{L}(\mathbb{X}, \mathbb{X})$ and some fixed $x_0 \in \mathbb{X} \setminus \{0\}$. Let $c_1 \geq 0$. Define the following optimization problems:
\begin{align}
\text{minimize }& c_1 \| A x \| + f(x)  \qquad &\text{ such that } x \in \mathbb{X} \tag{P1} \label{eq:prob1} \\
\text{minimize }& c_1 \| A x \| + f(x) - \inprod{x}{x_0} \quad &\text{ such that } x \in \mathbb{X} \text{ and } \inprod{x}{x_0} \geq 0 \tag{P2} \label{eq:prob2}.
\end{align}
Suppose there exists a unique optimal solution $x_1$ to problem~\eqref{eq:prob1}, and that this solution satisfies $\inprod{x_1}{x_0} = 0$.
If there exists an optimal solution $x_2$ to problem~\eqref{eq:prob2} satisfying $Ax_2 \neq 0$,
then $x_2$ must satisfy $\inprod{x_2}{x_0} > 0$.
\end{lemma}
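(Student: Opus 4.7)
The plan is to argue by contradiction. Suppose that $\langle x_2, x_0 \rangle = 0$. My first step is to show that in this case $x_2$ must coincide with $x_1$. Indeed, since $\langle x_2, x_0 \rangle = 0$, the objective of~\eqref{eq:prob2} at $x_2$ equals $c_1\|Ax_2\|+f(x_2)$, i.e., the objective of~\eqref{eq:prob1} at $x_2$. On the other hand, $x_1$ itself is feasible for~\eqref{eq:prob2} (because $\langle x_1, x_0\rangle = 0$) with objective value $c_1\|Ax_1\|+f(x_1)$. Optimality of $x_2$ for~\eqref{eq:prob2} then gives $c_1\|Ax_2\|+f(x_2) \le c_1\|Ax_1\|+f(x_1)$, while uniqueness of $x_1$ for~\eqref{eq:prob1} forces equality only when $x_2=x_1$. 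Hence $x_2 = x_1$, and in particular $Ax_1 = Ax_2 \neq 0$.

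Next, I exploit the fact that $Ax_1\neq 0$ to guarantee smoothness of the map $x \mapsto c_1\|Ax\|$ at $x_1$, whose gradient equals $c_1 A^*(Ax_1)/\|Ax_1\|$. Writing the first-order optimality of $x_1$ for the unconstrained problem~\eqref{eq:prob1} therefore yields the identity
\[
\nabla f(x_1) \;=\; -\,\frac{c_1\, A^*(A x_1)}{\|A x_1\|}.
\]
Taking the inner product with $x_0$ and using self-adjointness of $A^*$ gives $\langle \nabla f(x_1),x_0\rangle = - c_1\,\langle A x_1, A x_0\rangle/\|Ax_1\|$.

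For the final step, I use $x_0$ as a feasible direction from $x_2=x_1$ in problem~\eqref{eq:prob2}: since $\langle x_2+\epsilon x_0,x_0\rangle = \epsilon\|x_0\|^2\ge 0$ for $\epsilon\ge 0$, and the objective of~\eqref{eq:prob2} is differentiable at $x_2$ (again by $Ax_2\neq 0$), optimality of $x_2$ requires the directional derivative along $x_0$ to be nonnegative:
\[
\frac{c_1\langle Ax_2, Ax_0\rangle}{\|Ax_2\|} \;+\; \langle \nabla f(x_2),x_0\rangle \;-\; \|x_0\|^2 \;\ge\; 0.
\]
Substituting the identity for $\nabla f(x_1)=\nabla f(x_2)$ derived above, the first two terms cancel and we are left with $-\|x_0\|^2 \ge 0$. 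This contradicts $x_0\neq 0$, so the assumption $\langle x_2, x_0\rangle = 0$ must fail, yielding $\langle x_2, x_0\rangle > 0$.

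The main obstacle I anticipate is not analytic but one of bookkeeping: one must make sure that smoothness of $\|A\cdot\|$ holds at the critical point under consideration so that subdifferentials collapse to gradients. The assumption $A x_2 \neq 0$ is used precisely for this, and the reduction $x_1 = x_2$ in the first step transfers this nondegeneracy from~\eqref{eq:prob2} to~\eqref{eq:prob1}; everything else is a direct KKT / directional-derivative computation. Note that neither the Lipschitz constant of $\nabla f$ nor the nonexpansiveness of $A$ is needed for this argument, so no regularity technicalities will arise beyond differentiability of the norm at a nonzero point.
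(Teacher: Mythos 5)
Your proof is correct, but it takes a genuinely different route from the paper's. Both arguments share the same reduction — the whole point is to rule out $x_2 = x_1$ (equivalently, to derive a contradiction from $\langle x_2, x_0\rangle = 0$) — but the mechanisms differ. The paper, assuming $x_1 = x_2$ were possible, exhibits an explicit better feasible point $\widetilde{x} = x_1 + \delta x_0$ for a carefully chosen small $\delta$, and uses Taylor's theorem together with the $L$-Lipschitz gradient of $f$ and local smoothness of $x \mapsto \|Ax\|$ (with a separate case split for $Ax_1 = 0$ and a separate subcase for $c_1 = 0$) to show the linear gain $-\delta\|x_0\|^2$ beats the quadratic penalty. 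You instead write down the first-order stationarity condition for $x_1$ in \eqref{eq:prob1} and the one-sided directional-derivative condition for $x_2$ in \eqref{eq:prob2} along the feasible direction $x_0$, and observe that after identifying $x_1 = x_2$ the gradient terms cancel, leaving $-\|x_0\|^2 \ge 0$, a contradiction. Your argument is shorter and needs strictly weaker hypotheses: only differentiability of $f$ (not $L$-smoothness) and boundedness of $A$ (not non-expansiveness), and it dispenses with the paper's case analysis, since under the contradiction hypothesis $Ax_1 = Ax_2 \neq 0$ automatically, while the $c_1 = 0$ case is absorbed seamlessly because the $c_1$-terms vanish. What the paper's quantitative perturbation buys is an explicit strict gap between the \eqref{eq:prob2} value and the \eqref{eq:prob1} optimum, which is not needed for the stated conclusion. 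Two cosmetic points: the identity $\langle A^*(Ax_1), x_0\rangle = \langle Ax_1, Ax_0\rangle$ follows from the definition of the adjoint, not "self-adjointness of $A^*$"; and you should state explicitly that $x \mapsto \|Ax\|$ is (Fréchet) differentiable at any point with $Ax \neq 0$ in a Hilbert space, which is the only regularity fact your KKT computations rely on.
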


\begin{proof}
It suffices to show that $x_2 \neq x_1$. Indeed, if $x_1 \neq x_2$, then we have
\begin{align*}
\| A x_2 \| + f(x_2) - \inprod{x_2}{x_0} \overset{\1}{\leq} \| A x_1 \| + f(x_1) \overset{\2}{<} \| A x_2 \| + f(x_2),
\end{align*}
and so $\inprod{x_2}{x_0} > 0$.
Here step $\1$ follows because $x_2$ is an optimal solution of problem~\eqref{eq:prob2} and $x_1$ is feasible for problem~\eqref{eq:prob2} with $\inprod{x_1}{x_0} = 0$. Step $\2$ follows since $x_1$ is the unique optimal solution of problem~\eqref{eq:prob1} and $x_2$ is feasible for problem~\eqref{eq:prob1}.
To show that $x_2 \neq x_1$, we split the proof into two cases.

\paragraph{Case 1: $Ax_1 = 0$:} By assumption, we have $Ax_2 \neq 0$, so we must have that $x_2 \neq x_1$. This completes the proof for this case.

\paragraph{Case 2: $Ax_1 \neq 0$.} 
First consider the subcase $c_1 > 0$. It suffices to show that there exists a feasible solution $\widetilde{x}$ to the constrained problem~\eqref{eq:prob2} such that its objective value is strictly less than the optimal objective value of the unconstrained problem~\eqref{eq:prob1}, i.e.,
\begin{align}
c_1 \| A \widetilde{x} \| + f(\widetilde{x}) - \inprod{\widetilde{x}}{x_0} < c_1 \| Ax_1 \| + f(x_1). \label{eq:sufficient-cond}
\end{align}
We claim that for any positive scalar $\delta < \min\Bigl\{\frac{\| Ax_1 \|}{2\| x_0 \|}, 2  \bigl( \frac{2}{c_1 \| Ax_1 \|} + L \bigr)^{-1} \Bigr\}$, the point $\widetilde{x} = x_1 + \delta x_0$ is feasible for the constrained problem~\eqref{eq:prob2} and satisfies Eq.~\eqref{eq:sufficient-cond}. Indeed, feasibility follows immediately, since 
\[
\inprod{\widetilde{x}}{x_0} = \inprod{x_1}{x_0} + \delta \inprod{x_0}{x_0} =  \delta \| x_0 \|^2  > 0.
\]

Now note that for all $x'$ such that $\| Ax' \| \neq 0$, we have
\[
\left\| \nabla (\| Ax' \|) -  \nabla (\| Ax_1 \|) \right\| = \left\| \frac{Ax'}{\| Ax' \|} -  \frac{Ax_1}{\| Ax_1 \|} \right\| \leq \frac{1}{\| Ax_1 \| \land \| Ax' \|} \cdot \| Ax' - Ax_1 \| \leq \frac{1}{\| Ax_1 \| \land \| Ax' \|} \cdot \| x' - x_1 \|,
\]
where the final inequality follows since $A$ is non-expansive.
Also note that since $x_1$ is the optimal solution to the unconstrained problem~\eqref{eq:prob1} and $Ax_1 \neq 0$, we have 
\[
\nabla \bigl(\| Ax \| + f(x) \bigr) \mid_{x = x_1} = 0.
\]
Furthermore, note that $\| A\widetilde{x} \| \geq \| A x_1 \| - \delta \| x_0 \| \geq \frac{1}{2} \| A x_1 \|$, where the final inequality follows since $\delta < \frac{\| Ax_1 \|}{2 \|x_0 \|}$. Thus, the function $x \mapsto \| A x \| + f(x)$ is $\{2/(\| A x_1 \|_2) + L\}$-smooth on the line segment connecting $x_1$ to $\widetilde{x}$. Using these properties in conjunction with Taylor's theorem yields
\[
c_1 \| A\widetilde{x} \| + f(\widetilde{x}) - \bigl\{c_1 \| Ax_1 \| + f(x_1)\bigr\} \leq \left( \frac{2}{c_1 \| Ax_1 \|} + L \right) \cdot \frac{\| \widetilde{x} - x_1 \|^2}{2} = \frac{\delta^2 \| x_0 \|^2}{2} \cdot \left( \frac{2}{c_1 \| Ax_1 \|} + L \right)
\]

To verify Eq.~\eqref{eq:sufficient-cond}, we may write
\begin{align*}
c_1 \| A\widetilde{x} \| + f(\widetilde{x}) -  \inprod{\widetilde{x}}{x_0} - \bigl\{c_1 \| Ax_1 \| + f(x_1)\bigr\} &\leq \frac{\delta^2 \| x_0 \|^2}{2} \cdot \left( \frac{2}{c_1 \| Ax_1 \|} + L \right) -  \delta \| x_0 \|^2 \overset{\1}{<} 0,
\end{align*}
where step $\1$ follows since $0 < \delta < 2  \bigl( \frac{2}{c_1 \| Ax_1 \|} + L \bigr)^{-1}$.

To handle the subcase $c_1 = 0$ note that $f$ is differentiable at $x_1$ and $L$-smooth. Thus, we have, for the same choice $\widetilde{x} = x_1 + \delta x_0$, the inequality
\[
f(\widetilde{x}) -  \inprod{\widetilde{x}}{x_0} - f(x_1) \leq \frac{L}{2} \| \widetilde{x} - x_1 \|^2 - \delta \| x_0 \|^2 = \delta \| x_0 \|^2 \left( \frac{L}{2} - 1 \right), 
\]
and the RHS is less than $0$ for any $\delta \in (0, 2/L)$.
\end{proof}

\subsection{Saddle updates and Gaussian conditioning}

Given a matrix $A \in \mathbb{R}^{n \times d}$ and differentiable functions $f: \mathbb{R}^{d} \rightarrow \mathbb{R}, g: \mathbb{R}^n \rightarrow \mathbb{R}$, we let $I_A$ denote the solutions to the following non-linear system of equations
\begin{align} \label{eq:def-image-A}
I_{A} := \bigl\{(x, y) \in \mathbb{R}^{d} \times \mathbb{R}^n: \, Ax = \nabla g(y) \text{ and } A^{\top} y = -\nabla f(x)\bigr\}.
\end{align}
\begin{lemma} \label{lem:measurable-saddle}
	Let $f: \mathbb{R}^d \rightarrow \mathbb{R}$ and $g: \mathbb{R}^n \rightarrow \mathbb{R}$ be differentiable and strongly convex.  For each $A \in \mathbb{R}^{n \times d}$, the non-linear system in~\eqref{eq:def-image-A} admits a unique solution $(\bar{x}(A), \bar{y}(A))$.  Moreover, the map $A \mapsto (\bar{x}(A), \bar{y}(A))$ is continuous.  
\end{lemma}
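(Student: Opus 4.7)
The plan is to recognize the system~\eqref{eq:def-image-A} as the first-order optimality conditions of a convex-concave saddle point problem, invoke the preceding unnamed Hilbert-space lemma to obtain existence and uniqueness, and then obtain continuity from a standard stability calculation using strong convexity.

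First I would introduce the Lagrangian $L(x, y; A) := \langle Ax, y \rangle + f(x) - g(y)$, defined on $\mathbb{R}^d \times \mathbb{R}^n$. Since $f$ is strongly convex, $x \mapsto L(x, y; A)$ is strongly convex for every $y$, and since $g$ is strongly convex, $y \mapsto L(x, y; A)$ is strongly concave for every $x$. The Hilbert-space lemma stated at the start of this appendix (with $\mathbb{X} = \mathbb{R}^d$ and $\mathbb{Y} = \mathbb{R}^n$) therefore yields the existence of a unique saddle point $(\bar{x}(A), \bar{y}(A))$. By differentiability of $f$ and $g$, this saddle point is characterized by $\nabla_x L = A^\top \bar{y}(A) + \nabla f(\bar{x}(A)) = 0$ and $\nabla_y L = A\bar{x}(A) - \nabla g(\bar{y}(A)) = 0$, which are precisely the equations defining $I_A$. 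Conversely, any $(x,y) \in I_A$ is a stationary point of the strictly convex-concave $L(\cdot,\cdot;A)$, hence equals the unique saddle. This proves the existence and uniqueness claim.

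For continuity, fix $A$ and a nearby $A'$ with associated solutions $(\bar{x}, \bar{y})$ and $(\bar{x}', \bar{y}')$. Subtracting the two pairs of KKT equations gives
\[
A^\top(\bar{y} - \bar{y}') + (A - A')^\top \bar{y}' = -\bigl(\nabla f(\bar{x}) - \nabla f(\bar{x}')\bigr), \quad A(\bar{x} - \bar{x}') + (A - A')\bar{x}' = \nabla g(\bar{y}) - \nabla g(\bar{y}').
\]
Taking the inner product of the first with $\bar{x} - \bar{x}'$ and the second with $\bar{y} - \bar{y}'$, subtracting the resulting identities to cancel the bilinear terms $\langle A(\bar{x}-\bar{x}'), \bar{y}-\bar{y}'\rangle$, and invoking strong convexity of $f$ and $g$ (with moduli $\mu_f, \mu_g > 0$), I would obtain
\[
\mu_f \|\bar{x} - \bar{x}'\|^2 + \mu_g \|\bar{y} - \bar{y}'\|^2 \leq \|A - A'\|_{\op}\bigl(\|\bar{x}'\| \|\bar{y} - \bar{y}'\| + \|\bar{y}'\| \|\bar{x} - \bar{x}'\|\bigr).
\]
Dividing by $\|\bar{x} - \bar{x}'\|^2 + \|\bar{y} - \bar{y}'\|^2$ and using Cauchy--Schwarz, this yields a local Lipschitz bound of the form $\|\bar{x} - \bar{x}'\| + \|\bar{y} - \bar{y}'\| \lesssim \|A - A'\|_{\op}(\|\bar{x}'\| + \|\bar{y}'\|)$.

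The main obstacle is thus to bound $(\|\bar{x}'\|, \|\bar{y}'\|)$ uniformly for $A'$ in a neighborhood of $A$. This is a standard argument paralleling the one used in the proof of Lemma~\ref{lem:approximation-by-po}: strong convexity of the mapping $\gamma(x) := \max_y L(x, y; A')$ (whose gradient at $0$ may be computed via Danskin's theorem to be controlled in terms of $\|A'\|_{\op}$, $\|\nabla g(0)\|$, and $\|\nabla f(0)\|$) yields a bound on $\|\bar{x}(A')\|$, and substituting this back into the saddle sandwich inequality controls $\|\bar{y}(A')\|$. Since these bounds depend continuously on $\|A'\|_{\op}$, they are uniform on any bounded neighborhood, which combined with the Lipschitz bound above completes the proof of continuity.
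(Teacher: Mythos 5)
Your proof is correct. For existence and uniqueness you follow the same route as the paper: the paper does not cite the unnamed Hilbert-space lemma but simply re-runs its construction in $\mathbb{R}^d\times\mathbb{R}^n$ (inner maximizer via strong concavity, Danskin's theorem to differentiate $\psi(x)=\max_y L(x,y;A)$, unique minimizer of the strongly convex $\psi$), and then identifies the first-order conditions with the system defining $I_A$; your additional remark that any point of $I_A$ is a stationary point of the convex–concave $L$ and hence coincides with the unique saddle makes explicit a step the paper leaves implicit ("by construction admits a unique solution"). Where you genuinely diverge is continuity: the paper argues sequentially, writing $L_k=L+\langle (A_k-A)x,y\rangle$ and combining the four saddle inequalities for $L_k$ and $L$ with strong convexity of $L(\cdot,\bar y)$ and strong concavity of $L(\bar x,\cdot)$ to bound $\|\tilde x-\bar x\|_2^2+\|\tilde y-\bar y\|_2^2$ by $C\epsilon$ times lower-order terms (implicitly absorbing the norms of the solutions into the constant), whereas you subtract the two first-order systems, pair with $(\bar x-\bar x',\bar y-\bar y')$ so the bilinear terms $\langle A(\bar x-\bar x'),\bar y-\bar y'\rangle$ cancel, and invoke strong monotonicity of $\nabla f,\nabla g$. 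Your computation is correct and in fact buys a quantitative local Lipschitz estimate rather than bare continuity, at the price of needing a uniform bound on $(\|\bar x(A')\|_2,\|\bar y(A')\|_2)$ near $A$; your Danskin/strong-convexity argument (mirroring the proof of Lemma~\ref{lem:approximation-by-po}) supplies this, and one can get it even more cheaply by pairing the two KKT equations for $A'$ with $\bar x'$ and $\bar y'$ themselves, so that the bilinear terms again cancel and $\mu_f\|\bar x'\|_2^2+\mu_g\|\bar y'\|_2^2\le \|\nabla f(0)\|_2\|\bar x'\|_2+\|\nabla g(0)\|_2\|\bar y'\|_2$, a bound uniform over all $A'$, not just a neighborhood. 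Either way the argument closes, so there is no gap; both proofs ultimately rest on the same strong-convexity mechanism, yours routed through the optimality system and the paper's through saddle values.
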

\begin{proof}
	We first establish that the non-linear system admits a unique solution.  To this end, define the function $L: \mathbb{R}^{d} \times \mathbb{R}^{n} \times \mathbb{R}^{n \times d} \rightarrow \mathbb{R}$ as $L(x, y, A) := \langle Ax, y \rangle + f(x) - g(y)$.  Note that, for every $x \in \mathbb{R}^d$ and $A \in \mathbb{R}^{n \times d}$, the function $\phi_x : y \mapsto L(x, y, A)$ is strongly concave and hence admits a unique maximizer so that the function $\psi: x \mapsto \max_{y \in \mathbb{R}^d} \phi(y)$ is well-defined.  Since $\psi$ is the maximum of strongly convex functions, it is also strongly convex.  Further, by Danskin's theorem~\citep[see, e.g.,][Corollary 1]{rockafellar1991special}, $\psi$ is differentiable with derivative $\nabla \psi(x) = A^{\top} \cdot \argmax_{y \in \mathbb{R}^d} \phi_x(y) + \nabla f(x)$.  Thus,  since $\psi$ is strongly convex and differentiable, it admits a unique minimizer $\bar{x}(A)$ that satisfies $\nabla \psi\bigl(\bar{x}(A)\bigr) = 0$.  Similarly, $\phi_{\bar{x}(A)}$ admits a unique maximizer $\bar{y}(A)$ that satisfies $\nabla \phi_{\bar{x}(A)}\bigl(\bar{y}(A)\bigr) = 0$.  Expanding each of these first order conditions yields the non-linear system in~\eqref{eq:def-image-A}, which by construction admits a unique solution.  
	
	We turn now to establishing continuity of the map $M: A \mapsto \bigl(\bar{x}(A), \bar{y}(A)\bigr)$.  To this end, consider a sequence of matrices $(A_{k}) \rightarrow A$.  It suffices to show that $M(A_{\ell}) \rightarrow (\bar{x}, \bar{y})$.  Next, let $L_k(x,y) := \langle A_k x, y \rangle + f(x) - g(y)$ and $L(x, y) := \langle Ax, y \rangle + f(x) - g(y)$ and note that $L_k(x, y) = L(x, y) +\bigl \langle (A_k - A)x, y \bigr\rangle$.  Further, set $(\tilde{x}, \tilde{y}) := M(A_k)$.  By definition, the following pair of inequalities hold for all $x \in \mathbb{R}^d$ and $y \in \mathbb{R}^n$,
	\begin{align}\label{ineq:saddle}
		L_k(\tilde{x}, y) \overset{(a)}{\leq} L_k(\tilde{x}, \tilde{y}) \overset{(b)}{\leq} L_k(x, \tilde{y}) \qquad \text{ and } \qquad L(\bar{x}, y) \overset{(c)}{\leq} L(\bar{x}, \bar{y}) \overset{(d)}{\leq} L(x, \bar{y}).
	\end{align}
	Consequently, taking $y = \bar{y}$ in~\eqref{ineq:saddle}(a), $y = \tilde{y}$ in~\eqref{ineq:saddle}(c) and expanding we find that 
	\[
	L(\tilde{x}, \bar{y}) + \bigl \langle (A_k - A)\tilde{x}, \bar{y} \bigr\rangle \leq L(\bar{x}, \bar{y})  + \bigl \langle (A_k - A)\bar{x}, \tilde{y} \bigr\rangle.
	\]
	For each fixed $y \in \mathbb{R}^n$, $L(\cdot, y)$ is strongly convex, so there exists a constant $C > 0$ such that 
	\[
	\frac{C}{2} \| \tilde{x} - \bar{x} \|_2^2 \leq \bigl \langle (A_k - A)\bar{x}, \tilde{y} \bigr\rangle -  \bigl \langle (A_k - A)\tilde{x}, \bar{y} \bigr\rangle = \bigl \langle (A_k - A)(\tilde{x} - \bar{x}), \tilde{y} \bigr\rangle + \bigl \langle \tilde{x}, (A_k - A)^{\top} (\tilde{y} - \bar{y})\bigr \rangle.
	\]
	Re-arranging and applying Cauchy--Schwarz, we find that
	\[
	\| \tilde{x} - \bar{x} \|_2^2 \leq C \epsilon \cdot \bigl( \| \tilde{x} - \bar{x} \|_2 + \| \tilde{x} - \bar{x} \|_2 \cdot \| \tilde{y} - \bar{y} \|_2\bigr).
	\]
	Repeating these steps, using the inequalities~\eqref{ineq:saddle}(b,d) instead yields a similar inequality for $\| \tilde{y} - \bar{y} \|_2^2$, so that 
	\[
	\| \tilde{x} - \bar{x} \|_2^2 + \| \tilde{y} - \bar{y} \|_2^2  \leq C \epsilon \cdot \bigl( \| \tilde{x} - \bar{x} \|_2 + \| \tilde{y} - \bar{y} \|_2 +2 \| \tilde{x} - \bar{x} \|_2 \cdot \| \tilde{y} - \bar{y} \|_2\bigr).
	\]
	Taking $\epsilon$ small enough to ensure that $C \epsilon \leq 1/2$ and applying Jensen's inequality yields
	\[
	\frac{1}{4} \bigl(\| \tilde{x} - \bar{x} \|_2 + \| \tilde{y} - \bar{y} \|_2\bigr)^2 \leq C \epsilon \cdot \bigl(\| \tilde{x} - \bar{x} \|_2 + \| \tilde{y} - \bar{y} \|_2\bigr), 
	\]
	from which the conclusion follows.
\end{proof}
Lemma~\ref{lem:measurable-saddle} implies that, for each saddle-point update~\eqref{eq:saddle-update}, $(\bu_{k +1}, \bv_{k + 1})$ is a measurable function of the data $\bX$.  In turn, this implies that if we interleave first-order updates and saddle-point updates to obtain the collection of iterates $\bU_k = [\bu_1 \vert \ldots \vert \bu_k] \in \mathbb{R}^{n \times k}$ and $\bV_k = [\bv_1 \vert \ldots \vert \bv_k] \in \mathbb{R}^{d \times k}$, there exists an almost surely unique disintegration (see, e.g.,~\citet{chang1997conditioning}) of the joint distribution $\mathsf{Law}\bigl(\bX, (\bU_k, \bV_k)\bigr)$ into conditional distributions $\mathsf{Law}(\bX \, \vert \, \bU_k, \bV_k)$, which we determine in the next lemma.  To this end, let $F \subseteq [k]$ denote the indices for which a first-order update~\eqref{eq:first-order} was taken and $S \subseteq [k]$ with $S \cap F = \emptyset$ denote the indices for which a saddle-point update~\eqref{eq:saddle-update} was taken.  Further, let $\tilde{F} = \{\ell \in F: \ell + 1 \in F\}$ and define the pair of subspaces
\[
\mathbb{U}_k := \mathrm{span}\bigl(\{\bu_{\ell}:\, \ell \in \tilde{F} \cup S\}\bigr) \qquad \text{ and } \qquad \mathbb{V}_k := \mathrm{span}\bigl(\{\bv_{\ell}:\, \ell \in \tilde{F} \cup S\}\bigr).
\]
Equipped with these definitions, we next have our main conditioning lemma.
\begin{lemma} \label{lem:conditioning}
	Let $\bX = (X_{ij})_{i \in [n], j \in [d]}$ and suppose that $X_{ij} \overset{\mathrm{i.i.d.}}{\sim} \mathsf{N}(0, 1)$.  Let $k \in \mathbb{N}$ and for $\ell \leq k$, let $(\bu_{\ell}, \bv_{\ell})$ be constructed from the updates~\eqref{eq:initialization}-\eqref{eq:updates-general}.  Writing $\bU_k = [\bu_1 \vert \ldots \vert \bu_k] \in \mathbb{R}^{n \times k}$ and $\bV_k = [\bv_1 \vert \ldots \vert \bv_k] \in \mathbb{R}^{d \times k}$, let $\mathcal{F}_k = \sigma(\bU_k, \bV_k)$.  Then, with $\widetilde{\bX} \in \mathbb{R}^{n \times d}$ an independent copy of $\bX$,
	\[
	\bX \mid \mathcal{F}_k \overset{d}{=} \bP_{\mathbb{U}_k} \bX \bP_{\mathbb{V}_k} + \bP_{\mathbb{U}_k}^{\perp} \bX\bP_{\mathbb{V}_k} + \bP_{\mathbb{U}_k} \bX\bP_{\mathbb{V}_k}^{\perp} + \bP_{\mathbb{U}_k}^{\perp} \widetilde{\bX} \bP_{\mathbb{V}_k}^{\perp}.
	\]
\end{lemma}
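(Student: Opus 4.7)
The plan is to prove the lemma by induction on $k$, combining the inductive hypothesis with a Bolthausen-style Gaussian conditioning argument adapted to interleaved first-order and saddle updates.

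For the base case $k = 1$, both $\bu_1 = f_1^u(\beps_{u,1})$ and $\bv_1 = f_1^v(\beps_{v,1})$ are independent of $\bX$, and with no updates yet one has $\tilde F = S = \emptyset$ and hence $\mathbb{U}_1 = \mathbb{V}_1 = \{0\}$; the right-hand side therefore collapses to $\widetilde{\bX}$, matching $\bX$ in distribution.

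For the inductive step, assume the identity at step $k$, so that conditionally on $\mathcal{F}_k$ the residual $\bP_{\mathbb{U}_k}^{\perp} \bX \bP_{\mathbb{V}_k}^{\perp}$ is distributed as $\bP_{\mathbb{U}_k}^{\perp} \widetilde{\bX} \bP_{\mathbb{V}_k}^{\perp}$ for an independent copy $\widetilde{\bX}$. I then split on the type of update at iteration $k+1$. In the \emph{saddle case}, the iterates $(\bu_{k+1}, \bv_{k+1})$ satisfy the two linear-in-$\bX$ KKT identities $\bX \bv_{k+1} = \nabla \phi^u_{k+1}(\bu_{k+1}; \bU_k, \bE_{u,k})$ and $\bX^\top \bu_{k+1} = \nabla \phi^v_{k+1}(\bv_{k+1}; \bV_k, \bE_{v,k})$. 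Lemma~\ref{lem:measurable-saddle} guarantees that the map $\bX \mapsto (\bu_{k+1}, \bv_{k+1})$ is measurable and that the saddle is pinned down uniquely by these KKT equations, so conditioning on $(\bu_{k+1}, \bv_{k+1})$ amounts to imposing them on the Gaussian residual supplied by the inductive hypothesis; standard Gaussian conditioning then enlarges the spans to $\mathbb{U}_{k+1} = \mathbb{U}_k + \mathrm{span}(\bu_{k+1})$ and $\mathbb{V}_{k+1} = \mathbb{V}_k + \mathrm{span}(\bv_{k+1})$, matching the set-theoretic update $\tilde F_{k+1} \cup S_{k+1} = (\tilde F_k \cup S_k) \cup \{k+1\}$.

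In the \emph{first-order case}, $\bu_{k+1}$ and $\bv_{k+1}$ depend on $\bX$ only through $\bX \bv_k$ and $\bX^\top \bu_k$. When iteration $k$ was itself first-order, index $k$ newly enters $\tilde F_{k+1}$ (because $k+1 \in F$ now), and $\bu_k, \bv_k$ enter the updated spans; applying Gaussian conditioning to the decomposition $\bX \bv_k = \bX^{\parallel}_k \bv_k + \bP_{\mathbb{U}_k}^{\perp} \widetilde{\bX} \bP_{\mathbb{V}_k}^{\perp} \bv_k$ then yields the claimed form on the enlarged spans. When iteration $k$ was saddle, $\bu_k \in \mathbb{U}_k$ and $\bv_k \in \mathbb{V}_k$ by the inductive hypothesis, so $\bX \bv_k$ and $\bX^\top \bu_k$ are already functions of the $\mathcal{F}_k$-measurable part of $\bX$ and no new conditioning enters, consistent with $\tilde F_{k+1} = \tilde F_k$.

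The main obstacle will be the saddle case: because $\bX \mapsto (\bu_{k+1}, \bv_{k+1})$ is strongly nonlinear in $\bX$, one must argue that conditioning on its output reveals nothing about $\bX$ beyond the two KKT linear functionals. This is precisely where Lemma~\ref{lem:measurable-saddle} is essential, together with the strong convexity/concavity of the penalty functions, which forces the saddle to be pinned down by the two linear functionals of $\bX$ entering the bilinear term $\bu^{\top} \bX \bv$ at the optimum.
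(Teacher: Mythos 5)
Your proposal is correct and rests on the same two ingredients as the paper's own proof: the fact that, by strong convex--concavity and Lemma~\ref{lem:measurable-saddle}, a saddle output is pinned down exactly by the two KKT linear functionals $\bX\bv_{\ell} = \nabla\phi^u_{\ell}(\bu_{\ell})$ and $\bX^\top\bu_{\ell} = -\nabla\phi^v_{\ell}(\bv_{\ell})$ (note the sign you dropped on the second identity), and the bookkeeping that the directions of $\bX$ actually revealed by the algorithm are exactly those indexed by $S\cup\tilde{F}$. The difference is organizational: the paper argues in one shot that the conditioning event generated by the iterates (augmented with the functional values) is equivalent to a family of linear constraints on $\bX$, and then invokes a standard conditioning lemma for Gaussian matrices, whereas you re-derive that conditioning sequentially by induction on $k$, with a case split on the update type --- in effect reproving the cited Gaussian-matrix lemma in the interleaved setting. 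Your route is more self-contained, and your case analysis (including that $k$ enters $\tilde{F}$ only when steps $k$ and $k+1$ are both first-order, and that a first-order step following a saddle step reveals no new functional of $\bX$) is exactly the right bookkeeping; the paper's route is shorter because it outsources the sequential conditioning to the citation. If you write your version out, the one step to make explicit --- which both your sketch and the paper leave implicit --- is that $\mathcal{F}_{k+1}=\sigma(\bU_{k+1},\bV_{k+1})$ is generated by possibly non-invertible images of the revealed functionals together with auxiliary randomness independent of $\bX$, so you should establish the decomposition for the finer $\sigma$-algebra generated by $\{\bX\bv_{\ell},\bX^\top\bu_{\ell}\}_{\ell\in S\cup\tilde{F}}$ and the auxiliary randomness (with respect to which the new directions are measurable, as the Gaussian conditioning step requires) and then pass to the coarser $\mathcal{F}_{k+1}$ by the tower property, which is legitimate since the spans $\mathbb{U}_{k+1},\mathbb{V}_{k+1}$ are $\mathcal{F}_{k+1}$-measurable. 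This is a matter of writing rather than a gap, so your argument goes through.
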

\begin{proof}
Let $F, \tilde{F}, S$ be as defined before the statement of the lemma.  For each $\ell \in S \cup \tilde{F}$, let $\bh_{\ell} = \bX^{\top} \bu_{\ell}$ and $\bb_{\ell} = \bX\bv_{\ell}$.  Note that, by Lemma~\ref{lem:X-decomp}, it holds that $\bX \bv_{\ell} = \nabla \phi_{\ell}^{u}(\bu_{\ell}) = \bb_{\ell}$ and $\bX^{\top} \bu_{\ell} = - \nabla \phi_{\ell}^{v}(\bv_{\ell}) = \bh_{\ell}$.  Next, form $\widetilde{\bU}_k \in \mathbb{R}^{n \times \lvert S \cup \tilde{F} \rvert}$ and  $\widetilde{\bV}_k \in \mathbb{R}^{d \times \lvert S \cup \tilde{F} \rvert}$ by collecting the vectors $\{\bu_{\ell}: \, \ell \in S \cup \tilde{F}\}$ and $\{\bv_{\ell}: \, \ell \in S \cup \tilde{F} \}$ into matrices.  We then claim that, for all $U \in \mathbb{R}^{n \times k}, V \in \mathbb{R}^{n \times k}, H \in \mathbb{R}^{d \times \lvert S \cup \tilde{F} \rvert }$, and $B \in \mathbb{R}^{n \times \lvert S \cup \tilde{F} \rvert }$, the following events are equivalent.
\[
\Bigl\{\bU_k = U,\, \bV_k =  V, \bH_k = H, \bB_k = B \Bigr\} = \bigcup_{\ell \in S \cup \tilde{F}} \Bigl\{\bX v_{\ell} = b_{\ell},\, -\bX^{\top} u_{\ell} = h_{\ell}\Bigr\}.
\]
Note that for first-order updates, this equivalence is by definition, and for saddle-point updates this follows from Lemma~\ref{lem:measurable-saddle}.  Thus, $\mathcal{F}_k$ is equivalent to a set of linear constrains on $\bX$.  The desired claim then follows from standard properties of Gaussian matrices~\citep[see, e.g.,][Lemma 3.1]{montanari2023adversarial}.
\end{proof}

\subsection{Sub-exponential concentration for pseudo-Lipschitz functions}

Our last ancillary lemma is a concentration inequality for order-$2$ pseudo-Lipschitz functions of (normalized) Gaussian vectors.  The proof is a straightforward modification of the Gaussian Lipschitz concentration inequality~\citep[see, e.g.,][]{tao2012topics,wainwright2019high} and builds upon the Maurey--Pisier interpolation technique (see~\citet{milman1986asymptotic}). 
\begin{lemma} \label{lem:pseudo-Lipschitz-concentration}
	Suppose that $f: \mathbb{R}^d \rightarrow \mathbb{R}$ is an order-$2$ pseudo-Lipschitz function with constant $L$.  If $X \sim \mathsf{N}(0, I_d)$, then 
	\[
	\bigl \| f(X/\sqrt{d}) - \mathbb{E}[f(X/\sqrt{d})] \bigr\|_{\psi_1} \leq  \frac{\pi}{2} \sqrt{\frac{5}{\ln(2)}} \cdot \frac{L}{\sqrt{d}}.
	\]
\end{lemma}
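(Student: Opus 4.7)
The plan is to adapt the classical Maurey--Pisier Gaussian interpolation technique (which yields sub-Gaussian concentration for Lipschitz functionals of standard Gaussian vectors) to the order-$2$ pseudo-Lipschitz setting. The new wrinkle is that the natural ``Lipschitz constant'' $\|\nabla f(z)\|$ grows linearly with $\|z\|$, which introduces $\chi^2$-type corrections into the moment generating function that must be controlled, and which is ultimately responsible for the factor $5$ in the advertised constant.

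First, I would reduce to smooth $f$ by Gaussian mollification: $f_\varepsilon(x) := \mathbb{E}_{W \sim \mathsf{N}(0, \varepsilon^2 \bI_d)}[f(x + W)]$ is $C^\infty$, inherits the order-$2$ pseudo-Lipschitz property with a constant tending to $L$ as $\varepsilon \downarrow 0$, and converges pointwise to $f$; transferring the concentration bound at the end. For smooth $f$, the pseudo-Lipschitz property gives $\|\nabla f(z)\| \leq L(1 + 2\|z\|)$, and the algebraic identity $(1+2y)^2 = 5(1+y^2) - (y-2)^2$ yields the sharp inequality $\|\nabla f(z)\|^2 \leq 5L^2(1 + \|z\|^2)$ --- the source of the ``$5$''.

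Next, let $Y$ be an independent $\mathsf{N}(0,\bI_d)$ copy of $X$, and for $\theta \in [0,\pi/2]$ define the rotated pair $X_\theta = \sin\theta\cdot X + \cos\theta\cdot Y$, $X_\theta' = \cos\theta\cdot X - \sin\theta\cdot Y$, which are mutually independent $\mathsf{N}(0,\bI_d)$ random vectors at every $\theta$. Differentiating $\theta \mapsto f(X_\theta/\sqrt{d})$ and integrating gives the Maurey identity
\[
f(X/\sqrt{d}) - f(Y/\sqrt{d}) = \frac{1}{\sqrt{d}} \int_0^{\pi/2} \langle \nabla f(X_\theta/\sqrt{d}), X_\theta'\rangle \, d\theta.
\]
Writing this integral as $\tfrac{\pi}{2}$ times a uniform average on $[0,\pi/2]$, Jensen's inequality followed by Gaussian marginalization over $X_\theta'$ conditionally on $X_\theta$ produces
\[
\mathbb{E} \exp\bigl(\lambda[f(X/\sqrt{d}) - f(Y/\sqrt{d})]\bigr) \;\leq\; \mathbb{E} \exp\Bigl(\tfrac{\lambda^2 \pi^2}{8d}\,\|\nabla f(X_\theta/\sqrt{d})\|^2\Bigr).
\]
Substituting $\|\nabla f(X_\theta/\sqrt{d})\|^2 \leq 5L^2(1 + \|X_\theta\|^2/d)$ and using the $\chi^2_d$ moment generating function $\mathbb{E}\exp(t\|X_\theta\|^2) = (1-2t)^{-d/2}$, I would obtain, for $|\lambda|$ in the natural range $\lesssim \sqrt{d}/L$, an essentially sub-Gaussian MGF bound with variance proxy of order $\pi^2 L^2/d$, the correction from the $\chi^2$ factor being lower order in $d$.

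To complete the argument, I would convert this sub-Gaussian MGF bound into a $\psi_1$-norm bound in two steps. First, Jensen's inequality gives $\|W\|_{\psi_1} \leq \|W'\|_{\psi_1}$ where $W := f(X/\sqrt{d}) - \mathbb{E} f(X/\sqrt{d})$ and $W' := f(X/\sqrt{d}) - f(Y/\sqrt{d})$ is the symmetrization. Then the Gaussian-integration identity $\exp(cW^2) = (4\pi c)^{-1/2}\int \exp(\lambda W - \lambda^2/(4c))\,d\lambda$ converts the sub-Gaussian MGF bound into a $\psi_2$-norm bound via $\mathbb{E}\exp(cW^2) \leq (1 - 2vc)^{-1/2}$ for $c < 1/(2v)$. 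Finally, the elementary Young-type inequality $|W|/t \leq W^2/(2\gamma t^2) + \gamma/2$ optimized at $\gamma = 2$ (which is where the factor $\sqrt{\ln 2}$ enters, via the definition $\mathbb{E} e^{|W|/t} \leq 2$) yields the standard relation $\|W\|_{\psi_1} \leq \|W\|_{\psi_2}/\sqrt{\ln 2}$. Composing ``$\pi/2$'' from Maurey, ``$5$'' from the pseudo-Lipschitz inequality, and ``$\ln 2$'' from the $\psi_1$ definition then produces the stated constant $\frac{\pi}{2}\sqrt{5/\ln 2} \cdot L/\sqrt{d}$. The delicate step --- and the main obstacle --- is the final bookkeeping: carrying the $\chi^2$ correction through the MGF-to-$\psi_2$-to-$\psi_1$ reduction carefully enough that it remains $o_d(1)$ on the relevant range of $\lambda$ (or $c$), so that the essentially sub-Gaussian MGF bound is available precisely where it is needed.
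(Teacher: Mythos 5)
Your first three steps match the paper's argument almost exactly: a smoothing reduction (the paper uses Rademacher's theorem rather than mollification, a cosmetic difference), the Maurey--Pisier interpolation bound $\mathbb{E}\exp\{\lambda(g(X)-\mathbb{E} g(X))\}\le \mathbb{E}\exp\{\tfrac{\lambda^2\pi^2}{8}\|\nabla g(X)\|_2^2\}$ with $g(x)=f(x/\sqrt d)$, the pseudo-Lipschitz gradient bound $\|\nabla g(x)\|_2\le \tfrac{L}{\sqrt d}(1+2\|x\|_2/\sqrt d)$, and control of the resulting $\chi^2_d$ moment generating function on a restricted range of $\lambda$. Up to this point your proposal is correct and is the same proof.

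The gap is in your final conversion to the $\psi_1$ norm. You propose to pass from the (restricted-range) MGF bound to a $\psi_2$ bound via the Gaussian-integration identity and then use $\|W\|_{\psi_1}\le \|W\|_{\psi_2}/\sqrt{\ln 2}$. This route cannot work, and not merely because of delicate bookkeeping: the centered variable is genuinely \emph{not} sub-Gaussian. Take $f(x)=\|x\|_2^2$, which is order-$2$ pseudo-Lipschitz with $L=1$; then $f(X/\sqrt d)=\|X\|_2^2/d$ has exponential (not Gaussian) upper tails, $\mathbb{E}\exp(cW^2)=+\infty$ for every $c>0$, and hence $\|W\|_{\psi_2}=\infty$. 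Correspondingly, the Gaussian-integration identity $\mathbb{E} e^{cW^2}=(4\pi c)^{-1/2}\int \mathbb{E} e^{\lambda W}e^{-\lambda^2/(4c)}\,d\lambda$ requires an MGF bound for \emph{all} $\lambda\in\mathbb{R}$, whereas the interpolation argument only gives one for $|\lambda|\lesssim d/L$ (and the true MGF is infinite beyond that scale, as the $\chi^2$ computation shows), so the integral is $+\infty$ and no finite $\psi_2$ bound can be extracted. The fix is to skip $\psi_2$ entirely and read off the $\psi_1$ bound directly from the restricted MGF bound, as the paper does: having shown $\mathbb{E}\exp\{\lambda(g(X)-\mathbb{E} g(X))\}\le \exp\{5\lambda^2L^2\pi^2/(4d)\}$ for all $\lambda<d/(L\pi\sqrt2)$, one evaluates at $\lambda=\pm 1/t$ with $t=\tfrac{\pi}{2}\sqrt{5/\ln 2}\,\cdot L/\sqrt d$; since $1/t\asymp\sqrt d/L$ lies well inside the admissible range, the exponent equals $\ln 2$ and the $\psi_1$ definition is verified directly. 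Your symmetrization step ($\|W\|_{\psi_1}\le\|W'\|_{\psi_1}$ by Jensen) is fine but unnecessary once you take this direct route.
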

\begin{proof}
	First, note that since $f$ is order-$2$ pseudo-Lipschitz, it is locally Lipschitz so that by Rademacher's theorem it is differentiable almost everywhere with respect to the Lebesgue measure.  Hence, it suffices to prove the claim for functions $f$ which are both differentiable as well as pseudo-Lipschitz.  
	
	Next, let $g: \mathbb{R}^d \rightarrow \mathbb{R}$ be such that $g(x) = f(x/\sqrt{d})$.  Fix $\lambda > 0$, and apply~\citet[][Lemma 2.1]{wainwright2019high} to the convex function $t \mapsto e^{\lambda t}$ to obtain the bound
	\[
	\mathbb{E} \Bigl[ \exp\bigl\{\lambda \bigl(g(X) - \mathbb{E}[g(X)]\bigr)\bigr\} \Bigr] \leq \mathbb{E}\Bigl[\exp\Bigl\{\frac{\lambda \pi}{2} \nabla g(X)^{\top} Y\Bigr\}\Bigr] = \mathbb{E}\Bigl[\exp\Bigl\{\frac{\lambda^2 \pi^2}{8} \| \nabla g(X) \|_2^2\Bigr\}\Bigr],
	\]
	where $Y \sim \mathsf{N}(0, I_d)$ and is independent of $X$.  From the pseudo-Lipschitz nature of $f$, we deduce that $\| \nabla g(X) \|_2 \leq \frac{L}{\sqrt{d}}(1 + 2 \| X/\sqrt{d} \|_2)$.  Consequently, 
	\[
	\mathbb{E} \Bigl[ \exp\bigl\{\lambda \bigl(g(X) - \mathbb{E}[g(X)]\bigr)\bigr\} \Bigr] \leq  \mathbb{E}\Bigl[\exp\Bigl\{\frac{\lambda^2 L^2 \pi^2}{4d} \Bigl(1 + \frac{4}{d} \| X \|_2^2\Bigr) \Bigr\}\Bigr]  \leq \exp\Bigl\{\frac{5\lambda^2 L^2 \pi^2}{4d} \Bigr\},
	\]
	where the final inequality holds for all $\lambda < d/(L\pi\sqrt{2})$.  We thus deduce that 
	\[
	\bigl\| g(X) - \mathbb{E}[g(X)]\bigr\|_{\psi_1} \leq \frac{\pi}{2} \sqrt{\frac{5}{\ln(2)}} \cdot \frac{L}{\sqrt{d}},
	\]
	which concludes the proof. 
\end{proof}
\end{document}